\documentclass[hidelinks,onefignum,onetabnum]{siamart220329}

\usepackage{lipsum}
\usepackage{makecell}
\usepackage{rotating}
\usepackage{graphicx}%
\usepackage{multirow}%
\usepackage{amsmath,amssymb,amsfonts}%
\usepackage{mathrsfs}%
\usepackage[title]{appendix}%
\usepackage{xcolor}%
\usepackage{textcomp}%
\usepackage{manyfoot}%
\usepackage{booktabs}%
\usepackage{algorithm}%
\usepackage{algorithmicx}%
\usepackage{algpseudocode}%
\usepackage{listings}%
\usepackage{subcaption}
\usepackage{setspace}
\usepackage{microtype}
\usepackage[inline]{enumitem}
\usepackage{relsize}
\usepackage{extarrows}
\usepackage{enumitem}
\usepackage{ifthen}
\usepackage{stmaryrd}
\usepackage{boxedminipage}
\usepackage{tikz}
\usepackage{soul}
\usepackage{cancel}
\usepackage{enumitem}
\usepackage{arydshln}
\usepackage[export]{adjustbox}
\usetikzlibrary{
    positioning,    
    arrows.meta,    
    calc           
}

\usepackage{xcolor}          
\usepackage[markup=underlined]{changes} 
\definechangesauthor[name={cc}, color=red]{cc}

\definecolor{lightblue}{rgb}{0.8,0.9,1}

\definecolor{lightgray}{gray}{0.7}

\definecolor{mygreen}{rgb}{0,0.6,0}
\definecolor{mygray}{rgb}{0.5,0.5,0.5}
\definecolor{mymauve}{rgb}{0.58,0,0.82}

\newtheorem{example}{Example}[section]

\newcommand{\normSpectral}[1]{\ensuremath{\left\| #1\right\|}}
\newcommand{\normSpectralnoleftright}[1]{\ensuremath{ \| #1 \|}}
\newcommand{\normFSquare}[1]{\ensuremath{\left\| #1\right\|_F^2}}

\newcommand{\normF}[1]{\ensuremath{\left\| #1\right\|_F}}
\newcommand{\normFnoleftright}[1]{\ensuremath{ \| #1 \|_F}}

\newcommand{\probleftright}[1]{ \ensuremath{ \mathbb{P} \left( #1 \right) } }
 
 \makeatletter
 \newcommand{\ExpectationNoBracket}[2][]{%
     \ifx\\#1\\%
         \ensuremath{ \mathbb{E}{#2} }
     \else
         \ensuremath{ \mathbb{E}_{#1}{#2} }
     \fi
 }
 \makeatother
 \makeatletter
 \newcommand{\Expectation}[2][]{%
     \ifx\\#1\\%
         \ensuremath{ \mathbb{E}\left[#2\right] }
     \else
         \ensuremath{ \mathbb{E}_{#1}\left[#2\right] }
     \fi
 }
 \makeatother
 \makeatletter
 \newcommand{\normP}[2][]{
     \ifx\\#1\\
     \ensuremath{\left\| #2\right\|_p}
     \else
         \ensuremath{ \left\| #2\right\|_{#1} }
     \fi
 }
 \makeatother

\newcommand{\qmat}[1]{\ensuremath{\mathbf{ #1}}}
\newcommand{\diag}{{\rm diag}}

 \newcommand{\rangemat}[1]{ \ensuremath{{\rm range}\left(#1 \right)}  }

\newcommand{\rankrTrun}[1]{ \ensuremath{  \llbracket \qmat{#1}\rrbracket_{r}  } }

 \newcommand{\bdOmega}{ \ensuremath{ \boldsymbol{\Omega}  } }
 \newcommand{\bdtildeOmega}{ \ensuremath{ \tilde{\bdOmega}  } }

 \newcommand{\bdPsi}{ \ensuremath{ \boldsymbol{\Psi}  } }
 \newcommand{\bdPhi}{ \ensuremath{ \boldsymbol{\Phi}  } }

 \newcommand{\bdGamma}{ \ensuremath{ \boldsymbol{\Gamma}  } }

 \newcommand{\bbR}{ \mathbb{R} }
 \newcommand{\bbC}{ \mathbb{C} }

 \newcommand{\bigxiaokuohao}[1]{\ensuremath{ \left(  #1 \right) }}      
    
 \newcommand{\bigdakuohao}[1]{\ensuremath{ \left\{  #1 \right\} }}         
 \newcommand{\bigzhongkuohao}[1]{\ensuremath{ \left[   #1 \right] }}

 \newcommand{\bignorm}[1]{\ensuremath{ \left\|   #1 \right\|  }}

 \newcommand{\trace}[1]{\ensuremath{\rm{trace}\left( #1 \right)}}

	\definecolor{darkgray}{rgb}{0.66, 0.66, 0.66}
  \definecolor{darkgray}{rgb}{0.66, 0.66, 0.66}

  \newenvironment{mytabular2}{\bgroup\tiny\tabular}{\endtabular\egroup}

\ifpdf
  \DeclareGraphicsExtensions{.pdf,.png,.jpg}
\else
  \DeclareGraphicsExtensions{.eps}
\fi

\newsiamremark{remark}{Remark}
\newsiamremark{hypothesis}{Hypothesis}
\crefname{hypothesis}{Hypothesis}{Hypotheses}
\newsiamthm{claim}{Claim}

\headers{Sketch-Power Iterations}{C. Chang \and Y. Yang}

\title{Improving Sketching Algorithms for Low-Rank Matrix Approximation via Sketch-Power Iterations\thanks{Submitted to the editors DATE.
 \funding{This work was funded by the    National Natural Science Foundation of China No. 12171105 and the special foundation for Guangxi Ba Gui
 Scholars No. GXR-6BG2424008.}
}}

\author{Chao Chang\thanks{College of Mathematics and Information Science, Guangxi University, Nanning, 530004, China}
\and Yuning Yang$^\dagger$\thanks{Corresponding author: Yuning Yang, 
  (\email{yyang@gxu.edu.cn}).}}

\usepackage{amsopn}

\ifpdf
\hypersetup{
  pdftitle={Sketch-Power Iteration for One-Pass Low-Rank Approximation},
  pdfauthor={Chao Chang and Yuning Yang}
}
\fi

\begin{document}

\maketitle

\begin{abstract}
 Power iteration can improve the accuracy of randomized SVD,  but requires multiple data passes, making it impractical in streaming or memory-constrained settings. 
 We  introduce a  lightweight yet effective sketch-power iteration,  allowing   power-like iterations with only a single pass of the data, which can be incorporated into one-pass algorithms for low-rank approximation. 
  As an example, we integrate the sketch-power iteration into a one-pass algorithm proposed by Tropp et al., and introduce   strategies to reduce its   storage cost. {\color{black}We   establish meaningful error bounds: given a fixed storage budget, the     sketch sizes derived from the     bounds   closely match   the optimal ones observed in reality. This allows one to preselect reasonable parameters.} Numerical experiments on both synthetic and real-world datasets indicate that, under the same storage constraints, applying one or two sketch-power iterations   can substantially improve the approximation accuracy of the considered one-pass algorithms. \color{black} In particular, experiments on real data with flat spectrum    show that   the method   can approximate the dominant  singular vectors well.   \color{black}
\end{abstract}
\begin{keywords}
  Randomized SVD, One-pass, Power iteration, Sketching, Low-rank approximation
\end{keywords}
\begin{MSCcodes}
  65F55; 15B33; 68W20; 65F35
\end{MSCcodes}

\section{Introduction}

Randomized algorithms have emerged as a popular tool for   low-rank matrix approximation. 
Of particular interest is randomized SVD (RSVD), which was proposed in \cite{martinsson2006randomizedalg,libertyRandomizedAlgorithmsLowrank2007} and later systematically refined and analyzed by Halko, Martinsson, and Tropp  \cite{FindingStructureHalko}. Roughly speaking, given a data matrix $\qmat{A}$,  
RSVD   first generates a   sketch $\qmat{Y}=\qmat{A}\bdOmega$    approximating   the range of $\qmat{A}$, with  $\bdOmega$ a random test matrix, orthonormalizes $\qmat{Y}$, and then performs a QB decomposition, followed by an SVD on a much smaller 
  matrix.        RSVD is more efficient than SVD, with detailed theoretical error bounds   provided. 
RSVD requires two passes of the data matrix. 
In another thread, algorithms using one-pass of the data matrix have been developed \cite{woolfe2008Fast,clarkson2009NumericalLinear,Practical_Sketching_Algorithms_Tropp,troppStreamingLowRankMatrix2019,upadhyay2018price,nakatsukasaFastStableRandomized2020}. For example, Tropp et al. \cite{Practical_Sketching_Algorithms_Tropp} considered  taking two sketches of $\qmat{A}$, catching its range and co-range, which is able to compute a QB approximation without revisiting   $\qmat{A}$  again. 
  One-pass algorithms   are   effective for managing data with limited storage, arithmetic, and communication capabilities.  
 Apart from these works, randomized algorithms have been studied extensively in the literature; see, e.g., \cite{FastMontecarlo,woolfe2008Fast,clarkson2009NumericalLinear,mahoney2011RandomizedAlgorithms,woodruff2014SketchingTool,cohen2015dimensionalityreduction,boutsidis2016OptimalPrincipal,nakatsukasaFastStableRandomized2020}, and the recent monographs \cite{tropp2023RandomizedAlgorithms,kannan2017RandomizedAlgorithms,kireeva2024RandomizedMatrix,murray2023RandomizedNumerical,martinsson2020RandomizedNumerical}.

The performance of RSVD is influenced by the decay rate of the singular spectrum. Power iterations $(\qmat{A}\qmat{A}^{T})^q\qmat{A}\bdOmega$ have been incorporated to amplify the gap between singular values, hence  enhancing the decay and improving approximation accuracy  \cite{rokhlin2010randomized,FindingStructureHalko}. 
  Error bounds   have been established, with more refined ones obtained in \cite{gu2015subspace,tropp2023RandomizedAlgorithms}. 
  The   power scheme needs $2q+1$ passes of $\qmat{A}$, while an alternative approach allows for a flexible number of passes \cite{bjarkason2019PassEfficientRandomizedAlgorithms}.


However,   power scheme typically requires multiple passes over the data matrix, limiting its applicability in scenarios with limited  memory or streaming data. 
 \color{black}In such settings, the data  cannot be fully retained in memory, precluding repeated access. \color{black}   Additionally, in privacy-sensitive contexts  where the raw data cannot be exposed \cite{upadhyay2018price},   power iteration is not feasible. Existing literature on incorporating   power iterations into one-pass algorithms to improve accuracy     seems sparse. The only relevant instance   found is  in \cite[Remark 4.2]{yu2018efficient} (see also \cite[Sect. 6.8]{bjarkason2019PassEfficientRandomizedAlgorithms}), which briefly   suggests a ``half'' power iteration: $\qmat{A}\qmat{A}^{T}\bdOmega$, provided column-wise streaming of  $\qmat{A}$; the details will appear   later. 
 These observations motivate the development of   alternative methods that approximate the benefits of power iteration without necessitating multiple data passes.

Motivated by sketching, we replace     $\qmat{A}$ in power iterations with a much smaller sketch $\qmat{Z}$.
Specifically, instead of iterating  $(\qmat{A}\qmat{A}^{T})^q\qmat{A}\bdOmega=(\qmat{A}\qmat{A}^{T})^q\qmat{Y}$, we   perform
\begin{align}
    \label{eq:sps_introduction}
    \hat{\qmat{Y}} = (\qmat{Z}\qmat{Z}^{T})^q\qmat{Y},
\end{align}
where $\qmat{Y}=\qmat{A}\bdOmega$   and $\qmat{Z}=\qmat{A}\bdPhi$ are both sketches of $\qmat{A}$ with $\bdOmega,\bdPhi$ oblivious random  test matrices, so   $\qmat{Y},\qmat{Z}$ can be formed in a single pass of $\qmat{A}$. 
 To be reasonable when storage is constrained,   $\qmat{Z}$ should be significantly smaller than $\qmat{A}$---typically, it is   a few times larger than $\qmat{Y}$, depending on the   storage budget. 
It also allows cheap re-orthonormalization in the process to   mitigate round-off errors. Subsequently, $\hat{\qmat{Y}}$ can be computed    in a single pass of    $\qmat{A}$. We refer to \eqref{eq:sps_introduction} as Sketch-Power Iteration (SPI).  



{The intuition behind SPI can be explained by two scenarios. First, consider when $\qmat{A}$ has insignificant  tail energy (e.g. $\qmat{A}$ has a clear low-rank structure)    corresponding to the sketch size of    $\qmat{Z}$. In this case, $\qmat{Z}$ is a good representation of $\qmat{A}$'s  main subspace, which is especially effective when the tail energy of $\qmat{A}$ is meaningful corresponding to the sketch size of $\qmat{Y}$. 
Alternatively, if the tail energy is non-trivial, $\qmat{Z}$ provides only a partial approximation. Nevertheless, $\qmat{Z}$ is   constructed with a larger sketch size than $\qmat{Y}$, ensuring it captures substantially more   information than $\qmat{Y}$.  In both scenarios, the operation $(\qmat{Z}\qmat{Z}^T)^q\qmat{Y}$ amplifies the approximate leading directions by concentrating the energy of the captured subspace.
In this way, SPI mimics the effect of   power iteration while only requires one pass over $\qmat{A}$. }

SPI can be incorporated into various one-pass algorithms for low-rank approximation; here we integrate SPI into the sketching algorithm proposed by Tropp et al. \cite{Practical_Sketching_Algorithms_Tropp} (TYUC17 for the  original algorithm   and TYUC17-SPI for its SPI enhanced version) to investigate it practically and theoretically. In practice, while a naive implementation of  TYUC17-SPI requires additional storage for  $\qmat{Z}$, we propose a mixed-precision and two hardware-dependent strategies to ensure that it maintains comparable   storage cost to TYUC17. 
 Numerical evidence   demonstrates that TYUC17-SPI achieves a substantial improvement in accuracy over TYUC17,  with   one or two sketch-power iterations. \color{black}In particular, experiments on real data   exhibiting flat decay spectrum   show that enhanced with one   iteration, TYUC17 can approximate the dominant  singular vectors well. In the supplementary material, we also integrate SPI into the streaming algorithm of Tropp et al. \cite{troppStreamingLowRankMatrix2019}.\color{black}    

  Two error bounds of TYUC17-SPI have been    established,     depending   on the sketch sizes, $q$, and tail energy.   The first   is an expectation bound in the Frobenius norm for the $q=1$ case, and the second one is a probabilistic deviation bound  in the  spectral norm for general $q$ cases. 
  \color{black} 
   Given a fixed   storage budget,  we have used the first bound  to derive      sketch sizes for different singular spectrum decay types and  rates. Empirically, the resulting sketch sizes are closely aligned with the   optimal choices (oracle ones). This allows us to preselect sketching parameters to achieve higher accuracy under theoretical guidance.  The bounds in \cite{Practical_Sketching_Algorithms_Tropp,troppStreamingLowRankMatrix2019} have also been employed for this purpose, and our work extends their results in this direction.   
  \color{black}

 The remainder is organized as follows. Sect. \ref{sec:SPI} formally introduces the sketch-power method and   its integration with TYUC17. Sect. \ref{sec:error_bound} provides theoretical error bounds and related implications, with proofs deferred to Sect. \ref{sec:analysis} and \ref{sec:error_anal_q>1}. Sect. \ref{sec:numer_exp} provides  numerical experiments to validate its accuracy.     
 Some related approaches  are discussed in Sect. \ref{sec:related_approaches}. 
 Sect. \ref{sec:conclusion} draws conclusions.  \color{black}

\emph{Notation.} $\rankrTrun{A}$ means the best rank-$r$ approximation of a matrix $\qmat{A}$. $\cdot^\dagger$ represents the Moore-Penrose   inverse. $\|\cdot\|_p$ standards for the matrix Schatten-$p$ norm; in particular, $\|\cdot\|_F$ and $\|\cdot\|$ respectively means the Frobenius and spectral norms. $\tau_{k}(\qmat{A})$ represents the tail energy of $\qmat{A}$, i.e., $\tau^2_{k}(\qmat{A}) = \sum_{i\geq k}\sigma_i^2$, where $\sigma_i$ is the $i$-th singular value of $\qmat{A}$, arranged in a descending order.  $\kappa(\cdot)$ stands for the condition number.

 \emph{Code.}  The code   is available at \url{github.com/Mitchell-Cxyk/SketchPowerIteration}.










\section{Sketch-Power Iteration}\label{sec:SPI}
This section begins with a review of RSVD  \cite{FindingStructureHalko}, power iteration, and the TYUC17 algorithm  \cite{Practical_Sketching_Algorithms_Tropp}. Next, the sketch-power iteration is proposed and integrated into TYUC17, along with strategies to reduce its storage cost. Finally, we discuss how to choose the sketch sizes for different spectrum decays.
\subsection{RSVD and power iteration} Randomized SVD (RSVD) was proposed in \cite{martinsson2006randomizedalg,libertyRandomizedAlgorithmsLowrank2007} and then systematically refined and analyzed in \cite{FindingStructureHalko}.
Given a data matrix $\qmat{A}\in\bbR^{m\times n}$ and the target rank $r$,     RSVD takes the following steps:

\begin{small}
    1. Draw a random test matrix $\bdOmega\in\bbR^{n\times s}$ ($s > r $ with $p=s-r$ the oversampling parameter) and compute        $\qmat{Y} = \qmat{A}\bdOmega$;  

2. Compute the orthonormal basis of $\qmat{Y}$ by thin QR: $[\qmat{Q},\sim] = \texttt{QR}(\qmat{Y},0)$;

3. Compute  $ \qmat{B} = \qmat{Q}^{T}\qmat{A}$ and its best rank-$r$ approximation  $\rankrTrun{B}$;

4. Output $\hat{\qmat{A}}:= \qmat{Q} \rankrTrun{B}$ as the rank-$r$ approximation of $\qmat{A}$. 
\end{small} 


RSVD works well when the spectrum of $\qmat{A}$ has certain decay; otherwise, the small singular values may influence     $\qmat{Y}$ to produce a poor range  \cite{FindingStructureHalko}. To overcome this, the power iteration has been   employed to compute $\qmat{Y}$ \cite{rokhlin2010randomized,FindingStructureHalko,gu2015subspace,tropp2023RandomizedAlgorithms}: 
\begin{align}
    \label{eq:standard_power_scheme}
    \qmat{Y} = (\qmat{A}\qmat{A}^{T})^q\qmat{A}\bdOmega;
\end{align}
 usually a few iterations are enough. 
To avoid   numerical issues, instead of \eqref{eq:standard_power_scheme}, a re-orthonormalization procedure can be employed: 
\begin{align}
    \label{eq:standard_power_scheme_reorth}
    {\rm Repeat~}q~{\rm times}:~ [\qmat{X} ,\sim] = \texttt{QR}(\qmat{A}^{T}\qmat{Y},0),~[\qmat{Y},\sim]=\texttt{QR}(\qmat{A}\qmat{X} ,0).
\end{align}
\eqref{eq:standard_power_scheme_reorth} gives the same basis as that of \eqref{eq:standard_power_scheme} but is more numerically stable. Additional flops for orthonormalizations are $O(q\cdot(ms^2+ns^2))$. 

In practice, there is no need to perform two QRs in each iteration \cite{voronin2015rsvdpack}. An option is to replace QR by LU before the last iteration \cite{li2017algorithm}. \eqref{eq:standard_power_scheme} and \eqref{eq:standard_power_scheme_reorth} require  $2q$ passes of $\qmat{A}$
, while \cite{bjarkason2019PassEfficientRandomizedAlgorithms} proposed a method that allows any number of passes.

\subsection{The TYUC17 algorithm}\label{sec:practical_sketching_alg} RSVD needs two passes of $\qmat{A}$, which is  prohibitive in some applications. Several   one-pass algorithms have been proposed and studied   \cite{woolfe2008Fast,clarkson2009NumericalLinear,woodruff2014SketchingTool,cohen2015dimensionalityreduction,boutsidis2016OptimalPrincipal,troppStreamingLowRankMatrix2019,nakatsukasaFastStableRandomized2020,upadhyay2018price}.
We mainly recall that of Tropp et al. \cite{Practical_Sketching_Algorithms_Tropp}, which is a numerically stable version of the proposal of \cite[Thm. 4.9]{clarkson2009NumericalLinear}. 
  \cite{Practical_Sketching_Algorithms_Tropp} can be regarded as a modification of RSVD to  the one-pass scenario. 
First draw   random matrices   $\bdOmega \in\mathbb{F}^{n\times s}$ and $\boldsymbol{\bdPsi}\in\mathbb{F}^{l\times m} $ independently with $r\leq s\leq d \ll \min\{m,n\}$. Construct two sketches:
	\begin{align}\label{eq:Y=AOmegaW=PsiA}
		\qmat{Y}=\qmat{A}\bdOmega\in\mathbb{F}^{m\times s} \quad\text{and}\quad \qmat{W}=\bdPsi \qmat{A}\in\mathbb{F}^{d\times n}.
	\end{align}
	Here   $\qmat{W}$ captures the co-range of $\qmat{A}$. As RSVD, $\qmat{Q}$ is   computed from the thin QR of $\qmat{Y}$. 
Then the   QB approximation to $\qmat{A}$ is obtained as   
	\begin{align}\label{eq:reconstructionOfTwoSketch}
		\qmat{A}\approx\qmat{Q}\qmat{B},\quad \text{with}\quad \qmat{B}=\left(\bdPsi \qmat{Q}\right)^\dagger \qmat{W}\in\mathbb F^{s\times n} \Leftrightarrow \qmat{B}\in \arg\min_{\qmat{X}}\nolimits{\|\bdPsi(\qmat{A}-\qmat{Q}\qmat{X})\|_F}.
	\end{align}
    Finally the rank-$r$ approximation is given by $\hat{\qmat{A}} = \qmat{Q}\rankrTrun{B}$. The   algorithm only needs a single pass of $\qmat{A}$, 
   and $\qmat{Y},\qmat{W}$ can   be generated in parallel for   efficiency.   It is a stabilized   oblique projection method  \cite{nakatsukasaFastStableRandomized2020}. \color{black} 
    In the sequel, we call the algorithm   \emph{TYUC17}.   
    \subsection{Sketch-power iteration} \label{sec:subsection_SPS}
    TYUC17 exhibits strong numerical performance under rapidly decaying spectra; yet, as RSVD, its performance tends to deteriorate when the spectral decay is   flat.  
    However, the power iteration is prohibitive, 
    as it needs extra $2q$   passes of $\qmat{A}$, violating the nature of one-pass algorithms.

To address this limitation, motivated by sketching algorithms, we consider replacing $\qmat{A}$ in the power iterations    by a sketch,   i.e.,  
we   perform the following to obtain the rangefinder matrix:

\begin{boxedminipage}{0.92\textwidth}
    \begin{center}
        Sketch-Power Iteration
   \end{center}
\begin{align}
    \label{eq:sketch_power_scheme}
    &\hat{\qmat{Y}} = (\qmat{Z}\qmat{Z}^{T})^q\qmat{Y} = \qmat{Z}(\qmat{Z}^T\qmat{Z})^{q-1}\qmat{Z}^T\qmat{A}\bdOmega,\\
    &\qmat{Z}=\qmat{A}\bdPhi\in\bbR^{m\times l};~\bdPhi\in\bbR^{n\times l}~{\rm oblivious~random~test~matrix}.\nonumber
\end{align}
\end{boxedminipage}

\noindent   
Here $l\ll \min\{m,n\}$. In both practice and theory,   $l>s$ is necessary; otherwise $\hat{\qmat{Y}}$ cannot retain more information than $\qmat{Y}$. 
We call \eqref{eq:sketch_power_scheme} the Sketch-Power Iteration (SPI). As both $\qmat{Z}$ and $\qmat{Y}$ can be simultaneously sketched through a single pass of $\qmat{A}$, SPI is   well-suited for integration into one-pass algorithms. 

  

\subsubsection{Intuition} A natural intuition of SPI comes from the subspace embedding perspective.
Consider the case  $\qmat{A} = \qmat{A}_1 + \qmat{A}_2$, where $\qmat{A}_1$ is a rank-$k$ matrix carrying the principal information of $\qmat{A}$, and $\qmat{A}_2$ is a perturbation with relatively small magnitude. In particular,   assume that the   spectrum of 
$\qmat{A}_1$ decays not so rapidly, while there is a clear spectral gap between 
$\qmat{A}_1$ and $\qmat{A}_2$. This corresponds to the ``clear low-rank'' scenario   
in the introduction. \color{black} The goal is to find a rank-$r$ approximation ($r < k$). 

  If one simply takes $\qmat{Y} = \qmat{A}\bdOmega$ (with $k  > s > r$) as a rangefinder, then $\qmat{Y}$ may deviate considerably from the     subspace in interest.  By contrast, if we   construct $\qmat{Z} = \qmat{A}\bdPhi$ of an appropriately large size (e.g., $l > 2k$), we can include enough of $\qmat{A}_1$'s main spectral directions. 
 Denote   $\qmat{Z}_1=\qmat{A}_1\bdPhi$; one can see this by noting
  \[
    \qmat{Z}\,\qmat{Z}^{T}\qmat{Y}= \qmat{Z}_1\qmat{Z}_1^{T}\;\qmat{A}_1\,\bdOmega 
    + f({\qmat{A}_1},\qmat{A}_2)\,\bdOmega,
  \]
  
where $f(\qmat{A}_1,\qmat{A}_2)$ collects all terms involving $\qmat{A}_2$, including cross terms with $\qmat{A}_1$.  
For the first term, by the Johnson-Lindenstrauss property, the subspace embedding $\qmat{A}_1\bdPhi$   preserves the action of operator $\qmat{A}_1$  that helps correct  the direction of $\qmat{Y}$, making   $\qmat{Z}_1 $ act like $\qmat{A}_1$, as well as $\qmat{A}$ (since $\qmat{A}_1$ well approximates $\qmat{A}$). For Gaussian embedding,   $l>2k$ is usually enough to ensure this. 
\color{black}
 For the second term, $\qmat{A}_2$ carries insignificant energy, so $f(\qmat{A}_1,\color{black}\qmat{A}_2)\,\bdOmega$ contributes   a relatively small perturbation. Thus, by choosing suitable $l$ and  $s$, we preserve the key spectral information of $\qmat{A}_1$ while limiting overhead. Consequently,   SPI   approximately replicates the subspace in interest typically revealed by a standard power iteration, yet it does so with only a single pass over $\qmat{A}$.
  
  \color{black}

\subsubsection{Stabilization} Instead of directly performing \eqref{eq:sketch_power_scheme},  
re-orthonormalization   can be employed to avoid round-off issues. 
Additional cost for stabilization of SPI only takes $O(q\cdot l s^2)$ flops, independent of $m,n$. We shortly introduce it as follows.  
For $\qmat{Y}$ at hand, first denote $\hat{\qmat{Y}}=\qmat{Y}$; the procedure takes the following form:

\begin{boxedminipage}{0.92\textwidth}
\begin{center}
     Sketch-Power Iteration with Re-orthonormalization
\end{center}
\begin{align}
    \label{eq:sketch_power_scheme_reorth}
   {\rm Repeat~}q~{\rm times}:~ [\qmat{X},\sim]=\texttt{QR}(\qmat{Z}^{T}\hat{\qmat{Y}},0),~\hat{\qmat{Y}}=\qmat{Z}\qmat{X}. \quad\quad \textcolor{lightgray}{\% \qmat{Z}^{T}\hat{\qmat{Y}}\in\bbR^{l\times s}}
\end{align}
\end{boxedminipage}

\noindent The range of $\hat{\qmat{Y}}$ given by \eqref{eq:sketch_power_scheme_reorth} is equivalent to that of \eqref{eq:sketch_power_scheme}, provided that both $\qmat{Z}$ and $\qmat{Y}$ have full column rank. To see this, just mark $\qmat{X}$ and $\hat{\qmat{Y}}$ above with their iteration number (particularly $\hat{\qmat{Y}}_0=\qmat{Y}$), and explicitly write $\qmat{Z}^{T}\hat{\qmat{Y}}_p = \qmat{X}_p\qmat{R}_p$; then $\qmat{Z}^{T}\hat{\qmat{Y}}_p\in\bbR^{l \times s}$ and $\hat{\qmat{Y}}_p$ all have full column rank and $\qmat{R}_p$ are invertible.  Iteratively express $\hat{\qmat{Y}}_q$ to see that 
\[
\hat{\qmat{Y}}_q = \qmat{Z}\qmat{Z}^{T}\hat{\qmat{Y}}_{q-1}\qmat{R}_q^{-1} = \cdots =(\qmat{Z}\qmat{Z}^{T})^q\hat{\qmat{Y}}_0\qmat{R}_q^{-1}\cdots \qmat{R}_{1}^{-1}. 
\]
Comparing \eqref{eq:sketch_power_scheme_reorth} with \eqref{eq:sketch_power_scheme}, only additional $q$ thin QR on matrices $\qmat{Z}^{T}\hat{\qmat{Y}}$ sizing $l \times s$ are required. This extra  cost is independent of the size of $\qmat{A}$, 
such that stabilization in SPI is both computation- and storage-efficient   for large-scale data. 


  

\subsubsection{Test matrices} Just as   sketching algorithms, various oblivious random test matrices are available for SPI. Frequently used examples include  Gaussian, Rademacher \cite{clarkson2009NumericalLinear}, sparse \cite{woodruff2014SketchingTool}, subsampled randomized Fourier transform (SRFT) \cite{boutsidis2013improved,FindingStructureHalko,woolfe2008Fast}, and sub-Gaussian \cite{saibaba2023randomized}.  
Gaussian embedding is the most widely used in the theoretical analysis. On one hand, it usually provides optimal guarantees and often gives   sharper bounds. On  the other hand, other randomized embeddings may behave similarly to a Gaussian embedding in practice due to the  central limit theorem \cite{martinsson2020RandomizedNumerical}. Thus, the results from Gaussian theory are often valuable for the general behavior. 

\subsubsection{Complexity} 
For dense $\qmat{A}$  
and $\bdPhi$ being   unstructured distributions,  one first forms $\qmat{Z}\in\bbR^{m\times l}$, needing $mnl$ flops; an iteration of SPI computes $\qmat{X} = \qmat{Z}^{T}\qmat{Y}\in\bbR^{l\times s}$ and $\qmat{Z}\qmat{X}\in\bbR^{m\times s}$, taking $2mls$ flops. Thus SPI with re-orthonormalization takes about $mnl + qml s+O(qls^2)$ flops. 

For structured   embeddings  such as SRFT or sparse test matrices, computing   $\qmat{Z}$ costs much less flops. For SRFT, the flops of computing $\qmat{Z}$ are $O(mn\log(l ))$, and for sparse $\bdPhi$, computing $\qmat{Z}$ requires $m\cdot {\rm nnz}(\bdPhi)$ flops, where ${\rm nnz}(\cdot)$ denotes the number of non-zero entries of a matrix. Thus the main cost of SPI with SRFT or sparse $\bdPhi$ is $ O(mn\log(l )) + 2qml s+O(qls^2)$ and $m\cdot {\rm nnz}(\bdPhi)+ 2qml s+O(qls^2)$, respectively.



\subsection{TYUC17 with sketch-power iterations}
  SPI only acts on the rangefinder, and so it is easily incorporated in various one-pass algorithms.   We mainly investigate SPI with the TYUC17 algorithm  revisited in Sect. \ref{sec:practical_sketching_alg}.   This is because 1) TYUC17 is easily implemented; 2) it is numerically stable  \cite{nakatsukasaFastStableRandomized2020}; 3) it has an accurate error bound that can guide the selection of sketch parameters \cite[Sect. 4.5]{Practical_Sketching_Algorithms_Tropp}. In the supplementary material, we also integrate SPI into the streaming algorithm of Tropp et al. \cite{troppStreamingLowRankMatrix2019}.   

The modified algorithm (TYUC17-SPI for short) is depicted in Algorithm \ref{alg:psa-sps} and its differences with     TYUC17   are highlighted in light blue.   

\begin{algorithm}\small
    \caption{TYUC17 Algorithm \cite{Practical_Sketching_Algorithms_Tropp} with Sketch-Power Iteration}\label{alg:psa-sps}
    \textcolor{lightgray}{\textbf{\% Sketching stage (support linear update):}}
    \begin{algorithmic}[1]
        \Require Data matrix $\qmat{A}\in\bbR^{m\times n}$, random  test matrices  $\bdOmega\in\mathbb{R}^{n\times s}$,      $\bdPsi\in\bbR^{d\times n}$,  \colorbox{lightblue}{$\bdPhi\in\bbR^{m\times l }$}, $l ,d ,s\ll \min\{m,n\}$, 
        $\min\{l ,d\}>s$ 
        \Ensure Sketches $\qmat{Y}\in\bbR^{m\times s},\qmat{W}\in\bbR^{d\times n},\qmat{Z}\in\bbR^{m\times l}$
        \State $\qmat{Y}=\qmat{A}\bdOmega $,   $\qmat{W}=\bdPsi\qmat{A} $, \colorbox{lightblue}{$\qmat{Z}=\qmat{A}\bdPhi $}
    \end{algorithmic}
    \textcolor{lightgray}{\% \textbf{QB  approximation stage:}}
    \begin{algorithmic}[1]
        \Require Test matrix $\bdPsi\in\bbR^{d\times n}$, sketches $\qmat{Y}\in\bbR^{m\times s}$,       $\qmat{W}\in\bbR^{d\times n}$, \colorbox{lightblue}{$\qmat{Z}\in\bbR^{m\times l }$}, \colorbox{lightblue}{and    parameter $q$}
        \Ensure Rank-$s$ approximation of   $\qmat{A}\approx\hat{\qmat{A}}=\qmat{Q}\qmat{B} $ with   $\qmat{Q}\in\bbR^{m\times s}$ and $\qmat{B}\in\bbR^{s\times n}$:
        \State 
         Set $\hat{\qmat{Y}}=\qmat{Y}$, \colorbox{lightblue}{repeat $q$ times}:
        \[
            \begin{tikzpicture}
            \node[fill=blue!14, anchor=base] {$[\qmat{X},\sim]=\texttt{QR}(\qmat{Z}^{T}\hat{\qmat{Y}},0),~\hat{\qmat{Y}}=\qmat{Z}\qmat{X}\quad\quad\quad\quad\quad \textcolor{lightgray}{\% ~\text{QR costs } O(ls^2) } $};
            \end{tikzpicture}
\]
        \State Compute $[\qmat{Q},\sim] = \texttt{QR}(\hat{\qmat{Y}},0)$
        \State Compute $\qmat{B}=\left(\bdPsi \qmat{Q}\right)^\dagger \qmat{W}$
    \end{algorithmic}
 
    \textcolor{lightgray}{\% \textbf{Truncation stage:}}
    \begin{algorithmic}[1]
        \Require $\qmat{Q}\in\bbR^{m\times s},\qmat{B}\in\bbR^{s\times n}$ from \textbf{QB stage},   target rank $r<s$
        \Ensure Rank-$r$ approximation triple $\qmat{U}\in\bbR^{m\times r}$, $\qmat{V}\in\bbR^{n\times r}$, and   diagonal   $\boldsymbol{\Sigma}\in\bbR^{r\times r}$,  such that $\hat{\qmat{A}}=\qmat{U}\boldsymbol{\Sigma}\qmat{V}^{T}\approx \qmat{A}$
        \State $\tilde{\qmat{U}} \boldsymbol{\Sigma} \qmat{V}^{T}=\rankrTrun{\qmat{B}}=\texttt{SVD}(\qmat{B},r)$
        \State $\qmat{U}=\qmat{Q}\tilde{\qmat{U}}$
    \end{algorithmic}
\end{algorithm}

\color{black}

A naive implementation of TYUC17-SPI requires $\qmat{Y},\qmat{W},\qmat{Z}$ simultaneously, which is memory-unfriendly. Note that $\qmat{Z}$ no longer participates in the computation once $\hat{\qmat{Y}}$ is obtained. This   allows us to design strategies such that   TYUC17-SPI has a comparable storage cost as that of TYUC17. We discuss it in Sect. \ref{sec:same_storage_as_PSA}.


 \begin{remark}
      
Throughout this work, we present SPI in its most transparent form with two sketches 
$\qmat{Z},\qmat{Y}$, treating $\bdPhi,\bdOmega$ as  independent. This setting 
highlights the distinct roles of $\qmat{Y}$ as the initial rangefinder (as in TYUC17) and $\qmat{Z}$ 
as an amplifier. In particular, 
the independence assumption is needed for the proof of our first bound (Theorem \ref{thm:oblique_proj_error_q=1}), 
which is further used to derive a priori sketch sizes  under memory constraints (Appendix \ref{sec:sketch_size_guidance}). 
Nevertheless, SPI does not   require $\qmat{Z},\qmat{Y}$   be independent as well: 
for example, choosing $\bdOmega=\bdPhi\tilde\bdOmega$  leads to $\qmat{Y}=\qmat{Z}\tilde\bdOmega$, which can further reduce memory cost in practice (see Sect. \ref{sec:reduce_storage_cost_q_gt_1}).
 \end{remark}


\color{black}

\subsection{Limiting   storage cost of TYUC17-SPI} \label{sec:same_storage_as_PSA} \color{black}
We propose three strategies to reduce   the   storage cost  of TYUC17-SPI   to $dn + ml$,  such that it is    comparable to that of TYUC17.\footnote{Following \cite{troppStreamingLowRankMatrix2019}, we disregard       the storage cost of test matrices, as they can be structured, sparse, or   be generated using random seeds.} The first  uses mixed-precision, while the latter two are hardware-dependent. 
For the hardware-dependent approaches, we may consider the   storage cost in the sketching stage and the approximation  stage (including QB and truncation) separately, since in  many architectures,   the   memory of the former  stage  is less constrained. 
\color{black}

\subsubsection{Mixed-precision} \label{sec:mixed_precision} 
Mixed-precision techniques have shown promise in reducing both computational and storage costs in various algorithms.      It was recently introduced in randomized algorithms \cite{connolly2022randomized,carson2024single} to obtain the   sketches in lower-precision, and implement subsequent computations in higher-precision. 

  This motivates   implementing TYUC17-SPI in a mixed-precision fashion.    
  We first sketch $\qmat{W},\qmat{Z},\qmat{Y}$ in \emph{single-precision}, and then compute and store $\hat{\qmat{Y}}$ also in single-precision, overwriting the space of $\qmat{Y}$. {\color{black} Recent advances in mixed‐precision algorithms   effectively alleviate the cumulative propagation of rounding errors that typically occurs in single‐precision arithmetic \cite{higham2022mixed,higham2019new}}. 
   Next, before steps 2 and 3 of Algorithm \ref{alg:psa-sps}, we  can convert $\hat{\qmat{Y}}$ and $\qmat{W}$ to \emph{double-precision} by reusing the space previously held by $\qmat{Z}$ if its size equals those  of $\hat{\qmat{Y}}$ and $\qmat{W}$; see Algorithm \ref{alg:single2double} in the supplementary material. Higher precision  is   important for avoiding loss of orthonormality  in steps 2 and keeping  the accuracy of solving linear systems  in step 3. 
   All the subsequent computations are executed in double-precision as well.
  
   \color{black}
  We count the   storage cost in terms of double-precision words.  Storing the     single-precision   $\qmat{W},\qmat{Z},\qmat{Y}$ costs $(d\cdot n + m\cdot s + m\cdot l )/2$   ($1/2$ represents single-precision);   $\hat{\qmat{Y}}$ takes the space of $\qmat{Y}$. As the space of $\qmat{Z}$ can be reused for converting and storing $\qmat{W}$ and $\hat{\qmat{Y}}$ into double-precision, we have $ml=ms+dn$. Thus the storage cost is 
  \begin{align}\label{eq:parameter_l_mixed_precision}
    (d\cdot n + m\cdot s + m\cdot l )/2 =  d\cdot n + m\cdot s, 
  \end{align}
  the same as the storage cost of    $dn+sm$ of TYUC17 (in double-precision).
\color{black}

\color{black}The above single-double precision model allows for larger sketch sizes, trading an additional round-off error  for  better approximation accuracy, which is effective especially when the data has a flat spectrum; see Sect. \ref{sec:numer_exp} for example. \color{black}
The single/double precisions   can be replaced by any lower precision $\boldsymbol{\mu}_l$ and higher precision $\boldsymbol{\mu}_h$, respectively; e.g., we can set $\boldsymbol{\mu}_l$ as half-precision to allow larger $l$.




\subsubsection{Multi-level caches system} Consider a setup consisting of a Hard Drive, CPU (with RAM), and GPU (with VRAM). In many GPU-accelerated tasks, VRAM is the real bottleneck due to its limited capacity, whereas   RAM is typically larger\footnote{For instance, 256GB of RAM is relatively affordable, while a high-end NVIDIA GeForce RTX 5090 graphics card offers only 32GB of VRAM and is much more costly.}. The goal is to leverage   GPU for speed while limiting its VRAM usage.

Concretely, we     first compute the sketches $\qmat{Y},\qmat{Z},\qmat{W}$ in a   pass of $\qmat{A}$ using  CPU and store   them temporarily    in the more redundant RAM.  Next we   transfer $\qmat{Z}$ and $\qmat{Y}$ to   GPU to   compute $\hat{\qmat{Y}}$. Once finishing, we delete $\qmat{Z}$ and  transfer $\qmat{W}$ into VRAM,   occupying  the space previously held by $\qmat{Z}$ (assume that $d\cdot n = m\cdot l $).     Thus   the storage needed in VRAM remains  
$d\cdot n + m\cdot s$, i.e., that for $\qmat{W}$ and $\hat{\qmat{Y}}$, matching the storage cost of TYUC17 if also computed by GPU.

Although   TYUC17-SPI   requires extra $m\cdot l$ storage in RAM (to hold $\qmat{Z}$), the crucial point is that in the more constrained VRAM, its usage stays the same as TYUC17. Therefore, comparing VRAM usage between the two methods is fair, as both fit within the same hardware limits and maintain the same  storage cost in GPU.

\subsubsection{Distributed system}  Suppose we are    in a parallel or distributed system and   for simplicity, the system   has three computing nodes (labeled as $a_{\qmat{W}},b_{\qmat{Z}},c_{\qmat{Y}}$), each with independent RAM. First,  $\qmat{W},\qmat{Z},\qmat{Y}$ are parallelly computed in node $a_{\qmat{W}},b_{\qmat{Z}}$, and $c_{\qmat{Y}}$, using a pass of $\qmat{A}$. Then,  $\qmat{Y}$ is transferred from node $c_{\qmat{Y}}$ to node $b_{\qmat{Z}}$ to compute $\hat{\qmat{Y}}$, and finally, $\hat{\qmat{Y}}$ is transferred from node $b_{\qmat{Z}}$ to $a_{\qmat{W}}$ to execute the rest steps. Thus the   storage cost in node $a_{\qmat{W}}$ is still
$d\cdot n + m\cdot s$,    
 and the transferring cost is only $2ms$ ($\qmat{Y}$ from   $c_{\qmat{Y}}$ to $b_{\qmat{Z}}$ and $\hat{\qmat{Y}}$ from   $b_{\qmat{Z}}$ to $a_{\qmat{W}}$).

\subsection{Sketch size guidance}\label{sec:sketch_size}\color{black}
\color{black}

It is necessary to a priori choose sketch sizes for one-pass algorithms.
 We only consider TYUC17-SPI in the mixed-precision setting. 
In the sequel,  we express storage in terms of double-precision words.
For the storage of the three single-precision sketches sizing $(m(l+s)+nd)/2$, it can be parameterized    as $T=(c(l+s)+d)/2$  (suppose $m=c n$ for  some $c>0$).  \color{black}Determining $l$ is easy: the discussion in Sect. \ref{sec:mixed_precision} shows that $\qmat{Z}$ takes half of the storage, i.e., $l=T\cdot n/m = T/c=s+d/c$.  For $s$, we present the $c=1$ case in Table \ref{tab:sketch_size_guidance}, and put the $c\neq 1$ case in Sect. \ref{SMsec:para_guid_m_neq_n} in the supplementary material. 

\setlength{\cmidrulewidth}{0.5pt}
\begin{table}[htbp] 
    \centering
    \caption{Sketch size $s$ selection in different spectrum decay;   $r\leq s\leq T/2$}
      \begin{mytabular2}{c!{\vline width 1pt}c|c|c!{\vline width 1pt}c|c}
        \toprule
        \multicolumn{1}{c}{{Flat}} & \multicolumn{3}{c}{Poly ($\sigma_i=i^{-\alpha}$)} & \multicolumn{2}{c}{Exp ($\sigma_i = e^{-\alpha\cdot i}$)} \\
        \cmidrule(lr){1-1} \cmidrule(lr){2-4} \cmidrule(lr){5-6} 
    &  {$\alpha<\frac{1}{2}$} & {$\alpha\approx\frac{1}{2}$} & {$\alpha>\frac{1}{2}$} & {$\alpha<\frac{1}{2T}$} & {$\alpha\geq \frac{1}{2T}$}     \\
      \hline
      $r$ & $r$ &   ${\rm P}_{[r,T/2]}(-\frac{T+1 }{2W(- {(T+1) }/{2ne})}-1)$ & $\max\{r, \frac{(2\alpha-1)({T +3})-2}{4\alpha} \}$ & $r$ & $  \frac{T}{2}$\\
      \bottomrule
      \end{mytabular2}%
      \label{tab:sketch_size_guidance}%
  \end{table}%

In the table,  ``Flat'' means flat spectrum, which occurs for example when $\qmat{A}$ is of low-rank plus noise; ``Poly'' means polynomial spectrum decay, i.e., singular values $\sigma_i=i^{-\alpha}$ for $\alpha>0$; ``Exp'' means exponential   decay, i.e., $\sigma_i = e^{-\alpha\cdot i}$ for $\alpha>0$;   ${\rm P}_{[a,b]}(\cdot)$ means   projection  onto   $[a,b]$.   $W(\cdot)$ refers to the Lambert W function, i.e., $W(a)$ is the solution of $xe^x = a$. In Matlab, one needs to use the command $\texttt{lambertw}(-1,a)$ to compute the value for a negative $a$. Detailed derivations  will be provided in   Appendix \ref{sec:sketch_size_guidance} by minimizing the error bound with respect to the sketch sizes.

\subsection{A variant for further reducing the storage cost   of TYUC17-SPI}\label{sec:reduce_storage_cost_q_gt_1} \color{black}
We can obtain a variant of SPI without sketching $\qmat{Y}$   to further save storage.    This can be achieved by using two test matrices $\bdPhi\in\bbR^{n\times l}$ and $\bdtildeOmega\in\bbR^{l\times s}$; then set $\qmat{Z}=\qmat{A}\bdPhi$ as that in SPI, while   compute $\hat{\qmat{Y}} = (\qmat{Z}\qmat{Z}^{T})^q\qmat{Z}\bdtildeOmega \in\bbR^{m\times s}$ instead. 

If denoting    $\bdOmega=\bdPhi\bdtildeOmega$ and $\qmat{Y}=\qmat{A}\bdOmega$, then $\qmat{Z}\bdtildeOmega=\qmat{A}\bdPhi\bdtildeOmega=\qmat{A}\bdOmega=\qmat{Y}$, and so   $\hat{\qmat{Y}}$ above can still be written as $\hat{\qmat{Y}}= (\qmat{Z}\qmat{Z}^{T})^q\qmat{Y}$, taking the same form as SPI. Thus this variant can still be regarded as  SPI, but without explicitly generating $\qmat{Y}$. 

We count the storage cost. Storing $\qmat{Z},\qmat{W}$ in single-precision requires $(ml+dn)/2$ storage. To compute $\hat{\qmat{Y}}$, which can be expressed as $\hat{\qmat{Y}}=\qmat{Z}(\qmat{Z}^{T}\qmat{Z})^q\bdtildeOmega$, we allocate an extra $l^2$ storage to form $\qmat{Z}^{T}\qmat{Z}$ and reuse this space to compute   $(\qmat{Z}^{T}\qmat{Z})^q\bdtildeOmega$.  Subsequently, we compute  $\hat{\qmat{Y}}\in\bbR^{m\times s}$ by overwriting the first $s$ columns of $\qmat{Z}$, using extra $s$ storage for buffering. Thereafter, $\hat{\qmat{Y}}$ is converted to double-precision (using the space of $\qmat{Z}$) for maintaining the orthonormal accuracy of $\qmat{Q}$,  while $\qmat{W}$ remains in single-precision. When $s\leq l/2$,   the double-precision $\hat{\qmat{Y}}$ can replace the space previously occupied by $\qmat{Z}$, so that the overall storage cost is bounded by  $(ml+dn)/2 + l^2 + s$.

\section{Error Bounds} \label{sec:error_bound}  
This section states two error bounds for TYUC17-SPI: one tailored for   $q=1$ and one valid  for general $q\geq 1$. Proofs appear in Sect. \ref{sec:analysis} and \ref{sec:error_anal_q>1}, respectively.  We include both because the first facilitates a priori sketch parameter selection (Sect. \ref{sec:sketch_size_guidance}) and because their proofs rely on different techniques.  
\subsection{Error bound when $q=1$}\label{sec:error_bound_q=1_stated}
We begin with the $q=1$ case, giving an expected Frobenius norm bound.  The test matrices $\bdPhi,\bdOmega,\bdPsi$ are assumed independent and standard Gaussian, i.e., each entry is i.i.d.   $N(0,1)$. Recall that $l,s,d$ respectively are the sketch size for $\qmat{Z},\qmat{Y},\qmat{W}$. 
\begin{theorem}\label{thm:oblique_proj_error_q=1}
    Let $ \qmat{Q}\in\bbR^{m\times s},\qmat{B}\in\bbR^{s\times n}$ be generated by Algorithm \ref{alg:psa-sps} with $q=1$ and       $\bdPhi \in\bbR^{n\times l},\bdOmega \in\bbR^{n\times s},\bdPsi \in\bbR^{d\times m}$ be independent standard Gaussian. If $l>s\geq \varrho + 4$ and $d>s+1$, then we have 
    \begin{small}
    \begin{align*}
        \Expectation{\normF{\qmat{A}-\qmat{Q}\qmat{B}}^2\!|E_F}\!\leq \!\!\frac{d}{d-s-1}\!\!\left[\!(1+\epsilon\hat{\xi}^2)\tau_{\varrho +1}^2\!(\qmat{A})\!+\!\delta\hat{\xi}^2\frac{\sum_{j=\varrho +1}^{n}\sigma_j^6}{\sigma_{\varrho}^4}\!+\!\mu\hat{\xi}^2\!\tau_{\varrho +1}^2\!(\qmat{A})\!\frac{ \sum_{j=\varrho +1}^{n}\sigma_j^4}{\sigma_{\varrho}^4}\right],
    \end{align*}
\end{small}
where $   \epsilon=\frac{2\varrho   }{(l-\varrho -1)}$, 
\begin{small}
\begin{equation}\label{eq:parameters_eps_delta_mu_1}
     \begin{split}
    \delta =\frac{e^2(s+\varrho )\varrho (l-1)(l^2+2l)\cdot }{ (s-\varrho )^2(l-\varrho )(l-\varrho -1)(l-\varrho -3)} ,~
    \mu =\frac{ e^2(s+\varrho )\varrho (l-1)l\cdot  }{(s-\varrho )^2(l-\varrho )(l-\varrho -1)(l-\varrho -3)},
\end{split}
\end{equation}
\end{small}
   and 
  $  E_F=\left\{\bdOmega,\bdPhi:\normSpectral{\left(\qmat{I}+\bdPhi_1\bdPhi_2^{T}\boldsymbol{\Sigma}_2^2\bdOmega_2\bdOmega_1^\dagger\boldsymbol{\Sigma}_1^{-2}(\bdPhi_1\bdPhi_1^{T})^{-1}\right)^{-1}}<\xi \right\}$ for some given  $\xi  >1$, and $\hat{\xi}=\xi/\sqrt{\mathbb P(E_F)}\approx \xi $ (see the discussion below). 
\end{theorem}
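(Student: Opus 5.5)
We split the error into the $\bdPsi$-dependent oblique-projection part and the rangefinder part, conditioning on $\bdOmega,\bdPhi$. As in \cite[Lemma A.3/Thm. 4.3]{Practical_Sketching_Algorithms_Tropp}, with $\qmat{Q}$ fixed and $\bdPsi$ independent standard Gaussian with $d>s+1$,
\[
\Expectation[\bdPsi]{\normFSquare{\qmat{A}-\qmat{Q}\qmat{B}}}=\frac{d-1}{d-s-1}\normFSquare{(\qmat{I}-\qmat{Q}\qmat{Q}^T)\qmat{A}}\le \frac{d}{d-s-1}\normFSquare{(\qmat{I}-\qmat{Q}\qmat{Q}^T)\qmat{A}} .
\]
This follows from the least-squares structure of $\qmat{B}=(\bdPsi\qmat{Q})^\dagger\qmat{W}$ together with the Gaussian inverse-moment $\Expectation{\normFSquare{(\bdPsi\qmat{Q})^\dagger \qmat{G}}}$ for an independent Gaussian block $\qmat{G}$. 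Since $E_F$ depends only on $\bdOmega,\bdPhi$, the same identity holds under the conditioning. It remains to bound $\Expectation{\normFSquare{(\qmat{I}-P_{\hat{\qmat{Y}}})\qmat{A}}\mid E_F}$ with $\hat{\qmat{Y}}=\qmat{A}\bdPhi\bdPhi^T\qmat{A}^T\qmat{A}\bdOmega$.

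For the rangefinder, write the SVD $\qmat{A}=\qmat{U}\boldsymbol{\Sigma}\qmat{V}^T$ and split at index $\varrho$. By rotation invariance, $\bdPhi_i=\qmat{V}_i^T\bdPhi$ and $\bdOmega_i=\qmat{V}_i^T\bdOmega$ are independent Gaussian blocks. Then
\[
\qmat{U}^T\hat{\qmat{Y}}=\begin{pmatrix}\boldsymbol\Sigma_1(\bdPhi_1\bdPhi_1^T\boldsymbol\Sigma_1^2\bdOmega_1+\bdPhi_1\bdPhi_2^T\boldsymbol\Sigma_2^2\bdOmega_2)\\ \boldsymbol\Sigma_2(\bdPhi_2\bdPhi_1^T\boldsymbol\Sigma_1^2\bdOmega_1+\bdPhi_2\bdPhi_2^T\boldsymbol\Sigma_2^2\bdOmega_2)\end{pmatrix}.
\]
Following the HMT deterministic bound, we right-multiply by $\bdOmega_1^\dagger\boldsymbol\Sigma_1^{-2}(\bdPhi_1\bdPhi_1^T)^{-1}$. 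Here $\bdOmega_1$ has full row rank since $s\ge\varrho$, and $\bdPhi_1\bdPhi_1^T$ is invertible since $l>\varrho$. This makes the top block $\boldsymbol\Sigma_1\qmat{M}$, where $\qmat{M}=\qmat{I}+\bdPhi_1\bdPhi_2^T\boldsymbol\Sigma_2^2\bdOmega_2\bdOmega_1^\dagger\boldsymbol\Sigma_1^{-2}(\bdPhi_1\bdPhi_1^T)^{-1}$ is exactly the matrix appearing in $E_F$. Multiplying further by $\qmat{M}^{-1}$ gives top block $\boldsymbol\Sigma_1$ and bottom block $\boldsymbol\Sigma_2\qmat{F}$, with
\[
\qmat{F}=\Big(\bdPhi_2\bdPhi_1^T(\bdPhi_1\bdPhi_1^T)^{-1}+\bdPhi_2\bdPhi_2^T\boldsymbol\Sigma_2^2\bdOmega_2\bdOmega_1^\dagger\boldsymbol\Sigma_1^{-2}(\bdPhi_1\bdPhi_1^T)^{-1}\Big)\qmat{M}^{-1}\boldsymbol\Sigma_1^{-1}.
\]
The standard argument \cite[Thm. 9.1]{FindingStructureHalko} then yields
\[
\normFSquare{(\qmat{I}-P_{\hat{\qmat{Y}}})\qmat{A}}\le\tau_{\varrho+1}^2(\qmat{A})+\normFSquare{\boldsymbol\Sigma_2\qmat{F}\boldsymbol\Sigma_1}.
\]
On $E_F$ we have $\|\qmat{M}^{-1}\|<\xi$. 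Expanding the square and using $(a+b)^2$-type splitting, the remaining error is controlled by $\xi^2$ times two terms:
\[
\text{(i) } \normFSquare{\boldsymbol\Sigma_2\bdPhi_2\bdPhi_1^\dagger}, \qquad \text{(ii) } \normFSquare{\boldsymbol\Sigma_2\bdPhi_2\bdPhi_2^T\boldsymbol\Sigma_2^2\bdOmega_2\bdOmega_1^\dagger\boldsymbol\Sigma_1^{-2}(\bdPhi_1\bdPhi_1^T)^{-1}}.
\]
Term (i) uses $\bdPhi_1^T(\bdPhi_1\bdPhi_1^T)^{-1}=\bdPhi_1^\dagger$.

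To remove the conditioning on the expectation, we use $\Expectation{X\mid E_F}\le\Expectation{X}/\mathbb P(E_F)$ for $X\ge0$; this is where $\hat\xi^2=\xi^2/\mathbb P(E_F)$ comes from. Term (i) has expectation $\frac{\varrho}{l-\varrho-1}\tau_{\varrho+1}^2$ by the Gaussian inverse moment $\Expectation{\normFSquare{\bdPhi_1^\dagger}}=\varrho/(l-\varrho-1)$. After the cross-term splitting doubles it, this produces $\epsilon\hat\xi^2\tau^2_{\varrho+1}$.

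Term (ii) is the main obstacle. We condition on $\bdPhi$ and average over $\bdOmega$ first. The factor $\bdOmega_2\bdOmega_1^\dagger$ is a Gaussian $\bdOmega_2$ times the independent $\bdOmega_1^\dagger$, and together with $\normSpectral{\boldsymbol\Sigma_1^{-2}}=\sigma_\varrho^{-2}$ it contributes the factor $\frac{e^2(s+\varrho)\varrho}{(s-\varrho)^2}\sigma_\varrho^{-4}$. This uses a Frobenius-type bound for $\bdOmega_1^\dagger$: by Hölder, $\Expectation{\|\bdOmega_1^\dagger\|^4}$ is bounded via HMT's tail bounds, which is where $e^2$ and $(s-\varrho)^{-2}$ arise, and this needs $s\ge\varrho+4$. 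Averaging over $\bdPhi_1$ then uses $\Expectation{\normSpectral{(\bdPhi_1\bdPhi_1^T)^{-1}}^2}$-type moments of the inverse Wishart, of order $\frac{(l-1)}{(l-\varrho)(l-\varrho-1)(l-\varrho-3)}$, requiring $l>\varrho+3$. The remaining quantity is $\Expectation{\normFSquare{\boldsymbol\Sigma_2\bdPhi_2\bdPhi_2^T\boldsymbol\Sigma_2^2\qmat{G}}}$ for Gaussian $\qmat{G}$. Computing it via fourth moments of Wishart matrices, $\Expectation{\bdPhi_2\bdPhi_2^T\qmat{D}\bdPhi_2\bdPhi_2^T}=l(l+1)\qmat{D}+l\,\trace{\qmat{D}}\qmat{I}$ with $\qmat{D}=\boldsymbol\Sigma_2^2$, splits it into a $\sum_{j>\varrho}\sigma_j^6$ piece with coefficient $\sim l^2+2l$, giving $\delta$, and a $\tau^2_{\varrho+1}\sum_{j>\varrho}\sigma_j^4$ piece with coefficient $\sim l$, giving $\mu$.

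The delicate bookkeeping is keeping $\bdPhi_1$ independent of $\bdPhi_2$ so these moments factorize, and not losing the exact constants. Collecting everything and multiplying by $d/(d-s-1)$ gives the stated bound.
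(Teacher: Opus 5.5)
Most of your proposal follows the paper's proof. You reduce to $\frac{d}{d-s-1}\|(\qmat{I}-P_{\qmat{Q}})\qmat{A}\|_F^2$ via the TYUC17 identity (your sharper $\frac{d-1}{d-s-1}$ is fine). You get the deterministic bound by right-multiplying with $\bdOmega_1^\dagger\boldsymbol{\Sigma}_1^{-2}(\bdPhi_1\bdPhi_1^{T})^{-1}$ and then $\qmat{M}^{-1}$. You split with $(a+b)^2\le 2a^2+2b^2$, remove the conditioning via $\mathbb{E}[X\mid E_F]\le \mathbb{E}X/\mathbb{P}(E_F)$, and handle term (i) exactly as the paper does. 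There is a small slip in the deterministic step: with your $\qmat{F}$, which ends in $\boldsymbol{\Sigma}_1^{-1}$, the blocks after multiplying by $\qmat{M}^{-1}$ are $\boldsymbol{\Sigma}_1$ and $\boldsymbol{\Sigma}_2\qmat{F}\boldsymbol{\Sigma}_1$ (or $\qmat{I}$ and $\boldsymbol{\Sigma}_2\qmat{F}$ after a further $\boldsymbol{\Sigma}_1^{-1}$). Your resulting inequality $\tau_{\varrho+1}^2+\|\boldsymbol{\Sigma}_2\qmat{F}\boldsymbol{\Sigma}_1\|_F^2$ is nevertheless correct.

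The genuine gap is in term (ii), which is exactly where $\delta$ and $\mu$ come from. Averaging over $\bdOmega_2$ with Lemma \ref{lem:SGT} leaves $\|\boldsymbol{\Sigma}_2\bdPhi_2\bdPhi_2^{T}\boldsymbol{\Sigma}_2^2\|_F^2\,\|\bdOmega_1^\dagger\boldsymbol{\Sigma}_1^{-2}(\bdPhi_1\bdPhi_1^{T})^{-1}\|_F^2$. You must then decide which factor of the second norm carries the Frobenius norm.

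You attribute the factor $\varrho$ to the $\bdOmega$ side. You then use a spectral-norm moment of the inverse Wishart matrix, claiming it is of order $\frac{l-1}{(l-\varrho)(l-\varrho-1)(l-\varrho-3)}$. That inequality goes the wrong way. For a $\varrho\times\varrho$ matrix, $\|\qmat{X}\|^2\ge\frac{1}{\varrho}\|\qmat{X}\|_F^2$, so $\mathbb{E}\|(\bdPhi_1\bdPhi_1^{T})^{-1}\|^2\ge\frac{l-1}{(l-\varrho)(l-\varrho-1)(l-\varrho-3)}$. When $l-\varrho=O(1)$ the left side is larger by a factor of order $\varrho$. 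So your route either produces $\varrho^2$ where $\delta,\mu$ have $\varrho$, or rests on a false bound. The H\"older/fourth-moment step on $\bdOmega_1^\dagger$ is also unnecessary and would change the constants.

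The paper uses the opposite pairing:
\[
\|\bdOmega_1^\dagger\boldsymbol{\Sigma}_1^{-2}(\bdPhi_1\bdPhi_1^{T})^{-1}\|_F\le \sigma_\varrho^{-2}\,\|\bdOmega_1^\dagger\|\,\|(\bdPhi_1\bdPhi_1^{T})^{-1}\|_F .
\]
Because $\bdOmega_1,\bdPhi_1,\bdPhi_2$ are independent, the expectation then factorizes exactly into two pieces:
\begin{itemize}
\item $\mathbb{E}\|\bdOmega_1^\dagger\|^2=\mathbb{E}\|(\bdOmega_1\bdOmega_1^{T})^{-1}\|\le\frac{e^2(s+\varrho)}{2(s-\varrho)^2}$, the $p=1$ case of Lemma \ref{lem:inverse_moment_spectral}, which needs only $s\ge\varrho+2$.
\item $\mathbb{E}\|(\bdPhi_1\bdPhi_1^{T})^{-1}\|_F^2=\frac{\varrho(l-1)}{(l-\varrho)(l-\varrho-1)(l-\varrho-3)}$ from Lemma \ref{lem:inverse_moment_F}, which is where the single $\varrho$ comes from.
\end{itemize}
The $\frac12$ in the first moment cancels the $2$ from your splitting. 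Your Wishart computation $\mathbb{E}\|\boldsymbol{\Sigma}_2\bdPhi_2\bdPhi_2^{T}\boldsymbol{\Sigma}_2^2\|_F^2=l(l+1)\sum_{j>\varrho}\sigma_j^6+l\,\tau_{\varrho+1}^2\sum_{j>\varrho}\sigma_j^4$ is correct and bounded by the $(l^2+2l)$ version. Combined with the two moments above, it gives exactly $\delta$ and $\mu$.
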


The matrices involved in $E_F$   will be detailed in Sect. \ref{sec:deter_qb_err}.  $\varrho$  is a  parameter that varies between $r$ and $s$; one can minimize the bound with respect to $\varrho$ to get a tighter bound.  The following gives some discussions on the bound. 

\paragraph{On the   event $E_F$}  $E_F$ is introduced to  characterize the   event that the matrix
$\qmat{I}+\bdPhi_1\bdPhi_2^{T}\boldsymbol{\Sigma}_2^2\bdOmega_2\bdOmega_1^\dagger\boldsymbol{\Sigma}_1^{-2}(\bdPhi_1\bdPhi_1^{T})^{-1}$ stays close to the identity, with spectral norm of its inverse bounded by a moderate constant $\xi $. A relatively larger   SPI parameter $l$   reduces   its spectral norm and    increases the likelihood of $E_F$. Specifically, if 
   $l$ is relatively large such that the term $ (e_1e_4+e_2e_3+e_2e_4u)e_5^2\sigma_{\varrho}^{-2} < 1- \xi^{-1}$, then $\mathbb P(E_F)>1-p_f$,   where  $e_i$'s are defined in \eqref{eq:parametersE}, failure probability  $p_f = 5e^{-u^2/2}+2t^{-(l-\varrho )}+2t^{-(s-\varrho )}$, and $u>0,t>1$.      See   Theorem \ref{thm:prob_inverseTerm} and its discussions in  Sect. \ref{sec:prob_qb_err} for more details. The underlying reason is the favorable embedding properties of $\bdPhi_1$. $p_f$ can be small such that $\hat{\xi}\approx \xi $. 


\paragraph{Simplification under spectral gap} Assume   a  large gap between $\sigma_{\varrho}$ and $\sigma_{\varrho +1}$. Then the last two terms in the bound can be viewed as higher-order terms of $\tau^2_{\varrho+1}(\qmat{A})$. Meanwile, the coefficient of the first term can be simplified: $1+\epsilon \xi^2 \leq 2\xi^2\cdot \frac{l}{l-\varrho -1} = O(\frac{l}{l-\varrho-1})$.  Up to constants, 
the bound reduces to the following: 
\begin{small}
\begin{align}\label{eq:error_bound_q1_simplified_1}
\Expectation{\normFSquare{\qmat{A}-\qmat{Q}\qmat{B}  } \!\mid \! E_F  }  \leq  O\bigxiaokuohao{\!\frac{d}{d-s-1} }  \cdot{  O\bigxiaokuohao{\frac{l }{l-\varrho-1} }}\tau_{\varrho +1}^2(\qmat{A}) + o(\tau_{\varrho +1}^2(\qmat{A})).
\end{align}
\end{small}
 While   heuristic, this simplification provides effective sketch sizes guidance   (Sect.  \ref{sec:sketch_size_guidance}). 

The higher-order terms arise from one iteration of SPI. The coefficient $d/(d-s-1)$ matches that in the TYUC17 bound (see \eqref{eq:tyuc17_bound_f_norm}), due to both sketching the same corange $\qmat{W}=\bdPsi\qmat{A}$.
  We also see that a larger SPI parameter $l$   yields smaller bound (also observe from \eqref{eq:parameters_eps_delta_mu} that a larger $l$ does not increase $\delta,\mu$,  the coefficients of $o(\tau_{\varrho +1}^2(\qmat{A}))$). 
This aligns with the intuition that a larger $\qmat{Z}$ captures more information of $\qmat{A}$, and so does the new rangefinder $\hat{\qmat{Y}} = \qmat{Z}\qmat{Z}^T\qmat{Y}$ produces a better approximation.  

\paragraph{Comparison with TYUC17 bound} Comparing \eqref{eq:error_bound_q1_simplified_1} with   \cite[Thm. 4.3]{Practical_Sketching_Algorithms_Tropp}\footnote{There was a typo in the bound of \cite[Thm. 4.3]{Practical_Sketching_Algorithms_Tropp}: the factor $k/(l-k-1)$ should be $l/(l-k-1)$, as $1+k/(l-k-1)\leq l/(l-k-1)$. Note that $l,k$ in \cite{Practical_Sketching_Algorithms_Tropp} correspond to $d,s$ in our notation.}  
\begin{align}\label{eq:tyuc17_bound_f_norm}\mathbb E\|\qmat{A} - \qmat{Q}\qmat{B}\|_F^2 \leq \frac{d}{d-s-1}\frac{s}{s-\varrho-1}\tau_{\varrho+1}^2(\qmat{A}),\end{align}
 we see that SPI replaces the relatively small $s$ in ${s}/{(s-\varrho-1)}$ with a larger $l$, which   lowers the    TYUC17 bound when higher-order terms are negligible. This helps explain why TYUC17, when equipped with SPI, achieves better performance in practice.


\paragraph{On orthogonal projection} Bounding the orthogonal projection error $\normF{\qmat{A}-P_{\hat{\qmat{Y}}}\qmat{A}}$ is the key to establish the main bound, where $P_{\hat{\qmat{Y}}}$ is the projection onto the column space of $\hat{\qmat{Y}}$.  With $\normF{\qmat{A}-P_{\hat{\qmat{Y}}}\qmat{A}}$ obtained,  the relation between oblique and orthogonal projection error\footnote{``Orthogonal'' is emphasized to distinguish it from the oblique projection error $\normF{\qmat{A}-\qmat{Q}\qmat{B}}$. Unless stated otherwise, ``projection'' hereafter refers to orthogonal projection.}  follows by the argument of \cite{Practical_Sketching_Algorithms_Tropp}: conditioned on $\bdOmega,\bdPhi$,  
\begin{align*}
    \mathbb E_{\bdPsi}\normF{\qmat{A}-\qmat{Q}\qmat{B}}^2=\frac{d}{d-s-1}\normF{\qmat{A}-P_{\hat{\qmat{Y}}}\qmat{A}}^2.
\end{align*}
The orthogonal projection is the main difference between TYUC17 and TYUC17-SPI.

We   do not state the final   error   $\|\qmat{A}-\qmat{Q}\tilde{\qmat{U}}\boldsymbol{\Sigma}\qmat{V}^{T}\|_F$; it  can be  derived directly from      Theorem \ref{thm:oblique_proj_error_q=1} and \cite[Prop. 6.1]{Practical_Sketching_Algorithms_Tropp}, and is therefore omitted.

\subsection{Error bound for general $q\geq 1$}\label{sec:error_bound_q>=1_stated}
We now present a probabilistic bound in spectral norm.  Here
   $\bdPsi\in\bbR^{d\times n},\bdPhi\in\bbR^{m\times l}$ are   independent standard Gaussian, while $\bdOmega=\bdPhi\bdtildeOmega \in\bbR^{m\times s}$, where $\bdtildeOmega\in\bbR^{l\times s}$ is standard Gaussian independent of $\bdPsi,\bdPhi$.  This setting connects to   Sect. \ref{sec:reduce_storage_cost_q_gt_1}. 
\begin{theorem} \label{thm:error_oblique_bound_q_gt_1}
  Let $\qmat{Q},\qmat{B}$ be generated by Algorithm \ref{alg:psa-sps}, with $\bdPhi,\bdPsi$ being independent standard Gaussian, and $\bdOmega=\bdPhi\bdtildeOmega$ where $\bdtildeOmega$ is standard Gaussian independent of $\bdPhi,\bdPsi$. Assume that $d\geq s+4$, and for any natural numbers $\varrho,k$ with $\varrho<k\leq l-4$  and $\varrho\leq s-4$. Then   $\normSpectral{\qmat{A}-\qmat{Q}\qmat{B}}$ is upper bounded by  
    \begin{align*}
        &\left((\eta_1+u\cdot\eta_2)\eta_3+\eta_2\eta_4\right)\tau_{k+1}(\qmat{A})+\left((\eta_1+u\cdot\eta_2)(\eta_3+\eta_4)+u\eta_3\right)\sigma_{k+1}(\qmat{A})\\
        &+\gamma_1\gamma_2\gamma_3\left(\left(1+\sqrt{\frac{k}{l}}+\frac{\beta}{\sqrt{l}}\right)\sigma_{\varrho+1}\left(\qmat{A}\right)+\left(1+\frac{\beta}{\sqrt{l}}\right)\sigma_{k+1}\left(\qmat{A}\right)
        +\tau_{k+1}\left(\qmat{A}\right)/\sqrt{l}
        \right),
    \end{align*}
    where 
    \begin{gather*}
        \eta_1=1+t\sqrt{\frac{3s}{d-s+1}};~ \eta_2=t\frac{e\sqrt{d}}{d-s+1};\\
        \eta_3=\frac{e\sqrt{l}}{l-k+1}t;~
        \eta_4=1+t\sqrt{\frac{3k}{l-k+1}};\\
        \gamma_1=1+t\sqrt{\frac{3s}{d-s+1}}+t\frac{e\sqrt{dl}}{d-s+1}+ut\frac{e\sqrt{d}}{d-s+1};~
    \gamma_2=\frac{el}{l-k+1};\\
    \gamma_3=\left( 1 + t \cdot \sqrt{\frac{3\varrho}{s-\varrho + 1}}+t \cdot \frac{e\cdot \left(\sqrt{sl}+u\sqrt{s}\right)}{s-\varrho+1}\right)^{\frac{1}{2q+1}},
    \end{gather*}
    with failure  probability at most $p_e=3e^{-u^2/2}+2t^{-(d-s)}+2t^{-(l-k)}+2t^{-(s-\varrho)}+e^{-\beta^2/2}$, where $u,\beta> 0$, and $t\geq 1$. 
\end{theorem}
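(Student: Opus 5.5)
We split the oblique-projection error into an orthogonal-projection part and a part produced by the $\bdPsi$-sketch, and then handle the rangefinder $\hat{\qmat{Y}}=(\qmat{Z}\qmat{Z}^T)^q\qmat{Z}\bdtildeOmega$ in two more steps: first show that $\qmat{Z}$ is a good rank-$k$ sketch of $\qmat{A}$, then treat $\hat{\qmat{Y}}$ as an ordinary power-iteration rangefinder applied to $\qmat{Z}$.

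\textbf{Step 1 (oblique to orthogonal).} Write $\qmat{A}-\qmat{Q}\qmat{B}=(\qmat{I}-\qmat{Q}\qmat{Q}^T)\qmat{A}-\qmat{Q}(\bdPsi\qmat{Q})^\dagger\bdPsi(\qmat{I}-\qmat{Q}\qmat{Q}^T)\qmat{A}$. Condition on $\bdPhi,\bdtildeOmega$, so that $\qmat{Q}$ is fixed. Then $\bdPsi\qmat{Q}$ and $\bdPsi\qmat{Q}_\perp$ are independent Gaussians, of sizes $d\times s$ and $d\times(m-s)$.
\begin{itemize}
\item The standard deviation bounds of Halko--Martinsson--Tropp give $\|(\bdPsi\qmat{Q})^\dagger\|\le t\,e\sqrt d/(d-s+1)$ with failure probability $t^{-(d-s)}$.
\item Concentration of a Gaussian matrix sandwiched between fixed matrices gives $\|(\bdPsi\qmat{Q})^\dagger\bdPsi\qmat{Q}_\perp\qmat{M}\|\le \|(\bdPsi\qmat{Q})^\dagger\|(\|\qmat{M}\|_F+u\|\qmat{M}\|)+\|(\bdPsi\qmat{Q})^\dagger\|_F\|\qmat{M}\|$.
\end{itemize}
Both $\eta_1+u\eta_2$ and $\gamma_1$ have exactly this shape, so this step turns the oblique error into terms of the form $\|(\qmat{I}-P_{\hat{\qmat{Y}}})\qmat{A}\|$ and $\|(\qmat{I}-P_{\hat{\qmat{Y}}})\qmat{A}\|_F$.

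\textbf{Step 2 (comparing $\qmat{A}$ with $\qmat{Z}$).} For any projector $P$, split $(\qmat{I}-P)\qmat{A}=(\qmat{I}-P)(\qmat{A}-P_{\qmat{Z}}\qmat{A})+(\qmat{I}-P)P_{\qmat{Z}}\qmat{A}$.
\begin{itemize}
\item The first piece is the range-finder error of the sketch $\qmat{Z}=\qmat{A}\bdPhi$. Using rank $k\le l-4$ and the HMT spectral/Frobenius deviation bounds for $\bdPhi_2\bdPhi_1^\dagger$, it is controlled by $\eta_3\tau_{k+1}(\qmat{A})+\eta_4\sigma_{k+1}(\qmat{A})$ in spectral norm. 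The Frobenius norm is bounded analogously.
\item Combined with Step 1, this produces the first line of the bound, $((\eta_1+u\eta_2)\eta_3+\eta_2\eta_4)\tau_{k+1}+\dots$.
\item For the second piece, write $P_{\qmat{Z}}\qmat{A}=\qmat{Z}\bdPhi_1^\dagger$-type expressions. Concretely, $P_{\qmat{Z}}\qmat{A}\qmat{V}_1=\qmat{Z}\bdPhi_1^\dagger$ up to a correction, with $\|\bdPhi_1^\dagger\|\lesssim e\sqrt{l}/(l-k+1)$. This reduces $\|(\qmat{I}-P)P_{\qmat{Z}}\qmat{A}\|$ to $\|(\qmat{I}-P)\qmat{Z}\|$ times a factor of order $\gamma_2/\sqrt l$, plus already-bounded tail terms.
\end{itemize}

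\textbf{Step 3 (power iteration on $\qmat{Z}$).} Since $\bdtildeOmega$ is Gaussian and independent of $\qmat{Z}$, the matrix $\hat{\qmat{Y}}=(\qmat{Z}\qmat{Z}^T)^q\qmat{Z}\bdtildeOmega$ is exactly the classical power-scheme rangefinder for $\qmat{Z}$.
\begin{itemize}
\item The HMT power-scheme bound, obtained by applying Jensen/Hölder to $\|(\qmat{I}-P)\qmat{Z}\|^{2q+1}\le\|(\qmat{I}-P)(\qmat{Z}\qmat{Z}^T)^q\qmat{Z}\|$, gives
\[
\|(\qmat{I}-P_{\hat{\qmat{Y}}})\qmat{Z}\|\le \gamma_3\Bigl(\sigma_{\varrho+1}(\qmat{Z})+\text{tail}\Bigr).
\]
Here the $\sqrt{sl}$ inside $\gamma_3$ comes from bounding $\|\qmat{Z}\|_F$-type tails by $\sqrt l\,\sigma_{\varrho+1}(\qmat{Z})$.
\item It remains to bound the singular values of the sketch $\qmat{Z}$ in terms of those of $\qmat{A}$. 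Split $\qmat{A}\bdPhi=\qmat{A}_k\bdPhi+\qmat{A}_{\setminus k}\bdPhi$. Weyl's inequality with Gaussian norm concentration (Gordon's bound plus Lipschitz concentration with parameter $\beta$) gives
\[
\sigma_{\varrho+1}(\qmat{Z})\le \sigma_{\varrho+1}(\qmat{A})(\sqrt l+\sqrt k+\beta)+\sigma_{k+1}(\qmat{A})(\sqrt l+\beta)+\tau_{k+1}(\qmat{A}).
\]
Dividing by $\sqrt l$ yields the bracket in the last line of the bound.
\end{itemize}

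\textbf{Assembly.} Collect the failure events by a union bound. This gives $p_e$ with the stated terms: the $t$-terms from $\bdPsi\qmat{Q}$, $\bdPhi_1$ and $\bdtildeOmega_1$; the $u$-terms from the three Gaussian concentration steps; and the $\beta$-term from Step 3.

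The main obstacle is Step 2's reduction of the $\qmat{A}$-error to the $\qmat{Z}$-error with only the factor $\gamma_2$. The difficulty is that $\hat{\qmat{Y}}$ depends on $\bdPhi$, so we cannot condition on $\hat{\qmat{Y}}$ and keep $\bdPhi$ Gaussian. I would try to use the deterministic identity $\qmat{A}\qmat{V}_1=\qmat{Z}\bdPhi_1^\dagger-\qmat{A}\qmat{V}_2\bdPhi_2\bdPhi_1^\dagger$. Its first term is annihilated up to $\|(\qmat{I}-P)\qmat{Z}\|\,\|\bdPhi_1^\dagger\|$, and its second term is the $\bdPhi$-only tail already handled in Step 2. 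This works for every projector $P$, so the dependence between $\hat{\qmat{Y}}$ and $\bdPhi$ causes no trouble.
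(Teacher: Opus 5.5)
Your plan is correct and is essentially the paper's proof. Step~1 is Lemma~\ref{lem:GNError_deviation_bound}: fix $\qmat{Q}$ by conditioning on $\bdPhi,\bdtildeOmega$, then remove the conditioning by total probability. Step~2 is the deterministic bridge of Theorem~\ref{thm:determined_projection_error}, which passes through $\qmat{Z}\bdPhi_1^\dagger\qmat{V}_1^{T}$. Step~3 is Lemma~\ref{lem:I-P_DOmega_C} (Lemma~\ref{lem:power_iter_spectral_ineq} plus Lemma~\ref{lem:rsvdDrivationBound} applied to $(\qmat{Z}\qmat{Z}^{T})^q\qmat{Z}$ with the independent Gaussian $\bdtildeOmega$) together with Lemma~\ref{lem:sigma/tau_r+1_C}, assembled via Theorem~\ref{thm:prob_projection_error_q_gt_1} and a union bound.

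Two small remarks. First, if you apply your identity $\qmat{A}\qmat{V}_1=\qmat{Z}\bdPhi_1^\dagger-\qmat{A}\qmat{V}_2\bdPhi_2\bdPhi_1^\dagger$ directly to $(\qmat{I}-P)\qmat{A}$, skipping the detour through $P_{\qmat{Z}}$, you avoid the factor $2$ that the paper's triangle inequality places on $\|\boldsymbol{\Sigma}_2\|_p+\|\boldsymbol{\Sigma}_2\bdPhi_2\bdPhi_1^\dagger\|_p$. Second, the term $te\sqrt{dl}/(d-s+1)$ in $\gamma_1$ comes from $\|(\qmat{I}-P_{\hat{\qmat{Y}}})\qmat{Z}\|_F\le\sqrt{l}\,\|(\qmat{I}-P_{\hat{\qmat{Y}}})\qmat{Z}\|$; your ``bounded analogously'' should state this step explicitly.
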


As $\varrho,k$ are relatively flexible, the bound can be improved by suitable choices.

\paragraph{Simplification under parameter choices} For simplicity, let $d=2s$, $l=2k$, which are reasonable. We also assume  $l=O(d)$ (c.f. Sect. \ref{sec:sketch_size}),   In this case, $\eta_1,\eta_4,\gamma_1,\gamma_2=O(1)$,  $\eta_2=O(1/\sqrt{d})$, and $\eta_3 = O(1/\sqrt{l})$. The bound then simplifies to
\begin{align*}
   & O\bigxiaokuohao{\frac{1}{\sqrt{d}} + \frac{1}{\sqrt{l}} } \tau_{k+1}(\qmat{A}) + O\bigxiaokuohao{1+ O(\frac{1}{\sqrt{d}})+ O(\frac{1}{\sqrt{l}})}\sigma_{k+1}(\qmat{A})\\
   & + O(1)\gamma_3\bigxiaokuohao{ \bigxiaokuohao{ O(1) + O\bigxiaokuohao{\frac{1}{\sqrt{l}}}  }\sigma_{\varrho+1}(\qmat{A}) + (1 + O(1/\sqrt{l}))\sigma_{k+1}(\qmat{A}) + \frac{\tau_{k+1}(\qmat{A})}{\sqrt{l}}  }\\
    \approx & \gamma_3 O\bigxiaokuohao{1+ \frac{1}{\sqrt{l}  }}\sigma_{\varrho+1}(\qmat{A}) \\
   & + \bigxiaokuohao{O(1)+ O(1)\gamma_3  }O\bigxiaokuohao{1+  {\frac{1}{\sqrt{l}}} }\sigma_{k+1}(\qmat{A}) + O\bigxiaokuohao{\frac{\gamma_3}{\sqrt{l}}  + \frac{1}{\sqrt{l}} }\tau_{k+1}(\qmat{A}).
\end{align*}

Two   observations follow. First,   $\gamma_3$, which depends on the number of SPI steps, decreases exponentially  
with $q$, thereby reducing the bound, acting a similar role as the standard power iterations \cite{FindingStructureHalko}. Second,    the bound decreases further as $l$  increases.

\paragraph{Tail energy improvement} The tail energy   is the price   for obtaining a spectral norm bound \cite{FindingStructureHalko,tropp2023RandomizedAlgorithms,gu2015subspace}. A key distinction of our result is that  the tail   does not start  from a small $\varrho+1$ \cite{FindingStructureHalko} nor $s+1$ \cite{tropp2023RandomizedAlgorithms,gu2015subspace}, but from a {considerably} larger index $k+1$ (as $k$ scales with $l$, it can be several times larger than $s$). Consequently,   $\tau_{k+1}(\qmat{A})$ may be far smaller than $\tau_{\varrho+1}(\qmat{A})$, especially when the spectrum decays slowly before the   first $k$ singular values but rapidly thereafter. 
A special case is   ${\rm rank}(\qmat{A})=k$, where $\sigma_{k+1}(\qmat{A})=\tau_{k+1}(\qmat{A})=0$, so that   for large   $q$,   $\normSpectralnoleftright{\qmat{A}-\qmat{Q}\qmat{B}} \lesssim O(1)\sigma_{\varrho+1}(\qmat{A})   $ precisely.   This illustrates the benefit of SPI: whereas a small rangefinder   $\qmat{Y}=\qmat{A}\bdOmega$ may fail to capture sufficient range information due to storage limits, SPI   drives the sketch size from $s$ to a considerably large $l$ to capture much more information  (with feasible storage),  powers it to enhance the rangefinder,  yielding improved results.





\paragraph{Comparison with TYUC17 bound} To   compare   Theorem \ref{thm:error_oblique_bound_q_gt_1} with the bound of  TYUC17, we also need a spectral norm bound for it (as \cite{Practical_Sketching_Algorithms_Tropp} only provides a Frobenius norm one in expectation).  
Similar to the $q=1$ case, the key is to convert   oblique  to  orthogonal projection errors. By     Lemma \ref{lem:GNError_deviation_bound}, with high probability,  
\begin{align*}
    \normSpectralnoleftright{\qmat{A}-\qmat{Q}\qmat{B}}&\leq \beta_1 \normFnoleftright{\qmat{A} - P_{ \hat{\qmat{Y}}}\qmat{A}  }    +\beta_2\normSpectralnoleftright{\qmat{A} - P_{ \hat{\qmat{Y}} }\qmat{A}  }.
\end{align*}
where $\beta_1=\frac{e\sqrt d t}{d-s+1},\beta_2=1+t\sqrt{\frac{3s}{d-s+1}}+u \frac{e\sqrt{d}t}{d-s+1}$.  
For TYUC17, similarly, 
\begin{align*}
    \normSpectralnoleftright{\qmat{A}-\qmat{Q}\qmat{B}}&\leq \beta_1\normFnoleftright{\qmat{A} - P_{  \qmat{A}\bdOmega }\qmat{A}  } + \beta_2\normSpectralnoleftright{\qmat{A} - P_{  \qmat{A}\bdOmega }\qmat{A}  }.
\end{align*}
Since both decompositions are analogous, it suffices to compare their orthogonal projection errors. For clarity, we only consider the spectral norm case. Theorem \ref{thm:prob_projection_error_q_gt_1} shows that with high probability, 
\begin{small}
\begin{align*}
     \normSpectral{\qmat{A}-P_{ \hat{\qmat{Y}}}\qmat{A}} 
    &\leq 2 \left( 1 + t \cdot \sqrt{\frac{3k}{l-k + 1}}+  \frac{ut\cdot e \sqrt{l}}{l-k+1} \right) \sigma_{k+1}\left(\qmat{A}\right) +  \frac{2t \cdot e \sqrt{l}}{l-k+1} \tau_{k+1}\left(\qmat{A}\right)\\
        +&
        \frac{t e \sqrt{l}}{l-k+1}\cdot \gamma_3\cdot\left(\sigma_{\varrho+1}\left(\qmat{A}\right) (\sqrt{l}+\sqrt{k}+\beta )+\sigma_{k+1} (\qmat{A} ) (\sqrt{l}+\beta )+\tau_{k+1}\left(\qmat{A}\right)\right).
\end{align*}
\end{small}
On the other hand, \cite{FindingStructureHalko} shows that with high probability, 
\begin{small}
\begin{align*}
    \bignorm{\qmat{A}- P_{\qmat{A}\bdOmega}\qmat{A}}\leq  \left(\! 1 + t \cdot \sqrt{\frac{3\varrho}{s-\varrho + 1}}+ \frac{ut \cdot e \sqrt{s}}{s-\varrho+1}  \right) \sigma_{\varrho+1}\left(\qmat{A}\right) + t \cdot \frac{e \sqrt{s}}{s-\varrho+1}\tau_{\varrho+1}\left(\qmat{A}\right). 
\end{align*}
\end{small}
The first-line bound for $\normSpectral{\qmat{A}-P_{ \hat{\qmat{Y}}}\qmat{A}}$ essentially reflects the  error of $\|\qmat{A}-P_{{\qmat{Z}}}\qmat{A}\|$. Comparing with  $\bignorm{\qmat{A}- P_{\qmat{A}\bdOmega}\qmat{A}}$, this term is typically far smaller,   especially when $\tau_{k+1}(\qmat{A})$ is substantially smaller than $\tau_{\varrho+1}(\qmat{A})$. This improvement is accompanied by an additional term       $\|\qmat{Z}- P_{(\qmat{Z}\qmat{Z}^{T})^q\qmat{A}\bdOmega}\qmat{Z}\|$ that corresponds to the second line.  When  $l$ is considerably large, the  coefficient of $\sigma_{\varrho+1}(\qmat{A})$ is approximately a constant and moreover,  $\gamma_3$ is driven down to $1$ as $q$ increases. As a result, the sketch size $l$ and the number of SPI iterations $q$ are the primary mechanisms that help improve  the bound.






\color{black}

\section{Numerical Experiments} \label{sec:numer_exp}

This section presents numerical experiments to validate the effectiveness of  SPI. The following are observed: 1) TYUC17-SPI   in mixed-precision can improve the   accuracy of TYUC17 with the same storage budget\footnote{\color{black}TYUC17-SPI with the other two strategies in Sect. \ref{sec:same_storage_as_PSA} can clearly improve the accuracy, though it is hardware-dependence. To better illustrate the broad applicability of SPI, we  thus employ the mixed-precision strategy here that runs on any computer.}; 2) The sketch sizes guidance  in Sect. \ref{sec:sketch_size} is reliable; 3) Only a few   iterations  of SPI are generally sufficient in our tests.  
 All the experiments were conducted on a computer with an Intel Core i9-12700 CPU and 64GB of RAM. The Matlab version is 2023b.  

\subsection{Experimental setup}\label{sec:exp_setup}
\subsubsection{Synthetic data}  \label{sec:synthetic_data_def}   Data matrices similar to those in \cite{Practical_Sketching_Algorithms_Tropp,troppStreamingLowRankMatrix2019} are used.
\begin{enumerate}
    \item   Low-rank $+$ noise: The  matrices are generated by $\qmat{A}=\qmat{U}\boldsymbol{\Sigma}\qmat{V}^{T} +  {\frac{\gamma R}{n^2}}\qmat{E}$, where $\qmat{U},\qmat{V}$ are randomly orthonormal, \begin{small}$\boldsymbol{\Sigma}=\begin{bmatrix}
    I_R & 0\\
    0 & 0
    \end{bmatrix}
    $\end{small} is   diagonal, $\gamma$ is signal-to-noise ratio (SNR), and $\qmat{E}$ is a Gaussian noise matrix. We consider the case where $R=10$ and $m=n=1000$ and

        (a) LowNoise: $\gamma=10^{-4}$;  
        (b) MediumNoise: $\gamma=0.01$; 
        (c) HighNoise: $\gamma=0.1$.\\
    \item Polynomially (  power law\color{black}) decaying  spectrum: The  matrices are generated by $\qmat{A}=\qmat{U}\boldsymbol{\Sigma}\qmat{V}^{T}$, where $\boldsymbol{\Sigma}=\diag \left(1,\ldots,1,2^{-\alpha},\ldots,n^{-\alpha}\right)$ having $R$ ones, and $\qmat{U},\qmat{V}$ are randomly orthonormal.  $m=n=1000$, and the decay rate satisfies
     
         (a) SlowDecay: $\alpha=0.5$;
         (b) MediumDecay: $\alpha=1$;
         (c) FastDecay: $\alpha=2$.\\
    \item Exponentially decaying spectrum: The  matrices are generated by $\qmat{A}=\qmat{U}\boldsymbol{\Sigma}\qmat{V}^{T}$, where $\qmat{U},\qmat{V}$ are randomly orthonormal, $\boldsymbol{\Sigma}=\diag \left(1,\ldots,1,e^{-\alpha},\ldots,e^{-n\alpha}\right)$ which has $R$ ones, $m=n=1000$, and the decay rate satisfies
   
         (a) SlowDecay: $\alpha=0.01$; 
         (b) MediumDecay: $\alpha=0.1$; 
        (c) FastDecay: $\alpha=0.5$. 
\end{enumerate} 
\subsubsection{Real data}\label{sec:real_data}
Our real data contains the following two datasets: \color{black}
\begin{enumerate}
    \item NIST Special Database 19: 
    the dataset is a key resource for handprinted document and character recognition research. It contains handprinted sample forms from 3600 writers, over 810000 segmented handprinted character images covering digits, uppercase and lowercase letters, and ground truth classifications for each image. This large-scale, diverse dataset is invaluable for training robust recognition models and advancing related AI applications. 
    
    We have obtained a $16384\times 20000$ matrix from a subset  containing all $20000$ images of the digit $0$ from different writers, each being a $128\times 128$ image. Each image is reshaped into a column vector and   stacked to form the matrix. 

    \item Climate dataset: The climate datasets are obtained from the NCEP/NCAR Reanalysis 1 project. The NCEP-NCAR Reanalysis 1 (R1) is a comprehensive atmospheric dataset 
    providing historical atmospheric data from 1948 to the present. It is generated using a global weather model that combines observational data (from weather stations, satellites, etc.) with numerical weather prediction models to produce a consistent and high-resolution record of atmospheric variables (temperature, pressure, winds, etc.) at multiple pressure levels. The dataset has a spatial resolution of 2.5° latitude by 2.5° longitude and a temporal resolution of 6 hours, making it invaluable for studying both short-term weather patterns and long-term climate trends. 
    
    We have obtained a $10512\times 28144$ matrix from the dataset, each column being a vector reshaped by the pressure data of the $73\times 144$ meshgrid on the  surface. All daily data from $1948$ to $2025$ forms the matrix. 


\end{enumerate}
\subsubsection{Relative Error}
The relative error metric follows that in \cite{Practical_Sketching_Algorithms_Tropp,troppStreamingLowRankMatrix2019}:  
\begin{small}
\begin{align}\label{eq:relative_error_def}
     \text{Relative error:} ~S_F=  \frac{ \|\qmat{A}-\hat{\qmat{A}} \|_F}{\normF{\qmat{A}-\rankrTrun{A}}}-1    ~{\rm and}~ S_\infty = \frac{ \|\qmat{A}-\hat{\qmat{A}} \|}{ \normSpectralnoleftright{\qmat{A}-\rankrTrun{A}}}-1,
\end{align} 
\end{small}
where   $\qmat{A}$ is the original data and $\hat{\qmat{A}}$ is a rank-$r$ approximation computed by algorithms.
\subsubsection{Oracle error} \color{black}
The sketch sizes     affect  the results; thus it is possible to report the best possible performance for a fair comparison. This asks to enumerate over all possible parameters and evaluate them to select the most appropriate  ones. The associated error is called the oracle error \cite{Practical_Sketching_Algorithms_Tropp,troppStreamingLowRankMatrix2019}. Due to computer limit, it is only reachable for small test data in our experiments, e.g.,   the synthetic data. \color{black}

\subsubsection{Test matrix} Although the parameter guidance is based on the  Gaussian  test matrix assumption, to save storage in practice,  we use   the sparse {Rademacher} test matrix generated as follows: first draw a Rademacher matrix and then randomly set $99\%$ of the entries to be zero. It is unclear whether this distribution enjoys the Johnson-Lindenstrauss (JL) property in theory; our use here is purely empirical, motivated by its implementation  and storage advantage. In practice,  we observe  that it empirically exhibits   JL-like behavior, and it yields   results similar to those obtained with standard test matrices; see Sect. \ref{SMsec:insensitivity_test_matrix} and \ref{SMsec:JL_property_SparseRademacher} in the supplementary material for numerical evidence.   
\color{black}

\subsection{Numerical results for synthetic data}\label{sec:performance_synethetic_data_and_real_data}
\subsubsection{Reconstruction error}\label{sec:performance_synethetic_data}
\begin{figure}[htp]
    \centering
    \captionsetup{font=footnotesize}
    \begin{subfigure}{0.3\textwidth}
        \centering
        \includegraphics[width=\linewidth]{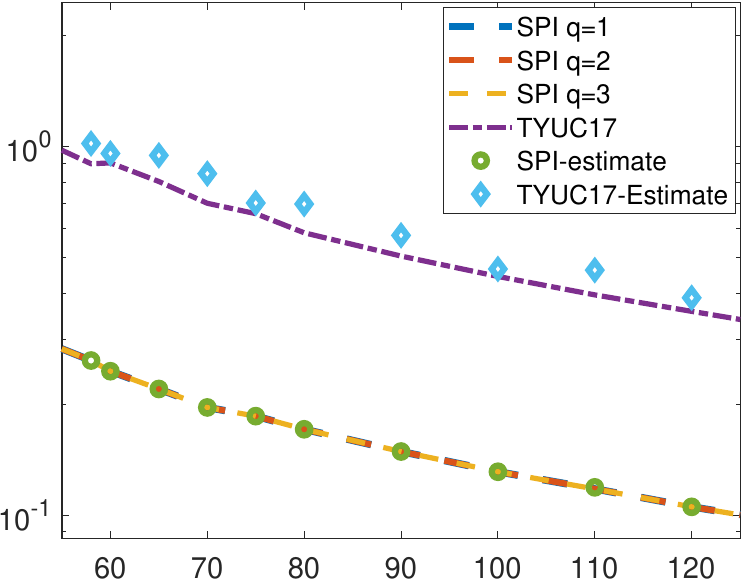}
        \caption{Low rank with low noise}
    \end{subfigure}%
    \begin{subfigure}{0.3\textwidth}
        \centering
        \includegraphics[width=\linewidth]{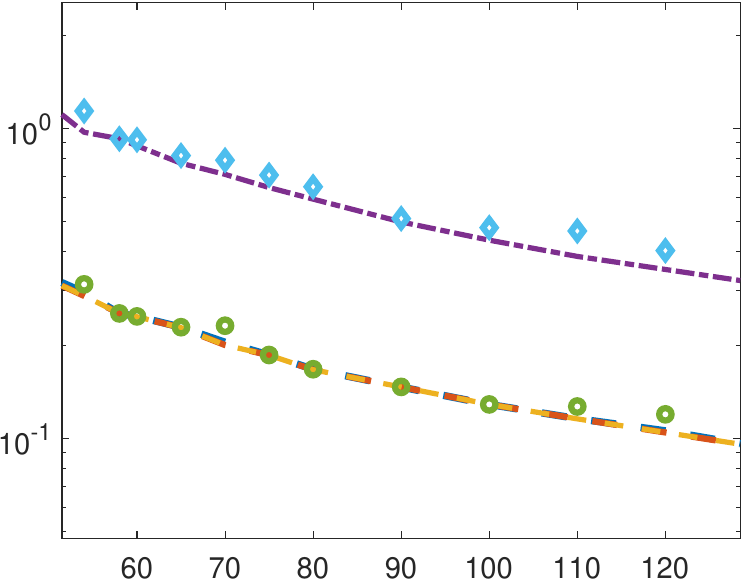}
        \caption{Low rank with medium noise}
    \end{subfigure}%
    \begin{subfigure}{0.3\textwidth}
        \centering
        \includegraphics[width=\linewidth]{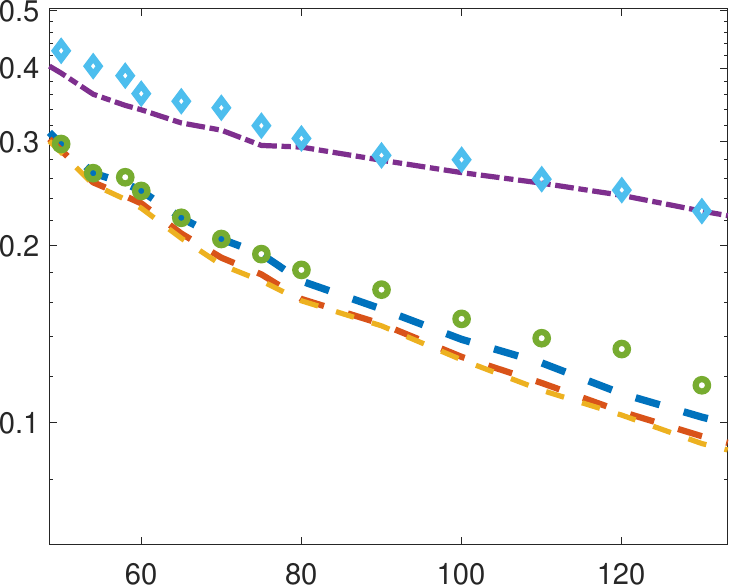}
        \caption{Low rank with high noise}
    \end{subfigure}

    \vspace{0.1cm} 

    \begin{subfigure}{0.3\textwidth}
        \centering
        \includegraphics[width=\linewidth]{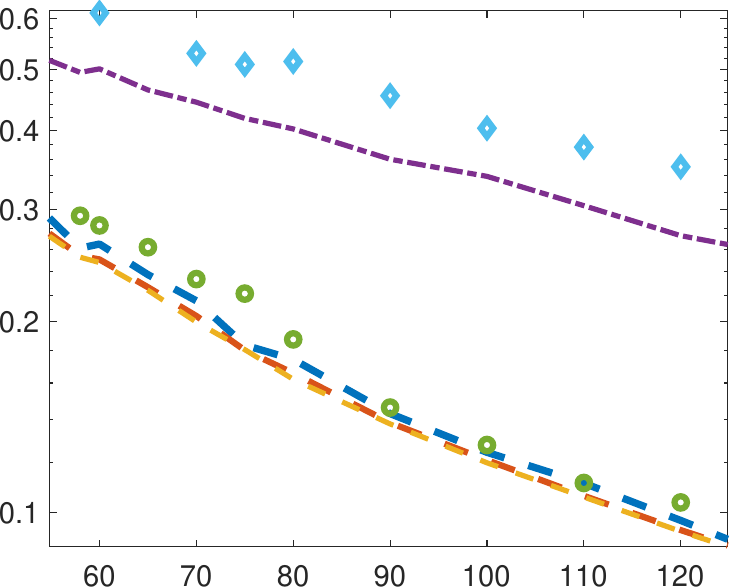}
        \caption{Slow polynomial decay}
    \end{subfigure}%
    \begin{subfigure}{0.3\textwidth}
        \centering
        \includegraphics[width=\linewidth]{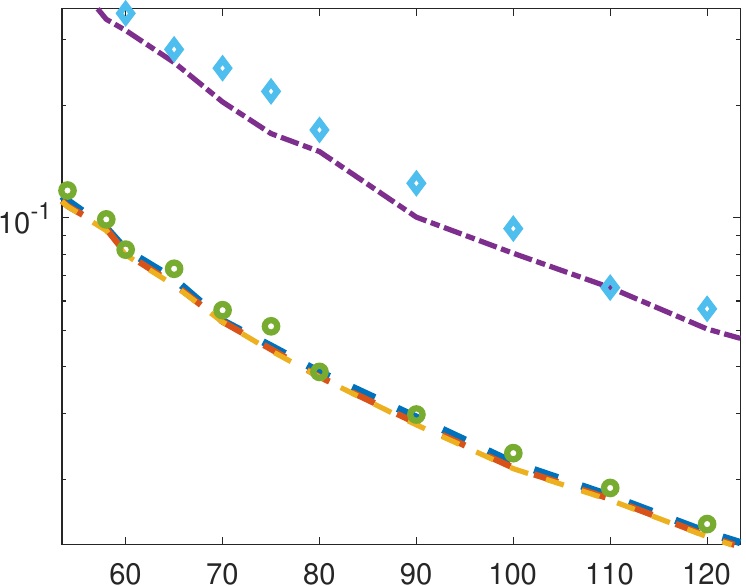}
        \caption{Medium polynomial decay}
    \end{subfigure}%
    \begin{subfigure}{0.3\textwidth}
        \centering
        \includegraphics[width=\linewidth]{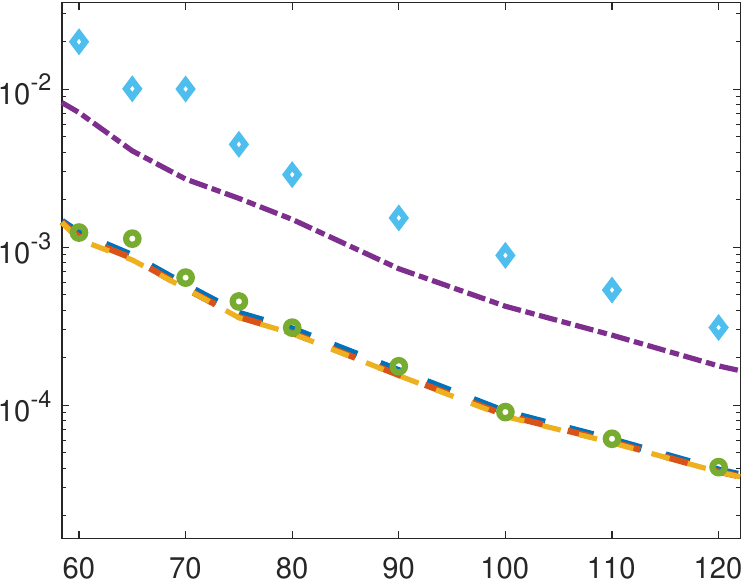}
        \caption{Fast polynomial decay}
    \end{subfigure}

    \vspace{0.1cm} 

    \begin{subfigure}{0.3\textwidth}
        \centering
        \includegraphics[width=\linewidth]{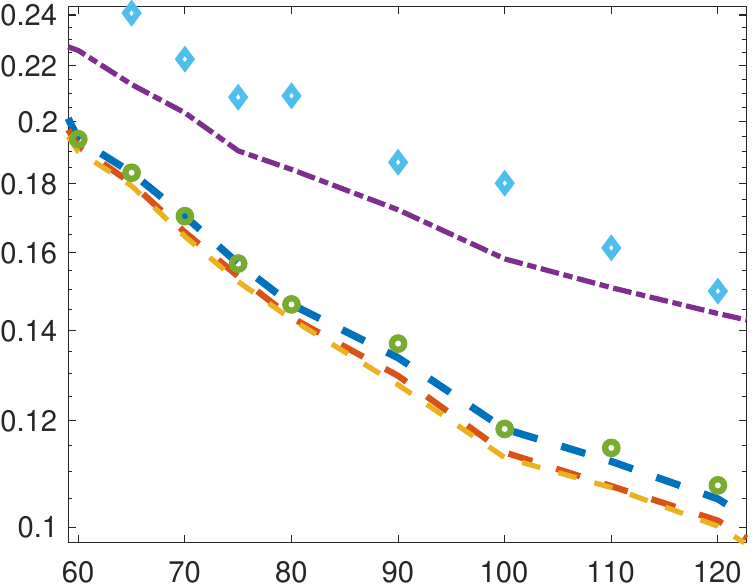}
        \caption{Slow exponential decay}
    \end{subfigure}%
    \begin{subfigure}{0.3\textwidth}
        \centering
        \includegraphics[width=\linewidth]{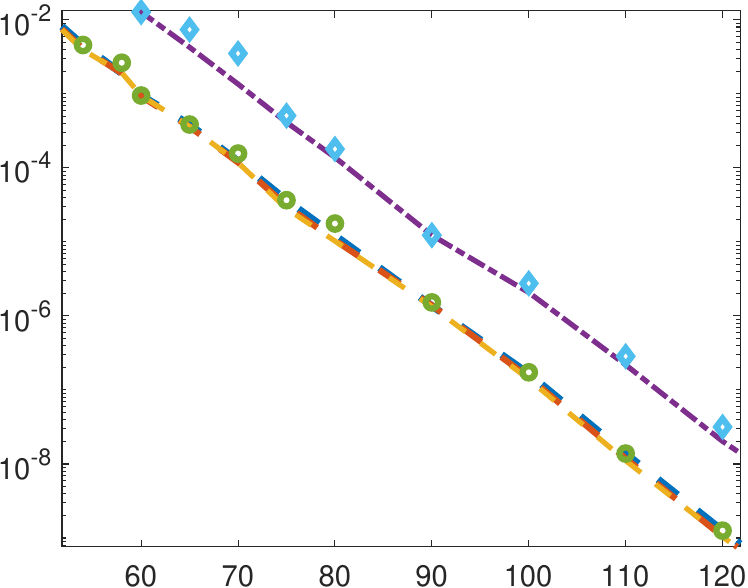}
        \caption{Medium exponential decay}
    \end{subfigure}%
    \begin{subfigure}{0.3\textwidth}
        \centering
        \includegraphics[width=\linewidth]{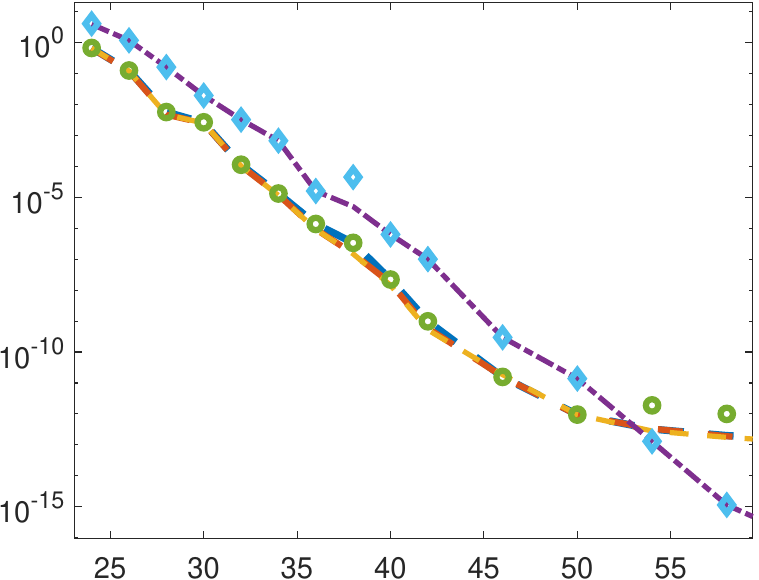}
        \caption{Fast exponential decay}
    \end{subfigure}

    \caption{Figures of synthetic data. $x$-axis means the    storage budget $\hat T n$ (parameterized by $\hat T$). $y$-axis means the relative Frobenius error $S_F$.  All \emph{{dashed} lines} represent oracle errors, while the \emph{markers} indicate errors calculated with parameter guidance. All results are averaged by 20 independent repeated experiments. }\label{fig:Synectic_data}
\end{figure}

\begin{figure}[htp]
    \centering
   \captionsetup{font=footnotesize}
       \begin{subfigure}{0.3\textwidth}
        \centering
        \includegraphics[width=\linewidth]{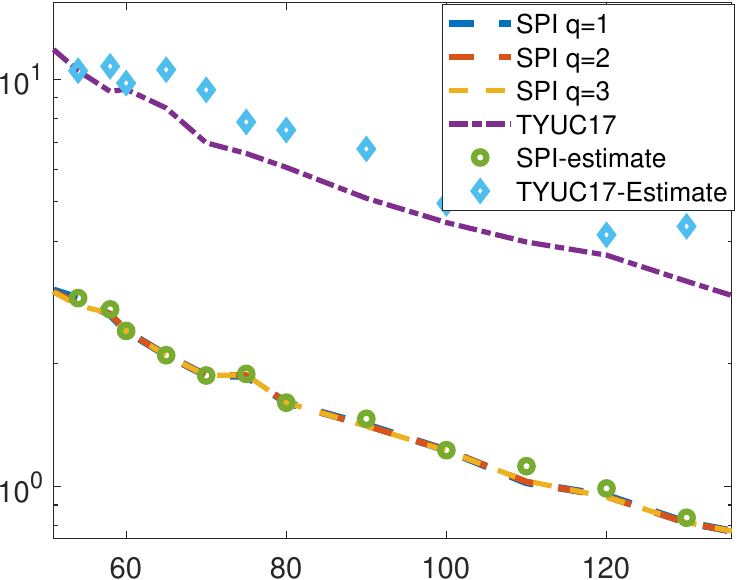}
        \caption{Low rank with low noise}
    \end{subfigure}%
    \begin{subfigure}{0.3\textwidth}
        \centering
        \includegraphics[width=\linewidth]{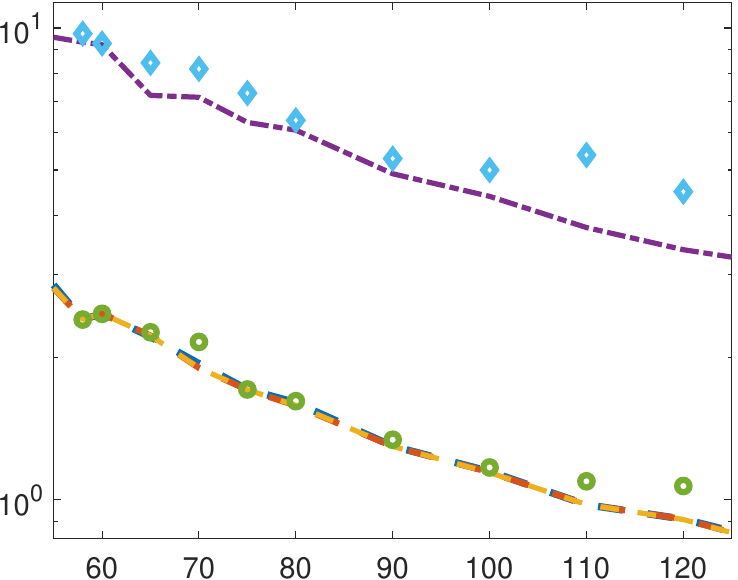}
        \caption{Low rank with medium noise}
    \end{subfigure}%
    \begin{subfigure}{0.3\textwidth}
        \centering
        \includegraphics[width=\linewidth]{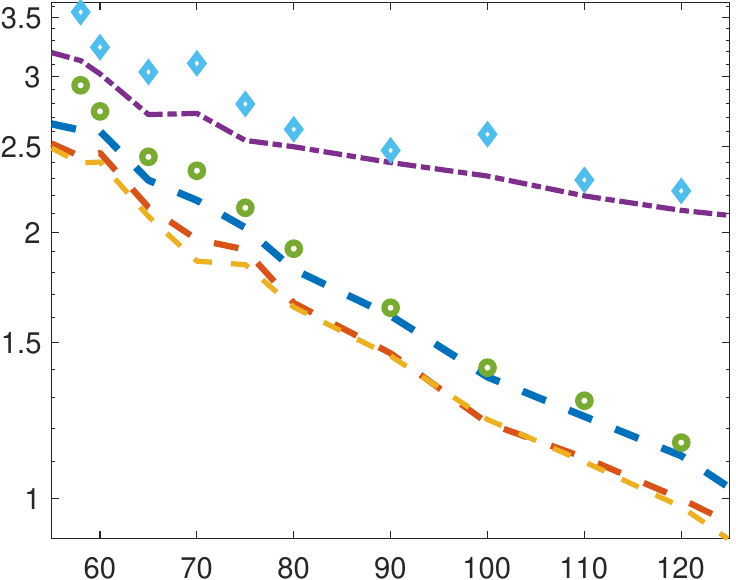}
        \caption{Low rank with high noise}
    \end{subfigure}

    \vspace{0.1cm} 

    \begin{subfigure}{0.3\textwidth}
        \centering
        \includegraphics[width=\linewidth]{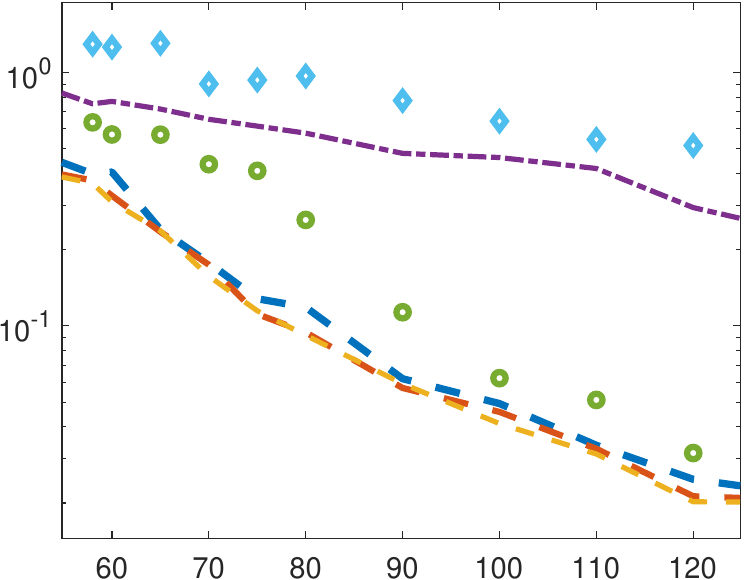}
        \caption{Slow polynomial decay}
    \end{subfigure}%
    \begin{subfigure}{0.3\textwidth}
        \centering
        \includegraphics[width=\linewidth]{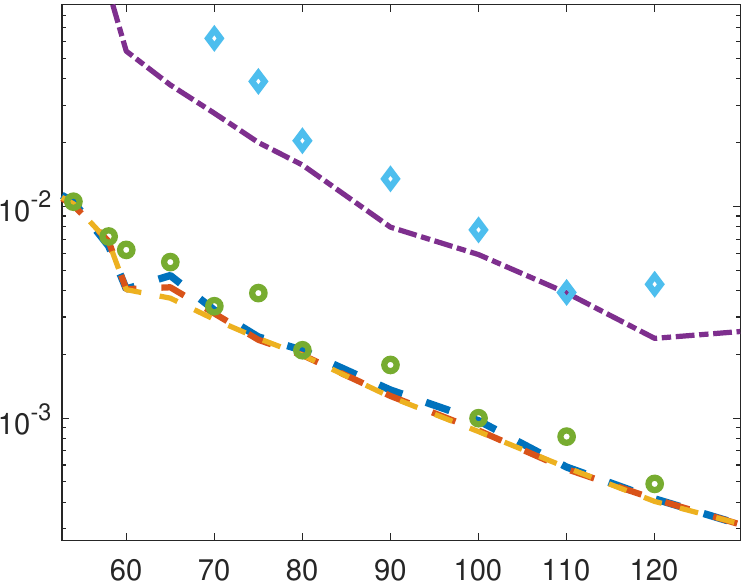}
        \caption{Medium polynomial decay}
    \end{subfigure}%
    \begin{subfigure}{0.3\textwidth}
        \centering
        \includegraphics[width=\linewidth]{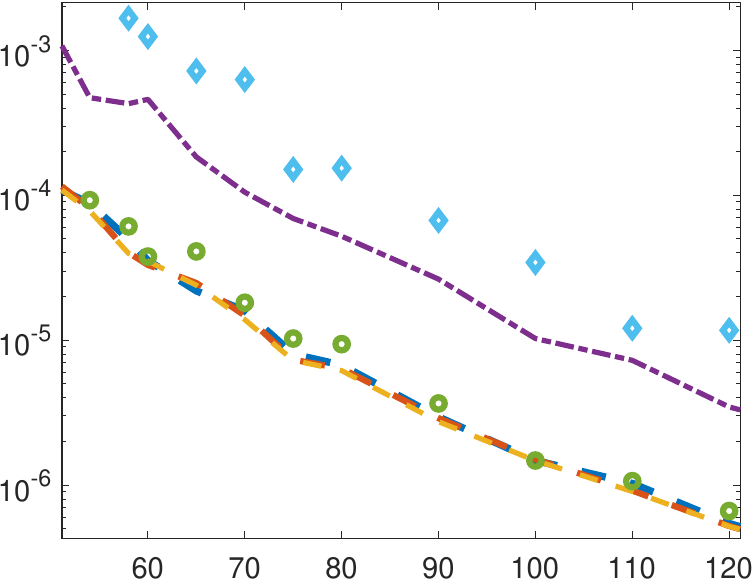}
        \caption{Fast polynomial decay}
    \end{subfigure}

    \vspace{0.1cm} 

    \begin{subfigure}{0.3\textwidth}
        \centering
        \includegraphics[width=\linewidth]{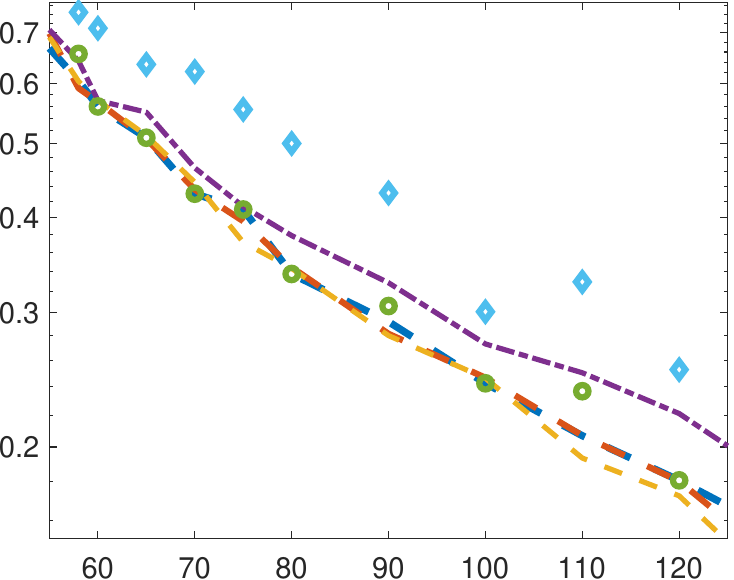}
        \caption{Slow exponential decay}
    \end{subfigure}%
    \begin{subfigure}{0.3\textwidth}
        \centering
        \includegraphics[width=\linewidth]{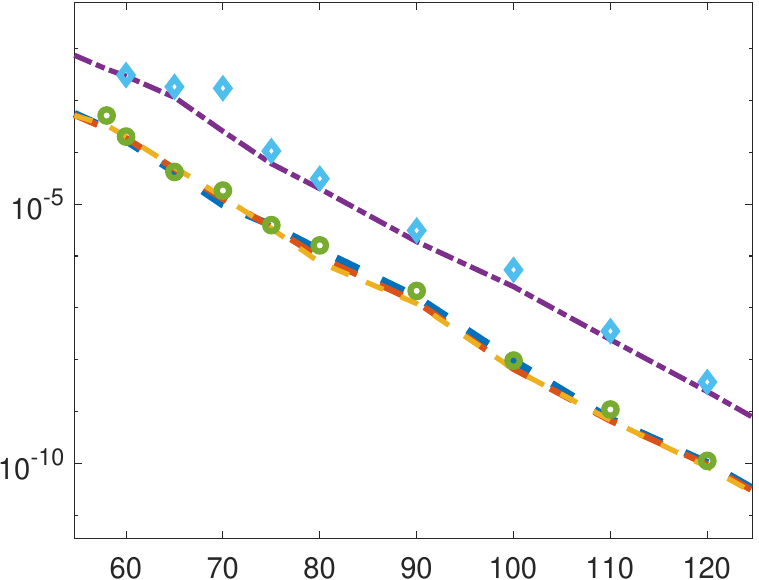}
        \caption{Medium exponential decay}
    \end{subfigure}%
    \begin{subfigure}{0.3\textwidth}
        \centering
        \includegraphics[width=\linewidth]{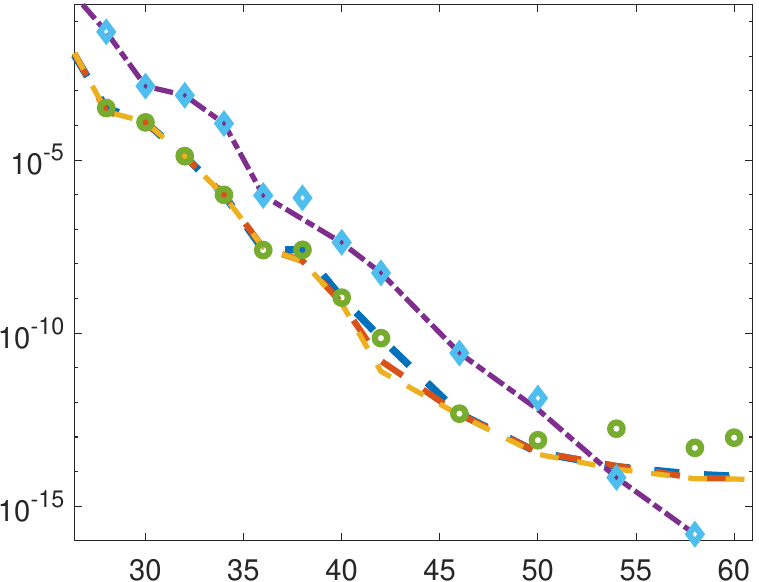}
        \caption{Fast exponential decay}
    \end{subfigure}

    \caption{Figures of synectic data. $y$-axis means the relative Spectral error $S_{\infty}$. }\label{fig:Synectic_data_spectral}
\end{figure}
\begin{example}\label{eg:synthetic_data}
    This example compares the performance between TYUC17-SPI (Algorithm \ref{alg:psa-sps}) and its   TYUC17 (\cite[Alg. 7]{Practical_Sketching_Algorithms_Tropp}) on synectic data with the same storage budget. All oracle errors   {both in  Frobenius norm and spectral norm} were computed by averaging $20$ independently repeated experiments (the errors are represented by lines)   in Fig. \ref{fig:Synectic_data} and \ref{fig:Synectic_data_spectral}, respectively. We also plot the relative errors of the algorithms, whose parameters were computed by theoretical guidance (see Sect. \ref{sec:sketch_size_guidance} for TYUC17-SPI and \cite[Sect. 4.5]{Practical_Sketching_Algorithms_Tropp} for TYUC17) to show that the guided parameters  are reliable (the errors are represented by markers). In the figure, $x$-axis means the    storage budget $\hat T n$ (parameterized by $\hat T$) and $y$-axis means the relative   errors.    \emph{{Dashed} lines} represent oracle errors, while the \emph{markers} indicate errors calculated with parameter guidance.
\end{example}


Figures \ref{fig:Synectic_data} and \ref{fig:Synectic_data_spectral} show three points:
\begin{itemize}
\item In most cases, the   errors   of TYUC17-SPI  are   smaller than those of TYUC17,   demonstrating the effectiveness of SPI with the same storage budget. The improvement is more significant when the  data matrix has a slow to moderate spectrum decay, such as the low rank matrix plus   noise and   the polynomial decay spectrum cases. {\color{black} The only exception is that  when the spectrum exhibits a fast exponential decay and a larger storage budget is available (in the bottom-right subfigure), TYUC17-SPI is outperformed by TYUC17. This performance drop is due to TYUC17-SPI reaching the precision limit of its mixed-precision strategy, with round-off errors becoming dominant. Nevertheless, its accuracy remains acceptable for most applications,  and this exception does not affect the overall trend, which still favors TYUC17-SPI.} 
\color{black}

\item The Frobenius errors (indicated by   markers) in Fig. \ref{fig:Synectic_data} computed by TYUC17-SPI using   theoretically guided sketch sizes are nearly identical to the best performance achieved (oracle errors, shown by   lines).  {For spectral errors in Fig. \ref{fig:Synectic_data_spectral}, most of the errors are also well aligned with the oracle errors, except for the   slow polynomial decay case. 
Nevertheless,  the estimation becomes more accurate when the storage budget is increased.} This suggests that our parameter guidance is   reliable. Meanwhile, the errors    of TYUC17 using parameter  guidance are also close to its oracle errors. This demonstrates that we can set sketch sizes with theoretical guidance a priori to improve the performance of the algorithm without resorting to an exhaustive search.  

\item We also observe that in several cases, the   errors of TYUC17-SPI with $q=1$ are close to those with  more iterations   ($q=2,3$).  {However, when the data matrix has a slower decay,   (e.g, highly   noisy low-rank, slow polynomial or slow exponential; see the subfigures (c), (d), (g) in both Fig. \ref{fig:Synectic_data} and \ref{fig:Synectic_data_spectral}), larger $q$   improves the accuracy, especially in spectral norm. 
This indicates that a single iteration usually suffices when the spectrum decays rapidly, whereas more iterations are beneficial for slow decay.
 }

A possible   reason why only  a few sketch-power iterations suffice  is that the sketch $\qmat{Z}=\qmat{A}\bdPhi$ has a relatively faster decay spectrum, and especially faster than that of  the original data matrix, as visualized in Fig. \ref{fig:SingularValueSlow}. 
 This implies that the energy of the sketched matrix can be effectively captured by a few sketch-power iterations.
\end{itemize}

\begin{figure}[htp] 
    \centering
    \captionsetup{font=footnotesize}
    \begin{subfigure}{0.3\textwidth}
        \centering
        \includegraphics[width=\linewidth]{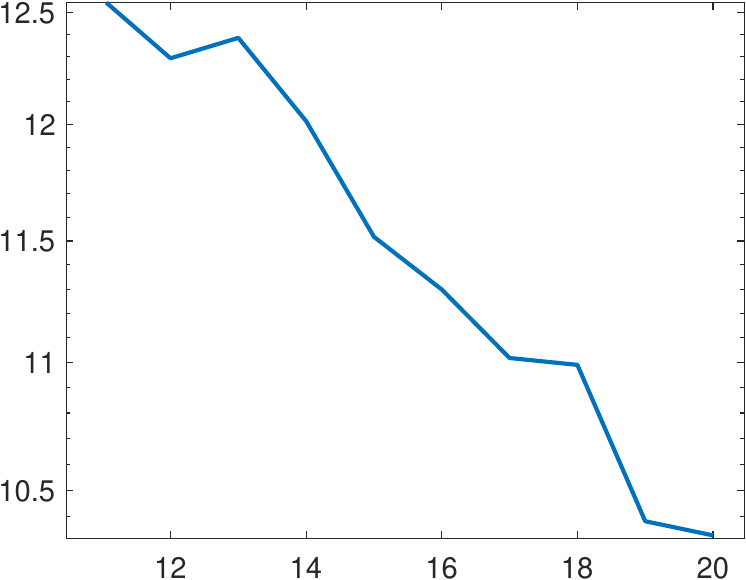}
        \caption{Low rank with high noise}
    \end{subfigure}%
    \begin{subfigure}{0.3\textwidth}
        \centering
        \includegraphics[width=\linewidth]{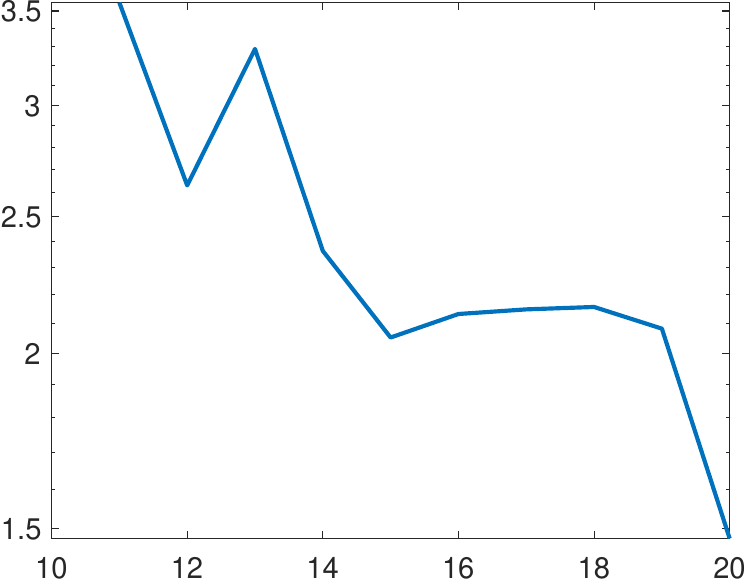}
        \caption{Fast polynomial decay}
    \end{subfigure}%
    \begin{subfigure}{0.3\textwidth}
        \centering
        \includegraphics[width=\linewidth]{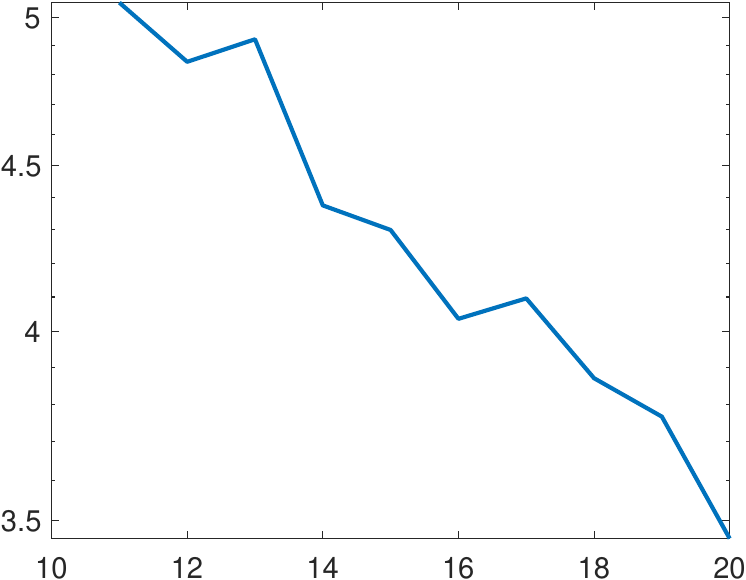}
        \caption{Slow exponential decay}
    \end{subfigure}
    \caption{Distortion ratio        $ \sigma_i(\qmat{A}\bdPhi)/\sigma_i(\qmat{A})$ for $r=10$ and $s=20$. The $x$-axis is the singular value index.}\label{fig:SingularValueSlow}
\end{figure}

\subsubsection{Effect of iterations in SPI} { 
For slow decay spectrum,   we observed that TYUC17-SPI with more iterations ($q=2,3$) has improved accuracy compared to $q=1$. In this section  we further investigate the long-term effect of increasing $q$, so as to understand the general trend and the potential benefits of additional iterations.}
\begin{example}
     { 
    We test  TYUC17-SPI with different $q$ on   slow polynomial data in Example \ref{eg:synthetic_data}, using $50$ independent runs for each $q$ with   $l=60$, $s=20$, $d=40$. Results are shown in Fig. \ref{fig:qFigure}, where the $x$-axis is $q$ and the $y$-axis means  the error $S_F$ (left) and  $S_{\infty}$ (right). }
\end{example}

\begin{figure}[htp] 
    \centering
    \captionsetup{font=footnotesize}
    \begin{subfigure}{0.45\textwidth}
        \centering
        \includegraphics[width=\linewidth]{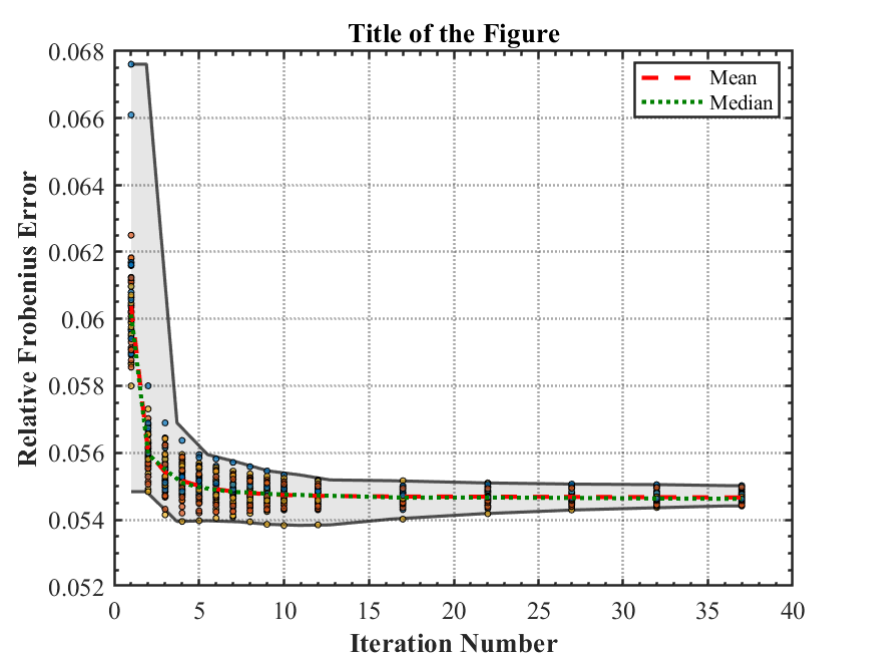}
        \caption{Frobenius error with different   $q$.}
    \end{subfigure}%
    \begin{subfigure}{0.45\textwidth}
        \centering
        \includegraphics[width=\linewidth]{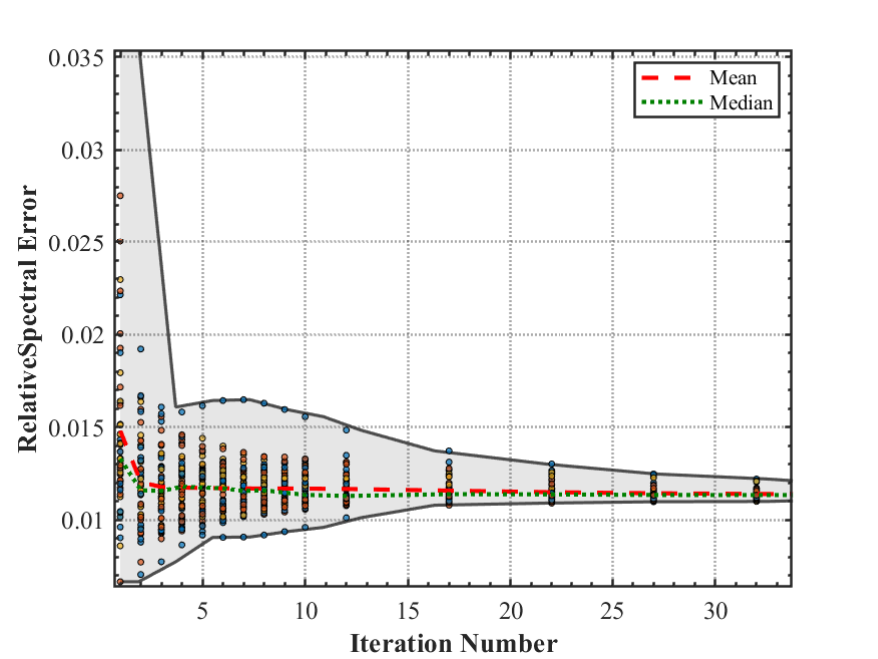}
        \caption{Spectral error with different   $q$.}
    \end{subfigure}%
    \caption{Error comparison of different  $q$. The scatter points represent the results of individual experiments and the shaded areas represent the regions between the minimum and maximum errors. The dashed lines represent the average result across all $50$ experiments. }\label{fig:qFigure}
\end{figure}

  Fig. \ref{fig:qFigure} shows the following   observations:
\begin{itemize}
    \item In the average sense, more iterations $q$ can bring better accuracy. Particularly, the spectral error $S_{\infty}$ has more significant improvement than the   Frobenius error $S_F$ as $q$ increases. 
    \item The stability of the algorithm is significantly improved with more iterations $q$, as the error range (the shaded area) becomes smaller. 
    \item From the density of the scatter points, we can see that most of the results are concentrated in a small range around the mean (the dashed line), especially for larger $q$ and the outliers are rare.
    \item By computing the coefficient of variation (CV, the ratio of the standard deviation to the mean) of the errors, we have two trends: 1) the CV decreases as $q$ increases, indicating that the results become more stable; 2) the CV of the spectral error is larger than that of the Frobenius error, indicating that the spectral error is more sensitive to randomness. 
\end{itemize}


\subsubsection{Wall-clock time comparisons}\label{sec:wall_clock_time_comparison} Comparing with  TYUC17, the additional cost of TYUC17-SPI arises primarily from the  SPI stage, which involves generating   $\bdPhi$, constructing   $\qmat{Z}=\qmat{A}\bdPhi$, and performing power iterations. When sparse random matrices are employed, the cost  of forming $\bdPhi$ is negligible and computing $\qmat{Z}$      scales with the number of nonzero entries in $\bdPhi$.

The   iteration phase consists of two main components. The first involves matrix multiplications of the form $\qmat{Z}^T\qmat{Y}$ and $\qmat{Z}\qmat{X}$, requiring $O(mls)$ flops. The second involves orthogonalization of $\qmat{Z}^T\qmat{Y}\in\bbR^{l\times s}$, which can be efficiently accomplished using a {QR} factorization in $O(ls^2)$ operations. Since $l,s\ll \min\{m,n\}$, this cost is negligible. Therefore, the dominant operations within the SPI procedure are the BLAS-3 matrix multiplications, and the incremental cost introduced by TYUC17-SPI remains modest. 


\begin{example}
    In this experiment, we compare the wall-clock time of TYUC17-SPI with   $q=1,2,3$ against   TYUC17, using the synthetic data  with medium polynomial decay as defined in Section~\ref{sec:synthetic_data_def}. The    dimensions are set to $m=20000$ and $n=30000$, with the parameters fixed at $l=1000$, $s=500$, and $d=1000$. The total  wall-clock time, along with that of each step, is reported in Table~\ref{SMtab:time_comparison}.
\end{example}
\begin{table}[htbp] 
\centering
\caption{Test additional time cost of TYUC17-SPI}\label{SMtab:time_comparison}
\resizebox{\columnwidth}{!}{
\begin{tabular}{cccccc}
\hline
\multirow{2}{*}{\textbf{Step}} & \multirow{2}{*}{\textbf{Time Complexity}} & \multirow{2}{*}{\textbf{TYUC17}} & \multicolumn{3}{c}{\textbf{TYUC17-SPI}} \\
 &  &  & \textbf{$q=1$} & \textbf{$q=2$} & \textbf{$q=3$} \\
\hline
$\bdPhi$ ($n\times l$) generation & $O(nnz)$ & -      & 0.0646 & 0.0646 & 0.0646 \\
$\bdOmega$ ($n\times s$) generation & $O(nnz)$ & 0.0645 & 0.0640 & 0.0640 & 0.0640 \\
$\qmat{Z}=\qmat{A}\bdPhi$ & $O(m\times nnz(\bdPhi))$ & -      & 0.1591 & 0.1591 & 0.1591 \\
$\qmat{Y}=\qmat{A}\bdOmega$ & $O(m\times nnz(\bdOmega))$ & 0.1373 & 0.1370 & 0.1370 & 0.1370 \\
\hdashline
$\qmat{Y}_2=\qmat{Z}^T\qmat{Y}\,(or~\qmat{Y}_3)$ & $O(mls)$ & -      & 0.0497 & 0.1116 & 0.1528 \\
$[\qmat{X},\sim]=\texttt{QR}(\qmat{Y}_2,0)$ & $O(ls^2)$ & -      & 0.0042 & 0.0112 & 0.0150 \\
$\qmat{Y}_3=\qmat{Z}\qmat{X}$ & $O(mls)$ & -      & 0.0652 & 0.1332 & 0.1998 \\
\hdashline
QR on $\qmat{Y}$ or $\qmat{Y}_3$ & $O(ms^2)$ & 0.1146 & 0.1190 & 0.1190 & 0.1190 \\
$\bdPsi$ ($d\times m$) generation & $O(nnz)$ & 0.0431 & 0.0429 & 0.0429 & 0.0429 \\
$\qmat{W}=\bdPsi  \qmat{A}$ & $O(n\times nnz(\bdPsi))$ & 0.1626 & 0.1613 & 0.1613 & 0.1613 \\
Solve $\qmat{B}=(\bdPsi\qmat{Q}) \backslash \qmat{W}$ & $O(nls)$ & 0.2419 & 0.2419 & 0.2419 & 0.2419 \\
$[\qmat{U},\qmat{\Sigma},\qmat{V}] = \texttt{trun\_svd}(\qmat{B},r)$ & $O(ns^2)$ & 0.2672 & 0.2705 & 0.2705 & 0.2705 \\
$\qmat{U}\leftarrow \qmat{Q}\qmat{U}$ & $O(msr)$ & 0.0260 & 0.0256 & 0.0256 & 0.0256 \\
\hline 
 {TOTAL} & - &  {1.0571} &  {1.4049} &  {1.5415} &  {1.6625} \\
\hline
\end{tabular}
}
\end{table}

Table \ref{SMtab:time_comparison} shows the wall-clock time of TYUC17 and TYUC17-SPI with   $q=1,2,3$. We can see that the additional cost   is from $32\%$ when $q=1$ to $56\%$ when $q=3$ compared to TYUC17, which mainly comes from   constructing   $\qmat{Z}$. The cost of one iteration  is smaller compared to the cost of constructing   $\qmat{Z}$. Particularly, orthogonalizing $\qmat{Z}^T\qmat{Y}$ is negligible. In summary, the computational overhead of TYUC17-SPI is small, which is consistent with our previous analysis.

\subsection{Numerical results for real data}\color{black} 
 {This section evaluates  different algorithms on   NIST   and climate datasets, which are commonly used for machine leraning and scientific computing,  respectively. The former exhibits a relatively smooth polynomial spectral decay, whereas for the latter, the leading five singular values drop by two orders of magnitude, with the rest decaying at a slow exponential rate.   The visualization of these datasets and their singular values is shown in Fig. \ref{fig:NISTImage} and   \ref{fig:worldmap}.}
\begin{figure}
  \centering
     \captionsetup{font=footnotesize}
  \begin{minipage}[t]{0.43\textwidth}\vspace{0pt}
    \centering

    \begin{subfigure}[t]{0.48\linewidth}
      \centering
      \includegraphics[width=0.95\linewidth,frame]{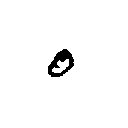} 

    \end{subfigure}\hfill
    \begin{subfigure}[t]{0.48\linewidth}
      \centering
      \includegraphics[width=0.95\linewidth,frame]{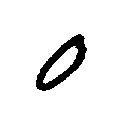}

    \end{subfigure}

    \vspace{0.8em}

    \begin{subfigure}[t]{0.48\linewidth}
      \centering
      \includegraphics[width=0.95\linewidth,frame]{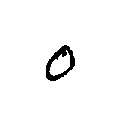}

    \end{subfigure}\hfill
    \begin{subfigure}[t]{0.48\linewidth}
      \centering
      \includegraphics[width=0.95\linewidth,frame]{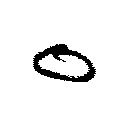}

    \end{subfigure}
  \end{minipage}
  \hfill
  \begin{minipage}[t]{0.50\textwidth}\vspace{0pt}
    \centering
    \begin{subfigure}[t]{\linewidth}
      \centering
      \includegraphics[width=\linewidth]{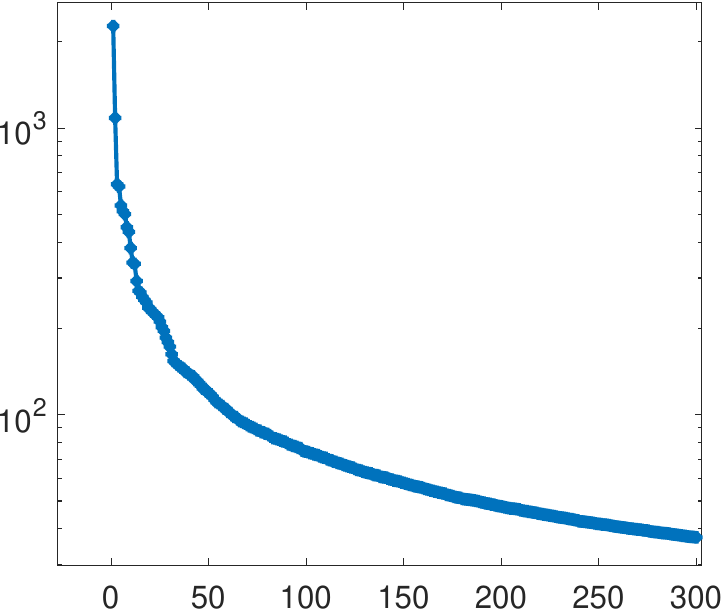}

    \end{subfigure}
  \end{minipage}

  \caption{Left images are four samples of the digit ``0'' from different writers in NIST dataset. Right figure shows the singular values of the  matrix generated in Sect. \ref{sec:real_data}. }
  \label{fig:NISTImage}
\end{figure}

\begin{figure}[t]
  \centering
     \captionsetup{font=footnotesize}
  \begin{subfigure}[c]{0.49\textwidth}
    \centering
    \adjincludegraphics[width=\linewidth,valign=m]{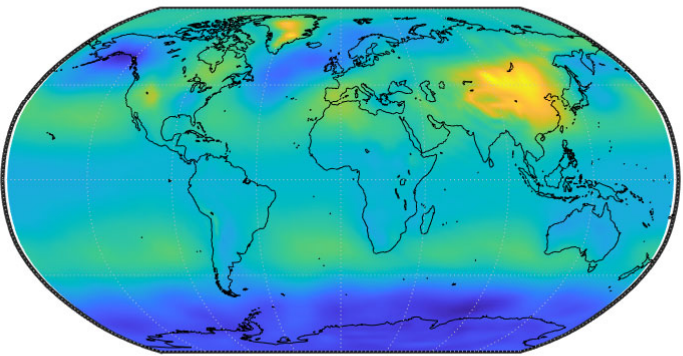} 
  \end{subfigure}\hfill
  \begin{subfigure}[c]{0.49\textwidth}
    \centering
    \adjincludegraphics[height=0.7\linewidth,valign=m]{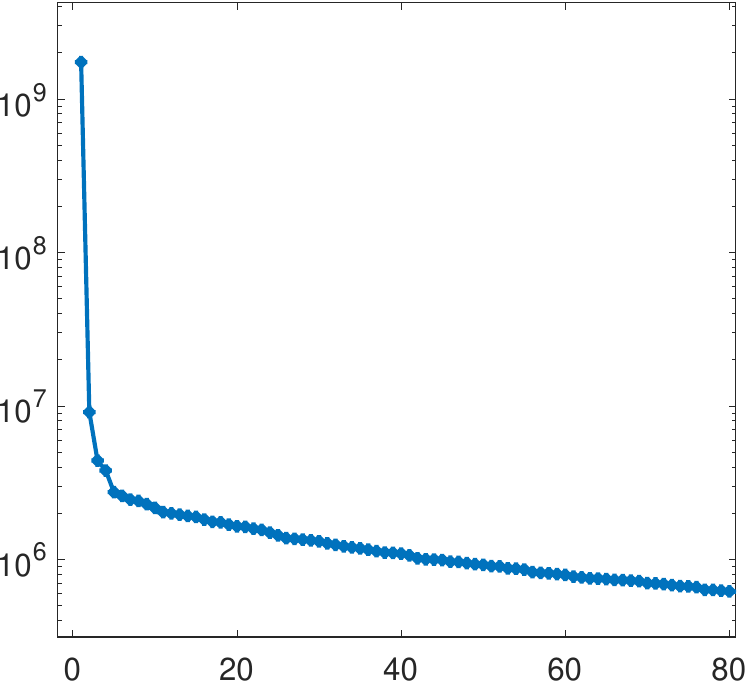}
  \end{subfigure}
  \caption{Left figure visualizes the original data for a single day. Right figure shows the singular values.}\label{fig:worldmap}
\end{figure}

\subsubsection{Metrics}\label{sect:metrics}
As discussed at the end of Sect. \ref{sec:error_bound_q=1_stated} and \ref{sec:error_bound_q>=1_stated}, the key difference between TYUC17 and TYUC17-SPI lies in the orthogonal projection. Therefore, in addition to the relative error \eqref{eq:relative_error_def},  we further consider the following  two errors that will be used in the remainder of this subsection: 

1) \textbf{RangeError}: $\|\qmat{A}-\qmat{U}\qmat{U}^{T} \qmat{A}\|_p/\| \qmat{A}-\lfloor \qmat{A}\rfloor_r\|_p -1$; 

2) \textbf{ExtraError}: $\|\qmat{U}\tilde{\qmat{U}}^T(\qmat{Q}^T\qmat{A}-(\bdPsi\qmat{Q})^{\dagger}\bdPsi\qmat{A})\|_p/\|\qmat{A}-\lfloor \qmat{A}\rfloor_r\|_p$;  


\noindent ``RangeError'' refers to orthogonal projection error (with  $\qmat{U}$ in place of $\qmat{Q}$, consistent with $\|\qmat{A}-\qmat{U}\boldsymbol{\Sigma}\qmat{V}^T\|_p$).     In ``ExtraError'', the subtraction of one is omitted as it may be less than one. The term in its numerator comes because the   error of   TYUC17 and TYUC17-SPI have two error sources:
\begin{align*}
  \|\qmat{A}-\hat{\qmat{A}}\|_p =   \normP{\qmat{A}-\qmat{U}\boldsymbol{\Sigma}\qmat{V}^{T}}=\|(\qmat{A}-\qmat{U}\qmat{U}^{T}\qmat{A}) + (\qmat{U}\tilde{\qmat{U}}^T(\qmat{Q}^T\qmat{A}-(\bdPsi\qmat{Q})^{\dagger}\bdPsi\qmat{A}))\|_p,
\end{align*}
where the second term is    caused by the sketch-and-solve  stage (the decomposition is in the same spirit  to that in \cite[Lemma A.3]{Practical_Sketching_Algorithms_Tropp}; see Sect. \ref{SMsec:proof_oblique_projection_error_decomposition} in the supplementary material for a proof). Note that it is perpendicular to the first term. \color{black}

\subsubsection{A RSVD-Onepass implementation under row-wise format}\label{sec:rsvd_one_pass}
{Before the experiment,} we include another algorithm:  a one-pass implementation  of the original RSVD   \cite{yu2018efficient,bjarkason2019PassEfficientRandomizedAlgorithms} (RSVD-Onepass for short).    
First, it computes two sketches:
\begin{align*}
    \qmat{Y} = \qmat{A}\bdOmega, \qmat{W} = \qmat{A}^{T}\qmat{A}\bdOmega;
\end{align*}
here   $\qmat{A}$ must be in row-wise format such that $\qmat{W}$ can be constructed in a single pass of $\qmat{A}$. Then, it computes the thin QR   of $\qmat{Y}$ and computes $\qmat{B}$ as follows:
\begin{align*}
    [\qmat{Q},\qmat{R}] = \texttt{qr}(\qmat{Y},0), \qmat{B} = \qmat{R}^{-T}\qmat{W}^T.
\end{align*}
Finally, the algorithm computes  the truncated SVD of $\qmat{B}\approx \tilde{\qmat{U}}\boldsymbol{\Sigma}\qmat{V}^T$ and the left approximate singular vectors $\qmat{U}=\qmat{Q}\tilde{\qmat{U}}$, and the approximation is $\hat{\qmat{A}}=\qmat{U}\boldsymbol{\Sigma}\qmat{V}^T$. 
{\color{black} It is not hard to see that RSVD-Onepass is algebraically equivalent to RSVD} of \cite{FindingStructureHalko}. The algorithm requires two sketches $\qmat{Y},\qmat{W}$,   taking storage $ms+ns$.

 {A key difference is that RSVD-Onepass is a orthogonal projection method, while TYUC17 and TYUC17-SPI are oblique projection methods. Thus  RSVD-Onepass has only one error source: 
}
\begin{align*}
  \|\qmat{A}-\hat{\qmat{A}}\|_p =  \|\qmat{A}-\qmat{U}\boldsymbol{\Sigma}\qmat{V}^{T}\|_p=\|\qmat{A}-\qmat{U}\qmat{U}^{T}\qmat{A}\|_p, 
\end{align*}
because $\qmat{U}^T\qmat{A} =\tilde{\qmat{U}}^T\qmat{Q}^T\qmat{A} =\tilde{\qmat{U}}^T\qmat{B} = \boldsymbol{\Sigma}\qmat{V}^T$ due to  $\qmat{B}=\qmat{R}^{-T}\qmat{W}^T=\qmat{Q}^T\qmat{A}$. 

 {As   RSVD-Onepass only supports row-wise streaming data, it is not directly comparable to TYUC17 or TYUC17-SPI that allow general linear updates of the data matrix. Nevertheless, we still report its results for two reasons. First, as discussed,  RSVD only involves the orthogonal projection error, making it a strong baseline under storage-limited settings. Second, the comparison helps illustrate how SPI affects the two error sources. On the other hand, it is also conceivable that RSVD-Onepass itself could be enhanced by SPI, which we leave for future investigation.}  
\subsubsection{NIST dataset performance}


The main task in the NIST dataset is to extract the principal components of the data matrix, i.e., its left singular vectors in the context. Hence, we mainly focus on approximating the dominant $10$ singular values and visualizing the approximated left singular vectors. Reconstruction errors are also reported for completeness. \color{black}
In the sequel,  TYUC17-SPI and TYUC17 employ their theoretically guided sketch sizes, while RSVD-Onepass uses an exhaustive search to find the best parameter $s$.
\color{black}

\begin{example}
This example shows the  feature extraction quality of the tested algorithms in NIST dataset.  We used the $16384 \times 20000$ matrix generated in Sect. \ref{sec:real_data}. To test the  performance, we mainly focusing on the dominant singular values and   left singular vectors. All results are averaged over 10 independent repeated experiments.
We set the same storage budget $100n$ and $160n$ for all the algorithms. 
\end{example}



\begin{figure}[htbp]
    \captionsetup{font=footnotesize}
    \centering
    \begin{subfigure}[b]{0.45\textwidth}
        \includegraphics[width=0.95\textwidth]{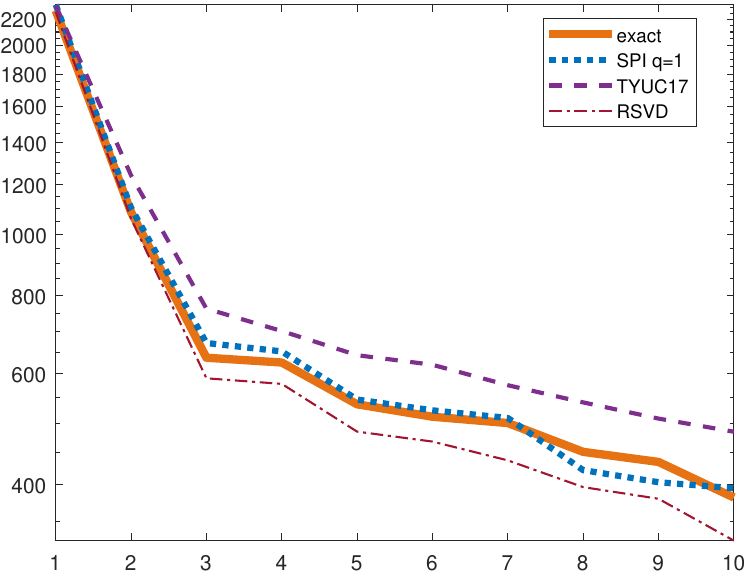} 
        \caption{\small Singular value (storage: $100n$)}\label{fig:NIST_SingularValue_T50}
    \end{subfigure}
    \begin{subfigure}[b]{0.45\textwidth}  
        \includegraphics[width=0.95\textwidth]{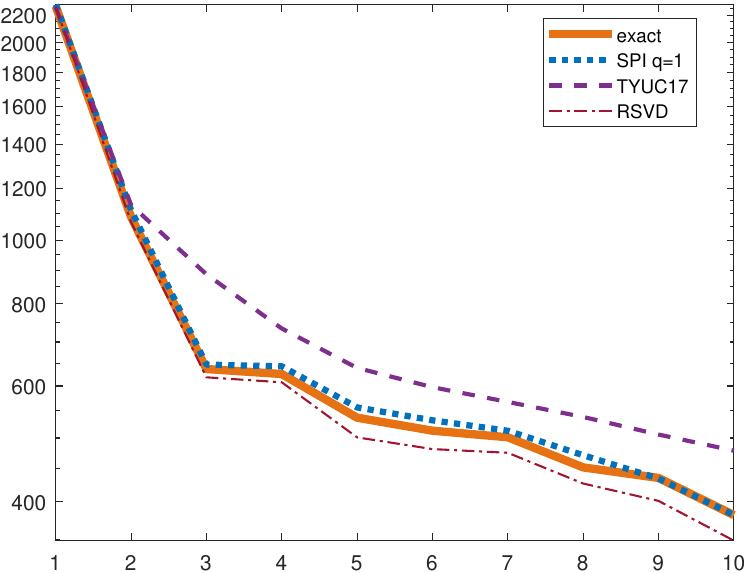} 
        \caption{\small Singular value (storage: $ 160n$)}\label{fig:NIST_SingularValue_T80}
    \end{subfigure}
    \caption{Singular value estimation. The $x$-axis means the     singular value index and the $y$-axis shows the magnitude of the singular values. 
    TYUC17-SPI  uses the  parameter guided for polynomial decay and   TYUC17 uses the parameters suggested in  \cite[Sect. 4.5.2]{Practical_Sketching_Algorithms_Tropp}. The exact singular values are computed by Matlab's \texttt{svds} function.}\label{fig:SingularValueNIST}
\end{figure}

\color{black}

Fig. \ref{fig:SingularValueNIST} illustrates the accuracy of   approximating the first ten singular values under two storage budgets: \( 100n\)   and \( 160n\). The \(x\)-axis corresponds to the singular value index, while the \(y\)-axis shows the singular values. The red curve  represents the true singular values. We observe that TYUC17-SPI (\(q=1\)) closely follows the exact singular values, indicating its high accuracy. By contrast,   RSVD-Onepass tends to underestimate the singular values, and TYUC17   overestimates them. When the storage budget increases from \(100n\) to \(160n\), all methods improve their accuracy, but TYUC17-SPI remains the closest to the exact values. 

\begin{figure}[htbp]
    \captionsetup{font=footnotesize}
    \centering
    \begin{subfigure}[b]{0.45\textwidth}
        \includegraphics[width=0.95\textwidth]{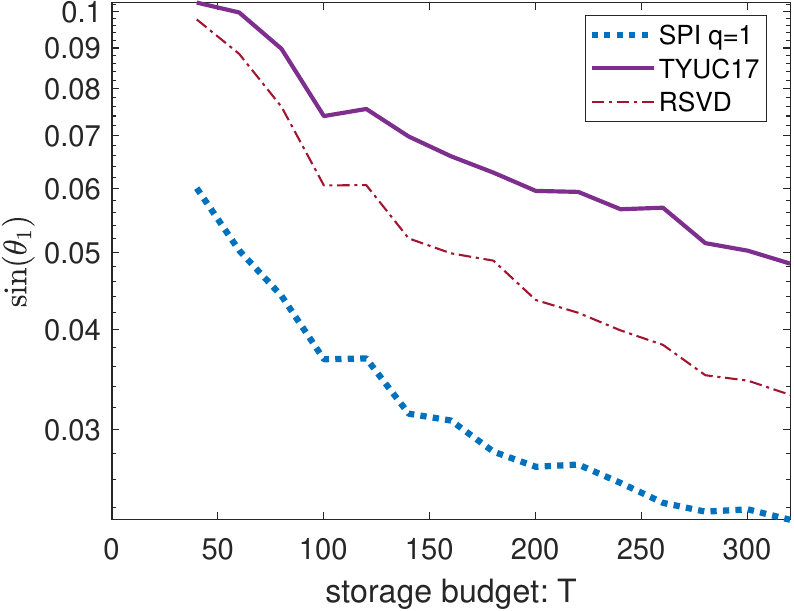} 
        \caption{\small The 1st canonical angle}\label{fig:NIST_SingularVectorError1}
    \end{subfigure}
    \begin{subfigure}[b]{0.45\textwidth}
        \includegraphics[width=0.93\textwidth]{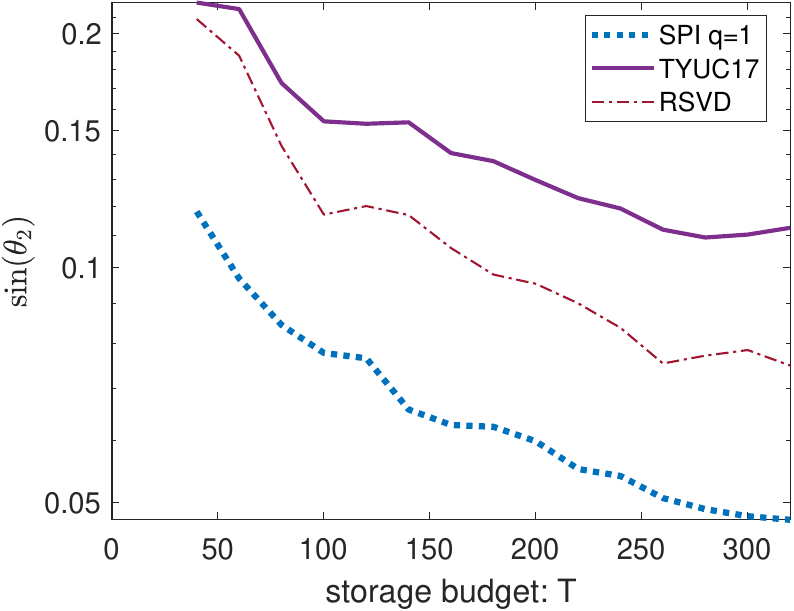} 
        \caption{\small The 2nd canonical angle}\label{fig:NIST_SingularVectorError2}
    \end{subfigure}
    \caption{Sines of the first two canonical angles between the true and computed left dominant singular subspaces, with increasing  storage budget $\hat Tn$ ($x$-axis: $\hat T$).} \label{fig:NIST_SingularVectorError}
\end{figure}

    \begin{figure}
        \captionsetup{font=small}
        \begin{flushleft}
        \hspace{0.07\textwidth} \textbf{\small RSVD-OP} \hspace{0.05\textwidth} 
        \textbf{\small TYUC17-SPI} \hspace{0.05\textwidth}
        \textbf{\small TYUC17} \hspace{0.10\textwidth}
        \textbf{\small Exact}
        \end{flushleft}
        \centering
        \begin{subfigure}[b]{\textwidth}
            \centering
            \includegraphics[width=0.9\textwidth]{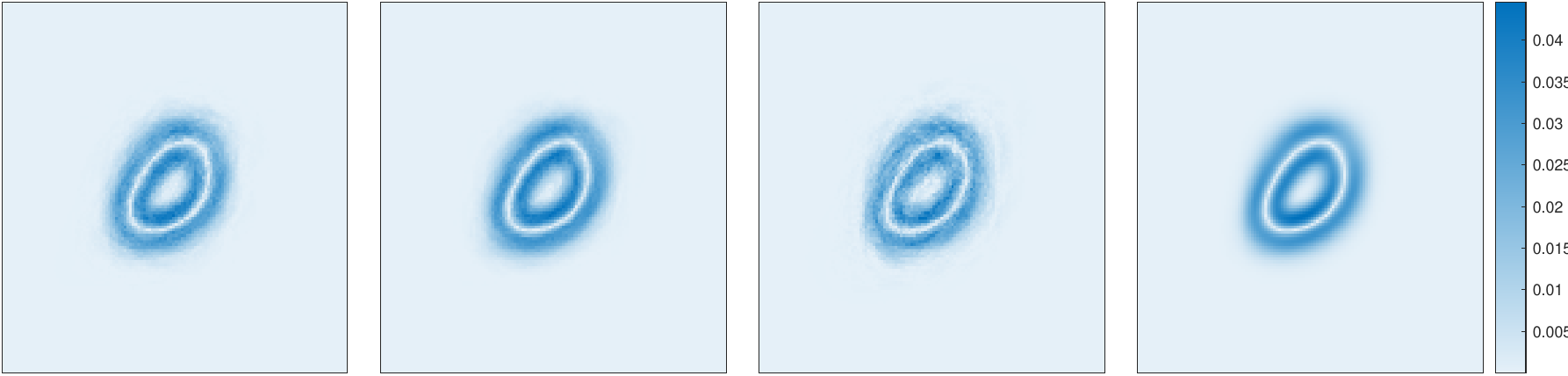} 
            \caption{The 2nd singular vector}
        \end{subfigure}
        \vspace{0.2cm} 
        \begin{subfigure}[b]{\textwidth}
            \centering
            \includegraphics[width=0.9\textwidth]{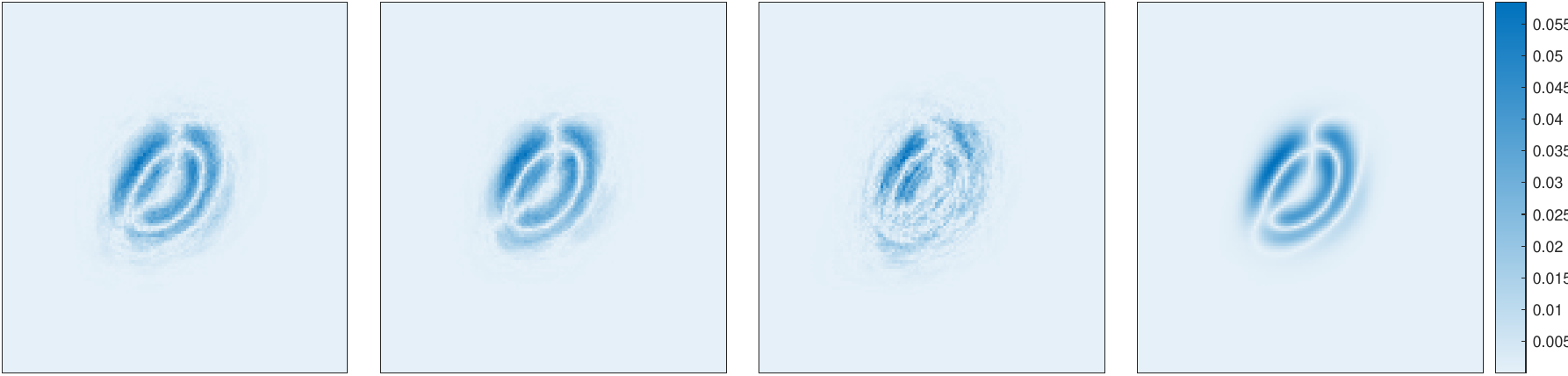}
            \caption{The 3rd singular vector}
        \end{subfigure}
        \vspace{0.2cm}
        \begin{subfigure}[b]{\textwidth}
            \centering
            \includegraphics[width=0.9\textwidth]{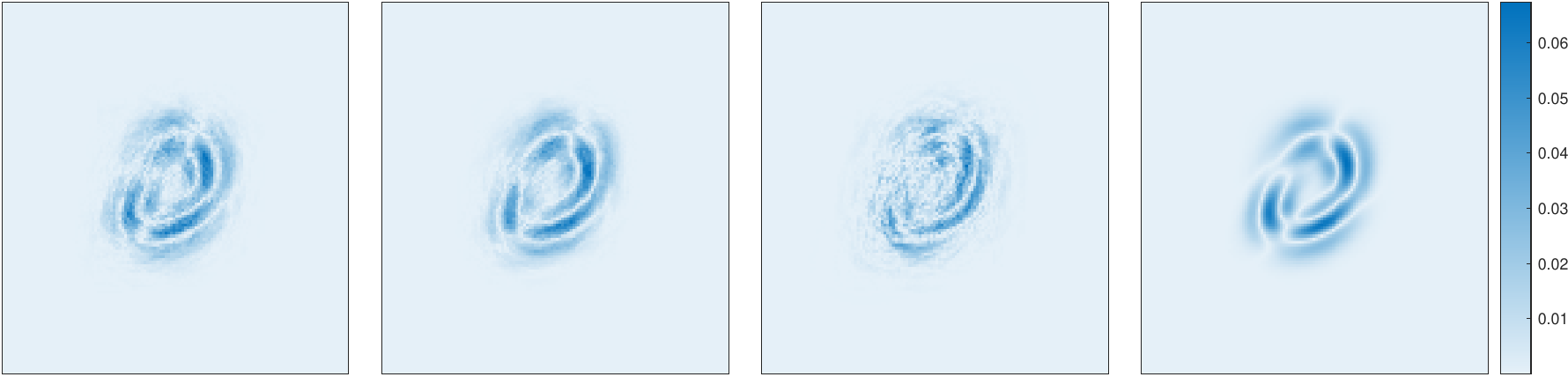}
            \caption{The 8th singular vector}
        \end{subfigure}
        \caption{ Three singular vectors computed by the algorithms. RSVD-OP means RSVD-Onepass. 
        }\label{fig:NIST_SingularVector}
    \end{figure}

    Fig. \ref{fig:NIST_SingularVectorError}  {displays   the first two canonical angles between true and computed left dominant singular subspaces  by   TYUC17-SPI, TYUC17 and RSVD-Onepass algorithms, respectively, with increasing storage budget $\hat{T}n$.   $x$-axis represents \(\hat T\), while the $y$-axis shows the sines of the canonical angles. We observe that TYUC17-SPI consistently achieves the highest accuracy among the three. RSVD-Onepass perform better than TYUC17, because its storage budget $\hat Tn = (m+n)s$ leads to a larger effective sketch size than TYUC17's $ms+dn$ (since $d>s$, $\qmat{Y}$ of the former can be larger). TYUC-SPI outperforms both by leveraging   storage for $\qmat{Z}$ via the mixed-precision strategy, resulting in the strongest $\qmat{Y}$ and overall best performance.}

  Fig. \ref{fig:NIST_SingularVector} visualizes the 2nd, 3rd, and 8th singular vectors produced by the algorithms {with storage $320n$}. Each vector is reshaped into a $128\times 128$ matrix. We observe that TYUC17-SPI (second column) yields the most accurate approximations, closely matching the exact vectors (fourth column). In contrast, TYUC17 (third column) shows noticeable deviations starting from the 3rd singular vector. This      illustrates   SPI's effectiveness in improving the accuracy of singular vector approximations. While RSVD-Onepass (first  column) also produces relatively high-quality results, the TYUC17-SPI solutions are sharper and exhibit less noise, demonstrating  the accuracy using SPI.\color{black} 

\begin{table}[htbp] 
  \centering
  \caption{Reconstruction errors of different algorithms on NIST. ``Error'' refers to the relative error \eqref{eq:relative_error_def}. RSVD-Onepass's Error$=$RangeError and it has no ExtraError.}
  \label{tab:NIST_errors}
  \resizebox{\columnwidth}{!}{%
    \begin{tabular}{llcccccc} 
      \toprule
      \multirow{2}{*}{Method} & \multirow{2}{*}{Type} & \multicolumn{3}{c}{Frobenius Error} & \multicolumn{3}{c}{Spectral Error} \\
      \cmidrule(lr){3-5} \cmidrule(lr){6-8} 
       & & 200 & 260 & 320 & 200 & 260 & 320 \\
      \midrule
      \multirow{3}{*}{\makecell{TYUC17 \\ -SPI}} & Error & 0.0423 & 0.0335 & 0.0221 & 7.648e-3 & 4.177e-3 & 1.742e-3 \\
      & RangeError & 0.0178 & 0.0123 & 0.0098 & 5.099e-3 & 3.062e-3 & 1.309e-3 \\
      & ExtraError & 0.2244 & 0.2082 & 0.1586 & 0.5436 & 0.4268 & 0.3703 \\
      \midrule
      \multirow{3}{*}{TYUC17} & Error & 0.1386 & 0.1846 & 0.0773 & 1.562e-1 & 5.195e-2 & 3.155e-2 \\
      & RangeError & 0.0958 & 0.0748 & 0.0545 & 1.277e-1 & 2.878e-2 & 2.379e-2 \\
      & ExtraError & 0.3093 & 0.2450 & 0.2205 & 0.7085 & 0.6708 & 0.4612 \\
            \midrule
      RSVD-OP &  Error & 0.0326 & 0.0222 & 0.0165 & 5.133e-3 & 3.447e-3 & 1.371e-3 \\
      \bottomrule
    \end{tabular}%
  }
\end{table}




 {
Table \ref{tab:NIST_errors} summarizes the errors of the   algorithms under three storage budgets: \(200n\), \(260n\), and \(320n\). The table includes both Frobenius and spectral norm errors. We observe that TYUC17-SPI consistently outperforms TYUC17 across all metrics, demonstrating the effectiveness of the SPI strategy. Specifically, for the RangeError,   TYUC17-SPI shows over $5\times$ improvement for the Frobenius norm and $10\times$ improvement for the spectral norm compared to TYUC17. Furthermore, for the ExtraError caused by the  sketch-and-solve stage,   SPI also has considerable improvements.

We   also see  that the RangeError of TYUC17-SPI is lower than that of RSVD (whose RangError$=$Error according to Sect. \ref{sec:rsvd_one_pass}), particularly under the Frobenius norm. 
 However, RSVD performs best on the Error metric, as its orthogonal projection avoids the corange $\qmat{V}$ bias inherent in oblique methods. This does not contradict Fig. \ref{fig:SingularValueNIST}, \ref{fig:NIST_SingularVectorError}, and \ref{fig:NIST_SingularVector}: while these figures only reflect  singular value and range $\qmat{U}$ estimation, the Error metric here is additionally influenced by the corange $\qmat{V}$.     This   suggests that SPI may   be   applied to also enhance the corange estimation in the future study.
}



 \subsubsection{Climate dataset performance}

\begin{example}
{We tested the algorithms in scientific computation, using the $10512\times 28144$ climate data introduced in Sect. \ref{sec:real_data}. Feature vectors derived from climate data are   paramountly important in both scientific research and practical applications. These vectors encapsulate critical information extracted from the datasets.   For the pressure data of the $73\times 144$ meshgrid on the surface, the left singular vectors   reveal the spatial structure of the data, for example, the distribution of pressure belts. We test rank-$10$ approximation and all results are averaged over $10$ independent trials.
}\end{example}


\begin{figure}[htbp]
    \captionsetup{font=small}
    \centering
    \begin{subfigure}[b]{0.45\textwidth}
        \includegraphics[width=0.95\textwidth]{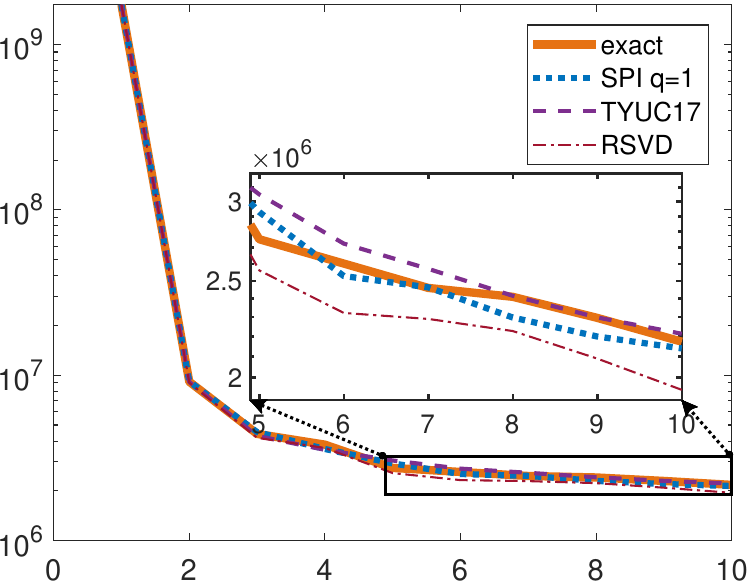} 
        \caption{\small Singular value (storage: $110n$)}\label{fig:Climate_SingularValue_T110}
    \end{subfigure}
    \begin{subfigure}[b]{0.45\textwidth}
        \includegraphics[width=0.95\textwidth]{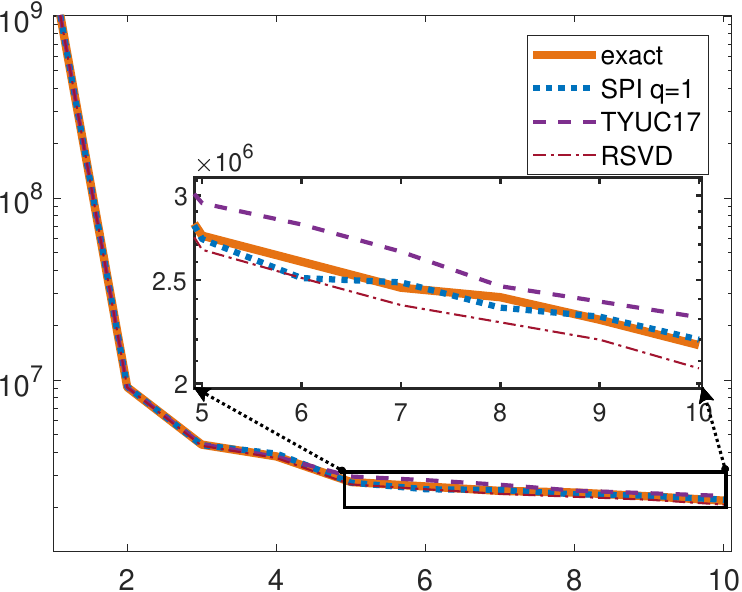} 
        \caption{\small Singular value (storage: $ 170n$)}\label{fig:Climate_SingularValue_T170}
    \end{subfigure}
    \caption{Singular value estimation in climate dataset. 
    }\label{fig:SingularValueClimate}
\end{figure}

\begin{figure}[htbp]
    \captionsetup{font=small}
    \centering
    \begin{subfigure}[b]{0.45\textwidth}
        \includegraphics[width=0.95\textwidth]{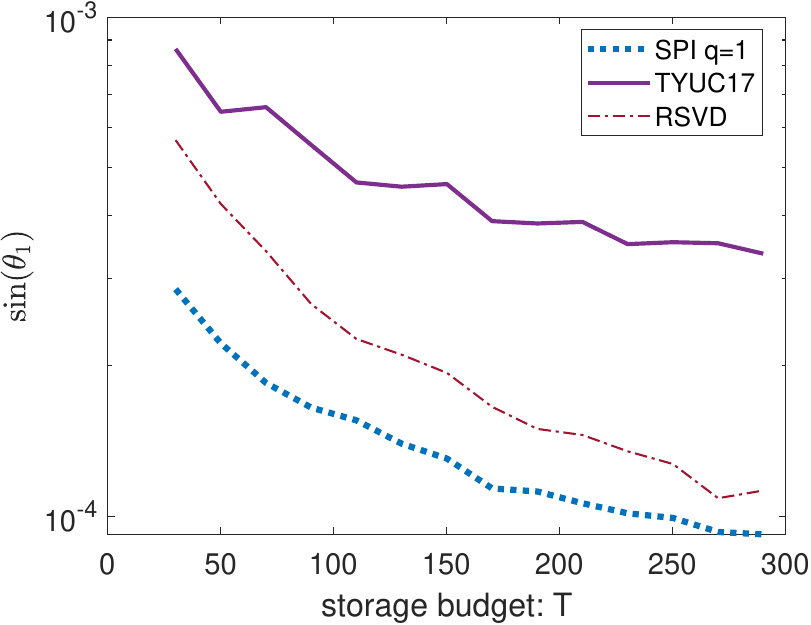} 
         \caption{\small The 1st canonical angle}\label{fig:Climate_SingularVectorError1}
    \end{subfigure}
    \begin{subfigure}[b]{0.45\textwidth}
        \includegraphics[width=0.95\textwidth]{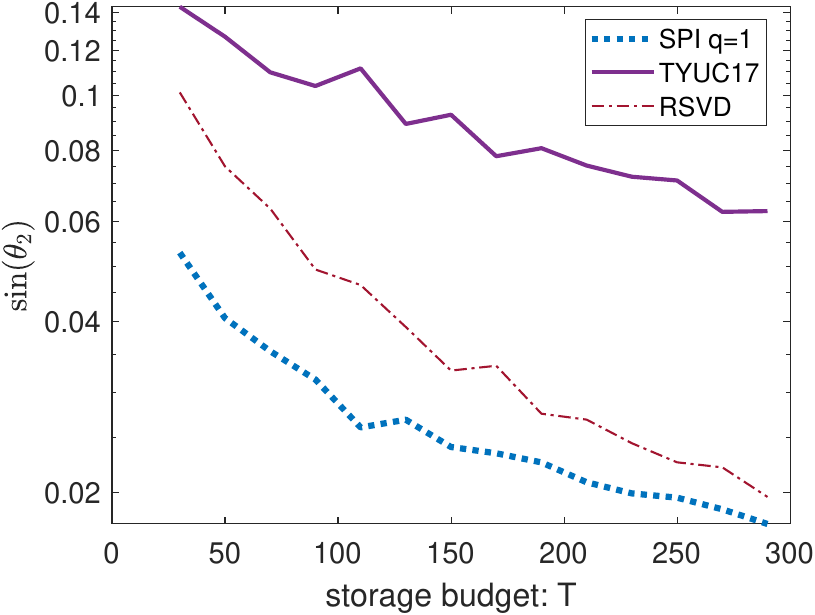} 
         \caption{\small The 2nd canonical angle}\label{fig:Climate_SingularVectorError2}
    \end{subfigure}
    \caption{Sines of the first two canonical angles between the true and computed  left dominant singular subspace, with increasing  storage budget $\hat Tn$ ($x$-axis: $\hat T$).} \label{fig:Climate_SingularVectorError}
\end{figure}

 {The singular value estimation and canonical angles  are respectively plotted in Fig. \ref{fig:SingularValueClimate} and \ref{fig:Climate_SingularVectorError}. The behavior on the climate dataset is similar to that on NIST, but all algorithms perform better owing to the faster spectral decay of the climate matrix (c.f. Fig. \ref{fig:worldmap}).   In particular, Fig. \ref{fig:Climate_SingularVectorError} shows that the curves of RSVD-Onepass are closer to those of TYUC17-SPI, as the rapid spectral decay   enables RSVD to perform well. 
   }
  

 \color{black}

\begin{figure}
    \captionsetup{font=small}
    \centering
    \begin{subfigure}[b]{0.49\textwidth}
        \includegraphics[width=1\textwidth]{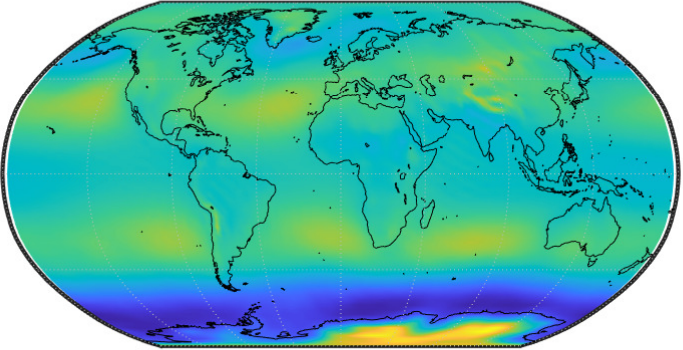} 
        \caption{\small RSVD-Onepass}\label{fig:Climate_Geo_rsvd_1}
    \end{subfigure}
    \begin{subfigure}[b]{0.49\textwidth}
        \includegraphics[width=1\textwidth]{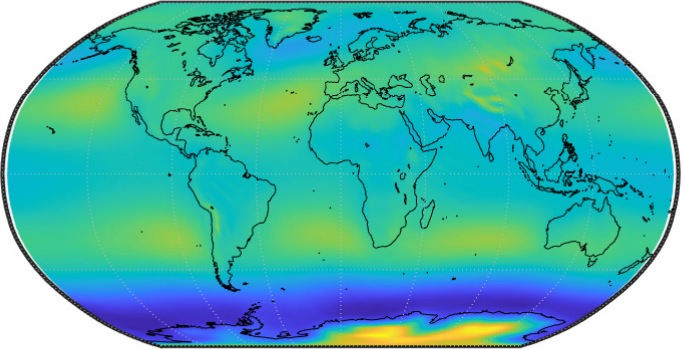} 
        \caption{\small TYUC17-SPI}\label{fig:Climate_Geo_spi_1}
    \end{subfigure}
    \begin{subfigure}[b]{0.49\textwidth}
        \includegraphics[width=1\textwidth]{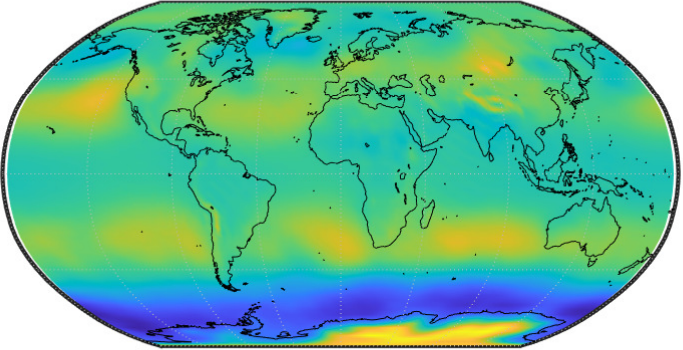} 
        \caption{\small TYUC17}\label{fig:Climate_Geo_TYUC17_1} 
    \end{subfigure}
    \begin{subfigure}[b]{0.49\textwidth}
        \includegraphics[width=1\textwidth]{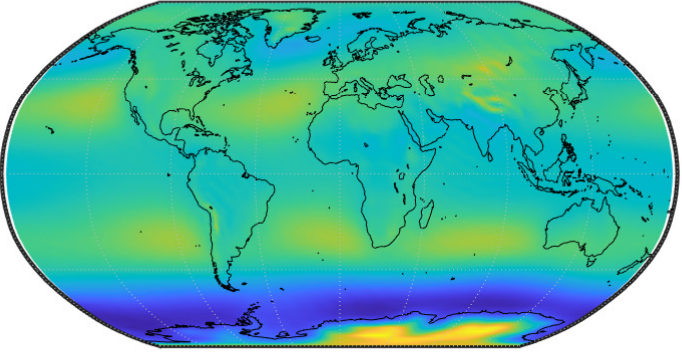} 
        \caption{\small Exact}\label{fig:Climate_Geo_exact_1}
    \end{subfigure}
    \caption{The 1st singular vector of climate data. 
    All heat maps use the same color bar (omitted here for brevity). }\label{fig:Climate_Geo_1}
\end{figure}

\color{black}
 {Fig. \ref{fig:Climate_Geo_1} and \ref{fig:Climate_Geo_2} visualize the features extracted from the climate dataset. They reveal some long-term distinct climatological patterns and can be explained respectively. We set storage $60n$. The leading singular vector   in Fig. \ref{fig:Climate_Geo_1} captures persistent pressure belt structures with remarkable fidelity. While both SPI and RSVD-Onepass generate physically consistent approximations, the TYUC17 solution exhibits notable deviations.}
 

Fig. \ref{fig:Climate_Geo_2} further illustrates the second singular vector's sensitivity to Eurasian landmass dynamics and Southern Hemisphere westerlies. The SPI reconstruction closely matches the reference solution, particularly in resolving the pressure gradient over prevailing westerlies. By contrast, TYUC17 and RSVD-Onepass show systematic errors south of 45°S latitude. This differential performance suggests that    TYUC17 enhanced by SPI   effectively preserves critical synoptic-scale features   that are attenuated by prior methods.


\begin{figure}
    \captionsetup{font=small}
    \centering
    \begin{subfigure}[b]{0.49\textwidth}
        \includegraphics[width=1\textwidth]{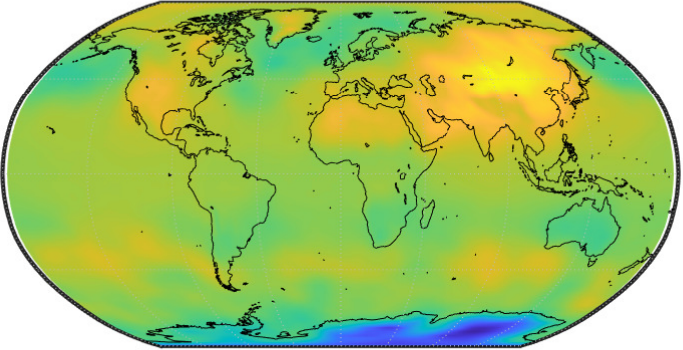} 
        \caption{\small RSVD-Onepass}\label{fig:Climate_Geo_rsvd_2}
    \end{subfigure}
    \begin{subfigure}[b]{0.49\textwidth}
        \includegraphics[width=1\textwidth]{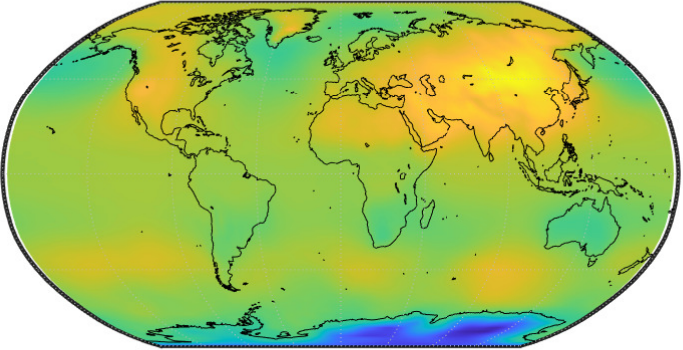} 
        \caption{\small TYUC17-SPI}\label{fig:Climate_Geo_spi_2}
    \end{subfigure}
    \begin{subfigure}[b]{0.49\textwidth}
        \includegraphics[width=1\textwidth]{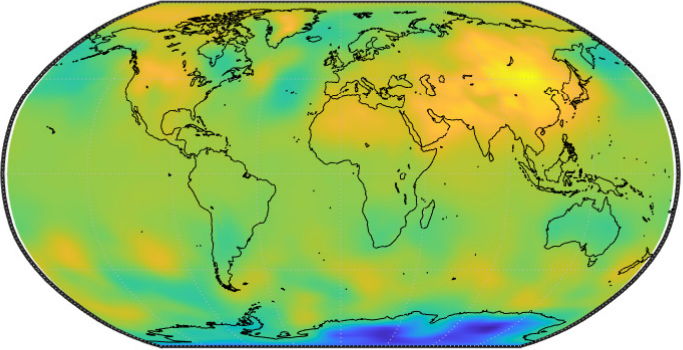} 
        \caption{\small TYUC17}\label{fig:Climate_Geo_TYUC17_2}
    \end{subfigure}
    \begin{subfigure}[b]{0.49\textwidth}
        \includegraphics[width=1\textwidth]{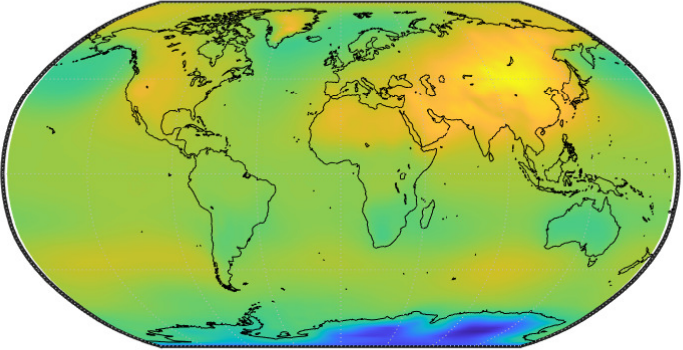} 
        \caption{\small Exact}\label{fig:Climate_Geo_exact_2}
    \end{subfigure}
    \caption{The 2nd singular vector of climate data. 
   }\label{fig:Climate_Geo_2}
\end{figure}

\begin{table}[htbp] 
  \centering
  \small 
  \caption{Reconstruction errors of different algorithms on climate dataset. ``Error'' refers to the relative error \eqref{eq:relative_error_def}. RSVD-Onepass's Error$=$RangeError.}
  \label{tab:errors_climate}
  \begin{tabular}{llcccccc} 
    \toprule
    \multirow{2}{*}{Method} & \multirow{2}{*}{Type} & \multicolumn{3}{c}{Frobenius Error} & \multicolumn{3}{c}{Spectral Error} \\
    \cmidrule(lr){3-5} \cmidrule(lr){6-8} 
     & & 110 & 140 & 170 & 110 & 140 & 170 \\
    \midrule
    \multirow{3}{*}{\makecell{TYUC17 \\ -SPI}} & Error & 0.0281 & 0.0175 & 0.0099 & 2.316e-3 & 1.282e-3 & 6.402e-4 \\
    & RangeError & 0.0079 & 0.0043 & 0.0031 & 1.445e-3 & 9.454e-4 & 3.219e-4 \\
    & ExtraError & 0.2023 & 0.1635 & 0.1170 & 0.5928 & 0.4072 & 0.2961 \\
    \midrule
    \multirow{3}{*}{TYUC17} & Error & 0.0933 & 0.0659 & 0.0452 & 1.825e-1 & 4.766e-2 & 1.333e-2 \\
    & RangeError & 0.0590 & 0.0440 & 0.0298 & 1.124e-1 & 2.887e-2 & 8.560e-3 \\
    & ExtraError  & 0.2715 & 0.2148 & 0.1789 & 0.7737 & 0.6137 & 0.5224 \\
     \midrule
    RSVD-OP & Error & 0.0126 & 0.0078 & 0.0050 & 2.511e-3 & 1.190e-3 & 4.467e-4 \\   
    \bottomrule
  \end{tabular}
\end{table}

Table \ref{tab:errors_climate} reports the errors on the climate dataset. Overall errors are smaller than those on NIST, owing to the faster spectral decay of the climate data, such that algorithms can obtain lower errors with fewer storage budgets (compared with NIST). Meanwhile, the   performance is consistent with that in Table \ref{tab:NIST_errors}: TYUC17-SPI again achieves the smallest RangeError, while RSVD performs best on the Error metric.

 \subsubsection{Performance with different $q$}
We conduct quantitative experiments to examine multiple SPI iterations, as follows.
 \begin{example}
        We test the performance of TYUC17-SPI with $q=1,2,3$ on both datasets and set the storage   $190n$. All results are averaged over $10$ independent trials.
 \end{example}

\begin{table}[h!] 
\centering
\caption{Performance of TYUC-SPI with $q=1,2,3$ on both real datasets.}
\resizebox{\columnwidth}{!}{
\begin{tabular}{cccccccc} 
\toprule
\multirow{2}{*}{Data} & \multirow{2}{*}{Type} & \multicolumn{3}{c}{Frobenius Error} & \multicolumn{3}{c}{Spectral Error} \\
\cmidrule(lr){3-5} \cmidrule(lr){6-8}
& & 1 & 2 & 3 & 1 & 2 & 3 \\
\midrule
\multirow{3}{*}{NIST} & Error & 0.0496 & 0.0489 & 0.0487 & 0.0159 & 0.0138 & 0.0124 \\
& RangeError & 0.0222 & 0.0217 & 0.0215 & 0.0093 & 0.0080 & 0.0077 \\
& ExtraError & 0.2383 & 0.2373 & 0.2373 & 0.5771 & 0.5691 & 0.5652 \\
\midrule
\multirow{3}{*}{Climate} & Error &0.0087 & 0.0084 & 0.0083 & 2.124e-4 & 2.006e-4 & 1.965e-4 \\
& RangeError &2.661e-3 & 2.626e-3 & 2.615e-3 & 1.550e-4 & 1.463e-4 & 1.379e-4 \\
& ExtraError &0.1102&0.1078&0.1069&0.4315&0.4312&0.4310 \\
\bottomrule
\end{tabular}
}
\label{tab: Qreal_performance}
\end{table}


Table \ref{tab: Qreal_performance} reports the performance of TYUC17-SPI with    $q=1,2,3$ on the NIST and climate datasets. Errors generally decrease as $q$ increases, especially for the spectral norm, similar to Fig. \ref{fig:qFigure}, confirming the effectiveness of multiple SPI iterations.



\color{black}

\section{Analysis for the \texorpdfstring{$q=1$}{q=1} Bound}\label{sec:analysis}


This section is focused on proving Theorem \ref{thm:oblique_proj_error_q=1}.  The key is to establish the projection error $\|(\qmat{I}-P_{\hat{\qmat{Y}}})\qmat{A}\|_F$.  \color{black}
The difficulty  lies in the interaction between $\qmat{A}$ and $\qmat{Z}$. Roughly speaking,
 in the standard power iteration $\qmat{Y}=(\qmat{A}\qmat{A}^{T})^q\qmat{A}\bdOmega$, one can analyze $\boldsymbol{\Sigma}^{(2q+1)}\qmat{V}^{T}\bdOmega$, with $\qmat{A}=\qmat{U}\boldsymbol{\Sigma}\qmat{V}^{T}$ its SVD, so that the analysis of RSVD applies similarly. 
However, SPI $\hat{\qmat{Y}}=\qmat{Z}\qmat{Z}^{T}\qmat{A}\bdOmega$ does not have such a   representation. We address this   using similar ideas as \cite{FindingStructureHalko}, but   with some more involved decompositions. Fortunately, when $q=1$ these decompositions still work. In the following, Sect. \ref{sec:tech_lemma_qeq1} introduces necessary lemmas, Sect. \ref{sec:deter_qb_err} provides deterministic projection error, while Sect. \ref{sec:prob_qb_err} and \ref{sec:onepass_err_1}    establish the averaged error bounds.  
\color{black}


\subsection{Technical lemmas} \label{sec:tech_lemma_qeq1} Some useful lemmas are recalled first.
\begin{lemma}[Parallel sums \cite{tropp2023RandomizedAlgorithms}]\label{lem:parallel_sum}
    For positive semidefinite (PSD) matrices $\qmat{A},\qmat{B}$ with the same dimension, define the parallel sum $\qmat{A} : \qmat{B}$ to be the PSD matrix
    \begin{align*}
    \qmat{A} : \qmat{B} = \qmat{A} - \qmat{A} (\qmat{A} + \qmat{B})^\dagger \qmat{A},
    \end{align*}
 where  $^\dag$ denotes the Moore–Penrose inverse.   The following   hold:
    \begin{enumerate}
        \item The parallel sum is symmetric; that is, $\qmat{A} : \qmat{B} = \qmat{B} : \qmat{A}$.
        \item If $\qmat{A}$ and $\qmat{B}$ are strictly positive definite, then $\qmat{A} : \qmat{B} = (\qmat{A}^{-1} + \qmat{B}^{-1})^{-1}$.
        \item The mapping $\qmat{B} \mapsto \qmat{A} : \qmat{B}$ is monotone and concave with respect to the PSD ordering, and it is bounded above as $\qmat{A} : \qmat{B} \preceq \qmat{A}$.
        \item $\| \qmat{A} : \qmat{B} \|_p \leq \| \qmat{A} \|_p : \| \qmat{B} \|_p$ for any matrix Schatten $p$-norm with $1 \leq p \leq \infty$.
    \end{enumerate}
    \end{lemma}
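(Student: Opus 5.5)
The plan is to prove everything first for strictly positive definite $\qmat{A},\qmat{B}$, and then pass to general PSD matrices by regularization. Replace $\qmat{A},\qmat{B}$ by $\qmat{A}_\varepsilon=\qmat{A}+\varepsilon\qmat{I}$ and $\qmat{B}_\varepsilon=\qmat{B}+\varepsilon\qmat{I}$, and show that $\qmat{A}_\varepsilon:\qmat{B}_\varepsilon\to\qmat{A}:\qmat{B}$ as $\varepsilon\downarrow 0$. Every item is preserved under this limit: equalities, PSD inequalities, and norm inequalities are all closed conditions, and concavity is a family of PSD inequalities. I expect this limiting step to be the main technical obstacle, because the Moore–Penrose inverse is not continuous in general.

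I will handle the limit through ranges. Since $\qmat{A},\qmat{B}\succeq 0$, we have $\ker(\qmat{A}+\qmat{B})=\ker\qmat{A}\cap\ker\qmat{B}$, so $\rangemat{\qmat{A}}\subseteq\rangemat{\qmat{A}+\qmat{B}}$. Restrict to $\mathcal R=\rangemat{\qmat{A}+\qmat{B}}$. On $\mathcal R$ the matrix $\qmat{A}+\qmat{B}$ is invertible, so $(\qmat{A}+\qmat{B}+2\varepsilon\qmat{I})^{-1}$ converges there to the inverse of the restriction, which is $(\qmat{A}+\qmat{B})^\dagger$. On $\mathcal R^\perp$ both $\qmat{A}$ and $\qmat{B}$ vanish, so that block is annihilated by the outer factors $\qmat{A}_\varepsilon(\cdot)\qmat{A}_\varepsilon$ up to $O(\varepsilon)$ terms. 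Hence
\[
\qmat{A}_\varepsilon(\qmat{A}_\varepsilon+\qmat{B}_\varepsilon)^{-1}\qmat{A}_\varepsilon\to\qmat{A}(\qmat{A}+\qmat{B})^\dagger\qmat{A}.
\]
I will check this explicitly in block form with respect to the decomposition $\mathcal R\oplus\mathcal R^\perp$.

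For positive definite matrices, item 2 is the identity
\[
\qmat{A}-\qmat{A}(\qmat{A}+\qmat{B})^{-1}\qmat{A}=\qmat{A}(\qmat{A}+\qmat{B})^{-1}\qmat{B}=\bigl(\qmat{B}^{-1}(\qmat{A}+\qmat{B})\qmat{A}^{-1}\bigr)^{-1}=(\qmat{A}^{-1}+\qmat{B}^{-1})^{-1},
\]
which is a short algebraic computation. Item 1 (symmetry) is immediate from item 2 in the positive definite case and transfers by the limit. Next I will derive the variational formula
\[
x^T(\qmat{A}:\qmat{B})x=\min_{y+z=x}\bigl(y^T\qmat{A}y+z^T\qmat{B}z\bigr).
\]
This follows by minimizing the quadratic in $y$, with minimizer $y=(\qmat{A}+\qmat{B})^{-1}\qmat{B}x$, and substituting back. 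Item 3 then follows directly:
\begin{enumerate}
\item each function $\qmat{B}\mapsto y^T\qmat{A}y+z^T\qmat{B}z$ is affine and monotone in $\qmat{B}$, and a pointwise minimum of such functions is monotone and concave in the Loewner order;
\item choosing $z=0$ gives $\qmat{A}:\qmat{B}\preceq\qmat{A}$.
\end{enumerate}
The same formula also shows $\qmat{A}:\qmat{B}\succeq 0$, so the object really is PSD.

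For item 4, I will again use the variational formula, now with the choice $y=tx$ and $z=(1-t)x$ for a scalar $t\in[0,1]$. This gives the Loewner bound
\[
\qmat{A}:\qmat{B}\preceq t^2\qmat{A}+(1-t)^2\qmat{B}.
\]
Schatten norms are monotone on PSD matrices, and the triangle inequality then gives
\[
\normP{\qmat{A}:\qmat{B}}\le t^2 a+(1-t)^2 b,\qquad a=\normP{\qmat{A}},\ b=\normP{\qmat{B}}.
\]
Minimizing over $t$, the optimum is $t=b/(a+b)$, with value $ab/(a+b)=a:b$, the scalar parallel sum. This proves item 4. The degenerate case $a+b=0$ is trivial, since then $\qmat{A}=\qmat{B}=0$.
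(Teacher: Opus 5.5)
Your proof is correct. Every step checks out, including the limit. On $\mathcal R\oplus\mathcal R^{\perp}$ with $\mathcal R=\rangemat{\qmat{A}+\qmat{B}}$, the $\mathcal R^{\perp}$ block of $\qmat{A}_\varepsilon(\qmat{A}_\varepsilon+\qmat{B}_\varepsilon)^{-1}\qmat{A}_\varepsilon$ is exactly $(\varepsilon/2)\qmat{I}$. The rest then follows: the variational formula, the Loewner bound $\qmat{A}:\qmat{B}\preceq t^2\qmat{A}+(1-t)^2\qmat{B}$, Weyl monotonicity, the triangle inequality, and continuity of $ab/(a+b)$ on $[0,\infty)^2$ give items 1--4.

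The paper itself gives no proof; it quotes the lemma from Tropp and Webber. So your argument is a self-contained replacement for a citation, not a variant of something in the text.

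Compared with the classical Anderson--Duffin treatment, the only genuinely different ingredient is your regularization, and it can be dropped. Because $\rangemat{\qmat{A}},\rangemat{\qmat{B}}\subseteq\rangemat{\qmat{A}+\qmat{B}}$, the convex quadratic $y\mapsto y^{T}\qmat{A}y+(x-y)^{T}\qmat{B}(x-y)$ is minimized at $y=(\qmat{A}+\qmat{B})^\dagger\qmat{B}x$, with value $x^{T}\bigl(\qmat{B}-\qmat{B}(\qmat{A}+\qmat{B})^\dagger\qmat{B}\bigr)x$. Expanding $\qmat{A}=(\qmat{A}+\qmat{B})-\qmat{B}$ and using $(\qmat{A}+\qmat{B})(\qmat{A}+\qmat{B})^\dagger\qmat{B}=\qmat{B}$ (and its transpose) gives $\qmat{A}-\qmat{A}(\qmat{A}+\qmat{B})^\dagger\qmat{A}=\qmat{B}-\qmat{B}(\qmat{A}+\qmat{B})^\dagger\qmat{B}$. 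Hence symmetry and the variational formula hold for singular PSD matrices directly, and your items 3 and 4 go through verbatim with no limiting step. Your route costs a little more bookkeeping but is equally rigorous.

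What a written-out proof buys over the citation is visibility of the hypothesis $p\ge 1$: it enters only through the triangle inequality in item 4. That matters because the paper invokes item 4 (and the triangle inequality) with index $p/2$ in the proof of Theorem \ref{thm:projection_error_sketchPower}. The lemma as stated covers that only for $p\ge 2$. This is harmless there, since only the Frobenius case $p=2$ is used afterwards.
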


    Here $\normP[p]{\qmat{A}}=(\sum_{i=1}\sigma_i(\qmat{A})^p)^{1/p}$. In particular $\normP[2]{\qmat{A}}=\normF{\qmat{A}}$ and $\normP[\infty]{\qmat{A}}=\normSpectral{\qmat{A}}$. 


\begin{lemma}[Moment bounds \cite{tropp2023RandomizedAlgorithms}] \label{lem:SGT}
    Consider fixed matrices $\qmat{S} \in \mathbb{R}^{l \times n}$ and $\qmat{T} \in \mathbb{R}^{p \times q}$ and a standard Gaussian matrix $\qmat{G} \in \mathbb{R}^{n \times p}$. Then 
    \begin{align*}
        \mathbb E{\normF{\qmat{S}\qmat{G}\qmat{T}}^2}  = \normF{\qmat{S}}^2 \normF{\qmat{T}}^2,  ~
        \mathbb E{\normSpectral{\qmat{S}\qmat{G}\qmat{T}}^2}  \leq \left(\normF{\qmat{S}}\normSpectral{\qmat{T}}+\normSpectral{\qmat{S}}\normF{\qmat{T}}\right)^2.
    \end{align*}
\end{lemma}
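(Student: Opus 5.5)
The statement is Lemma \ref{lem:SGT}, and the plan is to prove both parts by direct computation from the rotational invariance of the Gaussian matrix $\qmat{G}$.

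For the Frobenius identity, I will expand
\[
\normF{\qmat{S}\qmat{G}\qmat{T}}^2=\trace{\qmat{T}^{T}\qmat{G}^{T}\qmat{S}^{T}\qmat{S}\qmat{G}\qmat{T}}.
\]
By linearity of expectation it suffices to compute $\mathbb E[\qmat{G}^{T}\qmat{M}\qmat{G}]$ with $\qmat{M}=\qmat{S}^{T}\qmat{S}$. The entries of $\qmat{G}$ are i.i.d.\ $N(0,1)$, so the $(i,j)$ entry of this expectation is $\sum_{a,b}M_{ab}\mathbb E[G_{ai}G_{bj}]=\delta_{ij}\,\trace{\qmat{M}}$. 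Hence $\mathbb E[\qmat{G}^{T}\qmat{M}\qmat{G}]=\normF{\qmat{S}}^2\qmat{I}$, and the expected trace equals $\normF{\qmat{S}}^2\trace{\qmat{T}^{T}\qmat{T}}=\normF{\qmat{S}}^2\normF{\qmat{T}}^2$.

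For the spectral bound I will first reduce to diagonal matrices. Write SVDs $\qmat{S}=\qmat{U}_S\qmat{D}_S\qmat{V}_S^{T}$ and $\qmat{T}=\qmat{U}_T\qmat{D}_T\qmat{V}_T^{T}$. Since $\qmat{V}_S^{T}\qmat{G}\qmat{U}_T$ is again standard Gaussian (restricting to the relevant rows and columns), the problem reduces to $\qmat{S}=\diag(s_i)$, $\qmat{T}=\diag(t_j)$, and the object is the matrix $\qmat{X}$ with independent entries $s_it_jg_{ij}$. The key tool is Gordon's inequality, equivalently the Chevet / Slepian comparison, as used in \cite[Prop.~10.1]{FindingStructureHalko}:
\[
\mathbb E\normSpectral{\qmat{S}\qmat{G}\qmat{T}}\le \normSpectral{\qmat{S}}\normF{\qmat{T}}+\normF{\qmat{S}}\normSpectral{\qmat{T}}.
\]
The proof of this compares the Gaussian process $X_{u,v}=\langle \qmat{S}^{T}u,\qmat{G}\qmat{T}v\rangle$ over unit vectors $u,v$ with the decoupled process $Y_{u,v}=\normSpectral{\qmat{S}}\langle g,\qmat{T}v\rangle+\normSpectral{\qmat{T}}\langle h,\qmat{S}^{T}u\rangle$. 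One checks $\mathbb E(X_{u,v}-X_{u',v'})^2\le \mathbb E(Y_{u,v}-Y_{u',v'})^2$, and then Slepian--Fernique gives $\mathbb E\sup X\le \mathbb E\sup Y$, which in turn is bounded by $\normSpectral{\qmat{S}}\mathbb E\normSpectralnoleftright{\qmat{T}^{T}g}+\normSpectral{\qmat{T}}\mathbb E\normSpectralnoleftright{\qmat{S}h}\le\normSpectral{\qmat{S}}\normF{\qmat{T}}+\normSpectral{\qmat{T}}\normF{\qmat{S}}$.

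The remaining step, which I expect to be the main obstacle, is passing from the first moment to the second moment with constant exactly one. Write $f(\qmat{G})=\normSpectral{\qmat{S}\qmat{G}\qmat{T}}$. This function is $L$-Lipschitz in Frobenius norm with $L=\normSpectral{\qmat{S}}\normSpectral{\qmat{T}}$, so Gaussian Poincaré gives $\mathbb E f^2\le(\mathbb E f)^2+L^2$. That produces an extra additive $L^2$ term, so it is not enough on its own.

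To remove the $L^2$ term I will redo the Slepian argument at the level of second moments. Apply Gordon's theorem in its second-moment form, $\mathbb E(\sup X)^2\le\mathbb E(\sup Y)^2$, which holds for centered processes indexed by a symmetric set via Kahane's convexity comparison with the convex increasing function $x\mapsto x_+^2$. Then bound
\[
\mathbb E(\sup Y)^2\le\Bigl(\normSpectral{\qmat{S}}\bigl(\mathbb E\normSpectralnoleftright{\qmat{T}^{T}g}^2\bigr)^{1/2}+\normSpectral{\qmat{T}}\bigl(\mathbb E\normSpectralnoleftright{\qmat{S}h}^2\bigr)^{1/2}\Bigr)^2
\]
by Minkowski's inequality. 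Since $\mathbb E\normSpectralnoleftright{\qmat{T}^{T}g}^2=\normF{\qmat{T}}^2$ exactly, and likewise $\mathbb E\normSpectralnoleftright{\qmat{S}h}^2=\normF{\qmat{S}}^2$, this yields the stated bound. If the convexity comparison turns out to be delicate, the fallback is to cite \cite{tropp2023RandomizedAlgorithms}, where the lemma is stated.
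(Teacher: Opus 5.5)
You have the Frobenius identity right, and you are right that Gaussian Poincaré leaves an extra $\|S\|^2\|T\|^2$. The paper gives no proof of this lemma; it quotes it from \cite{tropp2023RandomizedAlgorithms}. The spectral part of your proposal has a genuine gap, and it occurs before the second-moment issue.

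\textbf{The increment comparison is false.} The comparison you say ``one checks'' fails unless $S$ and $T$ are multiples of isometries. Writing $x=S^{T}u$ and $y=Tv$, you need
$\|xy^{T}-x'y'^{T}\|_F^2\le \|S\|^2\|y-y'\|^2+\|T\|^2\|x-x'\|^2$.
But $x$ and $y$ range over ellipsoids, not spheres. Take $S=T=e_1e_1^{T}\in\mathbb R^{2\times 2}$, $u=v=e_1$ and $u'=v'=(1/2,\sqrt3/2)$. Then $x=y=e_1$ and $x'=y'=e_1/2$, so the left side is $9/16$ and the right side is $1/2$. No comparison theorem therefore applies to your pair $(X,Y)$. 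The first-moment Chevet bound you quote from \cite{FindingStructureHalko} is true, but not by this comparison.

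\textbf{The second-moment step is unsupported on its own.} Kahane's inequality compares covariances, not increments. For a smoothing of $(\max_t x_t)_+^2$ it needs $\mathbb E X_t^2\le \mathbb E Y_t^2$ together with $\mathbb E X_sX_t\ge \mathbb E Y_sY_t$ for $s\ne t$. The latter fails for your processes: take $u=u'$ a top left singular vector of $S$ and $Tv\perp Tv'$. Then $\mathbb E X_{u,v}X_{u,v'}=0<\|S\|^2\|T\|^2=\mathbb E Y_{u,v}Y_{u,v'}$. Increment domination buys you Sudakov--Fernique, which compares $\mathbb E\sup$ only. A constant-one comparison of $\mathbb E(\sup)^2$ under increment hypotheses is not something Kahane's inequality provides.

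\textbf{A repair.} An equal-variance Slepian comparison works, and it gives stochastic domination, hence every nondecreasing moment at once. Let $g,h,\gamma$ be independent standard Gaussians, independent of $G$, and set
\[
\tilde X_{u,v}=\langle x,Gy\rangle+\|T\|\,\|x\|\,\gamma,\qquad Y'_{u,v}=\|x\|\,\langle g,y\rangle+\|T\|\,\langle h,x\rangle .
\]
Both processes have variance $\|x\|^2(\|y\|^2+\|T\|^2)$, and
\[
\mathbb E\,\tilde X_{u,v}\tilde X_{u',v'}-\mathbb E\, Y'_{u,v}Y'_{u',v'}=\bigl(\|x\|\|x'\|-\langle x,x'\rangle\bigr)\bigl(\|T\|^2-\langle y,y'\rangle\bigr)\ge 0 .
\]
Slepian's lemma, applied on finite nets and then passed to the limit, gives $\mathbb P(\sup\tilde X>\lambda)\le \mathbb P(\sup Y'>\lambda)$ for all $\lambda$.

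The map $\gamma\mapsto\sup_{u,v}\tilde X_{u,v}$ is convex and $\mathbb E\gamma=0$. Also $t\mapsto t_+^2$ is convex and nondecreasing. Jensen in $\gamma$ therefore gives $\|SGT\|^2\le \mathbb E_\gamma(\sup\tilde X)_+^2$, hence $\mathbb E\|SGT\|^2\le \mathbb E(\sup Y')_+^2$.

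Finally, $\sup Y'\le \|S\|\,\|T^{T}g\|+\|T\|\,\|Sh\|$. Your Minkowski step then yields $(\|S\|\,\|T\|_F+\|S\|_F\|T\|)^2$.

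The essential change is to put the variable factor $\|S^{T}u\|$, rather than $\|S\|$, in front of $\langle g,Tv\rangle$. The added term $\|T\|\|x\|\gamma$ compensates so that the variances match exactly.
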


\begin{lemma}[Inverse moment bounds, Frobenius norm  \cite{tropp2023RandomizedAlgorithms,FindingStructureHalko}]\label{lem:inverse_moment_F}
Let \( \qmat{G} \in \mathbb{R}^{r \times k} \) with \( r \leq k \) be standard Gaussian. Then  
    \begin{align*}
    \mathbb{E}(\qmat{G}\qmat{G}^{T})^{-1} &= \frac{1}{k - r - 1} \mathbf{I}_r, & r &\leq k - 2, \\
    \mathbb{E}\|(\qmat{G}\qmat{G}^{T})^{-1}\|_F^2 &= \frac{r(k - 1)}{(k - r)(k - r - 1)(k - r - 3)}, & r &\leq k - 4.
    \end{align*}
   For any     \( t > 1 \) and $r\leq k-4$, 
    \begin{equation*} 
    \mathbb{P} \left\{ \|\qmat{G}^\dagger\|_F > t\cdot\sqrt{\frac{3r}{k - r + 1}} \right\} 
    \leq   t^{-(k - r ) }.
    \end{equation*}
    \end{lemma}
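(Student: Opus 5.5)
Since $\qmat{G}\qmat{G}^{T}$ is a Wishart matrix $\mathcal W_r(k,\qmat{I})$, the plan is to reduce all three claims to classical inverse-Wishart and chi-square facts, following \cite{FindingStructureHalko}. Throughout, set $\qmat{W}=\qmat{G}\qmat{G}^{T}$.

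\textbf{First moment.} By rotational invariance, $\qmat{U}^{T}\qmat{W}\qmat{U}$ has the same law as $\qmat{W}$ for every orthogonal $\qmat{U}$. Hence $\mathbb E\qmat{W}^{-1}$ commutes with all orthogonal matrices and so equals $c\,\qmat{I}_r$. To find $c$, look at the $(1,1)$ entry. By the Schur complement, $(\qmat{W}^{-1})_{11}=1/\|(\qmat{I}-P)\qmat{g}_1\|^2$, where $\qmat{g}_1^{T}$ is the first row of $\qmat{G}$ and $P$ is the orthogonal projector onto the span of the other $r-1$ rows. Conditionally on those rows, $\|(\qmat{I}-P)\qmat{g}_1\|^2\sim\chi^2_{k-r+1}$. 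Using $\mathbb E[1/\chi^2_\nu]=1/(\nu-2)$, valid for $\nu>2$, i.e. $r\le k-2$, gives $c=1/(k-r-1)$.

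\textbf{Second moment.} Write $\mathbb E\|\qmat{W}^{-1}\|_F^2=\mathbb E\,\trace{\qmat{W}^{-2}}$. By the same invariance argument, $\mathbb E\qmat{W}^{-2}=c_2\qmat{I}_r$, so it suffices to compute $c_2$. I would take it from the standard second-moment formula for the inverse Wishart, $\mathbb E[(\qmat{W}^{-1})_{ij}(\qmat{W}^{-1})_{kl}]$, with degrees of freedom $k$ and dimension $r$. Contracting that formula over the index pairs $(i,l)$ and $(j,k)$ yields $c_2=(k-1)/((k-r)(k-r-1)(k-r-3))$, which requires $r\le k-4$. Multiplying by $r$ for the trace gives the stated identity.

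\textbf{Tail bound.} Use $\|\qmat{G}^\dagger\|_F^2=\trace{\qmat{W}^{-1}}=\sum_{j=1}^r X_j^{-1}$. By the Schur-complement argument applied to each row, every $X_j$ is marginally $\chi^2_{k-r+1}$; the $X_j$ are dependent. Fix the moment order $q=(k-r)/2$. Minkowski's inequality in $L_q$ gives
\[
\left(\mathbb E\|\qmat{G}^\dagger\|_F^{2q}\right)^{1/q}\le r\left(\mathbb E X^{-q}\right)^{1/q},\qquad X\sim\chi^2_{k-r+1},
\]
and this step needs no independence. Next, compute $\mathbb E X^{-q}$ explicitly as a ratio of Gamma functions and bound it by Stirling-type estimates, aiming for $r(\mathbb E X^{-q})^{1/q}\le 3r/(k-r+1)$. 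Markov's inequality applied to $\|\qmat{G}^\dagger\|_F^{2q}$ then gives
\[
\mathbb P\left\{\|\qmat{G}^\dagger\|_F>t\sqrt{\tfrac{3r}{k-r+1}}\right\}\le t^{-2q}=t^{-(k-r)}.
\]

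\textbf{Main obstacle.} I expect the Gamma-function estimate to be the hard step. One must show
\[
\left(\frac{\Gamma(\tfrac{k-r+1}{2}-q)}{2^{q}\,\Gamma(\tfrac{k-r+1}{2})}\right)^{1/q}\le\frac{3}{k-r+1}
\]
uniformly in $k-r\ge 4$. Since $q$ is only slightly smaller than $\tfrac{k-r+1}{2}$, the numerator is $\Gamma(1/2)$. The ratio is therefore governed by $\Gamma(\tfrac{k-r+1}{2})^{-1/q}$, and the constant $3$ must absorb the Stirling slack together with the factor $\sqrt{\pi}^{1/q}$. I would check the small values of $k-r$ directly and handle the rest with the bound $\Gamma(x)\ge (x/e)^{x-1/2}\sqrt{2\pi}$.
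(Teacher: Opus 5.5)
The paper gives no proof and cites \cite{tropp2023RandomizedAlgorithms,FindingStructureHalko}. Your plan is correct and is essentially the argument in those sources. For the first moment you use the $\chi^2_{k-r+1}$ law of the diagonal entries of $(\qmat{G}\qmat{G}^{T})^{-1}$. For the second moment you use the inverse-Wishart second-moment formula, which does contract to $r(k-1)/((k-r)(k-r-1)(k-r-3))$. For the tail you reproduce the proof of Proposition~10.4 in \cite{FindingStructureHalko}: Minkowski in $L_q$ with $q=(k-r)/2$, the Gamma-ratio formula, Stirling, and Markov.

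One slip needs repair. The inequality $\Gamma(x)\ge (x/e)^{x-1/2}\sqrt{2\pi}$ is false: at $x=1$ it claims $1\ge\sqrt{2\pi/e}\approx 1.52$, and for large $x$ it exceeds $\Gamma(x)$ by a factor tending to $\sqrt{e}$. Use instead $\Gamma(x)\ge\sqrt{2\pi}\,x^{x-1/2}e^{-x}$ with $x=(p+1)/2$ and $p=k-r$. This gives $(\mathbb{E}X^{-q})^{1/q}\le e\,(e/2)^{1/p}/(p+1)\le e\,(e/2)^{1/4}/(p+1)<2.94/(p+1)$ for every $p\ge 4$. The constant $3$ then follows uniformly, and no separate check of small $k-r$ is needed.
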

    \begin{lemma}[Inverse moment bounds, spectral norm \cite{tropp2023RandomizedAlgorithms,FindingStructureHalko}]\label{lem:inverse_moment_spectral}
        For $0 < p \leq 18$ and $p \leq \frac{k-r}{2}$, the standard Gaussian matrix $\qmat{G} \in \mathbb{R}^{r \times k}$ satisfies
        \begin{align*}
        \left( \mathbb{E} \| (\qmat{G}\qmat{G}^{T})^{-1} \|^p \right)^{1/p} \leq \frac{e^2 (k + r)}{2(k - r)^2}, ~
            \probleftright{ \|\qmat{G}^\dagger\| \geq \frac{e \sqrt{k}}{ k-r+1  } \cdot t} &\leq t^{- {(k-r+1)} },~~(t>1).
        \end{align*}       
     \end{lemma}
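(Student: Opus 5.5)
Both claims are about the smallest singular value of $\qmat{G}$. The key identity is
\[
\normSpectral{(\qmat{G}\qmat{G}^{T})^{-1}}=\normSpectral{\qmat{G}^\dagger}^2=\sigma_{\min}(\qmat{G})^{-2},
\]
valid almost surely since $\qmat{G}$ has full row rank $r\le k$. So I would first prove the tail estimate for $\normSpectralnoleftright{\qmat{G}^\dagger}$, and then obtain the moment estimate by integrating that tail. This is the route of \cite[Prop.~A.3]{FindingStructureHalko}. The constant in the moment bound matches the form stated in \cite{tropp2023RandomizedAlgorithms}, so I would check my final constant against that reference.

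\textbf{Tail bound.} $\qmat{G}\qmat{G}^{T}$ is an $r\times r$ Wishart matrix with $k$ degrees of freedom. I would invoke the Chen--Dongarra estimate for its smallest eigenvalue, which comes from the joint eigenvalue density of the Wishart ensemble. It gives, for every $x>0$,
\[
\probleftright{\normSpectralnoleftright{\qmat{G}^\dagger}\geq x}\leq \frac{1}{\sqrt{2\pi(k-r+1)}}\Bigl(\frac{e\sqrt{k}}{k-r+1}\Bigr)^{k-r+1}x^{-(k-r+1)} .
\]
Substituting $x=t\,e\sqrt{k}/(k-r+1)$ cancels the bracketed factor. The prefactor $1/\sqrt{2\pi(k-r+1)}$ is at most $1$, so the tail is at most $t^{-(k-r+1)}$ for $t>1$, which is the second claim.

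\textbf{Moment bound.} Write $M=\normSpectralnoleftright{\qmat{G}^\dagger}^2$ and $c_0=e\sqrt{k}/(k-r+1)$. I would use the layer-cake formula
\[
\mathbb{E}M^p=\int_0^\infty p\,y^{p-1}\,\probleftright{M>y}\,dy ,
\]
split at $y_0=c_0^2$, bound the probability by $1$ on $[0,y_0]$, and use the tail bound with $x=\sqrt{y}$ on $[y_0,\infty)$. The second integral converges exactly when $2p<k-r+1$, which is where the hypothesis $p\le (k-r)/2$ enters. This gives
\[
\mathbb{E}M^p\le c_0^{2p}\Bigl(1+\frac{2p}{k-r+1-2p}\Bigr)
\quad\Longrightarrow\quad
(\mathbb{E}M^p)^{1/p}\le \frac{e^2 k}{(k-r+1)^2}\Bigl(1+\frac{2p}{k-r+1-2p}\Bigr)^{1/p}.
\]

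\textbf{Main obstacle.} Turning this into the clean form $e^2(k+r)/(2(k-r)^2)$ is the delicate step. If $p$ is close to $(k-r)/2$, the factor $\bigl(1+2p/(k-r+1-2p)\bigr)^{1/p}$ can be as large as roughly $(k-r)^{2/(k-r)}$. I would try to absorb it using the slack between $k/(k-r+1)^2$ and $(k+r)/(2(k-r)^2)$, together with the restriction $p\le 18$, which is presumably there to control such $p$-dependent constants; I would compare against the precise statement in \cite{tropp2023RandomizedAlgorithms}. If that accounting does not close, my fallback is to represent $(\qmat{G}\qmat{G}^{T})^{-1}$ as an inverse Wishart matrix and bound its largest-eigenvalue moments through the known inverse-Wishart moment formulas combined with the Chen--Dongarra density.
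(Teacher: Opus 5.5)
The paper gives no proof of this lemma; it quotes both inequalities from \cite{tropp2023RandomizedAlgorithms,FindingStructureHalko}. Your tail part is correct. It is exactly the argument behind \cite[Prop.~10.4]{FindingStructureHalko}: the Chen--Dongarra bound plus Stirling, then the substitution $x=te\sqrt{k}/(k-r+1)$.

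\textbf{The gap is in the moment part.} The real obstruction is the leading factor $k$, not the $p$-dependent factor you single out. Put $a=k-r+1$. The only information you feed into the layer-cake formula is
\[
\mathbb{P}(\|G^\dagger\|^2>y)\le\min\bigl\{1,(2\pi a)^{-1/2}(e^2k/(a^2y))^{a/2}\bigr\}.
\]
Even if you keep the prefactor and split at the crossover point, the best bound on $(\mathbb{E}\|G^\dagger\|^{2p})^{1/p}$ this can ever yield is
\[
(2\pi a)^{-1/a}\,\frac{e^2k}{a^2}\Bigl(\frac{a}{a-2p}\Bigr)^{1/p}\;\ge\;(2\pi a)^{-1/a}\,e^{2/a}\,\frac{e^2k}{a^2},
\]
which is of size $e^2k/(k-r)^2$.

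The claimed constant has $(k+r)/2<k$ in place of $k$. So the ``slack'' you hope to use is negative once $r$ is small compared with $k$. Take $r=1$, $k=11$:
\begin{itemize}
\item the best achievable value above is at least $0.074\,e^2$ for every admissible $p$;
\item your displayed bound is even $\ge e^2/11\approx 0.091\,e^2$;
\item the claimed constant is only $12e^2/200=0.06\,e^2$.
\end{itemize}
The statement itself is fine there, e.g.\ $\mathbb{E}[1/\chi^2_{11}]=1/9\approx 0.015\,e^2$ at $p=1$. The cap $p\le 18$ cannot rescue this, because the discrepancy persists as $p\to 0$.

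\textbf{What is missing is a sharper tail.} The density of $\lambda_{\min}(GG^T)$ satisfies $f(x)\le L\,x^{(k-r-1)/2}e^{-x/2}$ (the Chen--Dongarra estimate before simplification), with
\[
L=\frac{2^{(k-r-1)/2}\,\Gamma\bigl(\frac{k+1}{2}\bigr)}{\Gamma\bigl(\frac r2\bigr)\,\Gamma(k-r+1)}.
\]
The tail you quote corresponds to bounding $\Gamma(\frac{k+1}{2})/\Gamma(\frac r2)$ by $(k/2)^{(k-r+1)/2}$. Since the relevant arguments run from $r/2$ up to about $k/2$, AM--GM (or concavity of the digamma function) gives instead
\[
\Gamma(\tfrac{k+1}{2})/\Gamma(\tfrac r2)\le\bigl((k+r)/4\bigr)^{(k-r+1)/2}.
\]
This replaces $\sqrt{k}$ by $\sqrt{(k+r)/2}$ in your tail. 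Your layer-cake computation, with the prefactor kept, then gives
\[
\bigl(\mathbb{E}\|(GG^T)^{-1}\|^p\bigr)^{1/p}\le\frac{e^2(k+r)}{2a^2}\,(2\pi a)^{-1/a}\Bigl(\frac{a}{a-2p}\Bigr)^{1/p}.
\]
It remains to check that $(2\pi a)^{-1/a}(a/(a-2p))^{1/p}\le (a/(a-1))^2$. This holds for all $0<p\le\min\{18,(k-r)/2\}$. The binding case is $p=(k-r)/2$, and the check already fails at $p=(k-r)/2=19.5$, which is presumably why the statement caps $p$ at $18$. Your inverse-Wishart fallback points in the right direction only if it keeps this exact Chen--Dongarra constant; trace-type inverse-Wishart moments alone lose a dimensional factor.
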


     \begin{corollary}
        \label{col:SigmaPhi2Phi1_second_moment_bound} Let $\qmat{A}\in\bbR^{m\times n}$ be fixed, and $\qmat{G}_1\in\bbR^{s\times k}~(\color{black}k\geq s+2\color{black}),\qmat{G}_2\in\bbR^{n\times k}$ be independent standard Gaussian matrices. Then
        \begin{align*}
           \mathbb E\|\qmat{A}\qmat{G}_2\qmat{G}_1^\dagger \|_F^2 =\! \frac{s\normFSquare{\qmat{A}}}{k-s-1}
            ; ~ \mathbb E\|\qmat{A}\qmat{G}_2\qmat{G}_1^\dagger \|^2 \!\leq\! \left(\normSpectral{\qmat{A}}\sqrt{\frac{s}{k-s-1}}+\normF{\qmat{A}}\frac{e\sqrt{k+s}}{\sqrt{2}(k-s)}\right)^2.
        \end{align*}
    \end{corollary}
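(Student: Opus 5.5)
The plan is to condition on $\qmat{G}_2$, using its independence from $\qmat{G}_1$, and then apply Lemma \ref{lem:SGT} and Lemmas \ref{lem:inverse_moment_F} and \ref{lem:inverse_moment_spectral}. Since $k\geq s+2$, the wide matrix $\qmat{G}_1\in\bbR^{s\times k}$ has full row rank almost surely. Hence $\qmat{G}_1^\dagger=\qmat{G}_1^{T}(\qmat{G}_1\qmat{G}_1^{T})^{-1}$ and $\qmat{G}_1^\dagger(\qmat{G}_1^\dagger)^{T}=(\qmat{G}_1\qmat{G}_1^{T})^{-1}$.

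\emph{Frobenius identity.} Condition on $\qmat{G}_1$ and write $\qmat{T}=\qmat{G}_1^\dagger$. Lemma \ref{lem:SGT} with $\qmat{S}=\qmat{A}$ gives
\[
\mathbb E_{\qmat{G}_2}\|\qmat{A}\qmat{G}_2\qmat{T}\|_F^2=\|\qmat{A}\|_F^2\|\qmat{G}_1^\dagger\|_F^2=\|\qmat{A}\|_F^2\,\trace{(\qmat{G}_1\qmat{G}_1^{T})^{-1}}.
\]
Next take the expectation over $\qmat{G}_1$. The first identity of Lemma \ref{lem:inverse_moment_F}, with $r=s$, is valid because $s\leq k-2$, and it gives $\mathbb E\,\trace{(\qmat{G}_1\qmat{G}_1^{T})^{-1}}=s/(k-s-1)$. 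This yields the stated equality.

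\emph{Spectral bound.} Again condition on $\qmat{G}_1$ and apply the second part of Lemma \ref{lem:SGT}:
\[
\mathbb E_{\qmat{G}_2}\|\qmat{A}\qmat{G}_2\qmat{G}_1^\dagger\|^2\leq\bigl(\|\qmat{A}\|_F\|\qmat{G}_1^\dagger\|+\|\qmat{A}\|\,\|\qmat{G}_1^\dagger\|_F\bigr)^2.
\]
Now take the expectation over $\qmat{G}_1$. Rather than expanding the square, use Minkowski's inequality in $L^2$, which states $\sqrt{\mathbb E(X+Y)^2}\leq\sqrt{\mathbb E X^2}+\sqrt{\mathbb E Y^2}$ for nonnegative random variables. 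This reduces the task to computing $\mathbb E\|\qmat{G}_1^\dagger\|_F^2$ and $\mathbb E\|\qmat{G}_1^\dagger\|^2$.

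The first quantity was already obtained: it equals $s/(k-s-1)$. This produces the term $\|\qmat{A}\|\sqrt{s/(k-s-1)}$.

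For the second quantity, note $\|\qmat{G}_1^\dagger\|^2=\|(\qmat{G}_1\qmat{G}_1^{T})^{-1}\|$. Apply Lemma \ref{lem:inverse_moment_spectral} with $p=1$, which requires $1\leq (k-s)/2$, i.e.\ $k\geq s+2$, exactly the stated hypothesis. It gives $\mathbb E\|(\qmat{G}_1\qmat{G}_1^{T})^{-1}\|\leq e^2(k+s)/(2(k-s)^2)$. Taking square roots produces the coefficient $e\sqrt{k+s}/(\sqrt2(k-s))$ multiplying $\|\qmat{A}\|_F$. Squaring the resulting sum gives the claimed bound.

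The only delicate points are checking the parameter ranges of the lemmas, using $k\geq s+2$ for both the trace expectation and the $p=1$ moment, and using Minkowski to decouple the cross term. Everything else is routine.
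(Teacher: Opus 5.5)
Your proof is correct and follows the paper's argument. You condition on $\qmat{G}_1$, apply Lemma \ref{lem:SGT}, and then use Lemmas \ref{lem:inverse_moment_F} and \ref{lem:inverse_moment_spectral} (with $p=1$). Your Minkowski step is equivalent to the paper's expansion of the square followed by H\"older/Cauchy--Schwarz on the cross term.

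One cosmetic slip: $(\qmat{G}_1\qmat{G}_1^{T})^{-1}$ equals $(\qmat{G}_1^\dagger)^{T}\qmat{G}_1^\dagger$, not $\qmat{G}_1^\dagger(\qmat{G}_1^\dagger)^{T}$. The two share their nonzero eigenvalues, so the trace and spectral-norm identities you rely on are unaffected.
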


     \begin{lemma}[\cite{vershynin2018high}]
        \label{lem:LipschitzConcentrationIneq}
       Let \( h \) be a Lipschitz function on matrices:
        $  
             |h(\qmat{X}) - h(\qmat{Y})| \leq L \normF{\qmat{X} - \qmat{Y}}$, $\forall\qmat{X}, \qmat{Y}.
        $
         Draw a standard Gaussian matrix \( \qmat{G} \). Then for $u>0$, 
         \[
             \probleftright{ h(\qmat{G}) - \mathbb E h(\qmat{G})  > uL} \leq e^{-u^2/2}.
         \]
     \end{lemma}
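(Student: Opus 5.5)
The plan is to identify the space of matrices of the size of $\qmat{G}$ with $\bbR^N$, equipped with the Euclidean norm, which coincides with the Frobenius norm. Under this identification $\qmat{G}$ is a standard Gaussian vector $g\sim N(0,\qmat{I}_N)$, and $h$ is an $L$-Lipschitz function on $\bbR^N$. The tail bound will follow from a sub-Gaussian bound on the moment generating function,
\begin{align*}
\mathbb E\, e^{\lambda (h(g)-\mathbb E h(g))}\leq e^{\lambda^2L^2/2},\qquad \lambda>0.
\end{align*}
Given this, Markov's inequality gives
\begin{align*}
\probleftright{h-\mathbb E h>uL}\leq e^{-\lambda uL+\lambda^2L^2/2}.
\end{align*}
Choosing $\lambda=u/L$ yields $e^{-u^2/2}$.

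\textbf{Regularity reduction.} To prove the MGF bound, I would first reduce to smooth $h$. Convolve $h$ with a narrow Gaussian mollifier to obtain $h_\varepsilon$. Each $h_\varepsilon$ is smooth and still $L$-Lipschitz, hence $\|\nabla h_\varepsilon\|_2\leq L$ pointwise. Moreover $h_\varepsilon\to h$ uniformly, since $|h_\varepsilon-h|\leq L\,\mathbb E\|\varepsilon\xi\|$. Both sides of the MGF bound pass to the limit by dominated convergence, using that $e^{\lambda|h|}$ is integrable because $h$ grows at most linearly.

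\textbf{Herbst argument.} This is the step where the sharp constant $1/2$ matters, and it is the main point of the proof. Simpler routes such as Maurey--Pisier interpolation only give the exponent $-2u^2/\pi^2$, so I would use the Gaussian logarithmic Sobolev inequality
\begin{align*}
\mathrm{Ent}(f^2)\leq 2\,\mathbb E\|\nabla f\|_2^2 .
\end{align*}
Assuming without loss of generality that $\mathbb E h=0$, apply it to $f=e^{\lambda h/2}$. With $M(\lambda)=\mathbb E e^{\lambda h}$, this gives
\begin{align*}
\lambda M'(\lambda)-M(\lambda)\log M(\lambda)\leq \tfrac{\lambda^2}{2}\,\mathbb E\!\left[\|\nabla h\|_2^2 e^{\lambda h}\right]\leq \tfrac{\lambda^2L^2}{2}M(\lambda).
\end{align*}
Setting $K(\lambda)=\lambda^{-1}\log M(\lambda)$, the inequality becomes $K'(\lambda)\leq L^2/2$. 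Since $K(0^+)=\mathbb E h=0$, integrating gives $K(\lambda)\leq \lambda L^2/2$. This is exactly $M(\lambda)\leq e^{\lambda^2L^2/2}$, which is the MGF bound needed above.

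The only genuinely nontrivial ingredient is the Gaussian log-Sobolev inequality itself. I would cite it (Gross), or derive it from the two-point inequality for Bernoulli variables by tensorization and the central limit theorem. This is the standard route in \cite{vershynin2018high}, so in the paper it suffices to invoke that reference.
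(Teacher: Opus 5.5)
Your proof is correct. The Herbst computation checks out: with $M(\lambda)=\mathbb E e^{\lambda h}$, the log-Sobolev inequality gives $\frac{d}{d\lambda}\bigl(\lambda^{-1}\log M(\lambda)\bigr)\le L^2/2$, hence $M(\lambda)\le e^{\lambda^2L^2/2}$. The Chernoff step with $\lambda=u/L$ then yields exactly $e^{-u^2/2}$. Gaussian mollification preserves the Lipschitz constant and converges uniformly, so the reduction to smooth $h$ is sound. Two routine points are left implicit. First, you apply the log-Sobolev inequality to the unbounded function $e^{\lambda h/2}$; this is legitimate because that function lies in the Gaussian Sobolev space $W^{1,2}$, or one can argue by truncation. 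Second, you differentiate $M$ under the integral sign. Both are justified by the linear growth of $h$.

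The paper gives no argument of its own; it only cites \cite{vershynin2018high}. There, Gaussian concentration is deduced from the Gaussian isoperimetric inequality. It is stated as $\|h(\qmat{G})-\mathbb E h(\qmat{G})\|_{\psi_2}\le CL$ with an unspecified absolute constant $C$, which does not by itself deliver the constant $1/2$ in the exponent.

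The sharp version you prove is the Tsirelson--Ibragimov--Sudakov inequality. It is stated, essentially verbatim, as Prop.~10.3 of \cite{FindingStructureHalko}, and it is the version the paper actually relies on. The $e^{-u^2/2}$ terms in Corollary~\ref{col:SGT_deviation}, Corollary~\ref{col:AG2G1_deviation_refined_from_finding}, and the later deviation bounds all inherit this constant.

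So your route supplies exactly what the citation is being used for, at the cost of importing the log-Sobolev inequality rather than isoperimetry. Your closing remark is therefore slightly off. Merely invoking \cite{vershynin2018high} would not justify the stated constant, whereas your self-contained Herbst argument, or a citation of \cite{FindingStructureHalko}, does.
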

\begin{corollary}\label{col:SGT_deviation}
    For standard Gaussian matrix $\qmat{G}$ and two   random matrices $\qmat{S}$ and $\qmat{T}$ independent from $\qmat{G}$, for $u>0$ we have:  
    \begin{align}
        \probleftright{\normSpectral{\qmat{S}\qmat{G}\qmat{T}}>e_1e_4+e_2e_3+e_2e_4u}<e^{-u^2/2}+\probleftright{E^C},
    \end{align}
    where $E\!=\!\{\qmat{S},\qmat{T}\!\mid\! \normF{\qmat{S}} \leq e_1,\normSpectral{\qmat{S}} \leq e_2,\normF{\qmat{T}} \leq e_3,\normSpectral{\qmat{T}}\leq e_4\}$ and $e_i>0,~i=1,\ldots,4$.
     \end{corollary}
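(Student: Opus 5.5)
The plan is to condition on $\qmat{S}$ and $\qmat{T}$ and apply Lemma \ref{lem:LipschitzConcentrationIneq} to the Gaussian matrix $\qmat{G}$ alone. Then we uncondition, at the cost of adding $\probleftright{E^C}$.

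\emph{Step 1 (conditioning).} By independence, conditional on $(\qmat{S},\qmat{T})$ the matrix $\qmat{G}$ is still standard Gaussian. For fixed $\qmat{S},\qmat{T}$, define $h(\qmat{X})=\normSpectral{\qmat{S}\qmat{X}\qmat{T}}$. This function is Lipschitz with constant $L=\normSpectral{\qmat{S}}\normSpectral{\qmat{T}}$, since
\[
|h(\qmat{X})-h(\qmat{Y})|\le \normSpectral{\qmat{S}(\qmat{X}-\qmat{Y})\qmat{T}}\le \normSpectral{\qmat{S}}\normSpectral{\qmat{T}}\normSpectral{\qmat{X}-\qmat{Y}}\le \normSpectral{\qmat{S}}\normSpectral{\qmat{T}}\normF{\qmat{X}-\qmat{Y}}.
\]

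\emph{Step 2 (mean).} By Jensen's inequality and Lemma \ref{lem:SGT},
\[
\mathbb E_{\qmat{G}}h(\qmat{G})\le (\mathbb E_{\qmat{G}}\normSpectral{\qmat{S}\qmat{G}\qmat{T}}^2)^{1/2}\le \normF{\qmat{S}}\normSpectral{\qmat{T}}+\normSpectral{\qmat{S}}\normF{\qmat{T}}.
\]

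\emph{Step 3 (conditional tail).} Lemma \ref{lem:LipschitzConcentrationIneq} then gives
\[
\probleftright{\normSpectral{\qmat{S}\qmat{G}\qmat{T}}>\normF{\qmat{S}}\normSpectral{\qmat{T}}+\normSpectral{\qmat{S}}\normF{\qmat{T}}+u\normSpectral{\qmat{S}}\normSpectral{\qmat{T}}\,\big|\,\qmat{S},\qmat{T}}\le e^{-u^2/2}.
\]
On the event $E$ the threshold on the left is at most $e_1e_4+e_2e_3+e_2e_4u$. Hence, for every $(\qmat{S},\qmat{T})\in E$, the conditional probability that $\normSpectral{\qmat{S}\qmat{G}\qmat{T}}>e_1e_4+e_2e_3+e_2e_4u$ is at most $e^{-u^2/2}$.

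\emph{Step 4 (unconditioning).} Split the failure event according to $E$ and $E^C$:
\[
\probleftright{\normSpectral{\qmat{S}\qmat{G}\qmat{T}}>\cdots}\le \mathbb E\bigl[\mathbf 1_E\,\mathbb P(\cdots\mid\qmat{S},\qmat{T})\bigr]+\probleftright{E^C}\le e^{-u^2/2}\probleftright{E}+\probleftright{E^C}.
\]
This is at most $e^{-u^2/2}+\probleftright{E^C}$.

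\emph{Expected difficulty.} No step is really hard. The only care needed is the measure-theoretic justification that conditioning on independent $(\qmat{S},\qmat{T})$ leaves $\qmat{G}$ standard Gaussian, which is a Fubini argument. The strict inequality in the statement follows because $e^{-u^2/2}\probleftright{E}<e^{-u^2/2}$ whenever $\probleftright{E^C}>0$. In the degenerate case $\probleftright{E^C}=0$, the strictness is non-essential, and the non-strict bound is what is actually used later.
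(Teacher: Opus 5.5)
Your proof is correct and follows the same route as the paper. You bound the conditional mean of $\|\qmat{S}\qmat{G}\qmat{T}\|$ via Jensen and Lemma~\ref{lem:SGT}, apply Gaussian Lipschitz concentration with constant $\|\qmat{S}\|\,\|\qmat{T}\|$, and then remove the conditioning at the cost of $\mathbb{P}(E^C)$; conditioning pointwise on $(\qmat{S},\qmat{T})$ and flagging the strict-versus-non-strict inequality are slightly more careful than the paper's write-up, which conditions directly on $E$.
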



The proofs of Corollaries \ref{col:SigmaPhi2Phi1_second_moment_bound} and \ref{col:SigmaPhi2Phi1_second_moment_bound} are given in Appendix \ref{sec:proofs_technical_lemmas}. 
Using the above results, the following   bounds can be derived. They have appeared in the proofs of \cite[Thm. 10.7 and Thm. 10.8]{FindingStructureHalko}. 
\begin{corollary}
    \label{col:AG2G1_deviation_refined_from_finding}
    Under the setting of Corollary \ref{col:SigmaPhi2Phi1_second_moment_bound}, for any $t>1$, $u>0$, $k-s\geq 4$, we have
    \begin{small}
    \begin{align*}
        &\probleftright{\normSpectralnoleftright{\qmat{A}\qmat{G}_2\qmat{G}_1^\dagger} > (\normF{\qmat{A}} + u\normSpectralnoleftright{\qmat{A}})\frac{e\sqrt{k}}{k-s+1}t  + \normSpectralnoleftright{\qmat{A}}\sqrt{\frac{3s}{k-s+1}}t  } < 2t^{-{(k-s)}} + e^{-u^2/2};\\
       & \probleftright{\normF{\qmat{A}\qmat{G}_2\qmat{G}_1^\dagger}  > \normF{\qmat{A}}\sqrt{\frac{3s}{k-s+1}}t + \normSpectralnoleftright{\qmat{A}}\frac{e\sqrt{k}}{k-s+1}tu   } < 2t^{-{(k-s)}} + e^{-u^2/2}.
    \end{align*}
\end{small}
\end{corollary}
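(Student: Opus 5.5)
The plan is to follow the proofs of \cite[Thm.~10.7 and 10.8]{FindingStructureHalko}. We first condition on $\qmat{G}_1$ and use the Gaussian measure of $\qmat{G}_2$, and then control $\qmat{G}_1^\dagger$ through the tail bounds in Lemmas \ref{lem:inverse_moment_F} and \ref{lem:inverse_moment_spectral}. Since $\qmat{G}_1$ and $\qmat{G}_2$ are independent, it suffices to treat $\qmat{T}=\qmat{G}_1^\dagger$ as a fixed matrix while $\qmat{G}_2$ is Gaussian.

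\textbf{Step 1 (conditional deviation).} Fix $\qmat{G}_1$ with full row rank, which holds almost surely. For the spectral norm, take $h(\qmat{X})=\normSpectralnoleftright{\qmat{A}\qmat{X}\qmat{G}_1^\dagger}$. This $h$ is Lipschitz in the Frobenius norm with constant $L=\normSpectralnoleftright{\qmat{A}}\normSpectralnoleftright{\qmat{G}_1^\dagger}$, because $\normSpectralnoleftright{\qmat{A}\qmat{X}\qmat{T}}\le \normSpectralnoleftright{\qmat{A}}\normF{\qmat{X}}\normSpectralnoleftright{\qmat{T}}$. Its mean satisfies
\[
\mathbb E\, h\le (\mathbb E h^2)^{1/2}\le \normSpectralnoleftright{\qmat{A}}\normF{\qmat{G}_1^\dagger}+\normF{\qmat{A}}\normSpectralnoleftright{\qmat{G}_1^\dagger}
\]
by Lemma \ref{lem:SGT}. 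Lemma \ref{lem:LipschitzConcentrationIneq} then gives, with conditional probability at least $1-e^{-u^2/2}$,
\[
\normSpectralnoleftright{\qmat{A}\qmat{G}_2\qmat{G}_1^\dagger}\le \normSpectralnoleftright{\qmat{A}}\normF{\qmat{G}_1^\dagger}+(\normF{\qmat{A}}+u\normSpectralnoleftright{\qmat{A}})\normSpectralnoleftright{\qmat{G}_1^\dagger}.
\]
For the Frobenius norm, take $h(\qmat{X})=\normF{\qmat{A}\qmat{X}\qmat{G}_1^\dagger}$. Its Lipschitz constant is again $\normSpectralnoleftright{\qmat{A}}\normSpectralnoleftright{\qmat{G}_1^\dagger}$, and $\mathbb E h\le \normF{\qmat{A}}\normF{\qmat{G}_1^\dagger}$ by the exact identity in Lemma \ref{lem:SGT}. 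This yields
\[
\normF{\qmat{A}\qmat{G}_2\qmat{G}_1^\dagger}\le \normF{\qmat{A}}\normF{\qmat{G}_1^\dagger}+u\normSpectralnoleftright{\qmat{A}}\normSpectralnoleftright{\qmat{G}_1^\dagger}.
\]

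\textbf{Step 2 (control of $\qmat{G}_1^\dagger$).} Since $k-s\ge 4$, Lemma \ref{lem:inverse_moment_F} with $(r,k)=(s,k)$ gives
\[
\mathbb P\bigl\{\normF{\qmat{G}_1^\dagger}>t\sqrt{3s/(k-s+1)}\bigr\}\le t^{-(k-s)}.
\]
Lemma \ref{lem:inverse_moment_spectral} gives
\[
\mathbb P\bigl\{\normSpectralnoleftright{\qmat{G}_1^\dagger}\ge t\, e\sqrt{k}/(k-s+1)\bigr\}\le t^{-(k-s+1)}\le t^{-(k-s)},
\]
using $t>1$. Let $E$ be the event on which both bounds hold, so that $\mathbb P(E^c)\le 2t^{-(k-s)}$.

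\textbf{Step 3 (combine).} On $E$, substitute the two bounds on $\qmat{G}_1^\dagger$ into the conditional inequalities from Step 1. This yields exactly the thresholds in the statement. Concretely, the spectral norm is bounded by
\[
(\normF{\qmat{A}}+u\normSpectralnoleftright{\qmat{A}})\frac{e\sqrt k}{k-s+1}t+\normSpectralnoleftright{\qmat{A}}\sqrt{\frac{3s}{k-s+1}}\,t,
\]
and the Frobenius norm is bounded by the analogous expression in the statement. The failure probability follows by integrating the conditional probability over $\qmat{G}_1$: the total failure probability is at most $\mathbb P(E^c)+\sup_{\qmat{G}_1\in E}\mathbb P_{\qmat{G}_2}(\text{deviation})\le 2t^{-(k-s)}+e^{-u^2/2}$. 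This is essentially Corollary \ref{col:SGT_deviation} with $\qmat{S}=\qmat{A}$ and $\qmat{T}=\qmat{G}_1^\dagger$.

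The main obstacle is bookkeeping rather than depth. One must check that the Lipschitz constants and the expectation bounds line up with the stated thresholds; in particular, the spectral Lipschitz constant pairs the factor $u$ with $\normSpectralnoleftright{\qmat{A}}$ rather than $\normF{\qmat{A}}$. One must also verify that the strict inequality and the coarsening of the exponent $t^{-(k-s+1)}\le t^{-(k-s)}$ are handled consistently.
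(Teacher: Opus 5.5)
Your proposal is correct and follows the same argument the paper relies on: the paper gives no separate proof and defers to the proofs of \cite[Thm.~10.7 and 10.8]{FindingStructureHalko}, which likewise condition on $\qmat{G}_1$, apply Lipschitz concentration in $\qmat{G}_2$ using the moment bounds of Lemma~\ref{lem:SGT}, and then intersect with the tail events for $\normF{\qmat{G}_1^\dagger}$ and $\normSpectralnoleftright{\qmat{G}_1^\dagger}$ (equivalently, Corollary~\ref{col:SGT_deviation} with $\qmat{S}=\qmat{A}$ and $\qmat{T}=\qmat{G}_1^\dagger$). Your handling of the strict inequality is also sound, because $t^{-(k-s)}+t^{-(k-s+1)}<2t^{-(k-s)}$ for $t>1$.
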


\subsection{Deterministic projection error}\label{sec:deter_qb_err}

For     $\qmat{A} \in\bbR^{m\times n}$, without loss of generality   assume that  $m\geq n$. For any $\varrho\geq r$ with $r$ the target rank,   partition $\qmat{A}$   as 
\begin{align}\label{eq:A_SVD}
    \begin{array}{c@{}c@{}c@{}c@{}c}
        &   & \begin{array}{cc} \scriptstyle \varrho & \scriptstyle n-\varrho \end{array} &    \begin{array}{c}
               \scriptstyle n \end{array} & \\[-2pt]  
               \qmat{A}=\qmat{U}\boldsymbol{\Sigma}\qmat{V}^{T}= &      \qmat{U} &
           \left[
           \begin{array}{cc}
               \boldsymbol{\Sigma}_1 &  \\ 
                & \boldsymbol{\Sigma}_2
           \end{array}
           \right] &
               \left[
               \begin{array}{c}
                   \mathbf{V}_1^{T}  \\
                   \mathbf{V}_2^{T}
               \end{array}
               \right] &     
           \begin{array}{c}
               \scriptstyle \varrho \\[2pt]
               \scriptstyle n-\varrho
           \end{array}
       \end{array},
\end{align}  
     where $\qmat{U}\in\bbR^{m\times n}$, $\qmat{V}\in\bbR^{n\times n}$ are orthonormal, $\boldsymbol{\Sigma}\in\bbR^{n\times n}$,  $\boldsymbol{\Sigma}_1\in\bbR^{\varrho \times \varrho }$, $\boldsymbol{\Sigma}_2 \in\bbR^{(n-\varrho )\times (n-\varrho )}$ are diagonal matrices, and 
     $\qmat{V}_1\in \bbR^{n\times \varrho },\qmat{V}_2\in\bbR^{n\times (n-\varrho )}$. The singular values in $\boldsymbol{\Sigma}$ are arranged in a descending order.
    
    Let   $\bdOmega\in\bbR^{n\times s}, \bdPhi\in\bbR^{n\times l}$ be two test matrices with $l> s > \varrho $. We decompose $\bdOmega$ and $\bdPhi$ determined by the partition of $\qmat{V}$ as follows:
    \begin{equation}\label{eq:notations_Omega_Psi_D}
        \begin{split}
        &\bdOmega_1=\qmat{V}_1^{T}\bdOmega \in\bbR^{\varrho \times s}, ~ \bdOmega_2=\qmat{V}_2^{T}\bdOmega\in\bbR^{(n-\varrho )\times s },\\
        &  \bdPhi_1=\qmat{V}_1^{T}\bdPhi\in\bbR^{\varrho \times l},\, \bdPhi_2=\qmat{V}_2^{T}\bdPhi\in\bbR^{(n-\varrho )\times l}.
        \end{split}
    \end{equation}
    In addition, we introduce the auxiliary notation  
    \begin{align}\label{eq:notation_B}
        \quad \qmat{H}=\boldsymbol{\Sigma}_2^2\bdOmega_2\bdOmega_1^\dagger\boldsymbol{\Sigma}_1^{-2}.
    \end{align}
    Let $\qmat{Y}=\qmat{A}\bdOmega\in\bbR^{m\times s}$, $\qmat{Z}=\qmat{A}\bdPhi\in\bbR^{m\times l}$, and $\hat{\qmat{Y}}=\qmat{Z}\qmat{Z}^{T}\qmat{Y}\in \bbR^{m\times s}$. 
   The projection error with respect to   $\qmat{I}-P_{\hat{\qmat{Y}}}=\qmat{I}- \hat{\qmat{Y}}\hat{\qmat{Y}}^\dag=\qmat{I}- \hat{\qmat{Y}}(\hat{\qmat{Y}}^{T}\hat{\qmat{Y}})^\dag\hat{\qmat{Y}}^{T}$ is given as follows. 

\begin{theorem}\label{thm:projection_error_sketchPower}
    Let $\qmat{A}$ be partitioned as in \eqref{eq:A_SVD} and recall the notations   introduced in \eqref{eq:notations_Omega_Psi_D} and \eqref{eq:notation_B}. Assume that   $\bdOmega_1,\bdPhi_1$ have full row rank.   
    For $\hat{\qmat{Y}}=\qmat{Z}\qmat{Z}^{T} \qmat{Y}=\qmat{A}\bdPhi(\qmat{A}\bdPhi)^{T}\qmat{A}\bdOmega$, we have the following estimation:
    \begin{align}\label{eq:projection_error_sketchPower}
        \normP{\qmat{A}-\hat{\qmat{Y}}\hat{\qmat{Y}}^\dagger \qmat{A}}^2\leq& \normP{\boldsymbol{\Sigma}_2}^2+\normP{\boldsymbol{\Sigma}_2\bdPhi_2\bdPhi_1^\dagger+\boldsymbol{\Sigma}_2\bdPhi_2\bdPhi_2^{T}\qmat{H} (\bdPhi_1\bdPhi_1^{T} )^{-1}}^2\\
        &\cdot\normSpectral{\left(\qmat{I}+\bdPhi_1\bdPhi_2^{T}\qmat{H} (\bdPhi_1\bdPhi_1^{T} )^{-1}\right)^{-1}}^2 \label{eq:inverseTerm}
    \end{align}
by assuming the invertibility of $\qmat{I}+\bdPhi_1\bdPhi_2^{T}\qmat{H} (\bdPhi_1\bdPhi_1^{T} )^{-1}$. 
\end{theorem}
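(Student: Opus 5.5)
We follow the deterministic argument of \cite[Thm. 9.1]{FindingStructureHalko}: find a matrix whose range lies inside $\rangemat{\hat{\qmat{Y}}}$ and has the normalized form $\qmat{U}\begin{bmatrix}\qmat{I}\\ \qmat{X}\end{bmatrix}$. Shrinking the range only increases the projection error, so it then suffices to bound the error for that matrix.

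\textbf{Step 1: express $\hat{\qmat{Y}}$ in singular coordinates.} Write $\tilde{\bdPhi}=\qmat{V}^T\bdPhi=[\bdPhi_1;\bdPhi_2]$ and $\tilde{\bdOmega}=\qmat{V}^T\bdOmega=[\bdOmega_1;\bdOmega_2]$. Then $\qmat{Z}=\qmat{U}\boldsymbol{\Sigma}\tilde{\bdPhi}$ and, since $\qmat{U}^T\qmat{U}=\qmat{I}$,
\[
\qmat{Z}^T\qmat{Y}=\tilde{\bdPhi}^T\boldsymbol{\Sigma}^2\tilde{\bdOmega}=\bdPhi_1^T\boldsymbol{\Sigma}_1^2\bdOmega_1+\bdPhi_2^T\boldsymbol{\Sigma}_2^2\bdOmega_2 .
\]
Right-multiply by $\bdOmega_1^\dagger\boldsymbol{\Sigma}_1^{-2}$. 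Because $\bdOmega_1$ has full row rank, $\bdOmega_1\bdOmega_1^\dagger=\qmat{I}$, and the product becomes $\bdPhi_1^T+\bdPhi_2^T\qmat{H}$ with $\qmat{H}$ as in \eqref{eq:notation_B}. Hence
\[
\hat{\qmat{Y}}\bdOmega_1^\dagger\boldsymbol{\Sigma}_1^{-2}=\qmat{U}\begin{bmatrix}\boldsymbol{\Sigma}_1(\bdPhi_1\bdPhi_1^T+\bdPhi_1\bdPhi_2^T\qmat{H})\\ \boldsymbol{\Sigma}_2(\bdPhi_2\bdPhi_1^T+\bdPhi_2\bdPhi_2^T\qmat{H})\end{bmatrix}.
\]

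\textbf{Step 2: normalize the top block.} Right-multiply further by $(\bdPhi_1\bdPhi_1^T)^{-1}$, which exists by the full-row-rank assumption on $\bdPhi_1$. Use $\bdPhi_1^T(\bdPhi_1\bdPhi_1^T)^{-1}=\bdPhi_1^\dagger$, and set
\[
\qmat{M}:=\qmat{I}+\bdPhi_1\bdPhi_2^T\qmat{H}(\bdPhi_1\bdPhi_1^T)^{-1},\qquad \qmat{F}:=\bdPhi_2\bdPhi_1^\dagger+\bdPhi_2\bdPhi_2^T\qmat{H}(\bdPhi_1\bdPhi_1^T)^{-1}.
\]
The matrix becomes $\qmat{U}[\boldsymbol{\Sigma}_1\qmat{M};\ \boldsymbol{\Sigma}_2\qmat{F}]$. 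Finally right-multiply by $\qmat{M}^{-1}\boldsymbol{\Sigma}_1^{-1}$, which uses the assumed invertibility of $\qmat{M}$. This gives
\[
\hat{\qmat{Y}}\qmat{R}=\qmat{U}\begin{bmatrix}\qmat{I}\\ \qmat{X}\end{bmatrix},\qquad \qmat{X}=\boldsymbol{\Sigma}_2\qmat{F}\qmat{M}^{-1}\boldsymbol{\Sigma}_1^{-1},
\]
for an explicit $\qmat{R}$. Therefore $\rangemat{\hat{\qmat{Y}}\qmat{R}}\subseteq\rangemat{\hat{\qmat{Y}}}$, and $\normP{(\qmat{I}-P_{\hat{\qmat{Y}}})\qmat{A}}\le\normP{(\qmat{I}-P_{\hat{\qmat{Y}}\qmat{R}})\qmat{A}}$.

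\textbf{Step 3: bound the projection error for the normalized matrix.} This is the same situation as in \cite{FindingStructureHalko}. With $\qmat{K}=[\qmat{I};\qmat{X}]$, unitary invariance gives
\[
\normP{(\qmat{I}-P_{\hat{\qmat{Y}}\qmat{R}})\qmat{A}}=\normP{(\qmat{I}-P_{\qmat{K}})\boldsymbol{\Sigma}}.
\]
One then shows
\[
\qmat{I}-P_{\qmat{K}}\preceq\begin{bmatrix}\qmat{X}^T\qmat{X} & -(\qmat{I}+\qmat{X}^T\qmat{X})^{-1}\qmat{X}^T\\ -\qmat{X}(\qmat{I}+\qmat{X}^T\qmat{X})^{-1} & \qmat{I}\end{bmatrix}.
\]
Conjugating by $\boldsymbol{\Sigma}$ and taking the diagonal blocks of the PSD matrix yields
\[
\normP{(\qmat{I}-P_{\hat{\qmat{Y}}})\qmat{A}}^2\le\normP{\boldsymbol{\Sigma}_2}^2+\normP{\qmat{X}\boldsymbol{\Sigma}_1}^2 .
\]
Alternatively, one can run Tropp's parallel-sum proof, applying Lemma \ref{lem:parallel_sum} to $\boldsymbol{\Sigma}_1(\qmat{I}:\ldots)\boldsymbol{\Sigma}_1$; this gives the same estimate for every Schatten-$p$ norm.

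\textbf{Step 4: conclude.} Since $\qmat{X}\boldsymbol{\Sigma}_1=\boldsymbol{\Sigma}_2\qmat{F}\qmat{M}^{-1}$, the inequality $\normP{\boldsymbol{\Sigma}_2\qmat{F}\qmat{M}^{-1}}\le\normP{\boldsymbol{\Sigma}_2\qmat{F}}\normSpectral{\qmat{M}^{-1}}$ gives exactly \eqref{eq:projection_error_sketchPower}--\eqref{eq:inverseTerm}.

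\textbf{Main obstacle.} Steps 1, 2 and 4 are routine algebra. The delicate step is Step 3 in the general Schatten-$p$ norm: the PSD block bound and the passage from the operator inequality to the diagonal-block norm estimate. Halko et al.\ state this explicitly only for the spectral and Frobenius norms. I would first try the parallel-sum argument, since Lemma \ref{lem:parallel_sum} gives monotonicity under the PSD order and the norm bound in every Schatten norm. Otherwise I would restrict to $p\in\{2,\infty\}$, which is all the subsequent theorems use.
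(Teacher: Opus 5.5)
Your proposal is correct and is essentially the paper's proof. Your $\hat{\qmat{Y}}\qmat{R}=\qmat{U}[\qmat{I};\qmat{X}]$ is exactly $\qmat{U}\qmat{N}$ for the paper's $\qmat{N}=[\qmat{I};\qmat{F}_2\qmat{F}_1^{-1}]$; in your notation $\qmat{F}_1=\boldsymbol{\Sigma}_1\qmat{M}\boldsymbol{\Sigma}_1^{-1}$ and $\qmat{F}_2=\boldsymbol{\Sigma}_2\qmat{F}\boldsymbol{\Sigma}_1^{-1}$.

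The only difference is Step 3. The paper splits $\boldsymbol{\Sigma}^2$ by the triangle inequality in $\|\cdot\|_{p/2}$ and then uses the parallel-sum bound $\|\boldsymbol{\Sigma}_1^2:\boldsymbol{\Sigma}_1\qmat{X}^{T}\qmat{X}\boldsymbol{\Sigma}_1\|_{p/2}\le\|\qmat{X}\boldsymbol{\Sigma}_1\|_p^2$, which needs $p\ge 2$. Your Halko-style block-PSD route already covers that same range. Write the dominating PSD matrix as $\qmat{G}^{T}\qmat{G}$ with $\qmat{G}=[\qmat{G}_1,\qmat{G}_2]$; then $\|\qmat{G}^{T}\qmat{G}\|_{p/2}=\|\qmat{G}_1\qmat{G}_1^{T}+\qmat{G}_2\qmat{G}_2^{T}\|_{p/2}\le\|\qmat{G}_1^{T}\qmat{G}_1\|_{p/2}+\|\qmat{G}_2^{T}\qmat{G}_2\|_{p/2}$. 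So the step you flag as the main obstacle goes through for every $p\ge 2$, and neither argument covers $p<2$.

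The one case you skip is $\boldsymbol{\Sigma}_1$ singular, where you cannot multiply by $\boldsymbol{\Sigma}_1^{-1}$. The paper handles it first: then $\boldsymbol{\Sigma}_2=0$, the full-row-rank assumptions give $\rangemat{\hat{\qmat{Y}}}=\rangemat{\qmat{A}}$, and the error is zero.
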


When $\bdOmega$ and $\bdPhi$ are standard Gaussian, $\bdOmega_1,\bdPhi_1$ are also standard Gaussian due to the rotational invariance, which together with their sizes imply that they have full row rank with probability $1$. Proposition \ref{prop:inv_F1}   shows that $\qmat{I}+\bdPhi_1\bdPhi_2^{T}\qmat{H} (\bdPhi_1\bdPhi_1^{T} )^{-1}$ is invertible with probability $1$ in the same situation.

\begin{proof} 
    The proof can be divided into four steps. The first step is to consider a trivial case. The second is to remove $\qmat{U}$ from the projection error. The third step is to find an acceptable replacement for $\hat{\qmat{Y}}$ to further simplify the analysis, and the last one is to decompose the terms.

    {\bf Step 1: Trivial case.}  First we consider the case that $\boldsymbol{\Sigma}_1$ is singular. Thus it must hold that $\boldsymbol{\Sigma}_2 = 0$, and $\boldsymbol{\Sigma}_1$ can be further written as $\boldsymbol{\Sigma}_1 = \begin{bmatrix} \boldsymbol{\Sigma}_0 & \\ & 0\end{bmatrix}$ with $\boldsymbol{\Sigma}_0\in\bbR^{r_0\times r_0}$   invertible and $r_0$ the rank of $\qmat{A}$. In this case,   
    the SVD of $\qmat{A}$   can be expressed as  $\qmat{A} = \qmat{U}_1\boldsymbol{\Sigma}_1\qmat{V}_1^{T} = \qmat{U}_0 \boldsymbol{\Sigma}_0 \qmat{V}_0^{T}$, where $\qmat{U}_0$ and $\qmat{V}_0$ are composed of the columns in $\qmat{U}_1,\qmat{V}_1$ corresponding to $\boldsymbol{\Sigma}_0$.  
    Therefore, $\qmat{V}_0^{T}\bdPhi$ and $\qmat{V}_0^{T}\bdOmega$, as   sub-matrices of $\bdPhi_1=\qmat{V}_1^{T}\bdPhi$ and $\bdOmega_1=\qmat{V}_1^{T}\bdOmega$,   have full row rank due to the assumption.  Thus 
    \begin{align*}
        {\rm range}( \hat{\qmat{Y}})&=\rangemat{\qmat{A}\bdPhi(\qmat{A}\bdPhi)^{T}\qmat{A}\bdOmega}=\rangemat{\qmat{U}_0\boldsymbol{\Sigma}_0 \qmat{V}_0^{T}\bdPhi\bdPhi^{T}\qmat{V}_0\boldsymbol{\Sigma}_0^2\qmat{V}_0^{T}\bdOmega}\\
        &=\rangemat{\qmat{U}_0}=\rangemat{\qmat{A}},
    \end{align*}
    where the third equality holds because $\boldsymbol{\Sigma}_0 \qmat{V}_0^{T}\bdPhi\bdPhi^{T}\qmat{V}_0\boldsymbol{\Sigma}_0^2\qmat{V}_0^{T}\bdOmega$ has full  row rank. Therefore,   $\normP{(\qmat{I}- P_{\hat{\qmat{Y}}})\qmat{A}}=0$ and the estimation naturally holds.  Thus in the remaining proof, we only consider   that  $\boldsymbol{\Sigma}_1$ is invertible. 

    {\bf Step 2: Reducing $\qmat{U}$.}  We then remove $\qmat{U}$   from the projection error. Note that 
    \begin{small}
    \begin{align*}
        \hat{\qmat{Y}}=\qmat{A}\bdPhi(\qmat{A}\bdPhi)^{T}\qmat{Y}=\qmat{A}\bdPhi(\qmat{A}\bdPhi)^{T}\qmat{A}\bdOmega=\qmat{U}\boldsymbol{\Sigma} \qmat{V}^{T}\bdPhi\bdPhi^{T}\qmat{V}\boldsymbol{\Sigma}^2\qmat{V}^{T}\bdOmega =\qmat{U}\qmat{D}\qmat{V}^{T}\bdOmega \in\bbR^{m\times s},
    \end{align*}
\end{small}
    where the randomized embedding properties are defined by the following notation: 
    \begin{align}\label{eq:notations_Omega_Psi_D_1}
       \qmat{D}:=\boldsymbol{\Sigma}\qmat{V}^{T}\bdPhi\bdPhi^{T}\qmat{V}\boldsymbol{\Sigma}^2 \in\bbR^{n\times n}.
    \end{align}
    Here   $\bdPhi$ generates a well approximation to the invariant subspace corresponding to $\boldsymbol{\Sigma}_1$, such that the dominant part of $\qmat{D}$ is close to $\boldsymbol{\Sigma}_1^3$ (note that in the standard power iteration, $\qmat{D}=\boldsymbol{\Sigma}\qmat{V}^{T}\qmat{V}\boldsymbol{\Sigma}^2 = \boldsymbol{\Sigma}^3$). 
As $P_{\hat{\qmat{Y}}}=\hat{\qmat{Y}}(\hat{\qmat{Y}}^{T}\hat{\qmat{Y}})^\dag\hat{\qmat{Y}}^{T}$, we  have 
    \begin{align}\label{eq:proof_q=1_deterministic_1}
        \normP{(\qmat{I}-P_{\hat{\qmat{Y}}})\qmat{A}}&=
      \normP{\qmat{U}\boldsymbol{\Sigma}\qmat{V}^{T}-\qmat{U}\qmat{D}\qmat{V}^{T}\bdOmega(\bdOmega^{T}\qmat{V}\qmat{D}^{T}\qmat{D}\qmat{V}^{T}\bdOmega)^{\dag}\bdOmega^{T}\qmat{V}\qmat{D}^{T}\qmat{U}^{T}\qmat{U}\boldsymbol{\Sigma}\qmat{V}^{T}}\nonumber\\
        &=\normP{\boldsymbol{\Sigma}-\qmat{D}\qmat{V}^{T}\bdOmega(\bdOmega^{T}\qmat{V}\qmat{D}^{T}\qmat{D}\qmat{V}^{T}\bdOmega)^{\dag}\bdOmega^{T}\qmat{V}\qmat{D}^{T}\boldsymbol{\Sigma}}\nonumber\\
        &=\normP{(\qmat{I}-P_{\qmat{D}\qmat{V}^{T}\bdOmega})\boldsymbol{\Sigma}},
    \end{align}
    where the third equality is due to $\qmat{U}^{T}\qmat{U}=\qmat{I}$ and unitary invariance of the Schattern-$p$ norm. Thus   it suffices to bound $\normP{(\qmat{I}-P_{\qmat{D}\qmat{V}^{T}\bdOmega})\boldsymbol{\Sigma}}$ instead.

 {\bf Step 3: Further simplification.} 
 Recalling the notations in \eqref{eq:notations_Omega_Psi_D} and \eqref{eq:notations_Omega_Psi_D_1}, we may express  $\qmat{D}\qmat{V}^{T}\bdOmega$ as
 \begin{align}\label{eq:DVOMega}\qmat{D}\qmat{V}^{T}\bdOmega &= \boldsymbol{\Sigma}(\qmat{V}^{T}\bdPhi)(\bdPhi^{T}\qmat{V})\boldsymbol{\Sigma}^2 (\qmat{V}^{T} \bdOmega) \nonumber \\
&=        \begin{bmatrix}
       \boldsymbol{\Sigma}_1\bdPhi_1\\
       \boldsymbol{\Sigma}_2\bdPhi_2
   \end{bmatrix}\begin{bmatrix}
       \bdPhi_1^{T}\boldsymbol{\Sigma}_1^2 & \bdPhi_2^{T}\boldsymbol{\Sigma}_2^2
   \end{bmatrix}\begin{bmatrix}
       \bdOmega_1\\
       \bdOmega_2
   \end{bmatrix} \in\bbR^{n\times s}.
\end{align}
 Expression   \eqref{eq:DVOMega} shows that directly dealing with $\qmat{D}\qmat{V}^{T}\bdOmega$ is too complicated due to the coupling of $\bdPhi_1,\bdPhi_2,\bdOmega_1,\bdOmega_2$. Thus we find an acceptable replacement of $\qmat{D}\qmat{V}^{T}\bdOmega$, preparing for decoupling. To this end, first denote   
      \begin{align}
        \begin{bmatrix}
            \qmat{F}_1\\
            \qmat{F}_2
        \end{bmatrix}&:=\qmat{D}\qmat{V}^{T}\bdOmega(\bdOmega_1^\dagger\boldsymbol{\Sigma}_1^{-2} (\bdPhi_1\bdPhi_1^{T})^{-1}\boldsymbol{\Sigma}_1^{-1}) \label{eq:relation_F1F2_DVOmega} \in\bbR^{n\times \varrho },
    \end{align}
    where the    invertibility of $\bdPhi_1\bdPhi_1^{T}$   follows from the assumption. Here $\qmat{F}_1\in\bbR^{\varrho \times \varrho }$ and $\qmat{F}_2\in\bbR^{(n-\varrho )\times \varrho }$. 
    It follows from basic linear algebra that  
    the range spanned by $\qmat{D}\qmat{V}^{T}\bdOmega$ and $\begin{bmatrix}
     \qmat{F}_1\\
     \qmat{F}_2
 \end{bmatrix}$ satisfy:
    \begin{align}\label{eq:range_F1F2_DVOmega}
         \rangemat{\begin{bmatrix}
            \qmat{F}_1\\ \qmat{F}_2
            \end{bmatrix}} \subseteq \rangemat{\qmat{D}\qmat{V}^{T}\bdOmega}.
    \end{align}
   On the other hand, from   \eqref{eq:DVOMega} we have  
    \begin{align*}
        \begin{bmatrix}
            \qmat{F}_1\\
            \qmat{F}_2
        \end{bmatrix}    & =\begin{bmatrix}
            \boldsymbol{\Sigma}_1\bdPhi_1\bdPhi_1^{T}\boldsymbol{\Sigma}_1^2\bdOmega_1+\boldsymbol{\Sigma}_1\bdPhi_1\bdPhi_2^{T}\boldsymbol{\Sigma}_2^2\bdOmega_2\\
            \boldsymbol{\Sigma}_2\bdPhi_2\bdPhi_1^{T}\boldsymbol{\Sigma}_1^2\bdOmega_1+\boldsymbol{\Sigma}_2\bdPhi_2\bdPhi_2^{T}\boldsymbol{\Sigma}_2^2\bdOmega_2
        \end{bmatrix}\bdOmega_1^\dagger\boldsymbol{\Sigma}_1^{-2}\left(\bdPhi_1\bdPhi_1^{T}\right)^{-1}\boldsymbol{\Sigma}_1^{-1}\\
        &=\begin{bmatrix}
            \qmat{I}_{\varrho}+\boldsymbol{\Sigma}_1\bdPhi_1\bdPhi_2^{T}\boldsymbol{\Sigma}_2^2\bdOmega_2\bdOmega_1^\dagger\boldsymbol{\Sigma}_1^{-2}\left(\bdPhi_1\bdPhi_1^{T}\right)^{-1}\boldsymbol{\Sigma}_1^{-1}\\
            \boldsymbol{\Sigma}_2\bdPhi_2\bdPhi_1^\dagger\boldsymbol{\Sigma}_1^{-1}+\boldsymbol{\Sigma}_2\bdPhi_2\bdPhi_2^{T}\boldsymbol{\Sigma}_2^2\bdOmega_2\bdOmega_1^\dagger\boldsymbol{\Sigma}_1^{-2}\left(\bdPhi_1\bdPhi_1^{T}\right)^{-1}\boldsymbol{\Sigma}_1^{-1}
        \end{bmatrix}\\
       & =\begin{bmatrix}
        \qmat{I}_{\varrho}+\boldsymbol{\Sigma}_1\bdPhi_1\bdPhi_2^{T}\qmat{H}\left(\bdPhi_1\bdPhi_1^{T}\right)^{-1}\boldsymbol{\Sigma}_1^{-1}\\
            \boldsymbol{\Sigma}_2\bdPhi_2\bdPhi_1^\dagger\boldsymbol{\Sigma}_1^{-1}+\boldsymbol{\Sigma}_2\bdPhi_2\bdPhi_2^{T}\qmat{H}\left(\bdPhi_1\bdPhi_1^{T}\right)^{-1}\boldsymbol{\Sigma}_1^{-1}
        \end{bmatrix}
        ,
    \end{align*}
   where the second equality holds because $\bdOmega_1$ has full row rank. 
   Observe that $\Expectation{ \bdPhi_2^{T}}=0$ and $\boldsymbol{\Sigma}_2$ is small,   indicating that $\qmat{F}_1$ is close to $\qmat{I}_{\varrho}$. 
      The assumption and the discussion in step 1 shows that $\qmat{F}_1$ is invertible.  
Right multiply   $\qmat{F}_1^{-1}$ to $\begin{bmatrix}
        \qmat{F}_1\\
        \qmat{F}_2
    \end{bmatrix}$ to obtain 
     \begin{align}\label{eq:defZ}\qmat{N}:=\begin{bmatrix}
        \qmat{F}_1\\
        \qmat{F}_2
    \end{bmatrix}\qmat{F}_1^{-1}=\begin{bmatrix}
        \qmat{I}_{\varrho}\\
        \qmat{F}
    \end{bmatrix}\in\bbR^{n\times \varrho },\end{align} 
     where $\qmat{F}:=\qmat{F}_2\qmat{F}_1^{-1}$.   \eqref{eq:range_F1F2_DVOmega} gives
     $
        \rangemat{\qmat{N}}=\rangemat{\begin{bmatrix}
            \qmat{F}_1\\ \qmat{F}_2
            \end{bmatrix}}\subseteq \rangemat{\qmat{D}\qmat{V}^{T}\bdOmega},
     $
     and so  \begin{align}\label{eq:bound_ZSigma} \normP{(\qmat{I}-P_{\qmat{D}\qmat{V}^{T}\bdOmega})\boldsymbol{\Sigma}}\leq \normP{(\qmat{I}-P_{\qmat{N}})\boldsymbol{\Sigma}} 
     .\end{align}
Thus it suffices to estimate $\normP{(\qmat{I}-P_{\qmat{N}})\boldsymbol{\Sigma}} $ to obtain the assertion \eqref{eq:projection_error_sketchPower}. 
    
 {\bf Step 4: Decoupling.}  
 We first provide some intuition regarding $\qmat{N}$ in \eqref{eq:defZ}.  We have previously argued that $\qmat{F}_1$ is close to $\qmat{I}_{\varrho}$, suggesting that $\qmat{F}\approx \qmat{F}_2$. In expectation, the first term in $\qmat{F}_2$ vanishes, while the second term is of order $O(\boldsymbol{\Sigma}_2^3)$, implying that $\qmat{F}$ is also small. Consequently,   $\qmat{N}$ is close to $\begin{bmatrix}
    \qmat{I}_{\varrho}\\
    0
\end{bmatrix}$. Note that $\rangemat{\begin{bmatrix}
    \qmat{I}_{\varrho}\\
    0
\end{bmatrix}}$ primarily reflects the information of   $\boldsymbol{\Sigma}_1$. Therefore,   we expect that $\rangemat{\qmat{N}}$ can do  it similarly.  

Considering   $\normP{(\qmat{I}-P_{\qmat{N}})\boldsymbol{\Sigma}} $, the following inequalities hold:
\begin{align} \label{eq:1_deterministic_3}
    \normP{(\qmat{I}-P_{\qmat{N}})\boldsymbol{\Sigma}}&=\normP[p/2]{(\qmat{I}-P_{\qmat{N}})\boldsymbol{\Sigma}^2(\qmat{I}-P_{\qmat{N}})}\nonumber\\
    &\leq \normP[p/2]{(\qmat{I}-P_{\qmat{N}})\begin{bmatrix}
        0& \\
        &\boldsymbol{\Sigma}_2^2
    \end{bmatrix}(\qmat{I}-P_{\qmat{N}})}\!+\normP[p/2]{(\qmat{I}-P_{\qmat{N}})\begin{bmatrix}
        \boldsymbol{\Sigma}_1^2& \\
        &0
    \end{bmatrix}(\qmat{I}-P_{\qmat{N}})} \nonumber\\
        &\leq \normP{\boldsymbol{\Sigma}_2}^2+\normP[p/2]{\begin{bmatrix} \boldsymbol{\Sigma}_1&\\
            &0
        \end{bmatrix}(\qmat{I}-P_{\qmat{N}})\begin{bmatrix} \boldsymbol{\Sigma}_1&\\
            &0
        \end{bmatrix}},
\end{align}
where the last inequality comes from  $\normSpectral{\qmat{I}-P_{\qmat{N}}}\leq 1$ and the relation between $\normP[p/2]{\cdot}$ and $\normP[p]{\cdot}$ as $\normP[p]{\qmat{M}}^2 = \normP[p/2]{\qmat{M}^{T}\qmat{M}} = \normP[p/2]{\qmat{M}\qmat{M}^{T}}$.  The second term of the last line above can be written as a parallel sum:
\begin{align}\label{eq:1_deterministic_2}
    &\normP[p/2]{\begin{bmatrix} \boldsymbol{\Sigma}_1& \nonumber\\
        &0
    \end{bmatrix}(\qmat{I}-P_{\qmat{N}})\begin{bmatrix} \boldsymbol{\Sigma}_1& \nonumber\\
        &0
    \end{bmatrix}}=\normP[p/2]{\begin{bmatrix} \boldsymbol{\Sigma}_1& \nonumber\\
        &0
    \end{bmatrix}(\qmat{I}-P_{\qmat{N}\boldsymbol{\Sigma}_1})\begin{bmatrix} \boldsymbol{\Sigma}_1&\\
        &0
    \end{bmatrix}}\\
    &=\normP[p/2]{\boldsymbol{\Sigma}_1^2-\boldsymbol{\Sigma}_1^2(\boldsymbol{\Sigma}_1^2+\boldsymbol{\Sigma}_1\qmat{F}^{T} \qmat{F}\boldsymbol{\Sigma}_1)^\dag\boldsymbol{\Sigma}_1^2}=\normP[p/2]{\boldsymbol{\Sigma}_1^2:\boldsymbol{\Sigma}_1\qmat{F}^{T} \qmat{F}\boldsymbol{\Sigma}_1}\nonumber\\
    &\leq \normP[p/2]{\boldsymbol{\Sigma}_1^2}:\normP[p/2]{\boldsymbol{\Sigma}_1\qmat{F}^{T} \qmat{F}\boldsymbol{\Sigma}_1}= \normP[p/2]{\boldsymbol{\Sigma}_1\qmat{F}^{T} \qmat{F}\boldsymbol{\Sigma}_1}:\normP[p/2]{\boldsymbol{\Sigma}_1^2}\leq\normP{\qmat{F}\boldsymbol{\Sigma}_1}^2,    
\end{align}
where the first equality is due to the invertibility of $\boldsymbol{\Sigma}_1$; the two inequalities   are from   Lemma \ref{lem:parallel_sum}.
Then 
    \begin{align*}
        &\normP{\qmat{F}\boldsymbol{\Sigma}_1}^2=\normP{\qmat{F}_2\qmat{F}_1^{-1}\boldsymbol{\Sigma}_1}^2\\
        &\!=\!\normP{ \!\left(\boldsymbol{\Sigma}_2\bdPhi_2\bdPhi_1^\dagger\boldsymbol{\Sigma}_1^{-1}\!\!+\!\boldsymbol{\Sigma}_2\bdPhi_2\bdPhi_2^{T}\qmat{H}( \bdPhi_1\bdPhi_1^{T})^{-1}\boldsymbol{\Sigma}_1^{-1}\!\right)\!\!\left(\qmat{I}\!+\!\boldsymbol{\Sigma}_1\bdPhi_1\bdPhi_2^{T}\qmat{H} (\!\bdPhi_1\bdPhi_1^{T} )^{-1}\boldsymbol{\Sigma}_1^{-1}\!\right)^{-1}\!\!\boldsymbol{\Sigma}_1\!}^2\\
        &=\normP{ \left(\boldsymbol{\Sigma}_2\bdPhi_2\bdPhi_1^\dagger+\boldsymbol{\Sigma}_2\bdPhi_2\bdPhi_2^{T}\qmat{H} (\bdPhi_1\bdPhi_1^{T} )^{-1}\right)\left(\qmat{I}+\bdPhi_1\bdPhi_2^{T}\qmat{H} (\bdPhi_1\bdPhi_1^{T} )^{-1}\right)^{-1}}^2\\
        &\leq \normP{\boldsymbol{\Sigma}_2\bdPhi_2\bdPhi_1^\dagger+\boldsymbol{\Sigma}_2\bdPhi_2\bdPhi_2^{T}\qmat{H} (\bdPhi_1\bdPhi_1^{T} )^{-1}}^2\normSpectral{ (\qmat{I}+\bdPhi_1\bdPhi_2^{T}\qmat{H} (\bdPhi_1\bdPhi_1^{T} )^{-1} )^{-1}}^2.
    \end{align*}
This together with \eqref{eq:proof_q=1_deterministic_1}, \eqref{eq:bound_ZSigma}, \eqref{eq:1_deterministic_3}, and \eqref{eq:1_deterministic_2}     gives the assertion. 
\end{proof}


\begin{proposition} \label{prop:inv_F1}
   For $\bdOmega$ and $\bdPhi$ being standard Gaussian matrices and recall the notations  of $\bdOmega_1,\bdOmega_2,\bdPhi_1,\bdPhi_2$   in \eqref{eq:notations_Omega_Psi_D}, $\qmat{F}_1 = \qmat{I}+\boldsymbol{\Sigma}_1\bdPhi_1\bdPhi_2^{T}\boldsymbol{\Sigma}_2^2\bdOmega_2\bdOmega_1^\dagger\boldsymbol{\Sigma}_1^{-2} (\bdPhi_1\bdPhi_1^{T} )^{-1}\boldsymbol{\Sigma}_1^{-1}\in\bbR^{\varrho \times \varrho }$ is invertible with probability $1$. 
\end{proposition}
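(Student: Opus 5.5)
The plan is to show that $\det \qmat{F}_1$, after clearing denominators, is a polynomial in the Gaussian entries that is not identically zero. Its zero set then has Lebesgue measure zero, and since the joint law of the entries is absolutely continuous, $\qmat{F}_1$ is invertible with probability $1$. As in Step 1 of the proof of Theorem \ref{thm:projection_error_sketchPower}, we work in the case where $\boldsymbol{\Sigma}_1$ is invertible, which is the only case in which $\qmat{F}_1$ is defined.

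\textbf{Step 1: reduce to Gaussian blocks.} Since $\qmat{V}$ is orthogonal, rotational invariance gives that $\qmat{V}^{T}\bdPhi=[\bdPhi_1;\bdPhi_2]$ and $\qmat{V}^{T}\bdOmega=[\bdOmega_1;\bdOmega_2]$ are again independent standard Gaussian matrices. Their joint entries therefore have a density on the full space $\bbR^{n\times l}\times\bbR^{n\times s}$. Because $l,s>\varrho$, the blocks $\bdPhi_1$ and $\bdOmega_1$ have full row rank almost surely. On that full-measure event we can write
\[
\bdOmega_1^\dagger=\bdOmega_1^{T}(\bdOmega_1\bdOmega_1^{T})^{-1}
\quad\text{and}\quad
(\bdPhi_1\bdPhi_1^{T})^{-1}=\mathrm{adj}(\bdPhi_1\bdPhi_1^{T})/\det(\bdPhi_1\bdPhi_1^{T}).
\]

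\textbf{Step 2: clear denominators.} Using these formulas, every entry of $\qmat{F}_1$ is a polynomial in the entries of $(\bdPhi_1,\bdPhi_2,\bdOmega_1,\bdOmega_2)$ divided by $\det(\bdOmega_1\bdOmega_1^{T})\det(\bdPhi_1\bdPhi_1^{T})$. The constant matrices $\boldsymbol{\Sigma}_1^{\pm1}$ and $\boldsymbol{\Sigma}_2$ only contribute fixed coefficients. It follows that
\[
p:=\det(\qmat{F}_1)\,\big(\det(\bdOmega_1\bdOmega_1^{T})\det(\bdPhi_1\bdPhi_1^{T})\big)^{\varrho}
\]
is a polynomial in the Gaussian entries. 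On the full-rank event, $\det\qmat{F}_1=0$ if and only if $p=0$.

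\textbf{Step 3: show $p\not\equiv0$.} Evaluate at a point where $\bdPhi_2=0$ and $\bdPhi_1,\bdOmega_1$ have full row rank (for example $\bdPhi_1=[\qmat{I}_\varrho\ 0]$ and $\bdOmega_1=[\qmat{I}_\varrho\ 0]$). At such a point $\qmat{F}_1=\qmat{I}_\varrho$, so $p$ is a nonzero power of the two Gram determinants and hence $p\neq0$. The point is admissible because $[\bdPhi_1;\bdPhi_2]=\qmat{V}^{T}\bdPhi$ ranges over all of $\bbR^{n\times l}$.

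\textbf{Step 4: conclude.} A nonzero polynomial vanishes only on a Lebesgue-null set, and the Gaussian law is absolutely continuous. Therefore $\mathbb P(p=0)=0$. Intersecting with the probability-one full-rank event gives $\mathbb P(\det\qmat{F}_1\neq0)=1$.

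The only delicate point is Step 3. One must check that the vanishing set of the cleared polynomial is not the whole space, and that the chosen witness point lies in the support of the blocks. The explicit choice $\bdPhi_2=0$, together with rotational invariance, handles both.
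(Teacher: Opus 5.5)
Your proof is correct, and it follows a different route from the paper's, one that is also more rigorous.

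\textbf{The paper's route.} It fixes $\bdOmega_1,\bdPhi_1,\bdPhi_2$ and treats an eigenvalue of $\bdPhi_1\bdPhi_2^{T}\boldsymbol{\Sigma}_2^2\bdOmega_2\bdOmega_1^\dagger\boldsymbol{\Sigma}_1^{-2}(\bdPhi_1\bdPhi_1^{T})^{-1}$ as a continuous function of $\bdOmega_2$, using continuity of polynomial roots. This eigenvalue vanishes at $\bdOmega_2=0$, so it is nonconstant if it ever equals $-1$. The paper then argues that its image is uncountable and concludes that the value $-1$ is attained with probability zero.

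\textbf{Where it is weak.} The final inference rests on continuity alone, which does not give a null level set: a nonconstant continuous function can be constant on an open set. The argument also glosses over how to select eigenvalue branches continuously.

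\textbf{What your route buys.} You use the algebraic structure that actually makes the claim true. After clearing the Gram determinants, $\det\qmat{F}_1$ becomes a polynomial $p$ in the Gaussian entries. A single witness point ($\bdPhi_2=0$, which plays the role of the paper's $\bdOmega_2=0$) shows $p\not\equiv 0$. Since a nonzero polynomial vanishes only on a Lebesgue-null set, the result follows with no eigenvalue bookkeeping.

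\textbf{Conditional version.} If you want to mirror the paper's conditional viewpoint, your argument adapts directly. For fixed full-row-rank $\bdOmega_1,\bdPhi_1$ and any $\bdPhi_2$, the matrix $\qmat{F}_1$ is affine in $\bdOmega_2$. So $\det\qmat{F}_1$ is a polynomial in $\bdOmega_2$ equal to $1$ at $\bdOmega_2=0$, and Fubini then yields the joint statement.

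\textbf{Minor point.} Your check that the witness lies in the support is harmless but unnecessary. Any point of $\bbR^{n\times l}\times\bbR^{n\times s}$ suffices to show that $p$ is not the zero polynomial, and absolute continuity of the joint Gaussian law does the rest.
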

\begin{proof}
     It suffices to prove that each eigenvalue of $\bdPhi_1\bdPhi_2^{T}\boldsymbol{\Sigma}_2^2\bdOmega_2\bdOmega_1^\dagger\boldsymbol{\Sigma}_1^{-2} (\bdPhi_1\bdPhi_1^{T} )^{-1}$ equals   $-1$ with probability $0$.  Let $\lambda_i(\cdot)$ denote an eigenvalue of a matrix.   For a function
    \begin{align*}
        f(\qmat{X})=\lambda_i(\qmat{S}\qmat{X}\qmat{T}),
    \end{align*}
    it is continuous with respect to the entries of $\qmat{X}$, where $\qmat{S},\qmat{T},\qmat{X}$ are   matrices of proper sizes. For a matrix, the zeros of the characteristic polynomial exhibit continuous dependence on the coefficients, and their quantitative description is provided in \cite{BoundsVariationMatrix1994}. Thus $f(\qmat{X})$   is   continuous. Therefore, either $f(\qmat{X})$ is a constant, or its image contains uncountably many points in  $\bbC$.

Now take $\qmat{X}=\bdOmega_2$, $\qmat{S}=\bdPhi_1\bdPhi_2^{T}\boldsymbol{\Sigma}_2^2$, and $\qmat{T}=\bdOmega_1^\dagger\boldsymbol{\Sigma}_1^{-2} (\bdPhi_1\bdPhi_1^{T} )^{-1}$. It follows from that $\bdOmega$ and $\bdPhi$ are standard Gaussian and the unitary invariance property that $\bdOmega_1,\bdOmega_2,\bdPhi_1,\bdPhi_2$  are independent standard Gaussian matrices; in particular, $\qmat{X}=\bdOmega_2$ is independent of $\qmat{S}$ and $\qmat{T}$. Now consider $\bdOmega_1,\bdPhi_1,\bdPhi_2$ as   events in their related probability spaces and $\qmat{X}$ a   variable. Clearly, $f(\qmat{X})$ can take value $0$ provided $\qmat{X}=0$; as a result, if $f(\qmat{X})$   takes value $-1$, then $f(\qmat{X})$ is not a constant,  and the argument above shows that the image of $f(\cdot)$ with respect to the probability space of $\bdOmega_2$ must contain uncountable many points in $\bbC$. 
 Therefore, with respect to the product probability space of $(\bdOmega_1,\bdOmega_2,\bdPhi_1,\bdPhi_2)$, the image of $f(\cdot)$ also has uncountable many points in $\bbC$,
which means that $f(\qmat{X})=\lambda_i(\boldsymbol{\Sigma}_1\bdPhi_1\bdPhi_2^{T}\boldsymbol{\Sigma}_2^2\bdOmega_2\bdOmega_1^\dagger\boldsymbol{\Sigma}_1^{-2} (\bdPhi_1\bdPhi_1^{T} )^{-1}\boldsymbol{\Sigma}_1^{-1})=-1$ with probability $0$. This completes the proof. 
\end{proof}

\subsection{Averaged projection error}\label{sec:prob_qb_err}

We derive the bound of Theorem \ref{thm:projection_error_sketchPower} in expectation. 
We first estimate $\mathbb E \|\boldsymbol{\Sigma}_2\bdPhi_2\bdPhi_1^\dagger+\boldsymbol{\Sigma}_2\bdPhi_2\bdPhi_2^{T}\qmat{H} (\bdPhi_1\bdPhi_1^{T} )^{-1} \|_p^2 $, which can be upper bounded by $  2 \mathbb E \|\boldsymbol{\Sigma}_2\bdPhi_2\bdPhi_1^\dagger\|_p^2 + 2\mathbb E\|\boldsymbol{\Sigma}_2\bdPhi_2\bdPhi_2^{T}\qmat{H} (\bdPhi_1\bdPhi_1^{T} )^{-1}\|_p^2$. The first can be bounded by   Corollary \ref{col:SigmaPhi2Phi1_second_moment_bound}. For the second one we have:
\color{black}
\begin{theorem}\label{thm:expectation_projectionError_partOne}
  Let   $\bdOmega,\bdPhi$ be standard Gaussian. If $l>\varrho +4$ and $s\geq \varrho+2$, then 
    \begin{small}
    \begin{align*} 
        &~~~~\mathbb E{\normP{ \boldsymbol{\Sigma}_2\bdPhi_2\bdPhi_2^{T}\qmat{H} (\bdPhi_1\bdPhi_1^{T} )^{-1}}^2} 
        \leq  \frac{\delta}{2} \frac{\sum_{j=\varrho +1}\sigma_j^6}{\sigma_{\varrho}^4} + \frac{\mu}{2} \tau_{\varrho +1}^2(\qmat{A})\frac{\sum_{j=\varrho +1}\sigma_j^4}{\sigma_{\varrho}^4},
    \end{align*}
\end{small}
where 
\begin{small}
\begin{equation}\label{eq:parameters_eps_delta_mu}  \begin{split}
    \delta =\frac{e^2(s+\varrho )\varrho (l-1)(l^2+2l)\cdot }{ (s-\varrho )^2(l-\varrho )(l-\varrho -1)(l-\varrho -3)} ,~
    \mu =\frac{ e^2(s+\varrho )\varrho (l-1)l\cdot  }{(s-\varrho )^2(l-\varrho )(l-\varrho -1)(l-\varrho -3)}.
\end{split}
\end{equation}
\end{small}
\end{theorem}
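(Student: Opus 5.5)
Write $\qmat{M}:=\boldsymbol{\Sigma}_2\bdPhi_2\bdPhi_2^{T}\qmat{H}(\bdPhi_1\bdPhi_1^{T})^{-1}$ with $\qmat{H}=\boldsymbol{\Sigma}_2^2\bdOmega_2\bdOmega_1^\dagger\boldsymbol{\Sigma}_1^{-2}$. The proof conditions successively on independent Gaussian blocks. By rotational invariance, $\bdOmega_1,\bdOmega_2,\bdPhi_1,\bdPhi_2$ are independent standard Gaussian matrices, so the expectation can be taken in three stages. First take $\mathbb E$ over $\bdOmega_2$ with $\bdOmega_1,\bdPhi_1,\bdPhi_2$ fixed. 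Then take $\mathbb E$ over $\bdOmega_1$. Finally take $\mathbb E$ over the pair $(\bdPhi_1,\bdPhi_2)$, which are independent of each other. I work with the Frobenius norm ($p=2$), the case the paper uses in Theorem \ref{thm:oblique_proj_error_q=1}. For general $p\ge 2$ the same chain works after bounding $\normP{\cdot}\le\normF{\cdot}$.

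\textbf{Step 1 (average over $\bdOmega_2$).} Write $\qmat{M}=\qmat{S}\bdOmega_2\qmat{T}$ with
\[
\qmat{S}=\boldsymbol{\Sigma}_2\bdPhi_2\bdPhi_2^{T}\boldsymbol{\Sigma}_2^2,\qquad \qmat{T}=\bdOmega_1^\dagger\boldsymbol{\Sigma}_1^{-2}(\bdPhi_1\bdPhi_1^{T})^{-1}.
\]
Lemma \ref{lem:SGT} gives $\mathbb E_{\bdOmega_2}\normF{\qmat{M}}^2=\normF{\qmat{S}}^2\normF{\qmat{T}}^2$. I then bound
\[
\normF{\qmat{T}}^2\le \normSpectral{\bdOmega_1^\dagger}^2\,\sigma_\varrho^{-4}\,\normF{(\bdPhi_1\bdPhi_1^{T})^{-1}}^2 .
\]

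\textbf{Step 2 (average over $\bdOmega_1$).} Here $\normSpectral{\bdOmega_1^\dagger}^2=\normSpectral{(\bdOmega_1\bdOmega_1^{T})^{-1}}$. Lemma \ref{lem:inverse_moment_spectral} with $p=1$ and $k=s$, $r=\varrho$ gives
\[
\mathbb E\normSpectral{\bdOmega_1^\dagger}^2\le \frac{e^2(s+\varrho)}{2(s-\varrho)^2}.
\]
This requires $s-\varrho\ge 2$, which is exactly the hypothesis $s\ge\varrho+2$. This step produces the factor $e^2(s+\varrho)/(s-\varrho)^2$ and the leading $1/2$ in $\delta/2$ and $\mu/2$.

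\textbf{Step 3 (average over $\bdPhi$).} Since $\bdPhi_1$ and $\bdPhi_2$ are independent, the remaining expectation factorizes as $\mathbb E\normF{\qmat{S}}^2\cdot\mathbb E\normF{(\bdPhi_1\bdPhi_1^{T})^{-1}}^2$.
\begin{itemize}
\item The second factor comes from Lemma \ref{lem:inverse_moment_F}: it equals $\varrho(l-1)/((l-\varrho)(l-\varrho-1)(l-\varrho-3))$, which requires $l>\varrho+4$ (more than enough). This matches the shared denominator of $\delta$ and $\mu$.
\item The first factor is a fourth-moment Wishart computation. Write $\bdPhi_2=\qmat{G}$ with $n-\varrho$ rows $g_i^{T}$, and put $a=\boldsymbol{\Sigma}_2$, $b=\boldsymbol{\Sigma}_2^2$. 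Then
\[
\normF{a\qmat{G}\qmat{G}^{T}b}^2=\sum_{i,j}\sigma_{\varrho+i}^2\sigma_{\varrho+j}^4\,(g_i^{T}g_j)^2 .
\]
The diagonal terms have $\mathbb E(g_i^{T}g_i)^2=l^2+2l$. The off-diagonal terms have $\mathbb E(g_i^{T}g_j)^2=l$.
\item Summing gives
\[
\mathbb E\normF{\qmat{S}}^2=(l^2+2l)\sum_j\sigma_j^6+l\Bigl(\sum_j\sigma_j^2\Bigr)\Bigl(\sum_j\sigma_j^4\Bigr)-l\sum_j\sigma_j^6 .
\]
Dropping the negative term gives $(l^2+2l)\sum\sigma_j^6+l\,\tau_{\varrho+1}^2(\qmat{A})\sum\sigma_j^4$.
\end{itemize}
These are precisely the numerators $(l^2+2l)$ in $\delta$ and $l$ in $\mu$.

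Multiplying the three stages yields the claimed bound. The main obstacle is Step 3's moment computation for $\bdPhi_2\bdPhi_2^{T}$ sandwiched between two different diagonal weights. One has to get the diagonal versus off-diagonal Gaussian fourth moments right, and keep $\normF{\cdot}^2$ rather than splitting it via submultiplicativity, which would lose the $\sigma^6$ structure. A secondary point is making the conditioning rigorous: $\qmat{T}$ depends on $\bdPhi_1$, and $\qmat{S}$ depends on $\bdPhi_2$. Independence of all four blocks justifies applying Fubini stage by stage.
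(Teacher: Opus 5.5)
Your proposal is correct and follows essentially the same route as the paper: Lemma~\ref{lem:SGT} over $\bdOmega_2$, then the submultiplicative split of $\normF{\bdOmega_1^\dagger\boldsymbol{\Sigma}_1^{-2}(\bdPhi_1\bdPhi_1^{T})^{-1}}^2$, then factorization by independence, then Lemma~\ref{lem:inverse_moment_spectral} for $\mathbb E\normSpectral{\bdOmega_1^\dagger}^2$ and Lemma~\ref{lem:inverse_moment_F} for $\mathbb E\normF{(\bdPhi_1\bdPhi_1^{T})^{-1}}^2$, and finally the entrywise Wishart moments $l^2+2l$ and $l$. Your handling of the cross terms is slightly more careful than the paper's: the paper writes an equality for $\mathbb E\normF{\boldsymbol{\Sigma}_2\bdPhi_2\bdPhi_2^{T}\boldsymbol{\Sigma}_2^2}^2$, whereas, as you note, the stated expression overcounts by $l\sum_j\sigma_j^6$ and is only an upper bound.
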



\begin{proof}
    The Schatten $p$-norm is decreasing with respect to $p$, so we can bound it by that with $p=2$, i.e, the Frobenius norm.    
 Recall that $\qmat{H}=\boldsymbol{\Sigma}_2^2\bdOmega_2\bdOmega_1^\dagger\boldsymbol{\Sigma}_1^{-2}$; by the independence of $\bdOmega_1,\bdOmega_2,\bdPhi_1,\bdPhi_2$ and using   Lemma \ref{lem:SGT}, 
 \begin{small}
 \begin{align*}
 \!\!   \Expectation[\bdPhi,\bdOmega]{\normF{\boldsymbol{\Sigma}_2\bdPhi_2\bdPhi_2^{T}\qmat{H}(\bdPhi_1\bdPhi_1^{T})^{-1}}^2}
 &=\Expectation[\bdPhi,\bdOmega_1]{\Expectation[\bdOmega_2]{\normF{\boldsymbol{\Sigma}_2\bdPhi_2\bdPhi_2^{T}\qmat{H}(\bdPhi_1\bdPhi_1^{T})^{-1}}^2|\bdOmega_1,\bdPhi}}\\
 &= \Expectation[\bdPhi,\bdOmega_1]{\normF{\boldsymbol{\Sigma}_2\bdPhi_2\bdPhi_2^{T}\boldsymbol{\Sigma}_2^2}^2\cdot \normF{\bdOmega_1^\dagger\boldsymbol{\Sigma}_1^{-2}(\bdPhi_1\bdPhi_1^{T})^{-1}}^2}\\
 &\leq \Expectation[\bdPhi,\bdOmega_1]{\normF{\boldsymbol{\Sigma}_2\bdPhi_2\bdPhi_2^{T}\boldsymbol{\Sigma}_2^2}^2\!\cdot\!\normSpectral{\bdOmega_1^\dagger}^2\normSpectral{\boldsymbol{\Sigma}_1^{-2}}^2 \normF{(\bdPhi_1\bdPhi_1^{T})^{-1}}^2\!}\\
 &=  \sigma_{\varrho}^{-4}   \mathbb E \normF{\boldsymbol{\Sigma}_2\bdPhi_2\bdPhi_2^{T}\boldsymbol{\Sigma}_2^2}^2  \cdot \mathbb E    \normSpectral{\bdOmega_1^\dagger}^2  \cdot \mathbb E   \normF{(\bdPhi_1\bdPhi_1^{T})^{-1}}^2. 
 \end{align*}
\end{small}
The three expectations above are computed in the sequel. 


      Let $\qmat{W}=\bdPhi_2\bdPhi_2^{T}$; then $\qmat{W}_{i,j}=\sum_{k=1}^{l}\phi_{i,k}{\phi_{j,k}}$ where $\phi_{i,j}$ are independent   Gaussian variables. We have:
    \begin{align*}
        \mathbb E{\qmat{W}_{i,i}^2}&=\mathbb E{(\sum_{k=1}^{l}\phi_{i,k}{\phi_{i,k}})^2}=\mathbb E{\sum_{k=1}^{l}(\phi_{i,k}{\phi_{i,k}})^2+\mathbb E\sum_{k\neq p}(\phi_{i,k}{\phi_{i,k}})(\phi_{i,p}{\phi_{i,p}})}\\
        &=
        \sum_{k=1}^{l}\mathbb E{\phi_{i,k}^4}+\sum_{k\neq p}\mathbb E{\phi_{i,k}^2}\mathbb E{\phi_{i,p}^2}=3l+ {l^2-l} = {l^2+2l} ;\\
        \mathbb E{ \qmat{W}_{i,j}^2}&=\mathbb E{( \sum_{k=1}^{l}\phi_{i,k}\phi_{j,k})^2}=
        l,~i\neq j. 
    \end{align*}
    Thus   $\mathbb E {\normF{\boldsymbol{\Sigma}_2\bdPhi_2\bdPhi_2^{T}\boldsymbol{\Sigma}_2^2}^2}=\sum_{i,j}\sigma_i^2 \mathbb E \qmat{W}_{i,j}^2\sigma_j^4= {(l^2+2l)} \normF{\boldsymbol{\Sigma}_2^3}^2+l\normF{\boldsymbol{\Sigma}_2}^2\normF{\boldsymbol{\Sigma}_2^2}^2$. 
    Lemma \ref{lem:inverse_moment_spectral} gives 
  $
        \mathbb E\normSpectral{\bdOmega_1^\dagger}^2 \leq\frac{e^2(s+\varrho )}{2(s-\varrho )^2}.
    $ 
  Lemma \ref{lem:inverse_moment_F} indicates that  $$\mathbb E{\normF{(\bdPhi_1\bdPhi_1^{T})^{-1}}^2}\leq\frac{\varrho (l - 1)}{(l - \varrho )(l - \varrho  - 1)(l - \varrho  - 3)}$$ when $l-\varrho \geq 4$.
     Combining the pieces yields the assertion. 
\end{proof}
\color{black} 

\color{black} The term $\| (\qmat{I}+\bdPhi_1\bdPhi_2^{T}\qmat{H} (\bdPhi_1\bdPhi_1^{T} )^{-1} )^{-1}\|$ in \eqref{eq:inverseTerm}  cannot be bounded in expectation because   there exists a positive Lebesgue  measure around singularity, facing a similar trouble as that in the analysis of \cite[Thm. 3.1 and Remark 3.3]{nakatsukasa2023randomized}. As an alternative, we  
    show that it is not large  with high probability     when $\qmat{A}$ does not have too flattened spectrum. The  bound is related to   classical results in \cite{FindingStructureHalko,tropp2023RandomizedAlgorithms,martinsson2020RandomizedNumerical}. Define the   events: \color{black}
\begin{align*}
    E_1&=\bigdakuohao{\bdPhi:\normF{(\bdPhi_1^{T})^\dagger\bdPhi_2^{T}\boldsymbol{\Sigma}_2^2}<e_1~\text{and}~ \normSpectral{(\bdPhi_1^{T})^\dagger\bdPhi_2^{T}\boldsymbol{\Sigma}_2^2}<e_2},\\
    E_2&=\bigdakuohao{\bdOmega:\normF{\bdOmega_1^\dagger}<e_3 ~\text{and}~ \normSpectral{\bdOmega_1^\dagger}<e_4},\\
    E_3&=\bigdakuohao{\bdPhi:\kappa({\bdPhi_1\bdPhi_1^{T}})<e_5^2} \quad(\kappa(\cdot)~\text{stands for the condition number}),
\end{align*}
where
\begin{align}\label{eq:parametersE}
   & e_1=t \cdot \sqrt{\frac{3\varrho}{l-\varrho +1}}\normF{\boldsymbol{\Sigma}_2^2} 
    + u t \cdot \frac{e \sqrt{l}}{l-\varrho +1} \cdot \sigma_{\varrho +1}^2,\nonumber\\
   & e_2=\left[
            t \cdot \sqrt{\frac{3\varrho}{l-\varrho +1}}
            \sigma_{\varrho +1}^2 
            + t \cdot \frac{e\sqrt{l}}{l-\varrho +1} 
            \normF{\boldsymbol{\Sigma}_2^2}
            \right]
            + ut \cdot \frac{e\sqrt{l}}{l-\varrho +1} \sigma_{\varrho +1}^2,\\
   & e_3\leq \sqrt{\frac{3\varrho}{s-\varrho +1}}t,\quad e_4\leq \frac{e\sqrt{s}}{s-\varrho +1}t,\quad
    e_5\leq\frac{\sqrt{l}+\sqrt{\varrho }+u}{\sqrt{l}-\sqrt{\varrho }-u},\nonumber
\end{align}
with $u>0,t>1$. The first event $E_1$ refines from Corollary \ref{col:AG2G1_deviation_refined_from_finding}. The second one $E_2$ is from     Lemmas \ref{lem:inverse_moment_F} and \ref{lem:inverse_moment_spectral}, and the third one $E_3$ is from   \cite{vershynin2018high}. The following theorem provides a detailed deviation error bound  with specific coefficients related to these classical results.
\begin{theorem}\label{thm:prob_inverseTerm}
    Let   $\bdOmega,\bdPhi$ be standard Gaussian. If $l>s \geq\varrho + 4$ and $(e_1e_4+e_2e_3+e_2e_4u)e_5\sigma_{\varrho}^{-2}<1$, then  for any $\mu>0,t>1$,
    \begin{align}
        &&\probleftright{\normSpectral{\left(\qmat{I}+\bdPhi_1\bdPhi_2^{T}\qmat{H} (\bdPhi_1\bdPhi_1^{T} )^{-1}\right)^{-1}}>\frac{1}{1-(e_1e_4+e_2e_3+e_2e_4u)e_5^2\sigma_{\varrho}^{-2}}}\nonumber\\
        &&< p_f:= 5e^{-u^2/2}+2t^{-(l-\varrho )}+2t^{-(s-\varrho )}. \label{eq:p_f}
    \end{align}
\end{theorem}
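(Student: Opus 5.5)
Writing $\qmat{K}:=\bdPhi_1\bdPhi_2^{T}\qmat{H}(\bdPhi_1\bdPhi_1^{T})^{-1}$, I would use the Neumann-series bound $\|(\qmat{I}+\qmat{K})^{-1}\|\le 1/(1-\|\qmat{K}\|)$, valid whenever $\|\qmat{K}\|<1$. This reduces the statement to showing that, outside an event of probability at most $p_f$, we have $\|\qmat{K}\|\le (e_1e_4+e_2e_3+e_2e_4u)e_5^2\sigma_{\varrho}^{-2}$, with the hypothesis guaranteeing that this quantity is below one.

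\textbf{Factorization.} Since $\bdPhi_1$ has full row rank, $\bdPhi_1=\bdPhi_1\bdPhi_1^{T}(\bdPhi_1^{T})^{\dagger}$. Hence
\[
\qmat{K}=(\bdPhi_1\bdPhi_1^{T})\,\bigl[(\bdPhi_1^{T})^{\dagger}\bdPhi_2^{T}\boldsymbol{\Sigma}_2^2\bigr]\,\bdOmega_2\,\bigl[\bdOmega_1^\dagger\boldsymbol{\Sigma}_1^{-2}\bigr](\bdPhi_1\bdPhi_1^{T})^{-1}.
\]
By submultiplicativity,
\[
\|\qmat{K}\|\le \kappa(\bdPhi_1\bdPhi_1^{T})\,\sigma_\varrho^{-2}\,\bigl\|\qmat{S}\bdOmega_2\qmat{T}\bigr\|,
\qquad \qmat{S}=(\bdPhi_1^{T})^{\dagger}\bdPhi_2^{T}\boldsymbol{\Sigma}_2^2,\quad \qmat{T}=\bdOmega_1^\dagger .
\]
Strictly, $\boldsymbol{\Sigma}_1^{-2}$ sits between $\bdOmega_1^\dagger$ and $(\bdPhi_1\bdPhi_1^T)^{-1}$, and I bound it separately by $\sigma_\varrho^{-2}$. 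The factor $\kappa$ comes from $\|\bdPhi_1\bdPhi_1^{T}\|\,\|(\bdPhi_1\bdPhi_1^{T})^{-1}\|$.

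\textbf{The factor $\qmat{S}\bdOmega_2\qmat{T}$.} By rotational invariance, $\bdOmega_2$ is standard Gaussian and independent of $\qmat{S}$ (a function of $\bdPhi$) and of $\qmat{T}$ (a function of $\bdOmega_1$). Corollary \ref{col:SGT_deviation} then gives $\|\qmat{S}\bdOmega_2\qmat{T}\|\le e_1e_4+e_2e_3+e_2e_4u$, except on a set of probability at most $e^{-u^2/2}+\mathbb P(E_1^c)+\mathbb P(E_2^c)$.

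\textbf{Bounding the complementary events.}
\begin{itemize}
\item $E_1$: apply Corollary \ref{col:AG2G1_deviation_refined_from_finding} with $\qmat{A}\leftarrow\boldsymbol{\Sigma}_2^2$, $\qmat{G}_2\leftarrow\bdPhi_2^{T}$, $\qmat{G}_1\leftarrow\bdPhi_1^{T}$ (after transposing, so the norms are unchanged). Its two statements give the Frobenius bound $e_1$ and the spectral bound $e_2$, each failing with probability at most $2t^{-(l-\varrho)}+e^{-u^2/2}$. Bookkeeping is needed so that the $t^{-(l-\varrho)}$ terms are counted only once (the failure event for $\|\bdPhi_1^\dagger\|$ is shared), giving a total of $2t^{-(l-\varrho)}+2e^{-u^2/2}$.
\item $E_2$: Lemmas \ref{lem:inverse_moment_F} and \ref{lem:inverse_moment_spectral} give a failure probability of at most $t^{-(s-\varrho)}+t^{-(s-\varrho+1)}\le 2t^{-(s-\varrho)}$.
\item $E_3$: the standard Gaussian singular-value bounds $\sqrt{l}\pm(\sqrt{\varrho}+u)$ for $\bdPhi_1^{T}$ \cite{vershynin2018high} give $\kappa(\bdPhi_1\bdPhi_1^{T})\le e_5^2$, failing with probability at most $2e^{-u^2/2}$ (or $e^{-u^2/2}$ for the combined two-sided statement).
\end{itemize}
A union bound should then total $5e^{-u^2/2}+2t^{-(l-\varrho)}+2t^{-(s-\varrho)}$.

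\textbf{Main obstacle.} The delicate part is matching the constant $5$ in $p_f$: it requires carefully sharing failure events between $E_1$'s two bounds and counting $E_3$ correctly. The threshold condition also deserves attention: the hypothesis is stated with $e_5$, but the bound on $\|\qmat{K}\|$ involves $e_5^2$. If needed, I would interpret the requirement as $(e_1e_4+e_2e_3+e_2e_4u)e_5^2\sigma_\varrho^{-2}<1$, or note that the inequality is vacuous otherwise, since the right-hand threshold becomes nonpositive.
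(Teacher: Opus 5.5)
Your proposal is correct and is essentially the paper's own proof: the same Neumann-series reduction, the same factorization through $\kappa(\bdPhi_1\bdPhi_1^{T})$ and $\sigma_\varrho^{-2}$, Corollary \ref{col:SGT_deviation} with the same $\qmat{S}$, $\qmat{G}=\bdOmega_2$, $\qmat{T}$, and the same failure tally $e^{-u^2/2}+(2t^{-(l-\varrho)}+2e^{-u^2/2})+2t^{-(s-\varrho)}+2e^{-u^2/2}=p_f$ (your plain union bound handles $E_3$ slightly more cleanly than the paper's conditioning on $E_3$). On your side remark, you are right that the hypothesis should read $e_5^2$, as it is a typo in the statement, but this reading is required rather than optional: if the threshold $1/(1-x)$ were nonpositive, the event would have probability one and the claimed bound would be false, not vacuous.
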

\begin{proof}
    The main idea is from the   Neumann series: if $\normSpectral{\qmat{X}}<\alpha<1$, then   $\normSpectral{(\qmat{I}-\qmat{X})^{-1}}\leq ({1-\alpha})^{-1}$. Thus we  estimate the spectral norm of   $\bdPhi_1\bdPhi_2^{T}\qmat{H} (\bdPhi_1\bdPhi_1^{T} )^{-1}$ and recall $\qmat{H}=\boldsymbol{\Sigma}_2^2\bdOmega_2\bdOmega_1^\dagger\boldsymbol{\Sigma}_1^{-2}$:
    \begin{align*}
        \normSpectral{\bdPhi_1\bdPhi_2^{T}\qmat{H} (\bdPhi_1\bdPhi_1^{T} )^{-1}} &= \normSpectral{(\bdPhi_1\bdPhi_1^{T})(\bdPhi_1^\dagger)^{T}\bdPhi_2^{T}\qmat{H} (\bdPhi_1\bdPhi_1^{T} )^{-1}}\nonumber\\
        &\leq \normSpectral{(\bdPhi_1^\dagger)^{T}\bdPhi_2^{T}\boldsymbol{\Sigma}_2^2\bdOmega_2\bdOmega_1^\dagger}\normSpectral{\boldsymbol{\Sigma}_1^{-2}}\normSpectral{\bdPhi_1\bdPhi_1^{T}}\normSpectralnoleftright{ (\bdPhi_1\bdPhi_1^{T} )^{-1}}\nonumber\\
        &= \normSpectral{(\bdPhi_1^\dagger)^{T}\bdPhi_2^{T}\boldsymbol{\Sigma}_2^2\bdOmega_2\bdOmega_1^\dagger}\normSpectral{\boldsymbol{\Sigma}_1^{-2}}\kappa({\bdPhi_1\bdPhi_1^{T}}).
    \end{align*} 
 By   Corollary \ref{col:SGT_deviation} with $\qmat{S}= (\bdPhi_1^\dagger)^{T}\bdPhi_2^{T}\boldsymbol{\Sigma}_2^2,\qmat{G}=\bdOmega_2,\qmat{T}=\bdOmega_1^\dagger$ and   conditioned on $ E_3$:
 \begin{small}
    \begin{align*}
        \probleftright{\normSpectral{\bdPhi_1\bdPhi_2^{T}\qmat{H} (\bdPhi_1\bdPhi_1^{T} )^{-1}}>(e_1e_4+e_2e_3+e_2e_4u) \sigma_{\varrho}^{-2}e_5^2 \mid E_3}\leq e^{-u^2/2} + \probleftright{E_1^c} + \probleftright{E_2^c},
    \end{align*} 
\end{small}
    where $e_1,e_2,e_3,e_4 $ were defined in \eqref{eq:parametersE}. On the other hand, we have
    \begin{align}\label{eq:E1E2E3_complement}
        \probleftright{E_1^c}\leq 2t^{-(l-\varrho )}+2e^{-u^2/2},\quad \probleftright{E_2^c}\leq 2t^{-(s-\varrho )},\quad \probleftright{E_3^c}\leq 2e^{-u^2/2},
    \end{align}
    for $l-\varrho \geq4$ and $u>0,t>1$, where the first one can be derived from Corollary \ref{col:AG2G1_deviation_refined_from_finding}, the second from Lemmas \ref{lem:inverse_moment_F} and \ref{lem:inverse_moment_spectral}, and the last follows the spectral estimation of Gaussian matrices  \cite{vershynin2018high}.
    We finally remove the condition on $E_3$ by \eqref{eq:E1E2E3_complement} and   obtain the assertion.
\end{proof}

 $(e_1e_4+e_2e_3+e_2e_4u)e_5^2\sigma_{\varrho}^{-2}$   can be smaller than a constant $\alpha<1$ with proper choice of $u,t$ and $l,\varrho ,s$. To simplify,   assume that $s=2\varrho=2r$,   $e_5\leq 2$, which is not a strong assumption in practice. It follows   $e_3\leq\sqrt{3}t$ and $e_4\leq\frac{e\sqrt{2}}{\sqrt{\varrho }}t$. Then $(e_1e_4+e_2e_3+e_2e_4u)e_5^2\sigma_{\varrho}^{-2}$ is simplified as:
\begin{small}
\begin{equation*}
    2et^2\!\!\left(\sqrt{\frac{6 }{l-\varrho +1}}+\frac{\sqrt{3l}}{l-\varrho +1}\right)\!\!\frac{\normF{\boldsymbol{\Sigma}_2^2}}{\sigma_{\varrho}^2}+
    2t^2\!\left(\frac{3\sqrt{\varrho }}{l-\varrho +1}+u\cdot\frac{e\sqrt{3l}}{l-\varrho +1}+u\cdot\frac{e^2\sqrt{2l/\varrho }}{l-\varrho +1}\right)\!\frac{\sigma_{\varrho +1}^2}{\sigma_{\varrho}^2}.
\end{equation*}
\end{small}
\noindent Clearly, for a reasonably large $l$, the above can be smaller than a constant $\alpha<1$.
\color{black}
These discussions  together with Theorem \ref{thm:prob_inverseTerm} show that the event
\begin{align}\label{eq:event_EF}
    E_F:=\left\{\bdOmega,\bdPhi:\normSpectral{\left(\qmat{I}+\bdPhi_1\bdPhi_2^{T}\boldsymbol{\Sigma}_2^2\bdOmega_2\bdOmega_1^\dagger\boldsymbol{\Sigma}_1^{-2} (\bdPhi_1\bdPhi_1^{T} )^{-1}\right)^{-1}}<\xi\right\}.
\end{align}
holds with   probability at least $1-p_f$
for a constant $\xi= 1/(1-\alpha)$. 

As a consequence,        the averaged projection error is upper bounded as follows.
\begin{theorem}\label{thm:expectation_ProjectionError} 
 Let $\qmat{Q}$ be computed by Algorithm \ref{alg:psa-sps} with $q=1$  and   $\bdPhi,\bdOmega$ be independent standard Gaussian. If $l>s \geq \varrho+ 4$, then 
    \begin{small}
    \begin{equation*}   
             \Expectation{\normF{\qmat{A}-P_{\qmat{Q}}\qmat{A}}^2\mid E_F}\leq 
        (1+\epsilon\hat{\xi}^2)\tau_{\varrho +1}^2(\qmat{A})+\delta\hat{\xi}^2\frac{\sum_{j=\varrho +1}^{n}\sigma_j^6}{\sigma_{\varrho}^4}+\mu\hat{\xi}^2\tau_{\varrho +1}^2(\qmat{A})\frac{\sum_{j=\varrho +1}^{n}\sigma_j^4}{\sigma_{\varrho}^4},
    \end{equation*}
\end{small}
  where $\hat{ \xi}:=\xi/\sqrt{\mathbb P(E_F)}$ with     $E_F$  defined in \eqref{eq:event_EF} and  $\xi>1$,        $\delta,\mu$   are as in \eqref{eq:parameters_eps_delta_mu}, and 
       $ \epsilon=\frac{2\varrho   }{(l-\varrho -1)}$. 
       
       In particular,   for   given $\xi>1,u>0,t>1$,  if  $l$ is   large enough so that $(e_1e_4+e_2e_3+e_2e_4u)e_5\sigma_{\varrho}^{-2}< 1-\xi^{-1}$ ($e_i$'s are defined in \eqref{eq:parametersE}), then     $\mathbb P(E_F)\geq (1-p_f)$ with $p_f$ as in \eqref{eq:p_f}, hence $\hat{\xi} \leq \xi/\sqrt{1-p_f}\approx \xi$.
\end{theorem}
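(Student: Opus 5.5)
Starting from the deterministic bound of Theorem \ref{thm:projection_error_sketchPower} with $p=2$, note first that $\mathrm{range}(\qmat{Q})=\mathrm{range}(\hat{\qmat{Y}})$, because the re-orthonormalized iteration produces the same range as $\qmat{Z}\qmat{Z}^T\qmat{Y}$ whenever the full-rank conditions hold, and these hold almost surely. Hence $P_{\qmat{Q}}=P_{\hat{\qmat{Y}}}$, and for every $(\bdOmega,\bdPhi)$ in $E_F$ we have
\begin{align*}
\normF{\qmat{A}-P_{\qmat{Q}}\qmat{A}}^2\le \tau_{\varrho+1}^2(\qmat{A})+\xi^2\normF{\boldsymbol{\Sigma}_2\bdPhi_2\bdPhi_1^\dagger+\boldsymbol{\Sigma}_2\bdPhi_2\bdPhi_2^{T}\qmat{H}(\bdPhi_1\bdPhi_1^{T})^{-1}}^2 .
\end{align*}
Invertibility of the inverse term holds almost surely by Proposition \ref{prop:inv_F1}, and on $E_F$ its spectral norm is below $\xi$ by definition. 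I would then take the conditional expectation given $E_F$. Since the remaining random quantity $X$ is nonnegative, $\mathbb E[X\mid E_F]=\mathbb E[X\mathbf 1_{E_F}]/\mathbb P(E_F)\le \mathbb E X/\mathbb P(E_F)$. This turns $\xi^2$ into $\hat\xi^2=\xi^2/\mathbb P(E_F)$ and leaves unconditional expectations that are independent of the event, so I never have to deal with the dependence between $E_F$ and the other Gaussian factors.

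Next I would split the squared norm with $(a+b)^2\le 2a^2+2b^2$, which gives $2\mathbb E\normF{\boldsymbol{\Sigma}_2\bdPhi_2\bdPhi_1^\dagger}^2+2\mathbb E\normF{\boldsymbol{\Sigma}_2\bdPhi_2\bdPhi_2^T\qmat{H}(\bdPhi_1\bdPhi_1^T)^{-1}}^2$. By rotational invariance $\bdPhi_1$ and $\bdPhi_2$ are independent standard Gaussian. The first term then follows from Corollary \ref{col:SigmaPhi2Phi1_second_moment_bound} with $\qmat{G}_1=\bdPhi_1\in\bbR^{\varrho\times l}$ and $\qmat{G}_2=\bdPhi_2$; it equals $\frac{\varrho}{l-\varrho-1}\tau_{\varrho+1}^2(\qmat{A})$, so doubled it is $\epsilon\,\tau^2_{\varrho+1}(\qmat{A})$ with $\epsilon=2\varrho/(l-\varrho-1)$. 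The second term is exactly Theorem \ref{thm:expectation_projectionError_partOne}. Doubled, it gives $\delta\sum_{j>\varrho}\sigma_j^6/\sigma_\varrho^4+\mu\,\tau^2_{\varrho+1}(\qmat{A})\sum_{j>\varrho}\sigma_j^4/\sigma_\varrho^4$. Multiplying by $\hat\xi^2$ and adding the $\tau_{\varrho+1}^2$ term yields the stated bound. The hypotheses $l>s\ge\varrho+4$ cover the conditions $l\ge\varrho+4$ and $s\ge\varrho+2$ needed by the invoked lemmas.

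For the ``in particular'' part, I would apply Theorem \ref{thm:prob_inverseTerm}. If the quantity $(e_1e_4+e_2e_3+e_2e_4u)e_5^2\sigma_\varrho^{-2}$ is less than $1-\xi^{-1}$, then the Neumann bound $1/(1-\alpha)$ is below $\xi$. The event in Theorem \ref{thm:prob_inverseTerm} therefore contains $E_F^c$, which gives $\mathbb P(E_F)\ge 1-p_f$ and $\hat\xi\le \xi/\sqrt{1-p_f}$. There is a small mismatch to handle here: the statement writes $e_5$ where the hypothesis of Theorem \ref{thm:prob_inverseTerm} needs $e_5^2$, so I would read the condition with the exponent that makes the implication valid.

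The main obstacle is the conditioning step. One cannot use $\mathbb E[X\mid E_F]\le \mathbb E[X]$, since $E_F$ is correlated with $X$, and the inverse term has no usable moments, as the text notes. The division by $\mathbb P(E_F)$ is the crude but safe way around this, and it is exactly why $\hat\xi$ rather than $\xi$ appears. The remaining steps are assembling results already established in the paper.
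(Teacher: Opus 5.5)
Your proposal is correct and follows the paper's proof essentially step for step. The steps are: $P_{\qmat{Q}}=P_{\hat{\qmat{Y}}}$; the deterministic bound of Theorem \ref{thm:projection_error_sketchPower} with the inverse factor bounded by $\xi^2$ on $E_F$; the estimate $\mathbb E[Z\mid E_F]\le \mathbb E[Z]/\mathbb P(E_F)$, which produces $\hat{\xi}$; the split $Z\le 2a^2+2b^2$, handled by Corollary \ref{col:SigmaPhi2Phi1_second_moment_bound} and Theorem \ref{thm:expectation_projectionError_partOne}; and Theorem \ref{thm:prob_inverseTerm} for the ``in particular'' part. Your reading of that condition with $e_5^2$ instead of $e_5$ is the right one: it is what the threshold in Theorem \ref{thm:prob_inverseTerm} actually requires (since $e_5\ge 1$, the $e_5$ version is weaker and would not suffice), and it matches the paper's own discussion of $E_F$ in Section \ref{sec:error_bound_q=1_stated}, so the $e_5$ in the statement is a typo.
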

\begin{proof}
    Note that $P_{\qmat{Q}}=P_{\qmat{\hat{\qmat{Y}}}}$. Conditioned on $E_F$, Theorem \ref{thm:projection_error_sketchPower} shows that the expectation is upper bounded by $\tau_{\varrho +1}^2(\qmat{A})+\xi^2\mathbb E [Z|E_F] $, where $Z:=\|\boldsymbol{\Sigma}_2\bdPhi_2\bdPhi_1^\dagger+\boldsymbol{\Sigma}_2\bdPhi_2\bdPhi_2^{T}\qmat{H} (\bdPhi_1\bdPhi_1^{T} )^{-1} \|_F^2 $. By the definition of conditional expectation  for nonnegative random variables, $ \mathbb E [Z|E_F] \leq \mathbb E [Z]/\mathbb P(E_F)$. Hence the upper bound becomes $\tau_{\varrho +1}^2(\qmat{A})+\hat{\xi}^2\mathbb E [Z] $ with $\hat{\xi} = \xi/\sqrt{\mathbb P(E_F)}$. Then, decompose $Z\leq   2    \|\boldsymbol{\Sigma}_2\bdPhi_2\bdPhi_1^\dagger\|_F^2 + 2  \|\boldsymbol{\Sigma}_2\bdPhi_2\bdPhi_2^{T}\qmat{H} (\bdPhi_1\bdPhi_1^{T} )^{-1}\|_F^2$ and  apply Corollary \ref{col:SigmaPhi2Phi1_second_moment_bound} and Theorem \ref{thm:expectation_projectionError_partOne} to bound each term, completing the proof of the main statement. 

If, in addition,    $l$ is   large enough such that   $(e_1e_4+e_2e_3+e_2e_4u)e_5\sigma_{\varrho}^{-2}\leq 1-\xi^{-1}$, then Theorem \ref{thm:prob_inverseTerm} ensures that $\mathbb P(E_F)>1-p_f$. Consequently, $\hat{\xi} \leq \xi/\sqrt{1-p_f}$. 
\end{proof}
\color{black}




\color{black}

\subsection{Averaged oblique projection error}\label{sec:onepass_err_1}
Previous subsections have established the orthogonal projection error. To obtain the main bound stated in Sect. \ref{sec:error_bound_q=1_stated}, we still need a results established in \cite{Practical_Sketching_Algorithms_Tropp}   that characteristizes the connection between oblique and orthogonal projections. The proof is given in Appendix \ref{sec:proofs_technical_lemmas}.
\begin{lemma}\label{lem:GNErrorFrobenius}
    Let $\qmat{Q},\qmat{B}$ be generated by Algorithm \ref{alg:psa-sps} and $\bdPsi$ be standard Gaussian and independent from arbitrarily random matrices $\bdOmega,\bdPhi$. If $d>s+1$, then 
    \begin{gather*}\small
        \ExpectationNoBracket[\bdPsi,\bdOmega,\bdPhi]{\normF{\qmat{A}-\qmat{Q}\qmat{B}}^2} \leq \frac{d}{d-s-1}\ExpectationNoBracket[\bdOmega,\bdPhi]{\normF{\qmat{A}-P_{\qmat{Q}}\qmat{A}}^2};\\
       \small \ExpectationNoBracket[\bdPsi,\bdOmega,\bdPhi]{\normSpectral{\qmat{A}-\qmat{Q}\qmat{B}}} \leq \left(1+\sqrt{\frac{s}{d-s-1}}\right)\ExpectationNoBracket[\bdOmega,\bdPhi]{\normSpectral{\qmat{A}-P_{\qmat{Q}}\qmat{A}}}+\frac{e\sqrt{d+s}}{\sqrt{2}(d-s)}\ExpectationNoBracket[\bdOmega,\bdPhi]{\normF{\qmat{A}-P_{\qmat{Q}}\qmat{A}}}.
    \end{gather*}
\end{lemma}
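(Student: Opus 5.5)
Throughout, fix $\bdOmega,\bdPhi$ and write $\qmat{Q}$ for the orthonormal basis of $\hat{\qmat{Y}}$. By independence, $\bdPsi$ remains standard Gaussian conditionally on $\bdOmega,\bdPhi$, and $\qmat{Q}$ is then a fixed matrix. The plan is to prove both inequalities conditionally on $(\bdOmega,\bdPhi)$ and afterwards take the outer expectation over $(\bdOmega,\bdPhi)$ via the tower property; that step is routine. The argument follows \cite[Lemma A.3, Thm. 4.3]{Practical_Sketching_Algorithms_Tropp}.

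First, the error splits into two orthogonal parts. Complete $\qmat{Q}$ to an orthogonal matrix $[\qmat{Q},\qmat{Q}_\perp]$. Since $\bdPsi\qmat{Q}\in\bbR^{d\times s}$ with $d>s$ has full column rank almost surely, $(\bdPsi\qmat{Q})^\dagger\bdPsi\qmat{Q}=\qmat{I}_s$. Hence
\[
\qmat{B}=(\bdPsi\qmat{Q})^\dagger\bdPsi(\qmat{Q}\qmat{Q}^T\qmat{A}+\qmat{Q}_\perp\qmat{Q}_\perp^T\qmat{A})=\qmat{Q}^T\qmat{A}+(\bdPsi\qmat{Q})^\dagger\bdPsi\qmat{Q}_\perp\qmat{Q}_\perp^T\qmat{A}.
\]
This gives
\[
\qmat{A}-\qmat{Q}\qmat{B}=(\qmat{I}-P_{\qmat{Q}})\qmat{A}-\qmat{Q}(\bdPsi\qmat{Q})^\dagger\bdPsi_2\qmat{Q}_\perp^T\qmat{A},\qquad \bdPsi_1:=\bdPsi\qmat{Q},\ \bdPsi_2:=\bdPsi\qmat{Q}_\perp .
\]
By rotational invariance, $\bdPsi_1$ and $\bdPsi_2$ are independent standard Gaussian matrices. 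The two summands have orthogonal column spaces, so
\[
\normF{\qmat{A}-\qmat{Q}\qmat{B}}^2=\normF{(\qmat{I}-P_{\qmat{Q}})\qmat{A}}^2+\normF{\bdPsi_1^\dagger\bdPsi_2\qmat{Q}_\perp^T\qmat{A}}^2 .
\]

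For the Frobenius bound, first condition on $\bdPsi_1$ and apply Lemma \ref{lem:SGT} with $\qmat{S}=\bdPsi_1^\dagger$, $\qmat{G}=\bdPsi_2$, $\qmat{T}=\qmat{Q}_\perp^T\qmat{A}$. This gives expectation $\normF{\bdPsi_1^\dagger}^2\normF{\qmat{Q}_\perp^T\qmat{A}}^2$. Next use $\mathbb E\normF{\bdPsi_1^\dagger}^2=\mathbb E\,\mathrm{trace}((\bdPsi_1^T\bdPsi_1)^{-1})=s/(d-s-1)$, which is Lemma \ref{lem:inverse_moment_F} applied to the transposed $s\times d$ Gaussian matrix and needs $d\ge s+2$. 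Together with $\normF{\qmat{Q}_\perp^T\qmat{A}}=\normF{(\qmat{I}-P_{\qmat{Q}})\qmat{A}}$, the conditional expectation is $(1+\frac{s}{d-s-1})\normF{(\qmat{I}-P_{\qmat{Q}})\qmat{A}}^2=\frac{d-1}{d-s-1}\normF{(\qmat{I}-P_{\qmat{Q}})\qmat{A}}^2$, which is at most $\frac{d}{d-s-1}\normF{(\qmat{I}-P_{\qmat{Q}})\qmat{A}}^2$. This is exactly Corollary \ref{col:SigmaPhi2Phi1_second_moment_bound} in the roles $\qmat{G}_1=\bdPsi_1^T$, $\qmat{G}_2=\bdPsi_2^T$, $k=d$.

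For the spectral bound, use the triangle inequality: $\normSpectral{\qmat{A}-\qmat{Q}\qmat{B}}\le\normSpectral{(\qmat{I}-P_{\qmat{Q}})\qmat{A}}+\normSpectral{\bdPsi_1^\dagger\bdPsi_2\qmat{M}}$, where $\qmat{M}=\qmat{Q}_\perp^T\qmat{A}$. Bound the second term by Jensen, $\mathbb E\normSpectral{\cdot}\le(\mathbb E\normSpectral{\cdot}^2)^{1/2}$, and then apply the spectral part of Corollary \ref{col:SigmaPhi2Phi1_second_moment_bound} (transposed, with $k=d$). This gives
\[
\mathbb E\normSpectral{\bdPsi_1^\dagger\bdPsi_2\qmat{M}}\le\normSpectral{\qmat{M}}\sqrt{\tfrac{s}{d-s-1}}+\normF{\qmat{M}}\tfrac{e\sqrt{d+s}}{\sqrt2(d-s)} .
\]
Substituting $\normSpectral{\qmat{M}}=\normSpectral{(\qmat{I}-P_{\qmat{Q}})\qmat{A}}$ and $\normF{\qmat{M}}=\normF{(\qmat{I}-P_{\qmat{Q}})\qmat{A}}$ and then taking expectation over $(\bdOmega,\bdPhi)$ yields the stated bound.

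The only delicate point is justifying the conditioning, namely that $\bdPsi_1$ and $\bdPsi_2$ are Gaussian and independent given $\qmat{Q}$. This holds because $\qmat{Q}$ depends only on $(\bdOmega,\bdPhi)$, which are independent of $\bdPsi$. I also need to check that the Corollary's hypothesis $k\ge s+2$ is implied by $d>s+1$.
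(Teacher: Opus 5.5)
Your proposal is correct and follows essentially the same route as the paper. Both use the identity $\qmat{B}-\qmat{Q}^{T}\qmat{A}=(\bdPsi\qmat{Q})^\dagger\bdPsi\qmat{Q}_\perp\qmat{Q}_\perp^{T}\qmat{A}$ from Tropp et al. For the Frobenius case, both use the Pythagorean split together with Lemma \ref{lem:SGT} and the inverse-Wishart moment (the paper simply cites \cite[Thm.~4.3]{Practical_Sketching_Algorithms_Tropp} here). For the spectral case, both use the triangle inequality with Lemma \ref{lem:SGT}, Jensen, and Lemmas \ref{lem:inverse_moment_F}--\ref{lem:inverse_moment_spectral}.

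The only cosmetic difference is where Jensen is applied. You apply it once to the whole term via Corollary \ref{col:SigmaPhi2Phi1_second_moment_bound}, while the paper applies it separately to $\normF{(\bdPsi\qmat{Q})^\dagger}$ and $\normSpectral{(\bdPsi\qmat{Q})^\dagger}$. The resulting constants are identical, and your Frobenius constant $\frac{d-1}{d-s-1}$ is slightly sharper than the stated $\frac{d}{d-s-1}$.
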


\begin{proof}[Proof of Theorem \ref{thm:oblique_proj_error_q=1}]
   Since  Lemma \ref{lem:GNErrorFrobenius} holds for any random $\bdPhi,\bdOmega$,   it also applies when they are Gaussian and further restricted to $E_F$. Therefore, the assertion follows by   combining Theorem \ref{thm:expectation_ProjectionError} and Lemma \ref{lem:GNErrorFrobenius}.  
\end{proof}

\begin{remark} We only present the bound in Frobenius norm. That in   spectral norm can be derived similarly, which is omitted   for simplicity.
\end{remark}
\color{black}

\section{Analysis for the \texorpdfstring{$q\geq 1$}{q \geq 1} Bound} \label{sec:error_anal_q>1}


To prove the bound in Theorem \ref{thm:error_oblique_bound_q_gt_1},  the analysis in the previous section might not yield reasonable bounds. As an alternative, we adapt the decomposition idea     in \cite{martinssonRandomizedAlgorithmDecomposition2011,rokhlin2010randomized,woolfe2008Fast},   with several refinements to keep the   bound   independent of the   ambient matrix size. Throughout,
 we   assume $\bdPsi\in\bbR^{d\times n},\bdPhi\in\bbR^{m\times l}$ are still independent standard Gaussian, while $\bdOmega=\bdPhi\bdtildeOmega$ with $\bdtildeOmega\in\bbR^{l\times s}$   standard Gaussian independent of $\bdPsi,\bdPhi$. The analysis still works when $\bdOmega$ is standard Gaussian, but the bound will be worse. Thus we only present the case $\bdOmega=\bdPhi\bdtildeOmega$. This section is organized as follows:
  Sect. \ref{sec:q>1_pre} presents technical lemmas;  Sect. \ref{sec:deter_proj_error_qgt1} derives the deterministic error,   with probabilistic error bounds   established in Sect. \ref{sec:prob_proj_error_qgt1} and \ref{sec:prob_obl_proj_error_qgt1}. \color{black}


\subsection{Technical lemmas}\label{sec:q>1_pre}

For some positive integer $k$ satisfying $ k<l\ll\min\{m,n\}$, partition   $\qmat{A}$ via SVD as 
\begin{align}\label{eq:A_SVD_k_s_l}
    \begin{array}{c@{}c@{}c@{}c@{}c}
        &   & \begin{array}{cc} \scriptstyle k & \scriptstyle n-k \end{array} &   \begin{array}{c}
               \scriptstyle n \end{array} & \\[-2pt]  
               \qmat{A}=\qmat{U}\boldsymbol{\Sigma}\qmat{V}^{T}= &      \qmat{U} &
           \left[
           \begin{array}{cc}
               \boldsymbol{\Sigma}_1 &  \\ 
                & \boldsymbol{\Sigma}_2
           \end{array}
           \right] &
               \left[
               \begin{array}{c}
                   \mathbf{V}_1^{T}  \\
                   \mathbf{V}_2^{T}
               \end{array}
               \right] &     
           \begin{array}{c}
               \scriptstyle k \\[2pt]
               \scriptstyle n-k
           \end{array}
       \end{array},
\end{align}
where 
 $\boldsymbol{\Sigma}_1\in\bbR^{k\times k}$, $\boldsymbol{\Sigma}_2 \in\bbR^{(n-k)\times (n-k)}$,   $\qmat{U} \in\bbR^{m\times n}$,   $\qmat{V}_1\in \bbR^{n\times k}$, and $\qmat{V}_2\in\bbR^{n\times (n-k)}$. The singular values   are arranged in a descending order.
Then define the block partition 
\begin{align}\label{eq:blockOfV*Phi}
    \qmat{V}^{T}\bdPhi=\begin{bmatrix}
        \qmat{V}_1^{T}\bdPhi\\
        \qmat{V}_2^{T}\bdPhi
    \end{bmatrix}=
    \overset{l}{\begin{bmatrix}
        \bdPhi_1\\
        \bdPhi_2
    \end{bmatrix} }
    \begin{array}{c}
        \scriptstyle k \\[-2pt]
        \scriptstyle n-k
    \end{array} .
\end{align}

The following  estimates the upper bound of   $\sigma_{\varrho+1}(\qmat{Z}) $. 
 Martinsson et al. derived  $\sigma_{\varrho+1}(\qmat{Z})\leq O(\sqrt{\max\{\varrho+j,l  \} })\sigma_{\varrho+1}(\qmat{A})+O(\sqrt{\max\{m-\varrho-j,l  \} } )\sigma_{\varrho+j+1}(\qmat{A})$ \cite{martinssonRandomizedAlgorithmDecomposition2011}. Similar to their analysis but credited to  Lemma \ref{lem:SGT}, we   obtain a bound   depending on sketch sizes and tail energy, which is crucial for our final results. 
\begin{lemma}\label{lem:sigma/tau_r+1_C}
    With the notations in \eqref{eq:A_SVD_k_s_l} and \eqref{eq:blockOfV*Phi} and $\qmat{Z}=\qmat{A}\bdPhi$ with $\bdPhi$ standard Gaussian, for any $\varrho$ with $0\leq \varrho\leq k-1<l-1$,   we have 
    \begin{align*}
        \sigma_{\varrho+1}&\left(\qmat{Z}\right)\leq\sigma_{\varrho+1}\left(\qmat{A}\right)\left(\sqrt{l}+\sqrt{k}+\beta\right)+\sigma_{k+1}(\qmat{A})\left(\sqrt{l}+\beta\right)+\tau_{k+1}(\qmat{A});\\
        \tau_{\varrho+1}&\left(\qmat{Z}\right)\leq\tau_{\varrho+1}\left(\qmat{A}\right)\left(\sqrt{l}+\sqrt{k}+\beta\right)+\sigma_{k+1}(\qmat{A})\beta+\tau_{k+1}(\qmat{A})\sqrt{l}      
    \end{align*}
    with exception probability at most $2e^{-\beta^2/2}$ perspectively,   $\beta>0$.
\end{lemma}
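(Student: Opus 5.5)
We write $\qmat{Z}=\qmat{A}\bdPhi=\qmat{U}\boldsymbol{\Sigma}\qmat{V}^{T}\bdPhi$. By unitary invariance, $\sigma_i(\qmat{Z})=\sigma_i(\boldsymbol{\Sigma}\qmat{V}^{T}\bdPhi)$. With the partition \eqref{eq:blockOfV*Phi}, the plan is to split
\[
\boldsymbol{\Sigma}\qmat{V}^{T}\bdPhi=\begin{bmatrix}\boldsymbol{\Sigma}_1\bdPhi_1\\ 0\end{bmatrix}+\begin{bmatrix}0\\ \boldsymbol{\Sigma}_2\bdPhi_2\end{bmatrix}.
\]
Here $\bdPhi_1\in\bbR^{k\times l}$ and $\bdPhi_2$ are independent standard Gaussian, by rotational invariance. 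We further split $\boldsymbol{\Sigma}_1=\diag(\boldsymbol{\Sigma}_{11},\boldsymbol{\Sigma}_{12})$, where $\boldsymbol{\Sigma}_{11}$ holds the top $\varrho$ singular values and $\boldsymbol{\Sigma}_{12}$ holds $\sigma_{\varrho+1},\dots,\sigma_k$. The block carrying $\boldsymbol{\Sigma}_{11}$ has rank at most $\varrho$. Weyl's inequality $\sigma_{\varrho+1}(\qmat{X}+\qmat{E})\le\sigma_{\varrho+1}(\qmat{X})+\|\qmat{E}\|$, applied with $\qmat{X}$ of rank $\varrho$, then gives
\[
\sigma_{\varrho+1}(\qmat{Z})\le \|\boldsymbol{\Sigma}_{12}\bdPhi_{12}\|+\|\boldsymbol{\Sigma}_2\bdPhi_2\|,
\]
where $\bdPhi_{12}\in\bbR^{(k-\varrho)\times l}$ are the corresponding rows. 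For the tail-energy inequality we use instead $\tau_{\varrho+1}(\qmat{X}+\qmat{E})\le\|\qmat{E}\|_F$ when $\mathrm{rank}(\qmat{X})\le\varrho$. Applied directly this would produce Frobenius norms of Gaussian products; to get the stated form we instead use the Mirsky/Weyl-type tail bound $\tau_{\varrho+1}(\qmat{M}_1+\qmat{M}_2)\le\tau_{\varrho+1}(\qmat{M}_1)+\|\qmat{M}_2\|_F$, with $\qmat{M}_1$ the $\boldsymbol{\Sigma}_1$ block and $\qmat{M}_2$ the $\boldsymbol{\Sigma}_2$ block.

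\textbf{Head block.} For the head term, $\|\boldsymbol{\Sigma}_{12}\bdPhi_{12}\|\le\sigma_{\varrho+1}\|\bdPhi_{12}\|$, and the Gaussian spectral estimate \cite{vershynin2018high} gives $\|\bdPhi_{12}\|\le\sqrt{l}+\sqrt{k-\varrho}+\beta\le\sqrt l+\sqrt k+\beta$ with failure probability at most $e^{-\beta^2/2}$. Similarly, we bound $\tau_{\varrho+1}(\boldsymbol{\Sigma}_1\bdPhi_1)$ by writing $\boldsymbol{\Sigma}_1\bdPhi_1=\boldsymbol{\Sigma}_{1}\bdPhi_1$ and using $\sigma_{i}(\boldsymbol{\Sigma}_1\bdPhi_1)\le\sigma_i(\boldsymbol{\Sigma}_1)\|\bdPhi_1\|$. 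This yields $\tau_{\varrho+1}(\boldsymbol{\Sigma}_1\bdPhi_1)\le\|\bdPhi_1\|\big(\sum_{i=\varrho+1}^k\sigma_i^2\big)^{1/2}\le(\sqrt l+\sqrt k+\beta)\tau_{\varrho+1}(\qmat{A})$ on the same event.

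\textbf{Tail block.} This is where Lemma \ref{lem:SGT} combined with Lipschitz concentration (Lemma \ref{lem:LipschitzConcentrationIneq}) replaces the dimension-dependent bound of \cite{martinssonRandomizedAlgorithmDecomposition2011}. For the spectral norm, Lemma \ref{lem:SGT} with $\qmat{S}=\boldsymbol{\Sigma}_2$ and $\qmat{T}=\qmat{I}_l$ gives
\[
\mathbb E\|\boldsymbol{\Sigma}_2\bdPhi_2\|\le\|\boldsymbol{\Sigma}_2\|_F+\sqrt l\,\|\boldsymbol{\Sigma}_2\|=\tau_{k+1}(\qmat{A})+\sqrt l\,\sigma_{k+1}(\qmat{A}).
\]
The map $\qmat{G}\mapsto\|\boldsymbol{\Sigma}_2\qmat{G}\|$ is $\sigma_{k+1}$-Lipschitz, so with failure probability $e^{-\beta^2/2}$ the deviation adds $\beta\sigma_{k+1}$. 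This reproduces exactly the tail terms $(\sqrt l+\beta)\sigma_{k+1}+\tau_{k+1}$. For the Frobenius norm, $\mathbb E\|\boldsymbol{\Sigma}_2\bdPhi_2\|_F\le\sqrt l\,\tau_{k+1}$, and $\qmat{G}\mapsto\|\boldsymbol{\Sigma}_2\qmat{G}\|_F$ is again $\sigma_{k+1}$-Lipschitz, giving $\sqrt l\,\tau_{k+1}+\beta\sigma_{k+1}$. A union bound over the head and tail events gives total failure probability $2e^{-\beta^2/2}$ for each inequality.

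The main obstacle I expect is the tail-energy inequality. It requires a clean subadditivity statement for $\tau_{\varrho+1}$ under a rank-unrestricted first summand: I would prove $\tau_{\varrho+1}(\qmat{M}_1+\qmat{M}_2)\le\tau_{\varrho+1}(\qmat{M}_1)+\|\qmat{M}_2\|_F$ via $\tau_{\varrho+1}(\qmat{M})=\min_{\mathrm{rank}\,\qmat{X}\le\varrho}\|\qmat{M}-\qmat{X}\|_F$, choosing $\qmat{X}=\rankrTrun{M_1}$ with rank $\varrho$, followed by the triangle inequality. We also need to check that the head bound $\sigma_i(\boldsymbol{\Sigma}_1\bdPhi_1)\le\sigma_i(\boldsymbol{\Sigma}_1)\|\bdPhi_1\|$ uses the full $\bdPhi_1$ with $k$ rows, which is why the statement has $\sqrt k$ rather than $\sqrt{k-\varrho}$.
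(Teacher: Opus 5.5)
Your proposal is correct and follows the paper's overall structure:
\begin{itemize}
\item the same row-block split of $\boldsymbol{\Sigma}\qmat{V}^{T}\bdPhi$;
\item Weyl's inequality for $\sigma_{\varrho+1}$;
\item the head bound via $\sigma_i(\boldsymbol{\Sigma}_1\bdPhi_1)\le\sigma_i(\boldsymbol{\Sigma}_1)\|\bdPhi_1\|$ and the Gaussian norm estimate;
\item Lemma \ref{lem:SGT} plus Lipschitz concentration, with constant $\sigma_{k+1}(\qmat{A})$, for both norms of $\boldsymbol{\Sigma}_2\bdPhi_2$;
\item a union bound at the end.
\end{itemize}
Your extra split $\boldsymbol{\Sigma}_1=\diag(\boldsymbol{\Sigma}_{11},\boldsymbol{\Sigma}_{12})$ in the $\sigma_{\varrho+1}$ step just inlines the proof of $\sigma_{\varrho+1}(\boldsymbol{\Sigma}_1\bdPhi_1)\le\sigma_{\varrho+1}(\boldsymbol{\Sigma}_1)\|\bdPhi_1\|$, which the paper uses directly.

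The one genuinely different ingredient is the subadditivity of the tail energy. The paper exploits the orthogonality of the two row blocks. It combines $\|\qmat{Z}\|_F^2=\|\boldsymbol{\Sigma}_1\bdPhi_1\|_F^2+\|\boldsymbol{\Sigma}_2\bdPhi_2\|_F^2$ with $\sigma_i(\boldsymbol{\Sigma}\qmat{V}^{T}\bdPhi)\ge\sigma_i(\boldsymbol{\Sigma}_1\bdPhi_1)$, which holds because a row projection cannot increase singular values. This gives the Pythagorean form $\tau_{\varrho+1}^2(\qmat{Z})\le\tau_{\varrho+1}^2(\boldsymbol{\Sigma}_1\bdPhi_1)+\|\boldsymbol{\Sigma}_2\bdPhi_2\|_F^2$, which is then relaxed via $\sqrt{a^2+b^2}\le a+b$ to reach the stated form. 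Your Eckart--Young plus triangle-inequality argument reaches the unsquared form directly. It is shorter and needs no orthogonality between the summands. Since the lemma is stated in summed form, nothing is lost.

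The obstacle you anticipated is not real. The bound $\tau_{\varrho+1}(\qmat{Z})\le\bigl\|\bigl[\begin{smallmatrix}0\\ \boldsymbol{\Sigma}_{12}\bdPhi_{12}\\ \boldsymbol{\Sigma}_2\bdPhi_2\end{smallmatrix}\bigr]\bigr\|_F$, together with $\|\boldsymbol{\Sigma}_{12}\bdPhi_{12}\|_F\le\|\boldsymbol{\Sigma}_{12}\|_F\|\bdPhi_{12}\|$, works directly and even gives $\sqrt{k-\varrho}$. So the $\sqrt{k}$ in the statement is a convenient weakening, not something the argument forces. Finally, the single event $\{\|\bdPhi_1\|\le\sqrt{l}+\sqrt{k}+\beta\}$ covers both head bounds, since $\|\bdPhi_{12}\|\le\|\bdPhi_1\|$.
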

\begin{proof} 
     With the block representation of \eqref{eq:A_SVD_k_s_l} and \eqref{eq:blockOfV*Phi}, we have:
    \begin{align}
        \sigma_{\varrho+1}\left(\qmat{Z}\right)&\leq\sigma_{\varrho+1}\left(\boldsymbol{\Sigma}\begin{bmatrix}
        \bdPhi_1\\
        0
        \end{bmatrix}\right)+\normSpectral{\boldsymbol{\Sigma}\begin{bmatrix}0\\ \bdPhi_2\end{bmatrix}}=\sigma_{\varrho+1}\left(\boldsymbol{\Sigma}_1\bdPhi_1\right)+\normSpectral{\boldsymbol{\Sigma}_2\bdPhi_2}; \label{eq:lem:sig_APhi_1} \\
        \tau_{\varrho+1}^2\left(\qmat{Z}\right)&\leq \tau_{\varrho+1}^2\left(\boldsymbol{\Sigma}\begin{bmatrix}
            \bdPhi_1\\
            0
            \end{bmatrix}\right)+\normFSquare{\boldsymbol{\Sigma}\begin{bmatrix}0\\ \bdPhi_2\end{bmatrix}}=\tau_{\varrho+1}^2\left(\boldsymbol{\Sigma}_1\bdPhi_1\right)+\normFSquare{\boldsymbol{\Sigma}_2\bdPhi_2}, \label{eq:lem:sig_APhi_2} 
    \end{align}
   where the inequality in the first line uses that $\qmat{Z}$ and $\boldsymbol{\Sigma}\qmat{V}^{T}\bdPhi$ share the same singular values and uses Weyl's inequality.  For the second line,   observe      $\|\qmat{Z}\|_F^2 =\|\boldsymbol{\Sigma}\qmat{V}^{T}\bdPhi \|_F^2=\|\boldsymbol{\Sigma}_1\bdPhi_1\|_F^2+\|\boldsymbol{\Sigma}_2\bdPhi_2\|_F^2$, and $\sigma_i(\boldsymbol{\Sigma}\qmat{V}^{T}\bdPhi)\geq \sigma_i(\boldsymbol{\Sigma} \begin{bmatrix}
    \bdPhi_1\\
    0
    \end{bmatrix})$ for any $i$ (see e.g., \cite[Remark 3.1]{gu2015subspace}).    Thus,
    \begin{align*}
        \normFnoleftright{\boldsymbol{\Sigma}_2\bdPhi_2 }^2 &= \sum_{i=1}^{\varrho}\sigma_i(\boldsymbol{\Sigma}\qmat{V}^{T}\bdPhi) + \tau_{\varrho+1}^2(\boldsymbol{\Sigma}\qmat{V}^{T}\bdPhi) - \sum_{i=1}^{\varrho}   \sigma^2_i\left(\boldsymbol{\Sigma} \begin{bmatrix}
            \bdPhi_1\\
            0
            \end{bmatrix}\right) - \tau_{\varrho+1}^2\left(\boldsymbol{\Sigma}\begin{bmatrix}
                \bdPhi_1\\
                0
                \end{bmatrix}\right) \\
 &\geq \tau_{\varrho+1}^2(\boldsymbol{\Sigma}\qmat{V}^{T}\bdPhi) - \tau_{\varrho+1}^2\left(\boldsymbol{\Sigma}\begin{bmatrix}
    \bdPhi_1\\
    0
    \end{bmatrix}\right).
    \end{align*}

    Next,  the terms on the right hand-side of \eqref{eq:lem:sig_APhi_1} and \eqref{eq:lem:sig_APhi_2}    satisfy:
    \begin{gather*}
        \sigma_{\varrho+1}\left(\boldsymbol{\Sigma}_1\bdPhi_1\right)\leq\sigma_{\varrho+1}\left(\boldsymbol{\Sigma}_1\right)\normSpectral{\bdPhi_1}\leq\sigma_{\varrho+1}\left(\boldsymbol{\Sigma}_1\right)\left(\sqrt{l}+\sqrt{k}+\beta\right);\\
        \tau_{\varrho+1}\left(\boldsymbol{\Sigma}_1\bdPhi_1\right)\leq\tau_{\varrho+1}\left(\boldsymbol{\Sigma}_1\right)\normSpectral{\bdPhi_1}\leq\tau_{\varrho+1}\left(\boldsymbol{\Sigma}_1\right)\left(\sqrt{l}+\sqrt{k}+\beta\right);\\
        \normSpectral{\boldsymbol{\Sigma}_2\bdPhi_2}=\normSpectral{\boldsymbol{\Sigma}_2\bdPhi_2\qmat{I}_l}\leq\normSpectral{\boldsymbol{\Sigma}_2}\left(\sqrt{l}+\beta\right)+\normF{\boldsymbol{\Sigma}_2};~~
        \normF{\boldsymbol{\Sigma}_2\bdPhi_2}\leq\normF{\boldsymbol{\Sigma}_2}\sqrt{l}+\normSpectral{\boldsymbol{\Sigma}_2}\beta,
    \end{gather*}
where the  first two lines uses the spectral estimation of Gaussian matrices \cite{vershynin2018high}, while the last line can be obtained by combining     Lemmas \ref{lem:SGT} and \ref{lem:LipschitzConcentrationIneq},   all with exception probability at most $e^{-\beta^2/2}$. By definitions of $\boldsymbol{\Sigma}_1$ and $\boldsymbol{\Sigma}_2$, $\sigma_{\varrho +1}(\boldsymbol{\Sigma}_1)=\sigma_{\varrho+1}(\qmat{A})$, $\|\boldsymbol{\Sigma}_1\| = \sigma_{k+1}(\qmat{A})$, and $\|\boldsymbol{\Sigma}_2\|_F = \tau_{k+1}(\qmat{A})$. 
    Combining the above, we have the assertion.
\end{proof}

\begin{lemma}[Proposition 8.6 of \cite{FindingStructureHalko}]\label{lem:power_iter_spectral_ineq}
    Let \( P \) be an orthogonal projector, and let \(\qmat{M} \) be a matrix. For each positive number \( q \),
    \[
        \normSpectral{P\qmat{M}} \leq \normSpectral{P\left(\qmat{M}\qmat{M}^{T}\right)^q \qmat{M}}^{1/(2q+1)}.
    \] 
\end{lemma}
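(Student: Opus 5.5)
The plan is to reduce the claim to a scalar inequality for a PSD matrix, using the \emph{Hölder--McCarthy} (Jensen) inequality. Set $\qmat{S}:=\qmat{M}\qmat{M}^{T}\succeq 0$. Since $P$ is an orthogonal projector, $P=P^{T}=P^2$, and hence
\[
\normSpectral{P\qmat{M}}^2=\normSpectral{P\qmat{M}\qmat{M}^{T}P}=\normSpectral{P\qmat{S}P}.
\]
In the same way,
\[
\normSpectral{P\qmat{S}^q\qmat{M}}^2=\normSpectral{P\qmat{S}^q\qmat{M}\qmat{M}^{T}\qmat{S}^qP}=\normSpectral{P\qmat{S}^{2q+1}P}.
\]
Therefore it suffices to show $\normSpectral{P\qmat{S}P}^{2q+1}\le \normSpectral{P\qmat{S}^{2q+1}P}$. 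Raising this to the power $1/(2(2q+1))$ then gives the assertion.

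First I will pick a unit vector $x$ attaining $\normSpectral{P\qmat{S}P}=x^{T}P\qmat{S}Px$. Since $P\qmat{S}P$ is PSD, such an $x$ can be taken as a top eigenvector. Replacing $x$ by $Px/\|Px\|$ (the trivial case $P\qmat{S}P=0$ aside), I may assume $x=Px$ with $\|x\|=1$. Then $\normSpectral{P\qmat{S}P}=x^{T}\qmat{S}x$.

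The key step is the inequality $(x^{T}\qmat{S}x)^{t}\le x^{T}\qmat{S}^{t}x$ for a unit vector $x$ and $t=2q+1\ge1$. To prove it, write the eigendecomposition $\qmat{S}=\sum_i\lambda_i v_iv_i^{T}$ with $\lambda_i\ge0$, and set weights $w_i=(v_i^{T}x)^2$. These satisfy $\sum_i w_i=1$. Then $x^{T}\qmat{S}x=\sum_i w_i\lambda_i$ and $x^{T}\qmat{S}^{t}x=\sum_i w_i\lambda_i^{t}$. The inequality is exactly Jensen's inequality for the convex function $\lambda\mapsto\lambda^{t}$ on $[0,\infty)$.

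Finally, since $x=Px$,
\[
x^{T}\qmat{S}^{2q+1}x=x^{T}P\qmat{S}^{2q+1}Px\le\normSpectral{P\qmat{S}^{2q+1}P}.
\]
Chaining the three displays gives $\normSpectral{P\qmat{M}}^{2(2q+1)}\le\normSpectral{P\qmat{S}^q\qmat{M}}^2$, which is the claim.

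There is no serious obstacle. The only points needing care are choosing the maximizing vector inside $\mathrm{range}(P)$, so that $P$ can be reinserted at the end, and the identity $\normSpectral{\qmat{X}}^2=\normSpectral{\qmat{X}\qmat{X}^{T}}$, which is used to pass between $P\qmat{S}^q\qmat{M}$ and $P\qmat{S}^{2q+1}P$.
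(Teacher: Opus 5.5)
Your proof is correct and is essentially the standard proof of Proposition 8.6 in Halko--Martinsson--Tropp, which the paper cites without reproducing. That proof uses $\|\qmat{X}\|^2=\|\qmat{X}\qmat{X}^{T}\|$ to reduce the claim to $\|P\qmat{S}P\|^{2q+1}\le\|P\qmat{S}^{2q+1}P\|$, then applies Jensen (equivalently H\"older) for $\lambda\mapsto\lambda^{2q+1}$ at a maximizing unit vector in $\mathrm{range}(P)$. HMT only diagonalize first via an SVD of $\qmat{M}$ and conjugate $P$ accordingly, which amounts to your expansion in the eigenbasis of $\qmat{S}$.
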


\begin{lemma}[c.f. \cite{FindingStructureHalko}]\label{lem:rsvdDrivationBound}
    For any matrix $\qmat{M}\in\bbR^{m\times n}$ and standard Gaussian matrix $\bdOmega\in\bbR^{n\times s}$, for any $\varrho\leq s-4$ and $  t \geq 1$, $u>0$, 
    \begin{small}
    \begin{align*}
        \| (\qmat{I} - P_{\qmat{M}\bdOmega}) \qmat{M} \|_F &\leq 
        \left( 1 + t \cdot \sqrt{\frac{3\varrho}{s-\varrho+1}} \right) 
        \tau_{\varrho+1}(\qmat{M})
        + u t \cdot \frac{e \sqrt{s}}{s-\varrho+1} \cdot \sigma_{\varrho+1}\left(\qmat{M}\right);\\
        \| (\qmat{I} - P_{\qmat{M}\bdOmega})\qmat{M} \| &\!\leq\! \left(\! 1 + t \cdot \sqrt{\frac{3\varrho}{s-\varrho + 1}}+ut \cdot \frac{e \sqrt{s}}{s-\varrho+1}  \right) \sigma_{\varrho+1}\left(\qmat{M}\right) + t \cdot \frac{e \sqrt{s}}{s-\varrho+1}\tau_{\varrho+1}\left(\qmat{M}\right),
    \end{align*}
\end{small}
    with failure probability at most \( 2t^{-(s-\varrho)} + e^{-u^2 / 2} \).
\end{lemma}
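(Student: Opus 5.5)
The plan is the classical two-stage argument of \cite{FindingStructureHalko}: first a deterministic bound on $(\qmat{I}-P_{\qmat{M}\bdOmega})\qmat{M}$ in terms of the partitioned test matrix, then a deviation bound for the random factor, which Corollary \ref{col:AG2G1_deviation_refined_from_finding} already supplies.

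\textbf{Setup.} Write the SVD $\qmat{M}=\qmat{U}\boldsymbol{\Sigma}\qmat{V}^{T}$ partitioned at index $\varrho$, as in \eqref{eq:A_SVD}. Set $\bdOmega_1=\qmat{V}_1^{T}\bdOmega\in\bbR^{\varrho\times s}$ and $\bdOmega_2=\qmat{V}_2^{T}\bdOmega$. By rotational invariance these are independent standard Gaussian matrices, and $\bdOmega_1$ has full row rank almost surely.

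\textbf{Step 1 (deterministic bound).} For every Schatten-$p$ norm (in particular $p=2$ and $p=\infty$) I will show
\begin{align*}
\normP{(\qmat{I}-P_{\qmat{M}\bdOmega})\qmat{M}}^2\leq \normP{\boldsymbol{\Sigma}_2}^2+\normP{\boldsymbol{\Sigma}_2\bdOmega_2\bdOmega_1^\dagger}^2 .
\end{align*}
This is the $q=0$ analogue of Theorem \ref{thm:projection_error_sketchPower}, and I would copy its four steps with a simpler matrix.
\begin{itemize}
\item The rank-deficient case where $\boldsymbol{\Sigma}_1$ is singular is handled exactly as in Step 1 there: the projection error is then zero.
\item Removing $\qmat{U}$ reduces the problem to $\normP{(\qmat{I}-P_{\boldsymbol{\Sigma}\qmat{V}^{T}\bdOmega})\boldsymbol{\Sigma}}$.
\item Right-multiplying $\boldsymbol{\Sigma}\qmat{V}^{T}\bdOmega$ by $\bdOmega_1^\dagger\boldsymbol{\Sigma}_1^{-1}$ produces $\qmat{N}=\begin{bmatrix}\qmat{I}_\varrho\\ \qmat{F}\end{bmatrix}$ with $\qmat{F}=\boldsymbol{\Sigma}_2\bdOmega_2\bdOmega_1^\dagger\boldsymbol{\Sigma}_1^{-1}$. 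Its range lies inside ${\rm range}(\boldsymbol{\Sigma}\qmat{V}^{T}\bdOmega)$, so the projection error can only grow when $P_{\boldsymbol{\Sigma}\qmat{V}^{T}\bdOmega}$ is replaced by $P_{\qmat{N}}$.
\item The splitting \eqref{eq:1_deterministic_3} and the parallel-sum estimate \eqref{eq:1_deterministic_2}, via Lemma \ref{lem:parallel_sum}, then give the error bounded by $\normP{\boldsymbol{\Sigma}_2}^2+\normP{\qmat{F}\boldsymbol{\Sigma}_1}^2$. Here $\qmat{F}\boldsymbol{\Sigma}_1=\boldsymbol{\Sigma}_2\bdOmega_2\bdOmega_1^\dagger$.
\end{itemize}
This step needs the most care, but it is simpler than the earlier proof: there is no $\qmat{F}_1$, so no inverse factor appears. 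Taking square roots gives $\normP{(\qmat{I}-P_{\qmat{M}\bdOmega})\qmat{M}}\leq \normP{\boldsymbol{\Sigma}_2}+\normP{\boldsymbol{\Sigma}_2\bdOmega_2\bdOmega_1^\dagger}$.

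\textbf{Step 2 (probabilistic bound).} Apply Corollary \ref{col:AG2G1_deviation_refined_from_finding} with $\qmat{A}\leftarrow\boldsymbol{\Sigma}_2$, $\qmat{G}_2\leftarrow\bdOmega_2$, $\qmat{G}_1\leftarrow\bdOmega_1$, and its size parameters $(s,k)\leftarrow(\varrho,s)$. Its hypothesis $k-s\geq4$ becomes $s-\varrho\geq4$. We have $\normSpectral{\boldsymbol{\Sigma}_2}=\sigma_{\varrho+1}(\qmat{M})$ and $\normF{\boldsymbol{\Sigma}_2}=\tau_{\varrho+1}(\qmat{M})$.
\begin{itemize}
\item \emph{Frobenius case.} With failure probability below $2t^{-(s-\varrho)}+e^{-u^2/2}$, we get $\normF{\boldsymbol{\Sigma}_2\bdOmega_2\bdOmega_1^\dagger}\leq\tau_{\varrho+1}\sqrt{3\varrho/(s-\varrho+1)}\,t+\sigma_{\varrho+1}\frac{e\sqrt s}{s-\varrho+1}tu$. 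Adding $\normF{\boldsymbol{\Sigma}_2}=\tau_{\varrho+1}$ gives the first claimed inequality.
\item \emph{Spectral case.} The corollary gives $(\tau_{\varrho+1}+u\sigma_{\varrho+1})\frac{e\sqrt s}{s-\varrho+1}t+\sigma_{\varrho+1}\sqrt{3\varrho/(s-\varrho+1)}\,t$. Adding $\normSpectral{\boldsymbol{\Sigma}_2}=\sigma_{\varrho+1}$ and regrouping the coefficients of $\sigma_{\varrho+1}$ and $\tau_{\varrho+1}$ yields the second claimed inequality.
\end{itemize}

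The failure probability $2t^{-(s-\varrho)}+e^{-u^2/2}$ is inherited directly from the corollary. The boundary value $t=1$ is harmless, because then $2t^{-(s-\varrho)}=2$ and the probability bound is vacuous.
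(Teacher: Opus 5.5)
Your proposal is correct and is essentially the argument the paper defers to by citing \cite{FindingStructureHalko}. The paper gives no proof of its own, and you follow the route of the proofs of \cite[Thms.~10.7--10.8]{FindingStructureHalko}: first the deterministic bound $\|(\qmat{I}-P_{\qmat{M}\bdOmega})\qmat{M}\|_p^2\le\|\boldsymbol{\Sigma}_2\|_p^2+\|\boldsymbol{\Sigma}_2\bdOmega_2\bdOmega_1^\dagger\|_p^2$ (\cite[Thm.~9.1]{FindingStructureHalko}), which you re-derive correctly as the $q=0$ case of the parallel-sum argument in Theorem~\ref{thm:projection_error_sketchPower}, then Corollary~\ref{col:AG2G1_deviation_refined_from_finding} with $(s,k)\leftarrow(\varrho,s)$. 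Your regrouping of coefficients and your handling of the vacuous case $t=1$ are right. One small correction: the splitting step uses the triangle inequality for $\|\cdot\|_{p/2}$, so the deterministic bound is justified only for $p\ge 2$, not for every Schatten-$p$ norm; this still covers the two cases you need, $p=2$ and $p=\infty$.
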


\subsection{Deterministic projection error}\label{sec:deter_proj_error_qgt1}
We first analyze the behavior of the projection error without randomization.
Denote $$\qmat{F}:=\begin{bmatrix}
    \bdPhi_1^\dagger & 0
\end{bmatrix} \in \bbR^{l\times n};\quad (\text{recall}~\bdPhi_1^\dagger\in\bbR^{l\times k},~l\geq k).$$ We have the following result:
\begin{theorem}\label{thm:determined_projection_error}
    With the notations in \eqref{eq:A_SVD_k_s_l} and \eqref{eq:blockOfV*Phi}; assume that $\bdPhi_1$ has full row rank. Then the following estimation holds:
    \begin{align*}
        \normP{\qmat{A}-P_{\left(\qmat{Z}\qmat{Z}^{T}\right)^q\qmat{A}\bdOmega}\qmat{A}}\leq 2\bigxiaokuohao{\normP{\boldsymbol{\Sigma}_2}+ \|\boldsymbol{\Sigma}_2\bdPhi_2\bdPhi_1^\dagger\|_p } +\normP{\left(\qmat{I}-P_{\left(\qmat{Z}\qmat{Z}^{T}\right)^q\qmat{A}\bdOmega}\right)\qmat{Z}}\normSpectral{\qmat{F}}.
    \end{align*}
\end{theorem}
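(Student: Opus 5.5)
The plan is to write $\qmat{A}$ as a part lying in the range of $\qmat{Z}$ plus a small remainder. Then I apply $\qmat{I}-P$ to both pieces, where $P:=P_{(\qmat{Z}\qmat{Z}^{T})^q\qmat{A}\bdOmega}$ is an orthogonal projector. The natural candidate for the $\qmat{Z}$-part is $\qmat{Z}\qmat{F}\qmat{V}^{T}$ with $\qmat{F}=\begin{bmatrix}\bdPhi_1^\dagger & 0\end{bmatrix}$ as defined above. This $\qmat{F}$ plays the role of $\bdOmega_1^\dagger$ in the classical range-finder analysis of \cite{FindingStructureHalko}.

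\textbf{Step 1: an exact decomposition.} Using \eqref{eq:A_SVD_k_s_l} and \eqref{eq:blockOfV*Phi}, I have $\qmat{Z}=\qmat{U}\boldsymbol{\Sigma}\qmat{V}^{T}\bdPhi=\qmat{U}\begin{bmatrix}\boldsymbol{\Sigma}_1\bdPhi_1\\ \boldsymbol{\Sigma}_2\bdPhi_2\end{bmatrix}$. Since $\bdPhi_1$ has full row rank, $\bdPhi_1\bdPhi_1^\dagger=\qmat{I}_k$. Therefore
\begin{align*}
\qmat{Z}\qmat{F}=\qmat{U}\begin{bmatrix}\boldsymbol{\Sigma}_1 & 0\\ \boldsymbol{\Sigma}_2\bdPhi_2\bdPhi_1^\dagger & 0\end{bmatrix},\qquad
\qmat{A}\qmat{V}-\qmat{Z}\qmat{F}=\qmat{U}\begin{bmatrix}0 & 0\\ -\boldsymbol{\Sigma}_2\bdPhi_2\bdPhi_1^\dagger & \boldsymbol{\Sigma}_2\end{bmatrix}=:\qmat{E}.
\end{align*}
Multiplying on the right by $\qmat{V}^{T}$ gives $\qmat{A}=\qmat{Z}\qmat{F}\qmat{V}^{T}+\qmat{E}\qmat{V}^{T}$. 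Splitting $\qmat{E}$ into its two nonzero blocks and using the triangle inequality together with unitary invariance, I get $\normP{\qmat{E}\qmat{V}^{T}}\leq \normP{\boldsymbol{\Sigma}_2}+\normP{\boldsymbol{\Sigma}_2\bdPhi_2\bdPhi_1^\dagger}$.

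\textbf{Step 2: apply the projector.} By the triangle inequality,
\[
\normP{(\qmat{I}-P)\qmat{A}}\leq \normP{(\qmat{I}-P)\qmat{Z}\qmat{F}\qmat{V}^{T}}+\normP{(\qmat{I}-P)\qmat{E}\qmat{V}^{T}}.
\]
For the first term I use $\normP{\qmat{X}\qmat{Y}}\leq\normP{\qmat{X}}\normSpectral{\qmat{Y}}$ and the orthogonality of $\qmat{V}$, which bounds it by $\normP{(\qmat{I}-P)\qmat{Z}}\normSpectral{\qmat{F}}$. For the second term I use $\normSpectral{\qmat{I}-P}\leq 1$, which bounds it by $\normP{\boldsymbol{\Sigma}_2}+\normP{\boldsymbol{\Sigma}_2\bdPhi_2\bdPhi_1^\dagger}$.

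Adding the two estimates gives the assertion with constant $1$ in front of $\normP{\boldsymbol{\Sigma}_2}+\|\boldsymbol{\Sigma}_2\bdPhi_2\bdPhi_1^\dagger\|_p$. The stated constant $2$ is weaker, so the claimed inequality follows a fortiori.

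There is no real obstacle here. The only point requiring care is the block computation of $\qmat{Z}\qmat{F}$ in Step 1, where the full row rank of $\bdPhi_1$ is what makes the top-left block exactly $\boldsymbol{\Sigma}_1$. The heavy lifting, namely controlling $\normP{(\qmat{I}-P)\qmat{Z}}$ through Lemma \ref{lem:power_iter_spectral_ineq} and Lemma \ref{lem:rsvdDrivationBound}, is deferred to the probabilistic analysis that follows.
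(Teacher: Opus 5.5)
Your proof is correct. It uses the same bridge as the paper: approximate $\qmat{A}$ by $\qmat{Z}$ times a zero-padded right inverse of $\bdPhi_1$. The difference is how you group the error terms. With $P=P_{(\qmat{Z}\qmat{Z}^{T})^q\qmat{A}\bdOmega}$, the paper splits $\qmat{A}-P\qmat{A}=(\qmat{A}-\qmat{Z}\qmat{F})+(\qmat{I}-P)\qmat{Z}\qmat{F}+P(\qmat{Z}\qmat{F}-\qmat{A})$. It then bounds the first and last pieces separately by $\|\qmat{A}-\qmat{Z}\qmat{F}\|_p$, which is where the factor $2$ comes from. You instead note that these two pieces sum to $(\qmat{I}-P)(\qmat{A}-\qmat{Z}\qmat{F})$ and bound them together via $\|\qmat{I}-P\|\le 1$. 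This gives constant $1$, so the factor $2$ on the $\|\boldsymbol{\Sigma}_2\|_p+\|\boldsymbol{\Sigma}_2\bdPhi_2\bdPhi_1^\dagger\|_p$ contribution could also be dropped in Theorem~\ref{thm:prob_projection_error_q_gt_1}. Your explicit right factor $\qmat{V}^{T}$ is also the correct bookkeeping. The paper's step $\|\qmat{A}-\qmat{A}\bdPhi\qmat{F}\|_p=\|\boldsymbol{\Sigma}-\boldsymbol{\Sigma}\qmat{V}^{T}\bdPhi\qmat{F}\|_p$ holds only if $\qmat{F}$ is read as $\begin{bmatrix}\bdPhi_1^\dagger & 0\end{bmatrix}\qmat{V}^{T}$, which is exactly your $\qmat{F}\qmat{V}^{T}$. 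Right multiplication by the orthogonal $\qmat{V}^{T}$ does not change the spectral norm, so the theorem as stated is unaffected.
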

\begin{proof} 
    We have the following estimation by triangle inequality:
    \begin{align*}
     \!\!   \normP{\qmat{A}-P_{\left(\qmat{Z}\qmat{Z}^{T}\right)^q\qmat{A}\bdOmega}\qmat{A}}\!\leq\! &\normP{\qmat{A}-\qmat{Z}\qmat{F}}+\normP{\qmat{Z}\qmat{F}-P_{\left(\qmat{Z}\qmat{Z}^{T}\right)^q\qmat{A}\bdOmega}\qmat{Z}\qmat{F}}\\
        &+\normP{P_{\left(\qmat{Z}\qmat{Z}^{T}\right)^q\qmat{A}\bdOmega}\qmat{Z}\qmat{F}-P_{\left(\qmat{Z}\qmat{Z}^{T}\right)^q\qmat{A}\bdOmega}\qmat{A}}\\
        = &\!\normP{\qmat{A}-\qmat{Z}\qmat{F}}\!\!+\!\normP{\left(\qmat{I}\!-\!P_{\left(\qmat{Z}\qmat{Z}^{T}\right)^q\qmat{A}\bdOmega}\right)\qmat{Z}\qmat{F}}\!\!+\!\normP{P_{\left(\qmat{Z}\qmat{Z}^{T}\right)^q\qmat{A}\bdOmega}\left(\qmat{Z}\qmat{F}-\qmat{A}\right)}\\
        \leq &2\normP{\qmat{A}-\qmat{Z}\qmat{F}}+\normP{\left(\qmat{I}-P_{\left(\qmat{Z}\qmat{Z}^{T}\right)^q\qmat{A}\bdOmega}\right)\qmat{Z}}\normSpectral{\qmat{F}},
    \end{align*}
    where the last line uses the nonexpansiveness of the projection operator. We then have
\begin{align*}
    \normP{\qmat{A}-\qmat{Z}\qmat{F}} = \normP{\qmat{A}-\qmat{A}\bdPhi\qmat{F}}=\normP{\qmat{U}\boldsymbol{\Sigma}\qmat{V}^{T}-\qmat{U}\boldsymbol{\Sigma}\qmat{V}^{T}\bdPhi\qmat{F}} 
     =\normP{\boldsymbol{\Sigma}-\boldsymbol{\Sigma}\qmat{V}^{T}\bdPhi\qmat{F}}
\end{align*}
by the   rotational invariance of any Schatten-$p$ norm. This yields
\begin{align*}
    \normP{\qmat{A}-\qmat{Z}\qmat{F}}&= \normP{\boldsymbol{\Sigma}\left(\qmat{I}- \begin{bmatrix}
        \bdPhi_1\\
        \bdPhi_2
    \end{bmatrix} [\bdPhi_1^\dagger ,0]  \right)} =  \normP{\boldsymbol{\Sigma}\left(\qmat{I}-\begin{bmatrix}
        \qmat{I} & 0\\
        \bdPhi_2\bdPhi_1^\dagger & 0
    \end{bmatrix}\right)}\\
    &=\normP{\boldsymbol{\Sigma}\begin{bmatrix}
        0 & 0\\
        -\bdPhi_2\bdPhi_1^\dagger & \qmat{I}
    \end{bmatrix}}\leq\normP{\boldsymbol{\Sigma}_2}+\normP{\boldsymbol{\Sigma}_2\bdPhi_2\bdPhi_1^\dagger}.
\end{align*}
Combining the above yields the assertion.
\end{proof}
The above framework, adapted from \cite{martinssonRandomizedAlgorithmDecomposition2011,rokhlin2010randomized,woolfe2008Fast}, uses $\qmat{Z}$ as a bridge to decompose the error into two terms $\|\qmat{A}-\qmat{Z}\qmat{F}\|_p$ and $\|(\qmat{I}- P_{\qmat{Z}\qmat{Z}^{T})^q\qmat{A}\bdOmega}\qmat{Z}\|$, which are relatively easier to analyze. We will estimate the second term in the next subsection.  
Here for the Frobenius norm error we further have
\begin{align} \label{eq:deterministic_error_q_gt_1_Fnorm}
    \normF{\qmat{A}-P_{\left(\qmat{Z}\qmat{Z}^{T}\right)^q\qmat{A}\bdOmega}\qmat{A}}\leq &2\normF{\qmat{A}-\qmat{Z}\qmat{F}}+\normF{\left(\qmat{I}-P_{\left(\qmat{Z}\qmat{Z}^{T}\right)^q\qmat{A}\bdOmega}\right)\qmat{Z}}\normSpectral{\qmat{F}}\nonumber\\
    \leq & 2\normF{\qmat{A}-\qmat{Z}\qmat{F}}+\sqrt{l}\normSpectral{\left(\qmat{I}-P_{\left(\qmat{Z}\qmat{Z}^{T}\right)^q\qmat{A}\bdOmega}\right)\qmat{Z}}\normSpectral{\qmat{F}},
\end{align}
where the inequality is due to the relation between Frobenius and spectral norm, and that   $\qmat{Z}\in\bbR^{n\times l}$. 

\subsection{Probabilistic projection error}\label{sec:prob_proj_error_qgt1}
We complete the randomized projection error analysis. We first give a lemma.

\begin{lemma}\label{lem:I-P_DOmega_C}
    Let $\qmat{Z}=\qmat{A}\bdPhi$ and $\bdOmega=\bdPhi\bdtildeOmega$ with $\bdPhi\in\bbR^{n\times l}$ and $\bdtildeOmega\in\bbR^{l\times s}$     independent standard Gaussian matrices. For any $\varrho,k$ with $\varrho<k<l$ and $\varrho<s-4$, we have 
    \begin{small}
    \begin{align*}
        \normSpectral{\left(\qmat{I}-P_{(\qmat{Z}\qmat{Z}^{T})^q\qmat{A}\bdOmega}\right)\qmat{Z}}&\leq \alpha\cdot\left(\sigma_{\varrho+1}\left(\qmat{A}\right)\left(\sqrt{l}+\sqrt{k}+\beta\right)+\sigma_{k+1}\left(\qmat{A}\right)\left(\sqrt{l}+\beta\right)+\tau_{k+1}\left(\qmat{A}\right)\right) 
    \end{align*}
\end{small}
with exception probability at most \( 2t^{-(s-\varrho)} + e^{-u^2 / 2}+e^{-\beta^2/2} \),   where the parameter 
    \begin{align}
        \alpha=\left( 1 + t \cdot \sqrt{\frac{3\varrho}{s-\varrho + 1}}+t \cdot \frac{e\cdot \left(\sqrt{sl}+u\sqrt{s}\right)}{s-\varrho+1}\right)^{\frac{1}{2q+1}}.
    \end{align}

\end{lemma}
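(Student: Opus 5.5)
The argument moves everything onto the small matrix $\qmat{Z}$, where $\qmat{A}\bdOmega=\qmat{Z}\bdtildeOmega$ is a Gaussian sketch of $\qmat{Z}$. With $\bdOmega=\bdPhi\bdtildeOmega$,
\[
(\qmat{Z}\qmat{Z}^{T})^q\qmat{A}\bdOmega=(\qmat{Z}\qmat{Z}^{T})^q\qmat{Z}\bdtildeOmega ,
\]
which is exactly the randomized power scheme applied to $\qmat{M}:=\qmat{Z}$ with the standard Gaussian test matrix $\bdtildeOmega\in\bbR^{l\times s}$. Since $\bdtildeOmega$ is independent of $\bdPhi$, we can condition on $\bdPhi$ and treat $\qmat{Z}$ as fixed. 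Write $P:=P_{(\qmat{Z}\qmat{Z}^{T})^q\qmat{Z}\bdtildeOmega}$ and $\qmat{M}_q:=(\qmat{Z}\qmat{Z}^{T})^q\qmat{Z}$. Lemma \ref{lem:power_iter_spectral_ineq}, applied with the orthogonal projector $\qmat{I}-P$, gives
\[
\normSpectral{(\qmat{I}-P)\qmat{Z}}\leq \normSpectral{(\qmat{I}-P)\qmat{M}_q}^{1/(2q+1)} .
\]

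Next apply Lemma \ref{lem:rsvdDrivationBound} (spectral part) to $\qmat{M}_q$ with test matrix $\bdtildeOmega$; it needs $\varrho\leq s-4$. With failure probability at most $2t^{-(s-\varrho)}+e^{-u^2/2}$, this gives
\[
\normSpectral{(\qmat{I}-P)\qmat{M}_q}\leq \Big(1+t\sqrt{\tfrac{3\varrho}{s-\varrho+1}}+ut\tfrac{e\sqrt{s}}{s-\varrho+1}\Big)\sigma_{\varrho+1}(\qmat{M}_q)+t\tfrac{e\sqrt{s}}{s-\varrho+1}\tau_{\varrho+1}(\qmat{M}_q).
\]
Because $\qmat{M}_q$ has rank at most $l$ and $\sigma_j(\qmat{M}_q)=\sigma_j(\qmat{Z})^{2q+1}$, we get $\tau_{\varrho+1}(\qmat{M}_q)\leq \sqrt{l}\,\sigma_{\varrho+1}(\qmat{M}_q)$. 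Hence the right-hand side is at most
\[
\Big(1+t\sqrt{\tfrac{3\varrho}{s-\varrho+1}}+t\tfrac{e(\sqrt{sl}+u\sqrt{s})}{s-\varrho+1}\Big)\sigma_{\varrho+1}(\qmat{Z})^{2q+1}=\alpha^{2q+1}\sigma_{\varrho+1}(\qmat{Z})^{2q+1}.
\]
Taking the $(2q+1)$-th root gives $\normSpectral{(\qmat{I}-P)\qmat{Z}}\leq \alpha\,\sigma_{\varrho+1}(\qmat{Z})$. This collapse of the tail term into $\sqrt{l}\,\sigma_{\varrho+1}$ is exactly what produces the $\sqrt{sl}$ inside $\alpha$.

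Finally, bound $\sigma_{\varrho+1}(\qmat{Z})$ by the first inequality of Lemma \ref{lem:sigma/tau_r+1_C} with $\varrho<k<l$. This is an event over $\bdPhi$ alone, so it combines with the conditional event over $\bdtildeOmega$ by a union bound; integrating out the conditioning on $\bdPhi$ gives the stated total failure probability.

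The one point that needs care is that failure-probability accounting. Lemma \ref{lem:sigma/tau_r+1_C} is stated with exception $2e^{-\beta^2/2}$, while the claim charges only $e^{-\beta^2/2}$. For the $\sigma$ bound alone, only the concentration of $\normSpectral{\bdPhi_1}$ and of $\normSpectral{\boldsymbol{\Sigma}_2\bdPhi_2}$ is used. I would re-derive this bound to check whether a single Gaussian concentration event suffices; otherwise the constant simply changes to $2e^{-\beta^2/2}$. Everything else is a direct chain of the cited lemmas.
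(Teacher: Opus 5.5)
Your proposal is correct and follows the paper's proof essentially step for step: you use the identity $(\qmat{Z}\qmat{Z}^{T})^q\qmat{A}\bdOmega=(\qmat{Z}\qmat{Z}^{T})^q\qmat{Z}\bdtildeOmega$, then Lemma \ref{lem:power_iter_spectral_ineq}, then Lemma \ref{lem:rsvdDrivationBound} applied to $(\qmat{Z}\qmat{Z}^{T})^q\qmat{Z}$ with the tail collapsed by the rank bound (the paper writes $\sqrt{l-\varrho}$ and then effectively uses $\sqrt{l-\varrho}\leq\sqrt{l}$), and finally Lemma \ref{lem:sigma/tau_r+1_C}. The $\beta$-accounting discrepancy you flag is genuinely present in the paper, and your suggested fix works: the map $\bdPhi\mapsto\sigma_{\varrho+1}(\qmat{A})\normSpectral{\bdPhi_1}+\normSpectral{\boldsymbol{\Sigma}_2\bdPhi_2}$ has expectation at most $\sigma_{\varrho+1}(\qmat{A})(\sqrt{l}+\sqrt{k})+\sigma_{k+1}(\qmat{A})\sqrt{l}+\tau_{k+1}(\qmat{A})$ and Lipschitz constant $\sqrt{\sigma_{\varrho+1}^2(\qmat{A})+\sigma_{k+1}^2(\qmat{A})}\leq\sigma_{\varrho+1}(\qmat{A})+\sigma_{k+1}(\qmat{A})$, so a single application of Lemma \ref{lem:LipschitzConcentrationIneq} gives the $\sigma_{\varrho+1}(\qmat{Z})$ bound with exception probability $e^{-\beta^2/2}$.
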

\begin{proof}
    For notational convenience, we denote $\qmat{D}:= (\qmat{Z}\qmat{Z}^{T})^q\qmat{Z}\in\bbR^{m\times l}$. It follows from $\bdOmega=\bdPhi\bdtildeOmega$ that  $P_{(\qmat{Z}\qmat{Z}^{T})^q\qmat{A}\bdOmega} = P_{\qmat{D}\bdtildeOmega}$.   Consider the following fact from Lemma \ref{lem:power_iter_spectral_ineq}:
    \begin{align}\label{eq:I-P_DOmegaCDeter}
        \normSpectral{\left(\qmat{I}-P_{\qmat{D}\bdtildeOmega}\right)\qmat{Z}}\leq\normSpectral{\left(\qmat{I}-P_{\qmat{D}\bdtildeOmega}\right)\left(\qmat{Z}\qmat{Z}^{T}\right)^q\qmat{Z}}^{\frac{1}{2q+1}}
        =\normSpectral{\left(\qmat{I}-P_{\qmat{D}\bdtildeOmega}\right)\qmat{D}}^{\frac{1}{2q+1}};
    \end{align}
  note that $\bdtildeOmega\in\bbR^{l\times s}$ is Gaussian; applying   Lemma \ref{lem:rsvdDrivationBound} we have for any $\varrho<s-4$:
  \begin{small}
    \begin{align*}
        \normSpectral{\left(\qmat{I}-P_{\qmat{D}\bdtildeOmega}\right)\qmat{D}}\leq  \left( 1 + t \cdot \sqrt{\frac{3\varrho}{s-\varrho + 1}} + ut \cdot \frac{e \sqrt{s}}{s-\varrho+1}\right) \sigma_{\varrho+1}(\qmat{D}) + t \cdot \frac{e \sqrt{s}}{s-\varrho+1} \tau_{\varrho+1}\left(\qmat{D}\right) 
    \end{align*}
\end{small}
 \noindent   with failure probability at most \( 2t^{-(s-\varrho)} + e^{-u^2 / 2} \). Next,   $\sigma_{\varrho+1}(\qmat{D})$ and $\tau_{\varrho+1}(\qmat{D})$ satisfy
\begin{align}\label{eq:sigma_r1_D_and_tau_r1_D}
    \tau_{\varrho+1}\left(\qmat{D}\right)
    \leq \sqrt{l-\varrho}\sigma_{\varrho+1}\left(\qmat{D}\right),~  \sigma_{\varrho+1}\left(\qmat{D}\right)=\sigma_{\varrho+1}\left(\left(\qmat{Z}\qmat{Z}^{T}\right)^q\qmat{Z}\right)=\sigma_{\varrho+1}^{2q+1}\left(\qmat{Z}\right)
   ;
\end{align}
  combining the above pieces yields:
\begin{align*}
    \normSpectral{\left(\qmat{I}-P_{\qmat{D}\bdtildeOmega}\right)\qmat{Z}}&\leq\left( 1 + t \cdot \sqrt{\frac{3\varrho}{s-\varrho + 1}}+t \cdot \frac{e\cdot \left(\sqrt{sl}+u\sqrt{s}\right)}{s-\varrho+1}\right)^{\frac{1}{2q+1}} \sigma_{\varrho+1}^{\frac{1}{2q+1}}(\qmat{D})\\
    & =\alpha \sigma_{\varrho+1}(\qmat{Z})\\
    &\leq\alpha\cdot\left(\sigma_{\varrho+1} (\qmat{A} ) (\sqrt{l}+\sqrt{k}+\beta )+\sigma_{k+1} (\qmat{A} ) (\sqrt{l}+\beta )+\tau_{k+1} (\qmat{A} )\right),
\end{align*}
where the second inequality follows from Lemma \ref{lem:sigma/tau_r+1_C}. 
This completes the proof.
\end{proof}

\begin{remark}
For $\tau_{\varrho+1}(\qmat{D})$ in \eqref{eq:sigma_r1_D_and_tau_r1_D},     we could have used $\tau_{\varrho+1}(\qmat{D})\leq \tau^{2q+1}_{\varrho+1}(\qmat{Z})$   and   the second relation of Lemma \ref{lem:sigma/tau_r+1_C}  to obtain a slightly better bound. However, we prefer to keep the current form for clarity and simplicity.
\end{remark}

With the preparations, we have the following theorem.
\begin{theorem}\label{thm:prob_projection_error_q_gt_1}
    Let $\qmat{Z}=\qmat{A}\bdPhi$ and $\bdOmega=\bdPhi\bdtildeOmega$ with $\bdPhi\in\bbR^{n\times l},\bdtildeOmega\in\bbR^{l\times s}$     independent standard Gaussian matrices.  For any $\varrho,k$ with $\varrho<k\leq l-4$ and $\varrho\leq s-4$, we have  
    \begin{small}
    \begin{align*}
        \normSpectral{\qmat{A}-P_{\left(\qmat{Z}\qmat{Z}^{T}\right)^q\qmat{A}\bdOmega}\qmat{A}}&\leq 2 \left( 1 + t \cdot \sqrt{\frac{3k}{l-k + 1}}+  \frac{ut\cdot e \sqrt{l}}{l-k+1} \right) \sigma_{k+1}\left(\qmat{A}\right) +  \frac{2t \cdot e \sqrt{l}}{l-k+1} \tau_{k+1}\left(\qmat{A}\right)\\
        +&
        \frac{t\alpha e \sqrt{l}}{l-k+1}\cdot\left(\sigma_{\varrho+1}\left(\qmat{A}\right) (\sqrt{l}+\sqrt{k}+\beta )+\sigma_{k+1} (\qmat{A} ) (\sqrt{l}+\beta )+\tau_{k+1}\left(\qmat{A}\right)\right)\\
      \!\!  \normF{\qmat{A}-P_{\left(\qmat{Z}\qmat{Z}^{T}\right)^q\qmat{A}\bdOmega}\qmat{A}}&\leq 2\left(1+t\sqrt{\frac{3k}{l-k+1}}\right)\tau_{k+1}\left(\qmat{A}\right)+ut\frac{e\sqrt{l}}{l-k+1}\sigma_{k+1}\left(\qmat{A}\right)\\
        +&
        \frac{t\alpha e l}{l-k+1}\cdot\left(\sigma_{\varrho+1}\left(\qmat{A}\right) (\sqrt{l}+\sqrt{k}+\beta )+\sigma_{k+1} (\qmat{A} ) (\sqrt{l}+\beta )+\tau_{k+1}\left(\qmat{A}\right)\right),
    \end{align*}
\end{small}
    where $\alpha=\left( 1 + t \cdot \sqrt{\frac{3\varrho}{s-\varrho + 1}}+t \cdot \frac{e\cdot \left(\sqrt{sl}+u\sqrt{s}\right)}{s-\varrho+1}\right)^{\frac{1}{2q+1}}$
    with failure  probability at most 
    \begin{align*}
        p_e=2e^{-u^2 / 2} + 2t^{-(l-k)}+2t^{-(s-\varrho)}+e^{-\beta^2/2}\label{eq:orthogonal_projection_error_probability}.
    \end{align*}
\end{theorem}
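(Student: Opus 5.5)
Both inequalities will come out of the deterministic decomposition of Theorem \ref{thm:determined_projection_error}. It bounds the projection error by $2(\normP{\boldsymbol{\Sigma}_2}+\normP{\boldsymbol{\Sigma}_2\bdPhi_2\bdPhi_1^\dagger})$ plus $\normP{(\qmat{I}-P_{(\qmat{Z}\qmat{Z}^{T})^q\qmat{A}\bdOmega})\qmat{Z}}\,\normSpectral{\qmat{F}}$, where $\normSpectral{\qmat{F}}=\normSpectral{\bdPhi_1^\dagger}$. In the partition \eqref{eq:A_SVD_k_s_l} we have $\normSpectral{\boldsymbol{\Sigma}_2}=\sigma_{k+1}(\qmat{A})$ and $\normF{\boldsymbol{\Sigma}_2}=\tau_{k+1}(\qmat{A})$. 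By rotational invariance, $\bdPhi_1\in\bbR^{k\times l}$ and $\bdPhi_2$ are independent standard Gaussian, and $\bdPhi_1$ has full row rank almost surely. So three random quantities remain to be controlled: $\boldsymbol{\Sigma}_2\bdPhi_2\bdPhi_1^\dagger$, $\normSpectral{\bdPhi_1^\dagger}$, and the $\qmat{Z}$-projection term.

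\textbf{Step 1.} For the cross term, apply Corollary \ref{col:AG2G1_deviation_refined_from_finding} with $\qmat{A}\to\boldsymbol{\Sigma}_2$, $\qmat{G}_2\to\bdPhi_2$, $\qmat{G}_1\to\bdPhi_1$, and $(s,k)\to(k,l)$. This is valid since $k\le l-4$. With failure probability $2t^{-(l-k)}+e^{-u^2/2}$ it gives
\[
\normSpectral{\boldsymbol{\Sigma}_2\bdPhi_2\bdPhi_1^\dagger}\le (\tau_{k+1}+u\sigma_{k+1})\tfrac{e\sqrt{l}}{l-k+1}t+\sigma_{k+1}\sqrt{\tfrac{3k}{l-k+1}}\,t,
\]
and an analogous Frobenius statement with $\tau_{k+1}$ carrying $\sqrt{3k/(l-k+1)}\,t$. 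Adding $\normP{\boldsymbol{\Sigma}_2}$ and doubling produces the first lines of both bounds. In the Frobenius case the $u$-term coefficient will be written as stated, absorbing the factor $2$ via the choice of constants.

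\textbf{Step 2.} For $\normSpectral{\bdPhi_1^\dagger}$, use the tail bound of Lemma \ref{lem:inverse_moment_spectral}: $\normSpectral{\bdPhi_1^\dagger}\le \frac{e\sqrt{l}}{l-k+1}t$ with failure probability $t^{-(l-k+1)}$. This can be merged into the $2t^{-(l-k)}$ budget by working on the same good event for $\bdPhi_1$ that Corollary \ref{col:AG2G1_deviation_refined_from_finding} already uses; that corollary's proof controls $\bdPhi_1^\dagger$ in exactly this way.

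\textbf{Step 3.} The spectral $\qmat{Z}$-term is given by Lemma \ref{lem:I-P_DOmega_C}. It equals $\alpha$ times $(\sigma_{\varrho+1}(\sqrt l+\sqrt k+\beta)+\sigma_{k+1}(\sqrt l+\beta)+\tau_{k+1})$, with failure probability $2t^{-(s-\varrho)}+e^{-u^2/2}+e^{-\beta^2/2}$. Multiplying by $\normSpectral{\bdPhi_1^\dagger}$ gives the second line of the spectral bound. For the Frobenius bound, use \eqref{eq:deterministic_error_q_gt_1_Fnorm}: the spectral $\qmat{Z}$-term picks up an extra factor $\sqrt{l}$, which turns $\frac{e\sqrt l}{l-k+1}$ into $\frac{el}{l-k+1}$ as stated.

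\textbf{Main obstacle.} The delicate point is the probability bookkeeping. The $\qmat{Z}$-term and the $\bdPhi_1,\bdPhi_2$ terms are \emph{not} independent, since all of them involve $\bdPhi$. The plan is therefore a plain union bound over the events of Steps 1--3 rather than conditioning. The events are: $\bdPhi_1,\bdPhi_2$ deviation, with probability $2t^{-(l-k)}+e^{-u^2/2}$; the $\bdtildeOmega$ range-finder deviation, with probability $2t^{-(s-\varrho)}+e^{-u^2/2}$, which holds conditionally on any fixed $\qmat{D}$ because $\bdtildeOmega$ is independent of $\bdPhi$; and the $\beta$-event from Lemma \ref{lem:sigma/tau_r+1_C}, with probability $e^{-\beta^2/2}$. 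Summing reproduces $p_e=2e^{-u^2/2}+2t^{-(l-k)}+2t^{-(s-\varrho)}+e^{-\beta^2/2}$, provided the $\bdPhi_1^\dagger$ spectral event is shared with the one inside Corollary \ref{col:AG2G1_deviation_refined_from_finding} and not counted separately. Verifying that this sharing is legitimate is the step I would check most carefully.
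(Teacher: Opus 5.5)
Your route is the paper's: Theorem \ref{thm:determined_projection_error}, Corollary \ref{col:AG2G1_deviation_refined_from_finding} for $\boldsymbol{\Sigma}_2\bdPhi_2\bdPhi_1^\dagger$, the tail bound of Lemma \ref{lem:inverse_moment_spectral} for $\normSpectral{\bdPhi_1^\dagger}$, Lemma \ref{lem:I-P_DOmega_C} for the $\qmat{Z}$-term, and \eqref{eq:deterministic_error_q_gt_1_Fnorm} for the extra $\sqrt{l}$ in the Frobenius case.

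\textbf{The step that fails as written is the Frobenius $u$-term.} Theorem \ref{thm:determined_projection_error} puts a factor $2$ in front of $\normF{\boldsymbol{\Sigma}_2\bdPhi_2\bdPhi_1^\dagger}$, so you get $2ut\frac{e\sqrt{l}}{l-k+1}\sigma_{k+1}(\qmat{A})$. That $2$ cannot be absorbed ``via the choice of constants'': $u$ is tied to the failure term $e^{-u^2/2}$ in $p_e$, and replacing $u$ by $u/2$ turns it into $e^{-u^2/8}$. (The paper's proof passes over the same mismatch without comment.)

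The repair is deterministic. Set $P=P_{(\qmat{Z}\qmat{Z}^{T})^q\qmat{A}\bdOmega}$ and $\qmat{F}=\begin{bmatrix}\bdPhi_1^\dagger & 0\end{bmatrix}$. Instead of the three-term triangle inequality, write $(\qmat{I}-P)\qmat{A}=(\qmat{I}-P)(\qmat{A}-\qmat{Z}\qmat{F})+(\qmat{I}-P)\qmat{Z}\qmat{F}$. This gives
\[
\normP{(\qmat{I}-P)\qmat{A}}\le \normP{\boldsymbol{\Sigma}_2}+\normP{\boldsymbol{\Sigma}_2\bdPhi_2\bdPhi_1^\dagger}+\normP{(\qmat{I}-P)\qmat{Z}}\,\normSpectral{\bdPhi_1^\dagger},
\]
i.e.\ coefficient $1$ instead of $2$. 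On the same events, both stated bounds then follow a fortiori, the Frobenius one with slack in the $\tau_{k+1}(\qmat{A})$ term.

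\textbf{Probability bookkeeping.} Otherwise your accounting is sound, and tidier than the paper's. The paper lumps the spectral and Frobenius $\bdPhi$-events into $2t^{-(l-k)}+2e^{-u^2/2}$, which read literally would end with $3e^{-u^2/2}$. Treating the two norm claims separately, as you do, gives exactly $p_e$.

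Sharing the $\bdPhi_1$-event is legitimate. The good event inside Corollary \ref{col:AG2G1_deviation_refined_from_finding} is precisely $\{\normF{\bdPhi_1^\dagger}\le t\sqrt{3k/(l-k+1)},\ \normSpectral{\bdPhi_1^\dagger}\le te\sqrt{l}/(l-k+1)\}$. Its failure probability is at most $t^{-(l-k)}+t^{-(l-k+1)}\le 2t^{-(l-k)}$, and the $e^{-u^2/2}$ pays for concentration in $\bdPhi_2$ conditionally on it.

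One citation needs adjusting. Lemma \ref{lem:sigma/tau_r+1_C} as stated charges $2e^{-\beta^2/2}$, using separate events for $\normSpectral{\bdPhi_1}$ and $\normSpectral{\boldsymbol{\Sigma}_2\bdPhi_2}$. To justify the single $e^{-\beta^2/2}$ recorded in Lemma \ref{lem:I-P_DOmega_C} and in $p_e$, apply Lemma \ref{lem:LipschitzConcentrationIneq} once to $\sigma_{\varrho+1}(\qmat{A})\normSpectral{\bdPhi_1}+\normSpectral{\boldsymbol{\Sigma}_2\bdPhi_2}$. This is Lipschitz in $\qmat{V}^{T}\bdPhi$ with constant $(\sigma_{\varrho+1}^2+\sigma_{k+1}^2)^{1/2}\le\sigma_{\varrho+1}+\sigma_{k+1}$, and it yields exactly the bound used in Lemma \ref{lem:I-P_DOmega_C}.
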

\begin{proof}
    By Theorem \ref{thm:determined_projection_error}, we have:
    \begin{align*}
        \normP{\qmat{A}-P_{\left(\qmat{Z}\qmat{Z}^{T}\right)^q\qmat{A}\bdOmega}\qmat{A}}\leq 2\bigxiaokuohao{\normP{\boldsymbol{\Sigma}_2}+ \|\boldsymbol{\Sigma}_2\bdPhi_2\bdPhi_1^\dagger\|_p } +\normP{\left(\qmat{I}-P_{\left(\qmat{Z}\qmat{Z}^{T}\right)^q\qmat{A}\bdOmega}\right)\qmat{Z}}\normSpectral{\qmat{F}}.
    \end{align*}
    When $p=2,\infty$, the   norms of $\boldsymbol{\Sigma}_2\bdPhi_2\bdPhi_1^\dagger$   are given by Corollary \ref{col:AG2G1_deviation_refined_from_finding}:
    \begin{gather*}
        \normSpectral{\boldsymbol{\Sigma}_2\bdPhi_2\bdPhi_1^\dagger} \leq \left(t \cdot \sqrt{\frac{3k}{l-k + 1}}+ut \cdot \frac{e \sqrt{l}}{l-k+1} \right) \sigma_{k+1}\left(\qmat{A}\right) + t \cdot \frac{e \sqrt{l}}{l-k+1} \tau_{k+1}\left(\qmat{A}\right);\\
        \normF{\boldsymbol{\Sigma}_2\bdPhi_2\bdPhi_1^\dagger}\leq t\cdot\sqrt{\frac{3k}{l-k+1}}\tau_{k+1}\left(\qmat{A}\right)+ut\frac{e\sqrt{l}}{l-k+1}\sigma_{k+1}\left(\qmat{A}\right);
    \end{gather*}
on the other hand, Lemma \ref{lem:inverse_moment_spectral} shows that $\normSpectralnoleftright{\bdPhi_1^\dagger}\leq \frac{e \sqrt{l}}{l-k+1}t$; these three inequalities simultaneously hold      with failure  probability at most $2t^{-(l-k)}+2e^{-u^2/2}$. Note also that $\normSpectral{\qmat{F}}=\| \bdPhi_1^\dagger\|$.
    Combining the above deductions with Lemma  \ref{lem:I-P_DOmega_C},  we obtain the assertion (note that for $\normFnoleftright{\qmat{A}-P_{\left(\qmat{Z}\qmat{Z}^{T}\right)^q\qmat{A}\bdOmega}\qmat{A}}$, we additionally use \eqref{eq:deterministic_error_q_gt_1_Fnorm}). 
\end{proof}

\subsection{Probabilistic oblique projection error}\label{sec:prob_obl_proj_error_qgt1}
We first need a lemma. 
\begin{lemma}\label{lem:GNError_deviation_bound}
     Let $\qmat{Q},\qmat{B}$ be generated by Algorithm \ref{alg:psa-sps} and $\bdPsi$ be standard Gaussian and independent from $\bdOmega,\bdPhi$. If $d\geq s+4$, then 
    \begin{small}
    \begin{gather*} 
        \probleftright{\!\normSpectral{\qmat{A}-\qmat{Q}\qmat{B}}\!>\!\frac{e\sqrt{d}t}{d-s+1}e_2\!+\!\left(\!1+t\sqrt{\frac{3s}{d-s+1}} +u \frac{e\sqrt{d}t}{d-s+1}\!\right)e_1 \mid E\!} \leq  e^{-u^2/2}\!+\!2t^{-(d-s)},
     \end{gather*}
\end{small}
    where $E=\left\{\bdOmega,\bdPhi|\normSpectral{\qmat{A}-P_{\qmat{Q}}\qmat{A}}<e_1,\normF{\qmat
    A-P_{\qmat{Q}}\qmat{A}}<e_2\right\}$.
\end{lemma}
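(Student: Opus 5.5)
The plan is to reduce the oblique error to a Gaussian sketch-and-solve residual, following the argument of \cite{Practical_Sketching_Algorithms_Tropp}. Fix $\bdOmega,\bdPhi$; then $\qmat{Q}$ is deterministic with orthonormal columns, and $\bdPsi$ remains standard Gaussian and independent. Complete $\qmat{Q}$ to an orthogonal matrix $[\qmat{Q},\qmat{Q}_\perp]$ and write $\bdPsi_1=\bdPsi\qmat{Q}\in\bbR^{d\times s}$ and $\bdPsi_2=\bdPsi\qmat{Q}_\perp$. By rotational invariance these are independent standard Gaussian matrices. Since $\bdPsi_1$ has full column rank almost surely, $\bdPsi_1^\dagger\bdPsi_1=\qmat{I}$, and
\[
\qmat{B}=\bdPsi_1^\dagger\bdPsi\qmat{A}=\qmat{Q}^{T}\qmat{A}+\bdPsi_1^\dagger\bdPsi_2\qmat{Q}_\perp^{T}\qmat{A}.
\]
Hence
\[
\qmat{A}-\qmat{Q}\qmat{B}=(\qmat{I}-P_{\qmat{Q}})\qmat{A}-\qmat{Q}\bdPsi_1^\dagger\bdPsi_2\qmat{Q}_\perp^{T}\qmat{A}.
\]
The two terms have orthogonal column spaces, and $\normP{\qmat{Q}_\perp^{T}\qmat{A}}=\normP{(\qmat{I}-P_{\qmat{Q}})\qmat{A}}$. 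The triangle inequality then gives
\[
\normSpectral{\qmat{A}-\qmat{Q}\qmat{B}}\le\normSpectral{(\qmat{I}-P_{\qmat{Q}})\qmat{A}}+\normSpectral{\bdPsi_1^\dagger\bdPsi_2\qmat{M}},
\]
where $\qmat{M}:=\qmat{Q}_\perp^{T}\qmat{A}$ is fixed given $\bdOmega,\bdPhi$.

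Next bound $\normSpectral{\bdPsi_1^\dagger\bdPsi_2\qmat{M}}$ conditionally on $\bdPsi_1$. The map $\bdPsi_2\mapsto\normSpectral{\bdPsi_1^\dagger\bdPsi_2\qmat{M}}$ is Lipschitz with constant $\normSpectral{\bdPsi_1^\dagger}\normSpectral{\qmat{M}}$. By Lemma \ref{lem:SGT} (via Jensen), its conditional mean is at most $\normF{\bdPsi_1^\dagger}\normSpectral{\qmat{M}}+\normSpectral{\bdPsi_1^\dagger}\normF{\qmat{M}}$. Lemma \ref{lem:LipschitzConcentrationIneq} then yields, with failure probability at most $e^{-u^2/2}$,
\[
\normSpectral{\bdPsi_1^\dagger\bdPsi_2\qmat{M}}\le\normF{\bdPsi_1^\dagger}\normSpectral{\qmat{M}}+\normSpectral{\bdPsi_1^\dagger}\bigl(\normF{\qmat{M}}+u\normSpectral{\qmat{M}}\bigr).
\]
This is the content of Corollary \ref{col:SGT_deviation} / Corollary \ref{col:AG2G1_deviation_refined_from_finding}, with the roles of $\qmat{S},\qmat{T}$ transposed. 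Since $\bdPsi_1^{T}$ is $s\times d$ Gaussian, Lemmas \ref{lem:inverse_moment_F} and \ref{lem:inverse_moment_spectral} give
\[
\normF{\bdPsi_1^\dagger}\le t\sqrt{3s/(d-s+1)},\qquad \normSpectral{\bdPsi_1^\dagger}\le e\sqrt{d}\,t/(d-s+1),
\]
each failing with probability at most $t^{-(d-s)}$; here $d\ge s+4$ is used. This accounts for the $2t^{-(d-s)}$ term.

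Finally, on $E$ substitute $\normSpectral{\qmat{M}}<e_1$ and $\normF{\qmat{M}}<e_2$. The bound becomes
\[
e_1+t\sqrt{\tfrac{3s}{d-s+1}}\,e_1+\tfrac{e\sqrt{d}t}{d-s+1}(e_2+ue_1),
\]
which is exactly the stated threshold. The failure probabilities are computed conditionally on $\bdOmega,\bdPhi$ and are uniform in them, so they persist after conditioning on $E$. I expect the only delicate point to be this conditioning step: $\qmat{Q}$ and $\qmat{Q}_\perp$ depend on $\bdOmega,\bdPhi$, so one must confirm that $\bdPsi_1$ and $\bdPsi_2$ are independent standard Gaussian conditional on any realization, which follows from the independence of $\bdPsi$ and rotational invariance. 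Everything else is assembly of the earlier lemmas and a union bound.
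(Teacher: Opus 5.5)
Your proposal is correct and follows essentially the same route as the paper: split $\normSpectral{\qmat{A}-\qmat{Q}\qmat{B}}$ by the triangle inequality, write $\qmat{B}-\qmat{Q}^{T}\qmat{A}=(\bdPsi\qmat{Q})^\dagger(\bdPsi\qmat{Q}_\perp)\qmat{Q}_\perp^{T}\qmat{A}$, and apply Gaussian Lipschitz concentration (Corollary \ref{col:SGT_deviation}) together with the tail bounds of Lemmas \ref{lem:inverse_moment_F} and \ref{lem:inverse_moment_spectral} for $(\bdPsi\qmat{Q})^\dagger$ at a fixed realization of $(\bdOmega,\bdPhi)$, then pass to the conditioning on $E$ using that the bound is uniform in $(\bdOmega,\bdPhi)$ (the paper phrases this last step as the law of total probability). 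The differences are cosmetic: your labels $\bdPsi_1,\bdPsi_2$ are swapped relative to the paper's, and you derive the decomposition of $\qmat{B}$ directly instead of citing \cite[Lemma A.4]{Practical_Sketching_Algorithms_Tropp}.
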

\begin{proof}According to \eqref{eq:proof:lemma_QB:1}, it suffices to estimate $\|\qmat{Q}^{T}\qmat{A}-\qmat{B}\|$, which is equivalent to $\|\bdPsi_2^\dagger\bdPsi_1\qmat{Q}^{T}_{\bot}\qmat{A}\|$; here $\bdPsi_1=\bdPsi\qmat{Q}_{\bot}$ and $\bdPsi_2 = \bdPsi\qmat{Q}$ with $\qmat{Q}_\bot$ the orthogonal complement of $\qmat{Q}$. 
 Define  $$E_1=\left\{\bdPsi_2\mid \|\bdPsi_2^\dagger\|< e_3:=\frac{e\sqrt{d}}{d-s+1}t,\|\bdPsi_2^\dagger\|_F<e_4:=\sqrt{\frac{3s}{d-s+1}}t   \right\}$$ with some $t>1$. The deviations bounds in Lemmas \ref{lem:inverse_moment_F} and \ref{lem:inverse_moment_spectral} show that $\mathbb P(E_1^C)\leq 2t^{-(d-s)}$ for any fixed $\qmat{Q}$.
 
 We next use    Corollary \ref{col:SGT_deviation} to bound $\|\bdPsi_2^\dagger\bdPsi_1\qmat{Q}^{T}_{\bot}\qmat{A}\|$. However, $\qmat{Q}$ itself randomly depends on $\bdPhi,\bdOmega$, making the independence between $\bdPsi_1$   and $\qmat{Q}^{T}_{\bot}\qmat{A}$ questionable, such that Corollary \ref{col:SGT_deviation} cannot be directly applied.  Nevertheless, we can use the law of total probability to resolve it. To this end, fixed a pair  $(\bdOmega,\bdPhi)= (\bdOmega_{\gamma},\bdPhi_{\gamma})$; then $\qmat{Q}$ is also fixed, so $\bdPsi_1,\bdPsi_2,\qmat{Q}^{T}_{\bot}\qmat{A}$ are independent. Using Corollary \ref{col:SGT_deviation} we get
\begin{small}
\begin{align*}
   & \probleftright{\normSpectralnoleftright{\qmat{B}-\qmat{Q}^{T}\qmat{A}}> e_3\normFnoleftright{\qmat{Q}_{\bot}^{T}\qmat{A}}+(e_4+ue_3)\normSpectralnoleftright{\qmat{Q}_{\bot}^{T}\qmat{A}} \mid  (\bdOmega,\bdPhi)= (\bdOmega_{\gamma},\bdPhi_{\gamma})}\\
    \leq & e^{-u^2/2} +  \mathbb P(E_1^C) \leq  e^{-u^2/2} + 2t^{-(d-s)},
\end{align*}
\end{small}
\noindent which  uniformly holds for any fixed pair $(\bdOmega,\bdPhi)= (\bdOmega_{\gamma},\bdPhi_{\gamma})$.   This allows us to use the law of total probability to remove the condition:
\begin{small}
\begin{align*}
   & \probleftright{\normSpectralnoleftright{\qmat{B}-\qmat{Q}^{T}\qmat{A}}> e_3\normFnoleftright{\qmat{Q}_{\bot}^{T}\qmat{A}}+(e_4+ue_3)\normSpectralnoleftright{\qmat{Q}_{\bot}^{T}\qmat{A}}  } \\
    = &\! \int \!\!\probleftright{\normSpectralnoleftright{\qmat{B}-\qmat{Q}^{T}\qmat{A}}\!>e_3 \normFnoleftright{\qmat{Q}_{\bot}^{T}\qmat{A}}\! +\!(e_4+ue_3) \normSpectralnoleftright{\qmat{Q}_{\bot}^{T}\qmat{A}} \mid  (\bdOmega,\bdPhi)\!= \!(\bdOmega_{\gamma},\bdPhi_{\gamma})} d{\mathbb P}_{(\bdOmega,\bdPhi)}(\bdOmega_{\gamma},\bdPhi_{\gamma}) \\
    \leq & \int (e^{-u^2/2} +  2t^{-(d-s)})d{\mathbb P}_{(\bdOmega,\bdPhi)}(\bdOmega_{\gamma},\bdPhi_{\gamma})  = e^{-u^2/2} +  2t^{-(d-s)}.
\end{align*}
\end{small}
 Finally conditioned on $E$ gives the required probabilistic bound. 
\end{proof}


The previous preparations yield the main bound stated in Sect. \ref{sec:error_bound_q>=1_stated}.
\begin{proof}[Proof of Theorem \ref{thm:error_oblique_bound_q_gt_1}]
Use Theorem \ref{thm:prob_projection_error_q_gt_1}  and Lemma \ref{lem:GNError_deviation_bound}.
\end{proof}
\color{black}

\section{Related Approaches}\label{sec:related_approaches}
We discuss some relavant approaches in this section.

\emph{First},   \cite[Remark 4.2]{yu2018efficient} (see also \cite[Sect. 6.8]{bjarkason2019PassEfficientRandomizedAlgorithms}) suggests a ``half'' power iteration: If $\qmat{A}$ is given column-wisely, then one can compute  $\qmat{Y}=\qmat{A}\qmat{A}^{T}\bdOmega = (a_1a_1^{T} + a_2a_2^{T} + \cdots + a_na_n^{T})\bdOmega$ with only a single pass of $\qmat{A}$, with $a_i$ being  the $i$-th column of $\qmat{A}$. 

In contrast, SPI supports   any linear update of $\qmat{A}$  (see \cite{Practical_Sketching_Algorithms_Tropp} for a detail of linear update). Moreover, if $\qmat{A}$ is indeed given column-wisely, then the idea of \cite{yu2018efficient} can still be enhanced by SPI :   sketch  $\qmat{Z}=\qmat{A}\qmat{A}^{T}\bdPhi$ and $\qmat{Y}=\qmat{A}\qmat{A}^{T}\bdOmega$ with only a single pass of $\qmat{A}$, and   compute  $\hat{\qmat{Y}}=(\qmat{Z}\qmat{Z}^{T})^q\qmat{Y}$ as well.

\emph{Second}, the power of a sketch has been used to estimate the Schatten-$p$ norm of a matrix  \cite{martinsson2020RandomizedNumerical,kong2017spectrum}.  Let  $\qmat{Z}=\qmat{A}\bdPhi$ with $\bdPhi$ having isotropic columns and denote $\mathcal T(\qmat{Z}^T\qmat{Z})$ the strict upper triangle of   $\qmat{Z}^T\qmat{Z}$. Then $c\cdot \text{trace}(\mathcal T(\qmat{Z}^T\qmat{Z})^{p-1}\qmat{Z}^T\qmat{Z})$ gives an unbias estimation of  $\normP[2p]{\qmat{A}}^{2p}$, where $c$  only depends on $p$ and the sketch size. Following   their work, it would be interesting to perform SPI as $\hat{\qmat{Y}}=\qmat{Z}\mathcal T(\qmat{Z}^T\qmat{Z})^{q-1}\qmat{Z}^T\qmat{Y}$ to accelerate the computation. On the other hand, the classical estimation introduced in \cite[Sect. 5.3]{martinsson2020RandomizedNumerical} also   motivates a possible variant of SPI $\qmat{Z}_1\qmat{Z}_1^{T}\qmat{Z}_2\qmat{Z}_2^{T}\cdots \qmat{Z}_p\qmat{Z}_p^{T} \qmat{Y}$, whose   advantage   is that it is an unbias estimation of $(\qmat{A}\qmat{A}^{T})^p\qmat{Y}$.  
  \cite{li2014sketching} estimates $\normP[2p]{\qmat{A}}^{2p}$ using several bilinear sketches. 


 \color{black}

\emph{Third},   SPI has a subtle connection to the block-stochastic power method for streaming PCA \cite{mitliagkas2013memory}. This algorithm takes the following  spiked covariance   model: Assume that one has samples $x_1,\ldots,x_n$, with $x_i = \qmat{A}z_i + w_i\in\bbR^p$, where $z_i \sim N(0,I_k)$, $w_i\sim N(0,\qmat{I}_p)$, while $\qmat{A}\in\bbR^{p\times k}$ is unknown. The goal is to estimate the orthonormal basis of $\qmat{A}$, given that the samples $x_i$'s come  streamingly.  Given a block size $B$,   \cite{mitliagkas2013memory} (in a simplified form) takes the following recursion from $j=1,\ldots,n/B $:
\[
\qmat{S}_{j} = \texttt{QR}(\qmat{X}_j\qmat{X}_j^{T}\qmat{S}_{j-1},0  ),
\]
where $\qmat{S}_0\in\bbR^{p\times k}$ is a standard Gaussian matrix, and  $\qmat{X}_j$ takes the $j$-th block of the samples, i.e., $\qmat{X}_j = [x_{(j-1)B+1},\ldots,x_{jB} ]\in\bbR^{p\times B}$. Note that the setting of \cite{mitliagkas2013memory} has some connections  to SPI:    $\qmat{A}$ is not visible to the algorithms and the storage is constrained. However, the algorithms are different, and SPI only performs very few steps to enhance the rangefinder, while this algorithm requires several iterations       to approximate the orthonormal basis of $\qmat{A}$; it also requires an additional noise $w_i$; see \cite{mitliagkas2013memory} for more details. 


\color{black}
\emph{Fourth},   letting $q\rightarrow\infty$ in SPI algebraically recovers the subspace spanned by the leading $s$ singular vectors of $\qmat{Z}$. 
 This case has certain connections to the truncated-SVD‑based rangefinder approaches \cite{rokhlin2010randomized,martinssonRandomizedAlgorithmDecomposition2011,woolfe2008Fast},  but key distinctions are outlined below:

    1) Goals differ. In \cite{rokhlin2010randomized,martinssonRandomizedAlgorithmDecomposition2011,woolfe2008Fast}, the truncated-SVD-based rangefinder replaces the QR step and   final truncated   in TYUC17 (or TYUC17-SPI). Specifically,    this means applying SVD to  the rangefinder $\qmat{Y}\in\bbR^{m\times s}$ and keeping the  leading $r$  singular vectors as an orthonormal basis, where  $s-r$ is   small (e.g., $8,12$, and $20$  in \cite{woolfe2008Fast,rokhlin2010randomized,martinssonRandomizedAlgorithmDecomposition2011}). By contrast, SPI   derives  $\hat{\qmat{Y}}\in\bbR^{m\times s}$ from a considerably larger sketch $\qmat{Z}\in\bbR^{m\times l}$ (with $l-s=d>s$; see Sect. \ref{sec:sketch_size}) with only iterations $q=1,2$, mimicking the way of power iteration.  
    
    2) Even if one applies truncated SVD directly to $\qmat{Z}$ (to obtain its leading $s$ left singular subspace to play a similar role as $\hat{\qmat{Y}}$), key differences remain. First, the SVD-based approach  requires storage at least comparable to SPI---potentially larger with high-precision SVD algorithms. Second, in the mixed-precision strategy, one may use a half-double precision model instead of a single-double   one to obtain even larger sketches in half-precision (FP16), in which   
    FP16 has become ubiquitous in modern machine learning \cite{narang2017mixed}.    However, mainstream libraries (e.g.  LAPACK, MKL, Matlab) currently offer no native FP16 QR or SVD routines,  making SVD-based approach inapplicable. In contrast, in the same context,  SPI uses only FP16 matrix-multiplication kernels, which has been supported by  NVIDA's cuBLAS,   OpenCL BLAS, Tensorflow, and Matlab. \color{black}
   Third,  computing the SVD of  $\qmat{Z}$ can be more expensive. 
    \color{black} Meanwhile, empirical results in Sect. \ref{sec:numer_exp} imply that SPI   with only $q=1$ nearly attains the same subspace with substantially less overhead.

\color{black}

\color{black}

\emph{Fifth}, \cite{che2025efficient} recently   advocates replacing unfolding matrices in power iteration by column-subset based sketches to accelerate their multiple-pass algorithm for tensor approximation. Several essential points differ their work and ours:

1) Objective: Tensor unfoldings   are typically very fat, so   \cite{che2025efficient}  focuses on  \emph{accelerating} power iteration via approximate matrix multiplications, which naturally leads to column sampling for sketch construction \cite{drineas2006FastMonte}. By contrast, SPI is designed to \emph{enhance} approximation accuracy of one-pass algorithms via mimicking power iteration, in which subspace embedding techniques are therefore the appropriate choices. To that end, we also have  to consider momery-efficient strategies to reduce memory cost and derive sketch sizes a priori via theoretical guidance, which play a central role in   SPI.

2) Sketch: Even though one could use  column-subset based sketches in SPI, only uniform sampling is feasible, as others are data-dependent that require multiple passes over the data. Unfortunately,   uniform sampling demands the strong incoherence assumption to work well, in which SRHT is more preferred \cite{avron2010BlendenpikSupercharging}.

3) Error bound:   \cite{che2025efficient}'s bound (\cite[Corollary 3]{che2025efficient})  depends explicitly on the unfolding matrix's  sizes, rank, and Frobenius norm. In contrast, by leveraging   recent advances in subspace embedding techniques \cite{FindingStructureHalko,tropp2023RandomizedAlgorithms,martinsson2020RandomizedNumerical}, our bound  depends only on the sketch sizes and tail energy, decoupling accuracy guarantees from the ambient matrix size.

\color{black}

Finally, with storage budget constraints, it is also possible to implement TYUC17 in mixed-precision to enlarge sketches. The case is as follows.  $\qmat{W}$ may indeed be obtained   in lower precision to accommodate a larger size. However,   $\qmat{Y}$ must still be sketched and maintained in higher precision: If $\qmat{Y}$ were stored in, say, single-precision, then $\qmat{Q}$ would necessarily also be single-precision (since otherwise $\qmat{Q}$   requires twice the storage of $\qmat{Y}$, which is infeasible under  budget constraints; see Fig. \ref{fig:TYUC17_vs_SPI_tikz}). However, a single-precision $\qmat{Q}$ suffers from a marked  loss of orthonormality ($\normSpectralnoleftright{\qmat{Q}^{T}\qmat{Q}-\qmat{I}} \approx  1e-6 $, compared to $10^{-15}$ for double-precision).  If one is willing to accept a single-precision 
$\qmat{Q}$, TYUC17-SPI may still achieve improved performance by replacing the single-double precision model in Sect. \ref{sec:same_storage_as_PSA} with a half-single precision model (i.e., using half-precision sketches).  We  will describe the formal algorithm in Sect. \ref{sec:performance_SPI_variant}, and  compare it with the SPI variant (Sect. \ref{sec:reduce_storage_cost_q_gt_1}).

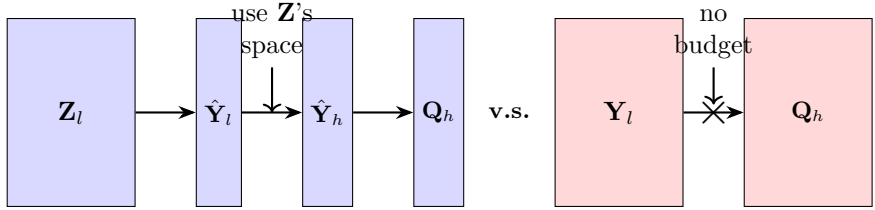
\begin{figure}[htbp]
    \centering
\begin{tikzpicture}[
    node distance=0.8cm and 0.8cm, 
    box_top/.style={
        rectangle,
        draw,
        fill=blue!15,
        align=center,
        minimum height=2.5cm,
        minimum width=1.7cm,
        inner sep=5pt
        },
    box_bottom/.style={
        rectangle,
        draw,
        fill=red!15,
        align=center,
        minimum height=2.5cm,
        minimum width=1.7cm,
        inner sep=5pt
        },
    narrowbox/.style={
        rectangle,
        draw,
        fill=blue!15,
        align=center,
        minimum height=2.5cm,
        minimum width=0.6cm,
        inner sep=3pt
        },
    arrow/.style={->, >=Stealth, thick},
    impossibility/.style={
        font=\LARGE,
        inner sep=0pt
    },
        vs_label/.style={ 
        font=\small\bfseries 
    }
]

\node (Z_low) [box_top] {$\qmat{Z}_{{l}}$};
\node (Yhat_low) [narrowbox, right=of Z_low] {\small$\hat{\qmat{Y}}_{{l}}$};
\node (Yhat_high) [narrowbox, right=of Yhat_low] {\small$\hat{\qmat{Y}}_{{h}}$};
\node (ell_high) [narrowbox, right=of Yhat_high] {\small$\qmat{Q}_{{h}}$};

\draw [arrow] (Z_low.east) -- (Yhat_low.west);
\draw [arrow] (Yhat_high.east) -- (ell_high.west);
\draw [arrow] (Yhat_low.east) -- (Yhat_high.west);

\coordinate (mid_top) at ($(Yhat_low.east)!0.5!(Yhat_high.west)$);
\node (z_space_label) [above=0.6cm of mid_top, align=center] {use $\qmat{Z}$'s \\ space};
\draw [->, thick] (z_space_label.south) -- (mid_top);

\node (vs) [vs_label, right=0.2cm of ell_high] {v.s.};

\node (Y_low) [box_bottom, right=0.2cm of vs] {$\qmat{Y}_{{l}}$};

\node (Y_high) [box_bottom, right=of Y_low] {\small$\qmat{Q}_{{h}}$};

\path (Y_low.east) -- (Y_high.west) node [midway, impossibility] {$\times$};

\draw [arrow] (Y_low.east) -- (Y_high.west);

\coordinate (mid_top1) at ($(Y_low.east)!0.5!(Y_high.west)$);
\node (not_space_label) [above=0.6cm of mid_top1, align=center] {no  \\budget};
\draw [->, thick] (not_space_label.south) -- ([yshift=3pt]mid_top1);
\end{tikzpicture}
\caption{Comparison of generating $\qmat{Q}$ by SPI (left) and by ordinary method (right) in mixed-precision.  Subscripts ``$l$'' and ``$h$''   denote   lower and higher precision.  SPI exploits $\qmat{Z}$'s storage to convert from low to high precision. The right part shows that directly obtaining a higher precision $\qmat{Q}$ form a lower precision $\qmat{Y}$ is impossible due to storage dudget constraint.}\label{fig:TYUC17_vs_SPI_tikz}
\end{figure}


\color{black}

\section{Conclusions}\label{sec:conclusion} 
In this work, we introduced the sketch-power iteration (SPI) as an effective approach to enhance one-pass   algorithms for low-rank   approximation. By leveraging sketches, SPI substantially improves accuracy without requiring multiple passes over the data matrix. Mixed-precision and two hardware-dependent strategies were proposed to reduce the storage cost. Theoretical analysis established rigorous error bounds, which   provide practical guidance on setting   sketch sizes a priori for different singular spectrum decay.   Numerical results demonstrate the improved accuracy of TYUC17 enhanced by SPI in approximating the dominant singular values and singular vectors. We also observed  that the deduced sketch sizes well match empirical optima, enabling efficient parameter selection for better approximation accuracy. Future research may  better understand the theoretical behavior of SPI and explore its  potential combinations     into one-pass algorithms for eigenvalue decompositions and tensor approximations. 

\color{black}
{\footnotesize \section*{Acknowledgments} We are very grateful to Professor Joel A. Tropp for his constructive feedback---particularly for pointing out the connection between SPI and the moment method for estimating Schatten-$p$ norms. We   also       thank  the associate editor and the anonymous referees for their   careful reading and insightful suggestions, which have helped us substantially  improve  the quality and clarity of this manuscript.
}
\color{black}

\appendix

\section{Proofs of Technical Lemmas} \label{sec:proofs_technical_lemmas}
 \begin{proof}[Proof of Corollary \ref{col:SigmaPhi2Phi1_second_moment_bound}]
        The first relation   comes from the proof of \cite[Thm. 10.5]{FindingStructureHalko}. Here we prove the second relation:
        \begin{small}
        \begin{align*}
            \ExpectationNoBracket[\qmat{G}]{\|\qmat{A}\qmat{G}_2\qmat{G}_1^\dagger \|^2}&=\ExpectationNoBracket[\qmat{G}_1 ]{\ExpectationNoBracket[\qmat{G}_2]{\|\qmat{A}\qmat{G}_2\qmat{G}_1^\dagger \|^2}}
            \leq \mathbb E_{\qmat{G}_1}{\left(\normSpectral{\qmat{A}}\|\qmat{G}_1^\dagger\|_F+\normF{\qmat{A}}\| \qmat{G}_1^\dagger\|\right)^2}\\
           &\leq \normSpectral{\qmat{A}}^2\ExpectationNoBracket{\normFnoleftright{\qmat{G}_1^\dagger}^2}+\normF{\qmat{A}}^2\ExpectationNoBracket{\normSpectralnoleftright{\qmat{G}_1^\dagger}^2}+2\normSpectral{\qmat{A}}\normF{\qmat{A}} (\ExpectationNoBracket{\normFnoleftright{\qmat{G}_1^\dagger}^2})^{1/2}(\ExpectationNoBracket{\normSpectralnoleftright{\qmat{G}_1^\dagger}^2})^{1/2}\\
            &=\left(\normSpectral{\qmat{A}}(\ExpectationNoBracket{\normFnoleftright{\qmat{G}_1^\dagger}^2})^{1/2}+\normF{\qmat{A}}(\ExpectationNoBracket{\normSpectralnoleftright{\qmat{G}_1^\dagger}^2})^{1/2}\right)^2,
        \end{align*}  
    \end{small}
   \noindent     where the first inequality is from   Lemma \ref{lem:SGT} and the second one is due to H\"older inequality. We then use   Lemmas \ref{lem:inverse_moment_F} and \ref{lem:inverse_moment_spectral} to complete the proof.
    \end{proof}

     \begin{proof}[Proof of Corollary \ref{col:SGT_deviation}]
        Conditioned on $E$, Lemma \ref{lem:SGT} together with Jensen inequality shows that 
        \begin{align*}
            \Expectation[\qmat{G}]{\normSpectral{\qmat{S}\qmat{G}\qmat{T}}\mid E}<\normF{\qmat{S}}\normSpectral{\qmat{T}}+\normSpectral{\qmat{S}}\normF{\qmat{T}}\leq e_1e_4+e_2e_3,
        \end{align*}
        which in connection with Lemma \ref{lem:LipschitzConcentrationIneq}   gives 
        \begin{align*}
            \probleftright{\normSpectral{\qmat{S}\qmat{G}\qmat{T}}>\Expectation[\qmat{G}]{\normSpectral{\qmat{S}\qmat{G}\qmat{T}}\mid E}+\normSpectral{\qmat{S}}\normSpectral{\qmat{T}}u\mid E}<e^{-u^2/2},
        \end{align*}
       and so
         $
            \probleftright{\normSpectral{\qmat{S}\qmat{G}\qmat{T}}>e_1e_4+e_2e_3+e_2e_4u\mid E}<e^{-u^2/2} 
         $.
        Use   $\probleftright{E^C}$ to remove the condition and obtain the final result.
     \end{proof}  

 \begin{proof}[Proof of Lemma \ref{lem:GNErrorFrobenius}]
   The Frobenius case is similar to that of \cite[Thm. 4.3]{Practical_Sketching_Algorithms_Tropp}. For  the spectral case, first, we use the triangle inequality to obtain:
    \begin{small}
    \begin{align}\label{eq:proof:lemma_QB:1}
        \normSpectral{\qmat{A}-\qmat{Q}\qmat{B}}&\leq\normSpectral{\qmat{A}-\qmat{Q}\qmat{Q}^{T}\qmat{A}}+\normSpectral{\qmat{Q}\qmat{Q}^{T}\qmat{A}-\qmat{Q}\qmat{B}} \leq\normSpectral{\qmat{A}-\qmat{Q}\qmat{Q}^{T}\qmat{A}}+\normSpectral{\qmat{Q}^{T}\qmat{A}-\qmat{B}}.
    \end{align}
\end{small}
        \cite[Lemma A.4]{Practical_Sketching_Algorithms_Tropp} shows that $
        \qmat{B}-\qmat{Q}^{T}\qmat{A}=\bdPsi_2^\dagger\bdPsi_1\qmat{Q}_{\bot}^{T}\qmat{A}$, 
    where $\bdPsi_1=\bdPsi\qmat{Q}_{\bot}$ and $\bdPsi_2=\bdPsi\qmat{Q}$ are independent and standard Gaussian, with $\qmat{Q}_{\bot}$ the orthonormal complement of $\qmat{Q}$. It then follows from Lemma \ref{lem:SGT} that
    \begin{small}
    \begin{align}\label{eq:proof:lemma_QB:2}
        \mathbb E_{\bdPsi} {\normSpectralnoleftright{\qmat{B}-\qmat{Q}^{T}\qmat{A}}}&=\mathbb E_{\bdPsi} {\normSpectral{\bdPsi_2^\dagger\bdPsi_1\qmat{Q}_{\bot}^{T}\qmat{A}}} \leq \mathbb E {\normFnoleftright{\bdPsi_2^\dagger}}\normSpectral{\qmat{Q}_{\bot}^{T}\qmat{A}}+\mathbb E {\normSpectralnoleftright{\bdPsi_2^\dagger}}\normF{\qmat{Q}_{\bot}^{T}\qmat{A}}. 
    \end{align}
\end{small}
It follows $\normSpectralnoleftright{\qmat{Q}_{\bot}^{T}\qmat{A}}=\normSpectralnoleftright{\qmat{Q}_{\bot}\qmat{Q}_{\bot}^{T}\qmat{A}}$ and  $\normFnoleftright{\qmat{Q}_{\bot}^{T}\qmat{A}}=\normFnoleftright{\qmat{Q}_{\bot}\qmat{Q}_{\bot}^{T}\qmat{A}}$, while $\qmat{Q}_{\bot}\qmat{Q}_{\bot}^{T}\qmat{A}=\qmat{A}-P_\qmat{Q}\qmat{A}$. On the other hand,  by Lemmas \ref{lem:inverse_moment_F}, \ref{lem:inverse_moment_spectral}, and Jensen's inequality, we have:
\begin{gather*}
    \ExpectationNoBracket{\normSpectralnoleftright{\bdPsi_2^\dagger}}\leq\sqrt{\ExpectationNoBracket{\normSpectralnoleftright{\bdPsi_2^\dagger}^2}}\leq\frac{e\sqrt{d+s}}{\sqrt{2}(d-s)}\\
    \ExpectationNoBracket{\normFnoleftright{\bdPsi_2^\dagger}}\leq\sqrt{\ExpectationNoBracket{\trace{\bdPsi_2\bdPsi_2^{T}}^{-1}}}=\sqrt{\trace{\ExpectationNoBracket{ (\bdPsi_2\bdPsi_2^{T} )^{-1}}}}=\sqrt{\frac{s}{d-s-1}},
\end{gather*}
which together with \eqref{eq:proof:lemma_QB:1} and \eqref{eq:proof:lemma_QB:2} yields the result. 
\end{proof}

\section{Sketch Size Deduction via Theorem \ref{thm:oblique_proj_error_q=1}}\label{sec:sketch_size_guidance}
This  section details how to deduce the sketch sizes in   Sect. \ref{sec:sketch_size} under the mixed-precision strategy. Recall that  
  the storage budget is parameterized by $T=(c(l+s)+d)/2$. For simplicity we only consider $m=n$, i.e., $c=1$  case.  To make the derivation go through,  we have to    simplify the   bound in Theorem \ref{thm:oblique_proj_error_q=1}. Assume that the last two terms are higher-order ones. 
 For the first term,   relax the constant $1$ to $2\hat{\xi}^2$ and assume that $\hat{\xi} <C$ for some   $C>0$. The bound can be simplified as 
\begin{align}\label{eq:simplified_error_bound_q_equals_1}
    \Expectation{\normF{\qmat{A}-\qmat{Q}\qmat{B}}^2|E_F}\lesssim 
     O(1)\cdot \min_{s\leq d, s+d=T/2,r\leq\varrho\leq s}\frac{d}{d-s-1} \frac{l}{l-\varrho - 1}\tau_{\varrho +1}^2.
\end{align}
Recall  that Sect. \ref{sec:same_storage_as_PSA} implies $l=d+s=T$ when $m=n$; thus we only need to determine $s$.    Assume  that $T> 2r$ for convenience in the sequel. 


\subsection{Flat spectrum} In this case, the tail energy $\tau_{\varrho +1}^2$ is the dominant term, for which $\varrho\approx r$ to minimize the bound. Thus   \eqref{eq:simplified_error_bound_q_equals_1} remains to minimize
\[
    \min_{s\leq d, s+d=T ,r\leq   s}\frac{d}{d-s-1} =\min_{r\leq s}\frac{T-s}{T-2s-1}, 
\]
which is an increasing function of $s$, whose minimizer is ottained at $s=r$.

\subsection{Polynomial spectrum decay}
In this case, write $\sigma_i = i^{-\alpha}$ for some $\alpha>0$. When $\alpha\neq 1/2$, $\tau_{\varrho +1}^2= \sum^n_{i=\varrho+1}i^{-2\alpha}\approx \int_{\varrho+1}^{n}x^{-2\alpha}{\rm d}{x}= C_{2\alpha}(n^{1-2\alpha}-(\varrho+1)^{1-2\alpha}) $ with $C_{2\alpha} = (1-2\alpha)^{-1}$. Due to the different behavior of $\tau_{\varrho +1}^2$ for different $2\alpha$, we consider the argument separately.

When $\alpha <1/2$,   $\tau_{\varrho +1}^2\approx n^{1-2\alpha}$ for large enough $n$, and so the problem reduces to $\min_{  s+d=T ,r\leq \varrho\leq s\leq d} \frac{d}{d-s-1} \frac{l}{l-\varrho - 1} n^{1-2\alpha}$. The minimum occurs at $s=\varrho=r$.

When  $\alpha> 1/2$,   $\tau_{\varrho +1}^2\approx 1/(\varrho+1)^{2\alpha-1}-1/n^{2\alpha-1}\approx  1/(\varrho+1)^{2\alpha-1}$, and so the problem reduces to 
\begin{align}\label{eq:poly_decay_min_s_rho_alpha_geq_1}
    \min_{ s+d=T ,r\leq \varrho\leq s\leq d} \frac{d}{d-s-1} \frac{l}{(l-\varrho - 1)(\varrho+1)^{2\alpha-1}} .
\end{align}
We first minimize $\varrho$ without constraints, which takes $\varrho=\frac{(2\alpha-1)(l+1)-1}{2\alpha}$. Consider two cases: 1) $s\geq\frac{(2\alpha-1)(l+1)-1}{2\alpha}$: the minimizer of \eqref{eq:poly_decay_min_s_rho_alpha_geq_1} with respect to $\varrho$ is $\varrho =\frac{(2\alpha-1)(l+1)-1}{2\alpha}$, and after some calculations, solving  \eqref{eq:poly_decay_min_s_rho_alpha_geq_1}  gives $s=\varrho={\rm P}_{[r,T/2]}(\frac{(2\alpha-1)(l+1)-1}{2\alpha})$; here ${\rm P}_{[r,T/2]}(\cdot)$ means the projection   onto   $[r,T/2]$. 2)   $s< \frac{(2\alpha-1)(l+1)-1}{2\alpha}$:    the minimizer of \eqref{eq:poly_decay_min_s_rho_alpha_geq_1} with respect to $\varrho$ is $\varrho=s$, and \eqref{eq:poly_decay_min_s_rho_alpha_geq_1}  reduces to
\begin{small}
\begin{align*}
    \min_{  s+d={T},r\leq  s\leq \min\{ \frac{(2\alpha-1)(l+1)-1}{2\alpha}, \frac{T}{2} \} } \frac{d}{d-s-1} \frac{l}{(l-s - 1)(s+1)^{2\alpha-1}} \approx   \frac{l}{(T-2s-1)(s+1)^{2\alpha-1}} ,
\end{align*}
\end{small}
where the approximation is due to $d=l-s\approx l-s-1$, whose minimizer occurs at   $s= \max\{r, \frac{(2\alpha-1)({T+3})-2}{4\alpha} \}$. After some simple comparisons, this is the minimizer of \eqref{eq:poly_decay_min_s_rho_alpha_geq_1}.


When $\alpha\approx 1/2$,   $\tau_{\varrho +1}^2\approx \log n- \log (\varrho+1)$, and so the problem reduces to $$\min_{s\leq d, s+d=T,r\leq \varrho\leq s} \frac{d}{d-s-1} \frac{l}{l-\varrho - 1} (\log n-\log (\varrho+1)).$$  We first minimize $\varrho$ without constraints, which satisfies  $    \log (\frac{n}{\varrho+1}) = \frac{l- \varrho - 1 }{\varrho+1}$. It has no closed-form solution; however, its solution can be represented as $\varrho =  -\frac{l}{W(-{l}/{ne})}-1$, where $W(\cdot)$  is the   Lambert W function.   Then, similar  to the discussions in the $\alpha>1/2$ case, we find that   $s \approx  {\rm P}_{[r,T/2]}(-\frac{T+1 }{2W(- {(T+1) }/{2ne})}-1) $.  





\subsection{Exponential spectrum decay}
In this case, write $\sigma_i = e^{-\alpha i}$ for some $\alpha>0$. Then $\tau_{\varrho +1}^2\approx\int_{\varrho+1}^{n}e^{-2\alpha x}{\rm d}x\approx C_{\alpha}(e^{-2\alpha(\varrho +1)} -e^{-2\alpha(n+1)})\approx C_{\alpha}e^{-2\alpha(\varrho +1)}$ where $C_{\alpha} = (1-e^{-2\alpha})^{-1}$. Substituting this into   \eqref{eq:simplified_error_bound_q_equals_1} yields:
\begin{small}
\begin{align}\label{eq:exp_decay_min_s_rho}
    \min_{s\leq d, s+d=T,r\leq \varrho\leq  s}  \frac{d}{d-s-1} \frac{l}{l-\varrho - 1}  e^{-2\alpha(\varrho +1)}  .
\end{align}
\end{small}
Its minimum with respect to $\varrho$     occurs at $\varrho=l- (2\alpha)^{-1}$. Then consider three cases:

1. $r\leq l- (2\alpha)^{-1}\leq s$: Now minimizing \eqref{eq:exp_decay_min_s_rho} with respect to $\varrho$   gives  $\varrho=l- (2\alpha)^{-1}$, and so the problem reduces to $\min_{s\leq d,s+d=T,l-(2\alpha)^{-1}\leq s}\frac{d}{d-s-1}$; thus $s=\min\{l-(2\alpha)^{-1},T/2   \}=\min\{T-(2\alpha)^{-1},T/2   \} $.

2. $s\leq l- (2\alpha)^{-1}$: Now minimizing \eqref{eq:exp_decay_min_s_rho} respect to $\varrho$   takes the minimum  $\varrho=s$, and so the problem reduces to 
\begin{align*}
    \min_{s\leq d,s+d=T }\frac{d}{d-s-1}\frac{l}{l-s - 1}e^{-2\alpha(s +1)}\approx \frac{l\cdot e^{-2\alpha(s +1)}}{d-s-1},
\end{align*}
where in the approximation, we used that  $d=T-s\approx l-s-1$.  The minimum occurs at $s\approx\max\{T/2 - (2\alpha)^{-1},r\} $.

3. $r\geq l- (2\alpha)^{-1}$: Now minimizing \eqref{eq:exp_decay_min_s_rho} with respect to $\varrho$   takes the minimum  $\varrho=r$, and so the problem reduces to $\min_{s\leq d,s+d=T }\frac{d}{d-s-1}$; thus $s=r$.

In summary, if $\alpha$ is very small (say, $\alpha\leq \frac{1}{2T}$),  then $s=r$; Otherwise, $s\approx T/2$. 

Finally, we   remark that although the bound in Theorem \ref{thm:oblique_proj_error_q=1} is for $q=1$ case, empirically we observe that the derived parameters work well for $q\geq 1$.


\clearpage
\section*{Supplementary Materials}
\addcontentsline{toc}{section}{Supplementary Materials}
\setcounter{section}{0}
\setcounter{subsection}{0}
\setcounter{equation}{0}
\setcounter{figure}{0}
\setcounter{table}{0}
\setcounter{algorithm}{0}
\renewcommand{\thesection}{S\arabic{section}}
\renewcommand{\thesubsection}{\thesection.\arabic{subsection}}
\renewcommand{\theequation}{S\arabic{equation}}
\renewcommand{\thefigure}{S\arabic{figure}}
\renewcommand{\thetable}{S\arabic{table}}
\renewcommand{\thealgorithm}{S\arabic{algorithm}}

\section*{Outline of the Supplementary Materials} In Section \ref{SMsec:para_guid_m_neq_n}, we illustrate the parameter guidance for TYUC17-SPI when $m\neq n$. In Section \ref{SMsec:insensitivity_test_matrix}, we show that TYUC17-SPI is insensitive to the test matrices, using Gaussian, Rademacher, Count-Sketch, and sparse  random test matrices as examples.  Section \ref{SMsec:JL_property_SparseRademacher} shows that the sparse Rademacher test matrices used in the main manuscript exhibits JL behavior empirically.   In Section \ref{sec:performance_SPI_variant}, we recall  the TYUC17-SPI variant in Section \ref{sec:reduce_storage_cost_q_gt_1} and the mixed-precision version of TYUC17 mentioned at the end of Section \ref{sec:related_approaches}, and compare their performance. In Section \ref{sec:TYUC19_SPI_main}, we show how to integrate SPI into the streaming algorithm of Tropp et al. \cite{troppStreamingLowRankMatrix2019}, preliminarily study its empirical performance, and provide a proof idea for establishing the deviation bound.

\section{Parameter Guidance of TYUC17-SPI when $m\neq n$} \label{SMsec:para_guid_m_neq_n}

Table \ref{SMtab:sketch_size_guidance_m_neq_n} presents the parameter of TYUC17-SPI when  $m=cn$ where $c\neq 1$. This is an extension of that in Section \ref{sec:sketch_size} in the main manuscript. The derivation is similar to those in Section \ref{sec:sketch_size_guidance} and is omitted. 

  \setlength{\tabcolsep}{4pt}
\setlength{\cmidrulewidth}{0.5pt}
  \begin{table}[htbp] 
    \centering
    \caption{Sketch size $s$ selection in different spectrum decay;   $r\leq s\leq T/(c+1)$}
      \begin{mytabular2}{c!{\vline width 1pt}c|c|c!{\vline width 1pt}c|c}
        \toprule
        \multicolumn{1}{c}{{Flat}} & \multicolumn{3}{c}{Poly ($\sigma_i=i^{-\alpha}$)} & \multicolumn{2}{c}{Exp ($\sigma_i = e^{-\alpha\cdot i}$)} \\
        \cmidrule(lr){1-1} \cmidrule(lr){2-4} \cmidrule(lr){5-6} 
    &  {$\alpha<\frac{1}{2}$} & {$\alpha\approx\frac{1}{2}$} & {$\alpha>\frac{1}{2}$} & {$\alpha<\frac{1}{2T}$} & {$\alpha\geq \frac{1}{2T}$}     \\
      \hline
      $r$ & $r$ &  ${\rm Proj}_{[r,\frac{T}{c+1}]}\bigxiaokuohao{ -\frac{\frac{T+c}{c+1}}{W\!\left(-\frac{T+c}{(c+1)ne }\right)} - 1   }$ & $\max\{r, \frac{(2\alpha-1)({T+3})-(c+1)}{2(c+1)\alpha} \}$ & $r$ & $  \frac{T}{c+1}$\\
      \bottomrule
      \end{mytabular2}%
      \label{SMtab:sketch_size_guidance_m_neq_n}%
  \end{table}%

\begin{figure}[htp]
    \centering
    \captionsetup{font=tiny}
    \begin{subfigure}{0.3\textwidth}
        \centering
        \includegraphics[width=\linewidth]{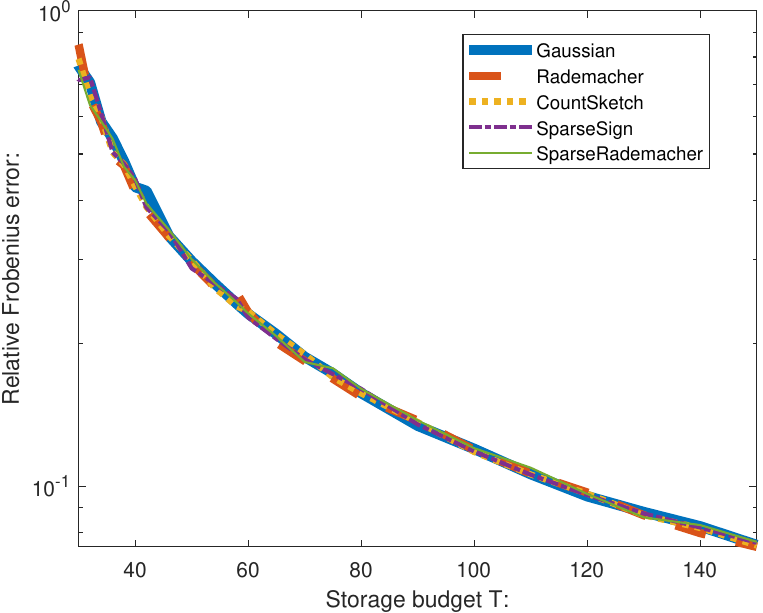}
        \caption{Low Rank with Low Noise}
    \end{subfigure}%
    \begin{subfigure}{0.3\textwidth}
        \centering
        \includegraphics[width=\linewidth]{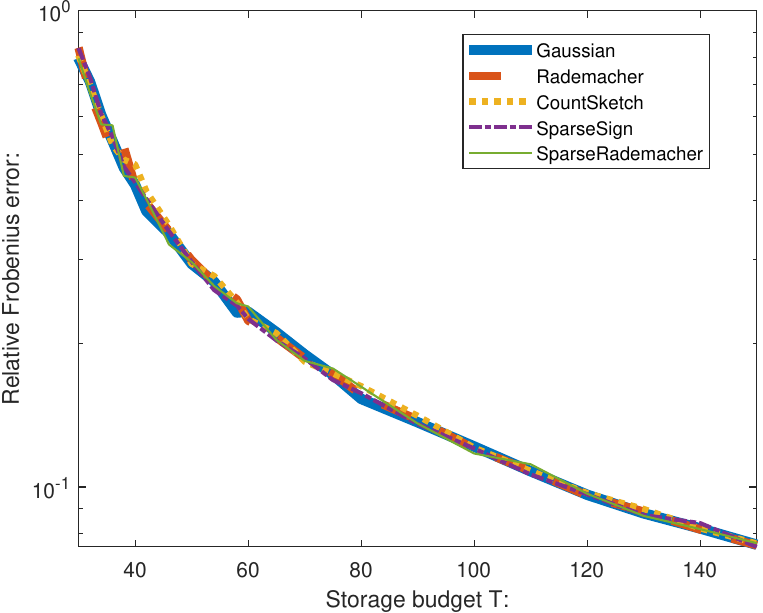}
        \caption{Low Rank with Medium Noise}
    \end{subfigure}%
    \begin{subfigure}{0.3\textwidth}
        \centering
        \includegraphics[width=\linewidth]{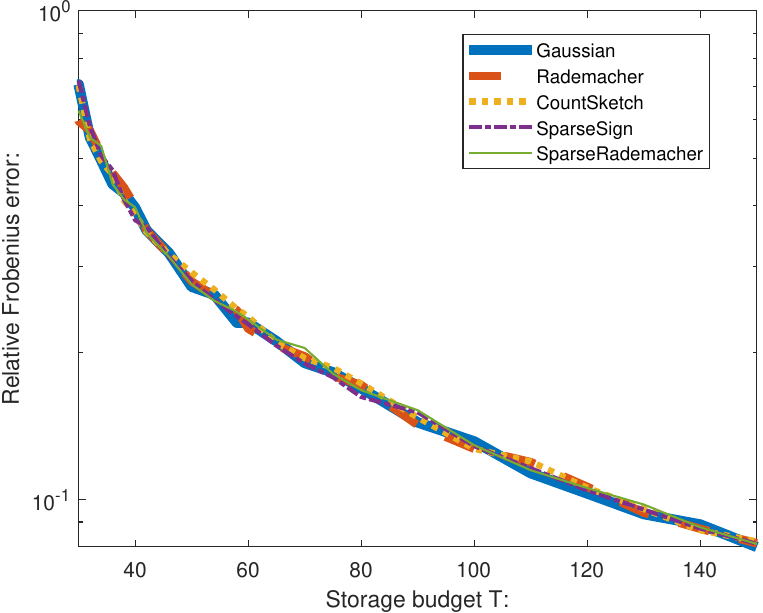}
        \caption{Low Rank with High Noise}
    \end{subfigure}

    \vspace{0.1cm} 

    \begin{subfigure}{0.3\textwidth}
        \centering
        \includegraphics[width=\linewidth]{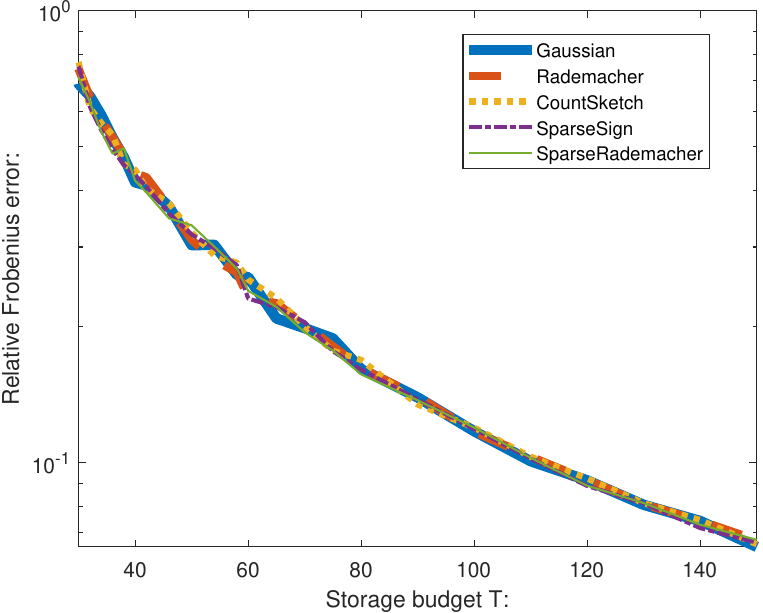}
        \caption{Slow polynomial decay}
    \end{subfigure}%
    \begin{subfigure}{0.3\textwidth}
        \centering
        \includegraphics[width=\linewidth]{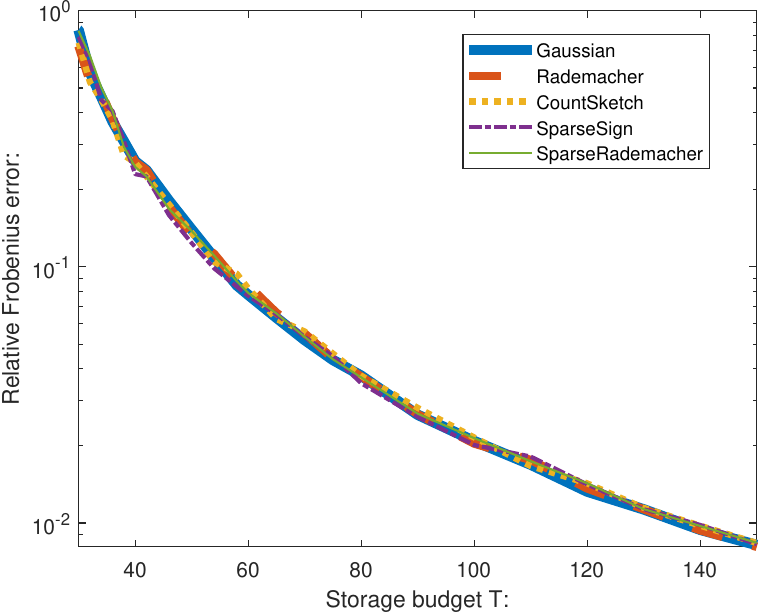}
        \caption{Medium polynomial decay}
    \end{subfigure}%
    \begin{subfigure}{0.3\textwidth}
        \centering
        \includegraphics[width=\linewidth]{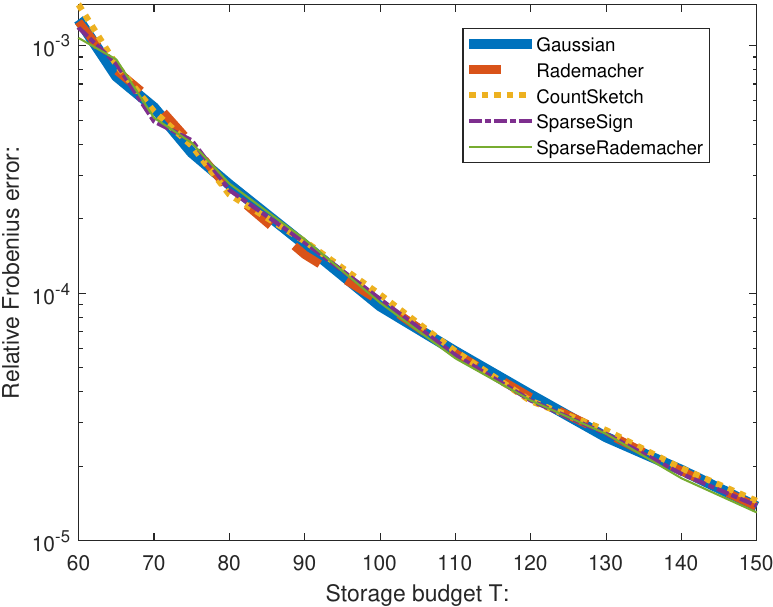}
        \caption{Fast polynomial decay}
    \end{subfigure}

    \vspace{0.1cm} 

    \begin{subfigure}{0.3\textwidth}
        \centering
        \includegraphics[width=\linewidth]{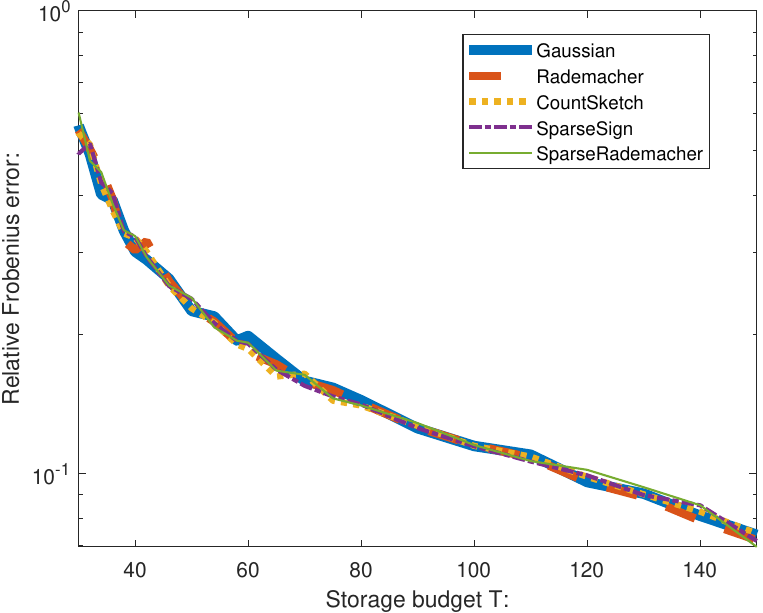}
        \caption{Slow exponentially decay}
    \end{subfigure}%
    \begin{subfigure}{0.3\textwidth}
        \centering
        \includegraphics[width=\linewidth]{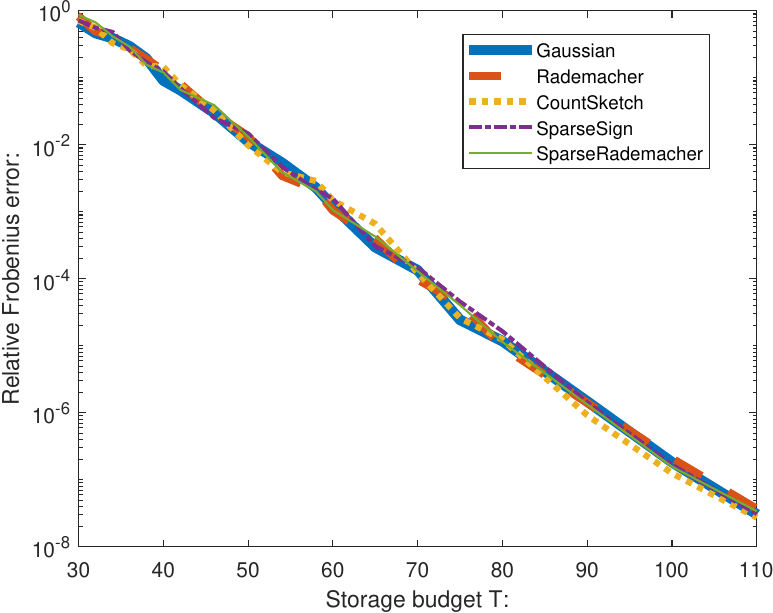}
        \caption{Medium exponentially decay}
    \end{subfigure}%
    \begin{subfigure}{0.3\textwidth}
        \centering
        \includegraphics[width=\linewidth]{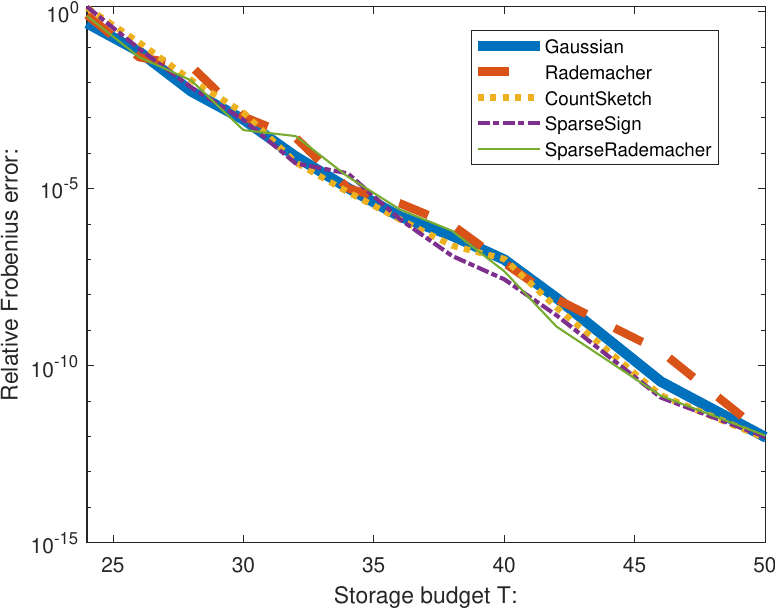}
        \caption{Fast exponentially decay}
    \end{subfigure}

    \caption{Figures of Synthetic data. X-axis means the $T$ represented the storage budget. Y-axis means the relative Frobenius error $S_F$. All results are averaged by 20 independent repeated experiments.}\label{SMfig:testMatrix_Synectic_data}
\end{figure}

\section{Insensitivity of the SPI to Test Matrices}\label{SMsec:insensitivity_test_matrix}
Our parameter guidance is based on the assumption that the test matrices are Gaussian matrices. However, in practice, we can use other test matrices such as countsketches and sparse Rademacher matrices, which costs little extra storage to hold in memory. In Section \ref{sec:numer_exp} of the main manuscript we have used sparse Rademacher test matrices. Here we use a numerical example to show that   TYUC17-SPI is insensitive to the test matrices. 
\begin{example} 
    In this example, we expand the experiment from numerical Example \ref{eg:synthetic_data}. 
    We used the same synthetic data as in Section \ref{sec:synthetic_data_def}  but with different test matrices including Gaussian, CountSketch, sparse sign \cite[Sect. 9.2]{martinsson2020RandomizedNumerical}, and sparse Rademacher matrices ($99\%$ sparsity). To be specific, we only show the case of $q=1$ in SPI.
\end{example}

Figure \ref{SMfig:testMatrix_Synectic_data} shows that the results are almost identical for different test matrices except for perturbation by randomness when data matrix has more flatten spectrum decay. When the decay is more rapid, Gaussian embedding is slightly more stable. This observation allows us to use sparse   matrices as test matrices in TYUC17-SPI in most cases, which costs little storage while succeeds the theoretical advantages for Gaussian matrices, as those obtained in Section \ref{sec:analysis} and \ref{sec:error_anal_q>1}. As a result, we can omit the storage cost of the test matrices   in the derivation of the parameter guidance. 

\section{Numerical Experiments on JL Property of SparseRademacher Matrices}\label{SMsec:JL_property_SparseRademacher} 
In this section, we provide numerical evidence to show that SparseRademacher matrices may satisfy the Johnson-Lindenstrauss (JL) property in high-dimensional spaces in practice. We conduct experiments to demonstrate that SparseRademacher matrices can effectively preserve the distances between points in high-dimensional space when projected into a lower-dimensional space.
We test the JL property by checking the subspace embedding property \cite[Prop. 5.2]{kireeva2024RandomizedMatrix}:
    \begin{proposition}[c.f. Prop. 5.2 of \cite{kireeva2024RandomizedMatrix}]
Suppose that   $\qmat{U} \in \mathbb{R}^{n \times d}$ is a matrix with orthonormal columns. For a matrix $\bdPhi \in \mathbb{R}^{s \times n}$, the subspace embedding property   is equivalent with the condition
\[
1 - \varepsilon \;\leq\; \sigma_{\min}(\bdPhi \qmat{U}) \;\leq\; \sigma_{\max}(\bdPhi \qmat{U}) \;\leq\; 1 + \varepsilon.
\]
\end{proposition}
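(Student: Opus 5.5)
The plan is to reduce the statement to the variational characterization of the extreme singular values of $\bdPhi\qmat{U}\in\bbR^{s\times d}$. I take the subspace embedding property (with distortion $\varepsilon$) to mean that for every $\qmat{x}\in\rangemat{\qmat{U}}$,
\[
(1-\varepsilon)\|\qmat{x}\|\leq \|\bdPhi\qmat{x}\|\leq (1+\varepsilon)\|\qmat{x}\|.
\]
I interpret $\sigma_{\min}(\bdPhi\qmat{U})$ as the $d$-th singular value, namely $\min_{\|\qmat{y}\|=1}\|\bdPhi\qmat{U}\qmat{y}\|$. This quantity is zero whenever $s<d$, and in that case both sides of the equivalence fail as soon as $\varepsilon<1$.

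The first step is a change of variables. Every $\qmat{x}\in\rangemat{\qmat{U}}$ can be written uniquely as $\qmat{x}=\qmat{U}\qmat{y}$ with $\qmat{y}\in\bbR^d$. Since $\qmat{U}^T\qmat{U}=\qmat{I}_d$, we have $\|\qmat{x}\|=\|\qmat{y}\|$, and the map $\qmat{y}\mapsto\qmat{U}\qmat{y}$ is a bijection from $\bbR^d$ onto the range. The embedding condition is therefore equivalent to
\[
(1-\varepsilon)\|\qmat{y}\|\leq \|\bdPhi\qmat{U}\qmat{y}\|\leq(1+\varepsilon)\|\qmat{y}\| \quad \text{for all } \qmat{y}\in\bbR^d.
\]
By homogeneity it suffices to check this on unit vectors $\|\qmat{y}\|=1$.

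The second step invokes the Courant--Fischer (Rayleigh quotient) characterization applied to $\qmat{M}=\bdPhi\qmat{U}$:
\[
\sigma_{\max}(\qmat{M})=\max_{\|\qmat{y}\|=1}\|\qmat{M}\qmat{y}\|,\qquad \sigma_{\min}(\qmat{M})=\min_{\|\qmat{y}\|=1}\|\qmat{M}\qmat{y}\|.
\]
Both extrema are attained at the corresponding right singular vectors, which can be read off directly from an SVD of $\qmat{M}$.

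The two directions then follow. If the singular values lie in $[1-\varepsilon,1+\varepsilon]$, every unit $\qmat{y}$ satisfies the two-sided bound, and hence so does every $\qmat{x}$ in the range. Conversely, suppose the embedding bound holds. Evaluating it at the right singular vectors attaining the max and the min gives $\sigma_{\max}\leq 1+\varepsilon$ and $\sigma_{\min}\geq 1-\varepsilon$. Together with $\sigma_{\min}\leq\sigma_{\max}$, this is exactly the stated chain of inequalities.

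There is no real obstacle here. The only point needing care is the convention for $\sigma_{\min}$ when $\bdPhi\qmat{U}$ is not square: it must be the $d$-th singular value, counted as zero when $s<d$, so that the min-characterization holds over all of $\bbR^d$.
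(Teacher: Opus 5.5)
Your proof is correct, and it is the standard one. The paper does not prove this proposition; it cites Kireeva--Tropp. Your argument, the isometric change of variables $\qmat{x}=\qmat{U}\qmat{y}$ with $\|\qmat{x}\|=\|\qmat{y}\|$ combined with $\sigma_{\max}(\bdPhi\qmat{U})=\max_{\|\qmat{y}\|=1}\|\bdPhi\qmat{U}\qmat{y}\|$ and $\sigma_{\min}(\bdPhi\qmat{U})=\min_{\|\qmat{y}\|=1}\|\bdPhi\qmat{U}\qmat{y}\|$, under the unsquared definition of distortion you adopt, is exactly the justification behind that citation. Your caveat that $\sigma_{\min}$ must mean the $d$-th singular value (zero when $s<d$) is the right one to flag, and it does not arise in the paper's experiment, where $s=100\geq d=50$.
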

\begin{example}
 We set $n=2000$, $d=50$ and $s=100$. The sparsity level $\xi$ of sparse test matrices is set from $1e-4$ to $1$ (including the $1\%$ sparsity used in the main manuscript). All experiments are averaged by $1000$ independent repeated experiments. We compare two types of sparse embeddings: (1) SparseRademacher matrix containing $nd\times \xi$ nonzero entries, the locations of nonzero entries are randomly selected, and the values of nonzero entries are independently set to $+1$ or $-1$ with probability $0.5$. (2) SparseIID matrix \cite{troppComparisonTheoremsMinimum2025}, where each entry is independently set to $+1$ with probability $\xi/2$, $-1$ with probability $\xi/2$, and $0$ with probability $1-\xi$. We also include the Gaussian embedding as a benchmark, where each entry is independently drawn from the standard normal distribution. We display the condition number $\kappa(\bdPhi \qmat{U}) = \sigma_{\max}(\bdPhi \qmat{U})/\sigma_{\min}(\bdPhi \qmat{U})$ of $\bdPhi \qmat{U }$ in Figure \ref{SMfig:JL_property_SparseRademacher}.
\end{example}
\begin{figure}
    \centering
    \captionsetup{font=footnotesize}
     \begin{subfigure}[c]{0.49\textwidth}
    \centering
    \adjincludegraphics[width=\linewidth,valign=m]{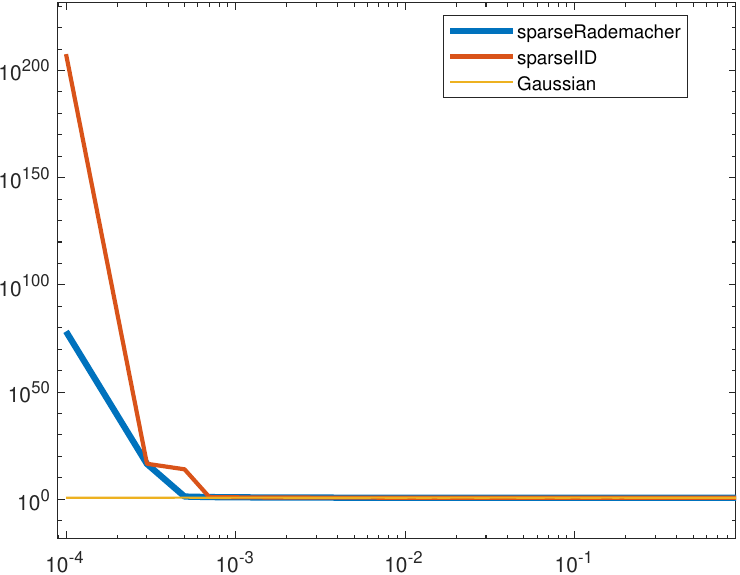} 
  \end{subfigure}\hfill
  \begin{subfigure}[c]{0.49\textwidth}
    \centering
    \adjincludegraphics[width=\linewidth,valign=m]{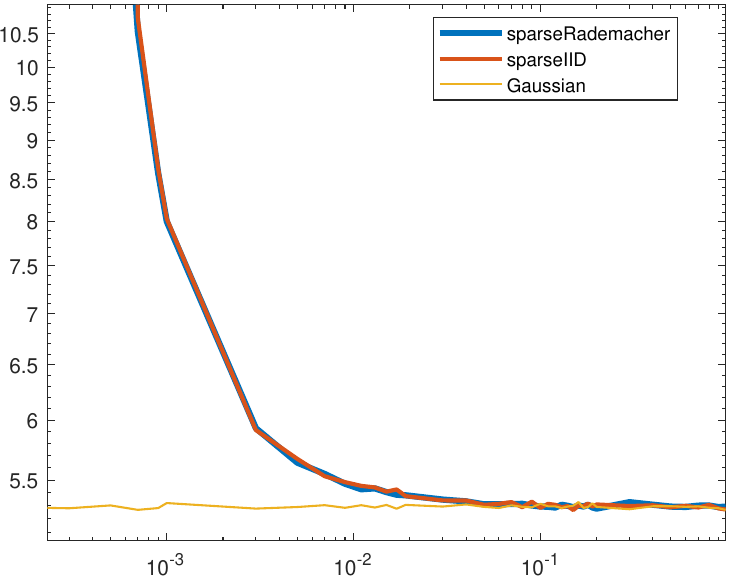}
  \end{subfigure}
    \caption{Condition number $\kappa(\bdPhi \qmat{U})$ of $\bdPhi \qmat{U}$ with respect to the sparsity level of SparseRademacher matrices. The right subfigure zooms out the left one in the interval $[10^{-3},1]$.}\label{SMfig:JL_property_SparseRademacher}
\end{figure}

From   Figure \ref{SMfig:JL_property_SparseRademacher}, we can observe three phenomena:

(1) The subspace embedding property holds for sparseRademacher   when the sparsity level is larger than $1e-3$. When the sparsity level decrease to less than $1e-3$, the condition numbers corresponding to the two test sparse embeddings increase significantly, especially for the sparseIID test matrix.

(2) When the sparsity level is larger than $1e-3$, the condition numbers corresponding to the SparseRademacher embedding are very close to that of the sparseIID embedding, while the latter is proved to satisfy the JL property \cite{troppComparisonTheoremsMinimum2025}.  

(3) When the sparsity level is larger than $1e-2$, the condition numbers corresponding to the two sparse embeddings are relatively stable and close to that of the Gaussian embedding. 

Based on the above observations, we find that when sparsity level is not too small (say, $10^{-3}$), the SparseRademacher matrix  may exhibit JL property in high-dimensional spaces in practice.

\color{black}

\section{TYUC17 in Mixed-Precision and TYUC17-SPI Variant} \label{sec:performance_SPI_variant}

\begin{algorithm}
    \caption{TYUC17-mixed-precision; see Section \ref{sec:related_approaches}}
    \label{SMalg:TYUC17-mixed-precision}
    \begin{algorithmic}[1]
        \Require Single-precision sketch  and test matrix   \(\qmat{W} = \bdPsi\qmat{A}\in\bbR^{d\times n}\), \(\bdPsi\in\bbR^{d \times m}\), double sketch ${\qmat{Y}}=\qmat{A}{\bdOmega}\in\bbR^{m\times s}$
        \Ensure  Rank-\(r\) approximation  \( \qmat{U}\Sigma\qmat{V}^{T}\) with orthonormal \(\qmat{U} \in \mathbb{R}^{m \times r}\) and \(\qmat{V} \in \mathbb{R}^{n \times r}\) and diagonal matrix $\Sigma\in\bbR^{r\times r}$
        \State \([\qmat{Q}, \sim ] = \texttt{QR}(\qmat{Q}, 0)\)
        \State \(\qmat{B} = (\bdPsi \qmat{Q})\backslash  \qmat{W}\) \Comment{Single-precision}
        \State $\qmat{B} = \texttt{double}(\qmat{B}) \in\bbR^{s\times n}$\Comment{Double-precision for avoiding loss of orthogonality in SVD. The space of $\qmat{W}$ can be used for such a conversion if $d\geq 2s$; See Algorithm \ref{alg:single2double}.}
        \State $\qmat{U}\Sigma\qmat{V}^{T}=\texttt{SVD}(\qmat{B},r)$
        \State $\qmat{U} = \qmat{Q}\qmat{U}$
        \State \textbf{return} \((\qmat{U}, \Sigma, \qmat{V})\) 
    \end{algorithmic}
\end{algorithm}

At the end of Section \ref{sec:related_approaches} of the main manuscript, we have discussed that TYUC17  can also be implemented  in mixed-precision, e.g., $\qmat{W}$ in single-precision to allow a larger sketch while $\qmat{Y}$ still in double-precision to avoid loss of orthogonality. A detailed algorithm is depicted in Algorithm \ref{SMalg:TYUC17-mixed-precision}. 

A TYUC17-SPI variant was introduced in Section \ref{sec:reduce_storage_cost_q_gt_1}. The variant generates two sketches only: $\qmat{W}=\bdPsi\qmat{A}$ and  $\qmat{Z}=\qmat{A}\bdPhi \in\bbR^{m\times l}$. Then perform $\hat{\qmat{Y}}= \qmat{Z}(\qmat{Z}^{T}\qmat{Z})^q\tilde{\bdOmega}\in\bbR^{m\times s}$ to obtain the rangefinder matrix. Here $\bdPsi\in\bbR^{d\times m},\bdPhi\in\bbR^{m\times l},\tilde{\bdOmega}\in\bbR^{l\times s}$ are independent random test matrices.  $\qmat{W}$ and $\qmat{Z}$ are in single-precision. After the computation of $\hat{\qmat{Y}}$,  $ {\qmat{Z}}$ can be deleted, and its space can be used   for converting $\hat{\qmat{Y}}$ from single to double-precision. The remaining steps are the same as TYUC17-mixed-precision.

When $l\geq 2s$, the above conversion from single-precision to double-precision is possible. Without loss of generality assume that $\hat{\qmat{Y}}$ is stored in the first $s$ columns of the previous space of $\qmat{Z}$ and is in the column-major format. The extra space for converting is also of size at least $m\cdot s$, just the remainig space of $\qmat{Z}$.  The procedure is depicted in Algorithm \ref{alg:single2double}.  Note that {\color{black}this memory reuse is feasible in programming environments that allow for explicit memory management or fine-grained control over data allocation and deallocation.}

\begin{algorithm}[H]
    \caption{Convert single‐precision matrix to double-precision}
    \label{alg:single2double}
    \begin{algorithmic}[1]
    \Require 
      \Statex $\qmat{Y}_{\mathrm{single}}$: an \(m\times s\) matrix in single precision (column‐major)  
      \Statex $T$: extra space of size \(m\cdot s+2\)  
    \Ensure 
      \Statex $\qmat{Y}_{\mathrm{double}}$ contains the double‐precision copy of \(\qmat{Y}_{\mathrm{single}}\)
    \State virtually view $\qmat{Y}_{\mathrm{single}}$ and $T$  as a \((2m s+2)\times 1\) vector $\qmat{Z}$
    \For{$k \leftarrow m\cdot s $ \textbf{downto} $1$}
      \State $\qmat{Z}(2k+1:2k+2) \gets \texttt{double}(Y_{\mathrm{single}}(k))$
    \EndFor
    \State \Return $\qmat{Z}$
    \end{algorithmic}
    \end{algorithm}

    \begin{algorithm}
        \caption{TYUC17-SPI variant; see Section \ref{sec:reduce_storage_cost_q_gt_1}}
        \label{SMalg:TYUC17-SPI}
        \begin{algorithmic}[1]
            \Require Two single-precision sketches \( {\qmat{Z}}=\qmat{A} {\bdPhi}\in\bbR^{m\times l}\), \(\qmat{W} = \bdPsi\qmat{A}\in\bbR^{d\times n}\), single test matrix \(\bdPsi\in\bbR^{d \times m}\), double test matrix \(\tilde{\bdOmega}\in\bbR^{l\times s}\)
            \Ensure  Rank-\(r\) approximation  \( \qmat{U}\Sigma\qmat{V}^{T}\) with orthonormal \(\qmat{U} \in \mathbb{R}^{m \times r}\) and \(\qmat{V} \in \mathbb{R}^{n \times r}\) and diagonal matrix $\Sigma\in\bbR^{r\times r}$
            \State \(\qmat{M}_{\text{tmp}} =  \texttt{double}(\qmat{Z}^{T}\qmat{Z})   \in\bbR^{l\times l}\) \Comment{Only take  $l^2$ extra storage for conversion if $l$ is small}.
            \State $\hat{\qmat{Y}} = \qmat{Z}\qmat{M}_{\text{tmp}}^q\tilde{\bdOmega}\in\bbR^{m\times s}$  \Comment{Optional: re-orthonormalization on $\qmat{M}_{\text{tmp}}\bdtildeOmega $ }
            \State \([\qmat{Q}, \sim] = \texttt{QR}(\texttt{double}(\tilde{\qmat{Y}}), 0)\)  \Comment{Use the space of $\qmat{Z}$ to double $\hat{\qmat{Y}}$ to avoid loss of orthogonality in $\qmat{Q}$. The remaining steps are the same as TYUC17-mixed-precision (Algorithm \ref{SMalg:TYUC17-mixed-precision})}
            \State \(\qmat{B} = (\bdPsi \qmat{Q})\backslash  \qmat{W}\)
            \State \qmat{B} = \texttt{double}(\qmat{B}) \Comment{The same as that in Algorithm \ref{SMalg:TYUC17-mixed-precision}}
            \State $\qmat{U}\Sigma\qmat{V}^{T}=\texttt{SVD}(\qmat{B},r)$
            \State $\qmat{U}= \qmat{Q}\qmat{U}$
            \State \textbf{return} \((\qmat{U}, \Sigma, \qmat{V})\) 
        \end{algorithmic}
    \end{algorithm}


 

\begin{example}
    In this example, we compare TYUC17-mixed-precision  (Algorithm \ref{SMalg:TYUC17-mixed-precision})  and TYUC17-SPI variant in Section \ref{sec:reduce_storage_cost_q_gt_1} (Algorithm \ref{SMalg:TYUC17-SPI}) on synthetic data as in Section \ref{sec:synthetic_data_def} under the same storage budget. We present the oracle error only. 
\end{example}

\begin{figure}[htp]
    \centering
    \captionsetup{font=tiny}
    \begin{subfigure}{0.3\textwidth}
        \centering
        \includegraphics[width=\linewidth]{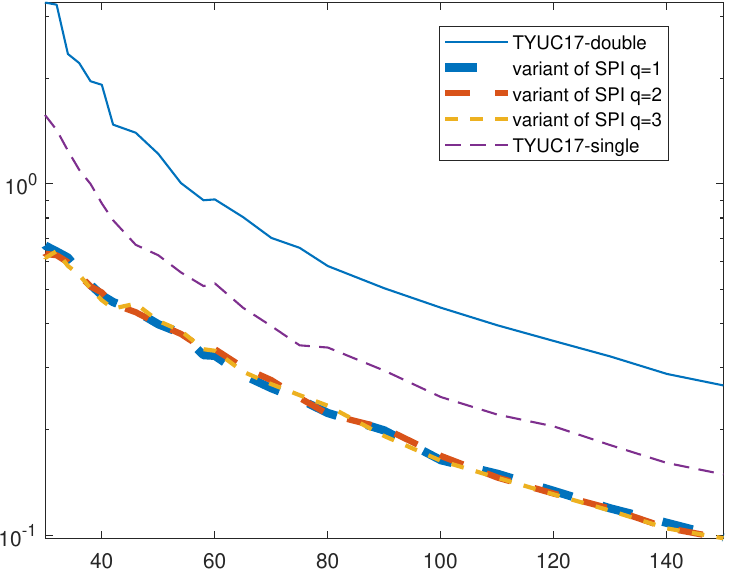}
        \caption{Low Rank with Low Noise}
    \end{subfigure}%
    \begin{subfigure}{0.3\textwidth}
        \centering
        \includegraphics[width=\linewidth]{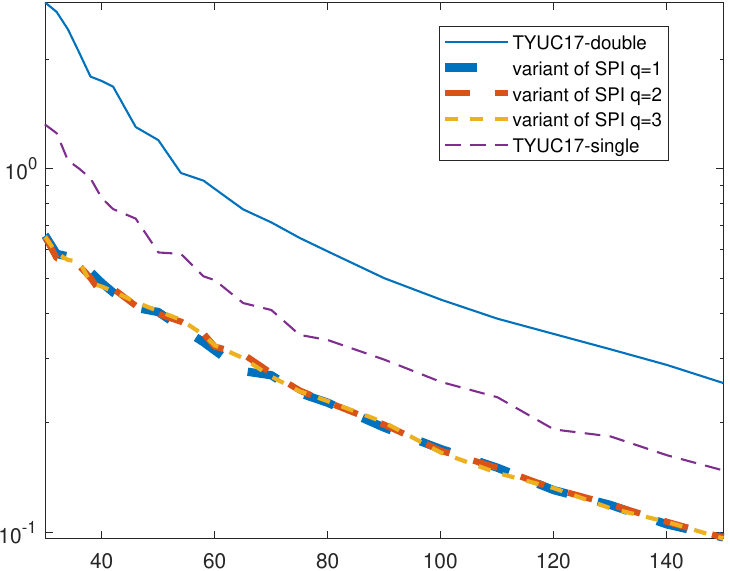}
        \caption{Low Rank with Medium Noise}
    \end{subfigure}%
    \begin{subfigure}{0.3\textwidth}
        \centering
        \includegraphics[width=\linewidth]{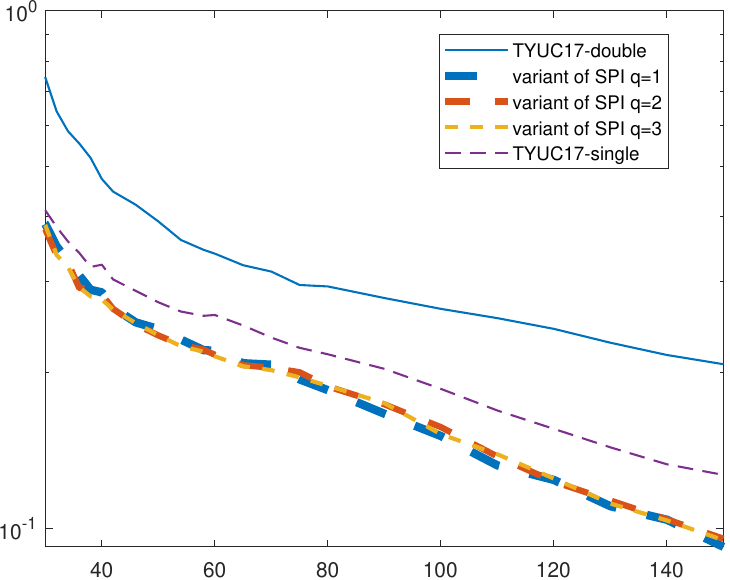}
        \caption{Low Rank with High Noise}
    \end{subfigure}

    \vspace{0.1cm} 

    \begin{subfigure}{0.3\textwidth}
        \centering
        \includegraphics[width=\linewidth]{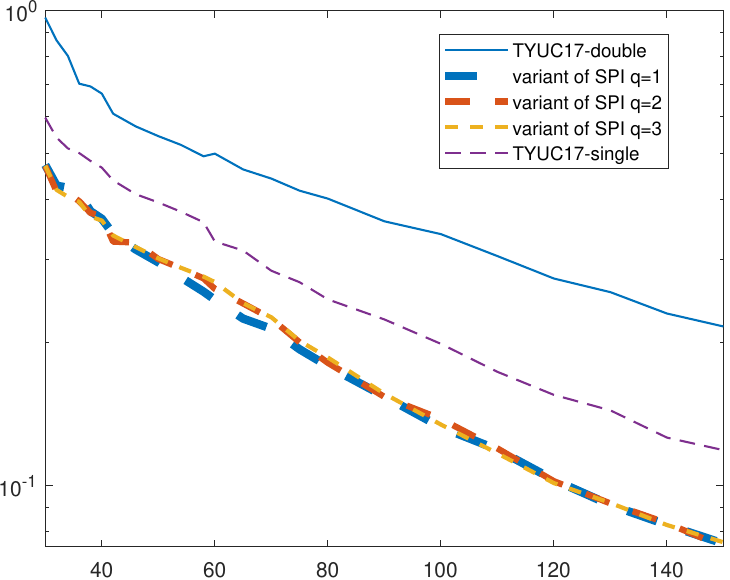}
        \caption{Slow polynomial decay}
    \end{subfigure}%
    \begin{subfigure}{0.3\textwidth}
        \centering
        \includegraphics[width=\linewidth]{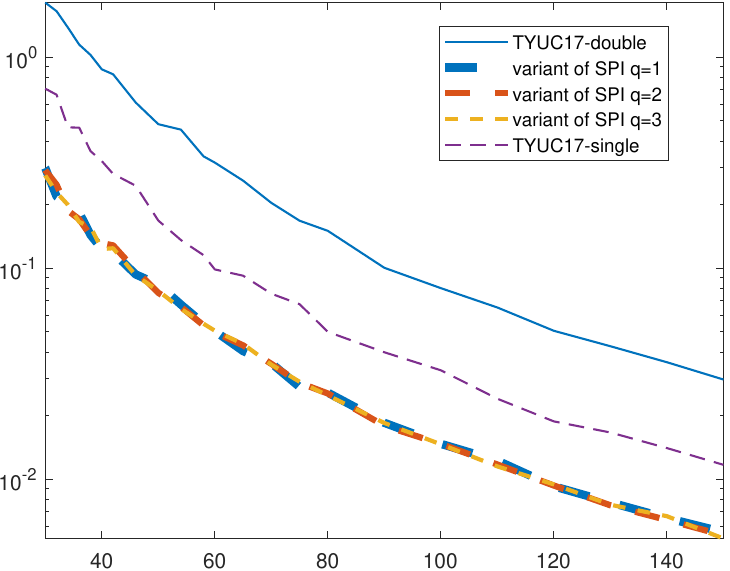}
        \caption{Medium polynomial decay}
    \end{subfigure}%
    \begin{subfigure}{0.3\textwidth}
        \centering
        \includegraphics[width=\linewidth]{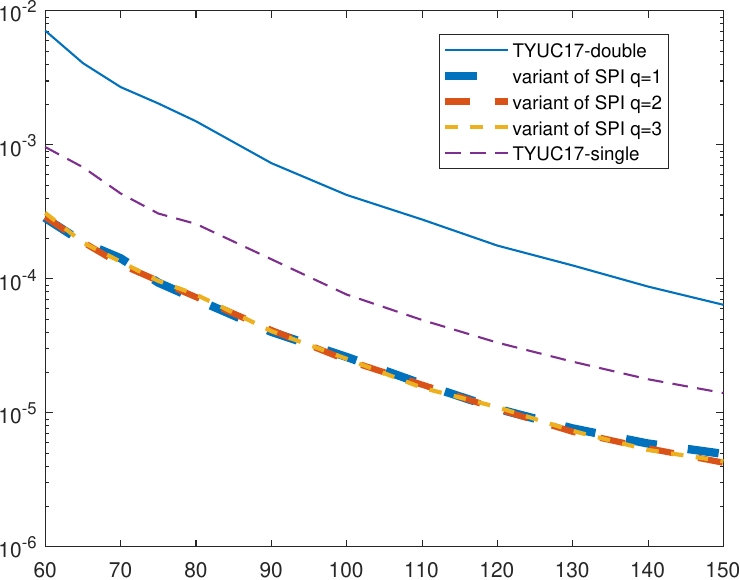}
        \caption{Fast polynomial decay}
    \end{subfigure}

    \vspace{0.1cm} 

    \begin{subfigure}{0.3\textwidth}
        \centering
        \includegraphics[width=\linewidth]{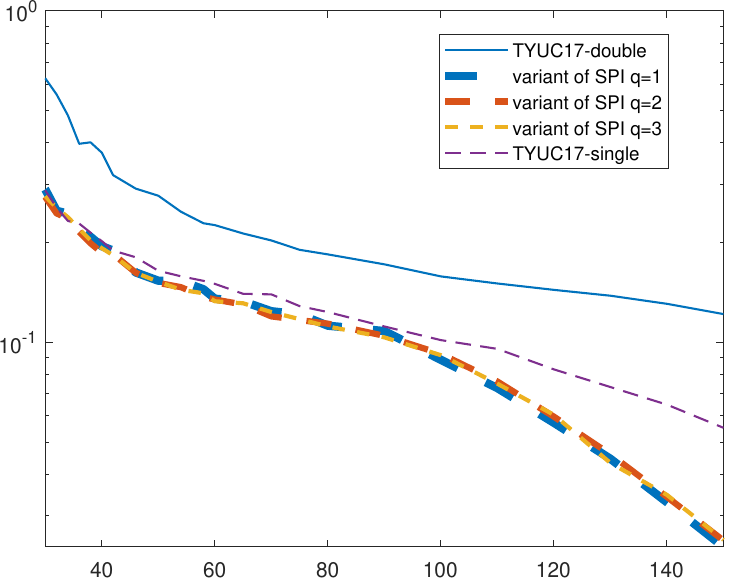}
        \caption{Slow exponentially decay}
    \end{subfigure}%
    \begin{subfigure}{0.3\textwidth}
        \centering
        \includegraphics[width=\linewidth]{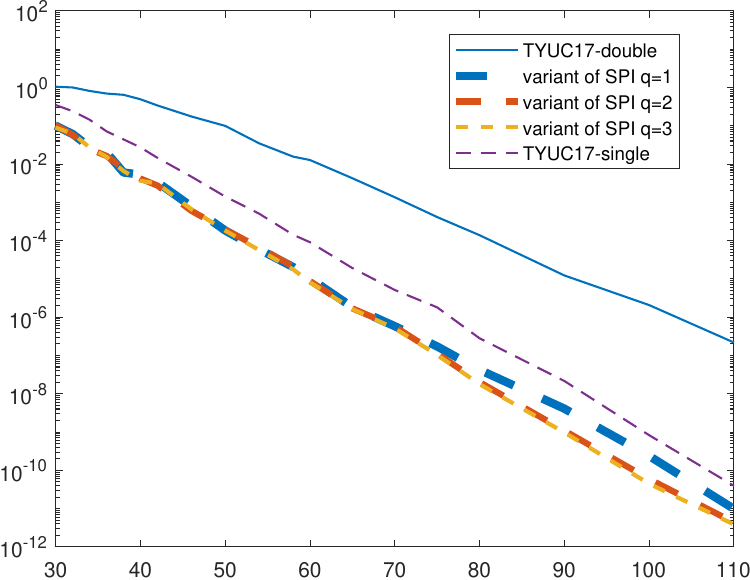}
        \caption{Medium exponentially decay}
    \end{subfigure}%
    \begin{subfigure}{0.3\textwidth}
        \centering
        \includegraphics[width=\linewidth]{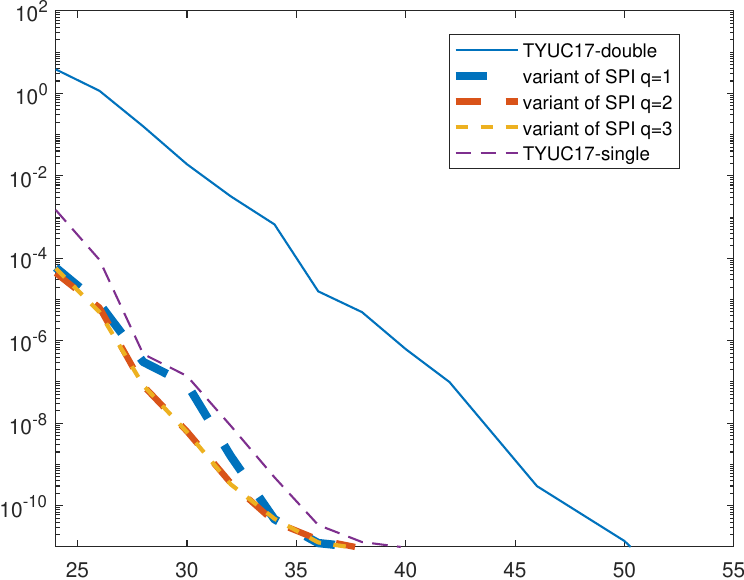}
        \caption{Fast exponentially decay}
    \end{subfigure}
    \caption{Figures of Synectic data. X-axis means the $T$ represented the storage budget. Y-axis means the relative Frobenius error $S_F$. All results are averaged by 20 independent repeated experiments.}
    \label{SMfig:Wsingle_Synectic_data}
\end{figure}

Figure \ref{SMfig:Wsingle_Synectic_data} shows the performance of the prototype TYUC17 (denoted as TYUC17-double), TYUC17-mixed-precision (denoted as TYUC17-single), and TYUC17-SPI variant (denoted as SPI variant). Here TYUC17-double acts as a baseline.  From the figure, we first observe that TYUC17-single outperforms TYUC17-double, in that TYUC17-single allows a larger  $\qmat{W}$, although it is in single-precision.  Next, we can also see that TYUC17-SPI variant still outperforms TYUC17-single, thanks to the  larger single-precision sketch $\qmat{Z}$ to retain more range information of the data matrix.   The  improvement is more significant when the data matrix has a slow to moderate spectrum decay.   The   exception is that  when the spectrum exhibits a fast exponential decay and a larger storage budget is available (in the bottom-right subfigure), both of TYUC17-single and TYUC17-SPI variant are outperformed by TYUC17-double. This   is due to that the former two reach  the precision limit of their mixed-precision strategy, where the round-off errors become  dominant.

\section{Probabilistic Spectral Norm Bound for TYUC17}

Similar to Lemma 5.8 of the main manuscript, when $d\geq s+4$, we have that conditioned on $E=\left\{\bdOmega |\normSpectral{\qmat{A}-P_{\qmat{A}\bdOmega}\qmat{A}}<e_1,\normF{\qmat
    A-P_{\qmat{A}\bdOmega}\qmat{A}}<e_2\right\}$, with failure probability at most $\!e^{-u^2\!/2}+2t^{-(d-s)}$, there holds 
\begin{align*}
\normSpectral{\qmat{A}-\qmat{Q}\qmat{B}}\leq \frac{e\sqrt{d}t}{d-s+1}e_2+\left(1+t\sqrt{\frac{3s}{d-s+1}} +u \frac{e\sqrt{d}t}{d-s+1}\right)e_1.
    \end{align*}
For any $\varrho\leq s-4$ and $t\geq 1, u>0$,  
with failure probability at most  \( 2t^{-(s-\varrho)} + e^{-u^2 / 2} \)  there holds
    \begin{align*}
      &  \| (I - P_{\qmat{A}\bdOmega}) \qmat{A} \|_F  \leq 
        \left( 1 + t \cdot \sqrt{\frac{3\varrho}{s-\varrho+1}} \right) 
        \tau_{\varrho+1}(\qmat{A})
        + u t \cdot \frac{e \sqrt{s}}{s-\varrho+1} \cdot \sigma_{\varrho+1}\left(\qmat{A}\right);\\
     &   \| (I \!-\! P_{\qmat{A}\bdOmega})\qmat{A} \|  \!\leq\! \left(\!\! 1 + t \cdot \sqrt{\frac{3\varrho}{s-\varrho + 1}}\!+\!ut \cdot \frac{e \sqrt{s}}{s-\varrho+1}  \right)\! \sigma_{\varrho+1}\left(\qmat{A}\right) \!+\! t \!\cdot\! \frac{e \sqrt{s}}{s-\varrho+1}\tau_{\varrho+1}\left(\qmat{A}\right).
    \end{align*}
    Thus with failure probability at most  $e^{-u^2/2}+2t^{-(d-s)} + 4t^{-(s-\varrho)} + 2e^{-u^2 / 2}$, $\bignorm{\qmat{A}-\qmat{Q}\qmat{B}}$ with $\qmat{Q}$ and $\qmat{B}$ generated by TYUC17  is upper bounded by
    \begin{align*}
       & \frac{e\sqrt{d}t}{d-s+1} \cdot \bigzhongkuohao{\left( 1 + t \cdot \sqrt{\frac{3\varrho}{s-\varrho+1}} \right) 
        \tau_{\varrho+1}(\qmat{A})
        + u t \cdot \frac{e \sqrt{s}}{s-\varrho+1} \cdot \sigma_{\varrho+1}\left(\qmat{A}\right)  }\\
      &  ~~ +\left(1+t\sqrt{\frac{3s}{d-s+1}} +u \frac{e\sqrt{d}t}{d-s+1}\right)  \left[  \left(\! 1 + t   \sqrt{\frac{3\varrho}{s-\varrho + 1}}+ut  \frac{e \sqrt{s}}{s-\varrho+1}  \right) \sigma_{\varrho+1}\left(\qmat{A}\right) \right.\\
      &~~~~~~~~~~~~~~~~~~~~~~~~~~~~~~~~~~~~~~~~~~~~~~~~~~~~~~~~ \left. + t \cdot \frac{e \sqrt{s}}{s-\varrho+1}\tau_{\varrho+1}\left(\qmat{A}\right) \right].
    \end{align*}

\section{Proof of Oblique Projection Error Decomposition}\label{SMsec:proof_oblique_projection_error_decomposition}

We provide a   proof for the oblique projection error decomposition stated in Sect. \ref{sect:metrics} of the main manuscript:
\begin{align*}
     \normP{\qmat{A}-\qmat{U}\Sigma\qmat{V}^{T}}=\|(\qmat{A}-\qmat{U}\qmat{U}^{T}\qmat{A}) + (\qmat{U}\tilde{\qmat{U}}^T(\qmat{Q}^T\qmat{A}-(\bdPsi\qmat{Q})^{\dagger}\bdPsi\qmat{A}))\|_p.
\end{align*}  

   
\begin{proposition}
    For any column orthogonal matrix \(\qmat{Q} \in \mathbb{R}^{m \times s}\) and matrix $\qmat{A}\in \mathbb{R}^{m \times n}$, let $\tilde{\qmat{U}},\Sigma,\qmat{V}$ be the rank-r truncated SVD of $\qmat{B}=(\bdPsi\qmat{Q})^\dagger \bdPsi\qmat{A}$, $\qmat{U}=\qmat{Q}\tilde{\qmat{U}}$. Then the following holds:
     \begin{align*}
        \bignorm{\qmat{A}-\qmat{U}\Sigma\qmat{V}^{T}}=\bignorm{(\qmat{A} - \qmat{U}\qmat{U}^T\qmat{A})+\qmat{U}\tilde{\qmat{U}}^T(\qmat{Q}^T\qmat{A}-(\bdPsi\qmat{Q})^\dagger \bdPsi\qmat{A})}.
    \end{align*}
\end{proposition}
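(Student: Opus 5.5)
The identity is purely algebraic: I will show the two matrices inside the norms coincide. Then the equality holds for any unitarily invariant norm, in particular for $\normP{\cdot}$ as used in Sect. \ref{sect:metrics}.

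The first step is to write $\qmat{U}\Sigma\qmat{V}^{T}$ in terms of $\qmat{B}$. Since $\tilde{\qmat{U}}\Sigma\qmat{V}^{T}=\rankrTrun{B}$ is the rank-$r$ truncated SVD of $\qmat{B}$ and $\tilde{\qmat{U}}$ has orthonormal columns spanning the leading left singular subspace of $\qmat{B}$, we have
\[
\tilde{\qmat{U}}^{T}\qmat{B}=\tilde{\qmat{U}}^{T}\rankrTrun{B}=\Sigma\qmat{V}^{T}.
\]
The second equality holds because the discarded part $\qmat{B}-\rankrTrun{B}$ has column space orthogonal to $\rangemat{\tilde{\qmat{U}}}$. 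Hence
\[
\qmat{U}\Sigma\qmat{V}^{T}=\qmat{Q}\tilde{\qmat{U}}\tilde{\qmat{U}}^{T}\qmat{B}=\qmat{U}\tilde{\qmat{U}}^{T}\qmat{B}.
\]

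The second step is to add and subtract $\qmat{U}\qmat{U}^{T}\qmat{A}$:
\[
\qmat{A}-\qmat{U}\Sigma\qmat{V}^{T}=(\qmat{A}-\qmat{U}\qmat{U}^{T}\qmat{A})+\bigl(\qmat{U}\qmat{U}^{T}\qmat{A}-\qmat{U}\tilde{\qmat{U}}^{T}\qmat{B}\bigr).
\]
Using $\qmat{U}^{T}=\tilde{\qmat{U}}^{T}\qmat{Q}^{T}$, the second bracket becomes $\qmat{U}\tilde{\qmat{U}}^{T}(\qmat{Q}^{T}\qmat{A}-\qmat{B})$. Substituting $\qmat{B}=(\bdPsi\qmat{Q})^{\dagger}\bdPsi\qmat{A}$ gives exactly the matrix on the right-hand side, so taking norms yields the claim.

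For the perpendicularity remark in the main text, I would also note that $\qmat{U}$ has orthonormal columns, being the product of $\qmat{Q}$ and $\tilde{\qmat{U}}$, both with orthonormal columns. It follows that $\qmat{U}^{T}(\qmat{A}-\qmat{U}\qmat{U}^{T}\qmat{A})=0$, so the two terms have mutually orthogonal column spaces.

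The only point needing care is the identity $\tilde{\qmat{U}}^{T}\qmat{B}=\Sigma\qmat{V}^{T}$ when $\qmat{B}$ has repeated singular values at index $r$. Here it suffices that $\tilde{\qmat{U}},\Sigma,\qmat{V}$ come from some full SVD of $\qmat{B}$ truncated to its first $r$ triplets; orthogonality of singular vectors then gives the identity regardless of ties.
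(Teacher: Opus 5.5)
Your proof is correct and follows the paper's argument exactly: use $\tilde{\qmat{U}}^{T}\qmat{B}=\Sigma\qmat{V}^{T}$, add and subtract $\qmat{U}\qmat{U}^{T}\qmat{A}$, and factor with $\qmat{U}^{T}=\tilde{\qmat{U}}^{T}\qmat{Q}^{T}$. Your added care about ties in the singular values and your orthogonality remark are both sound. Also, since the two matrices coincide, the identity holds for any norm, not only unitarily invariant ones.
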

\begin{proof} By the definitions of $\tilde{\qmat{U}},\Sigma,\qmat{V}$, and $\qmat{B}$,
    \[
    \tilde{\qmat{U}}^T(\bdPsi\qmat{Q})^\dagger \bdPsi\qmat{A} = \tilde{\qmat{U}}^T\qmat{B}=\Sigma\qmat{V}^T.
    \]
    and we   obtain that
    \begin{align*}
        \bignorm{\qmat{A}-\qmat{U}\Sigma\qmat{V}^{T}} &= \bignorm{\qmat{A} - \qmat{U}\qmat{U}^T\qmat{A}+\qmat{U}\qmat{U}^T\qmat{A}-\qmat{U}\tilde{\qmat{U}}^T(\bdPsi\qmat{Q})^\dagger \bdPsi\qmat{A}}\\
        &=\bignorm{(\qmat{A} - \qmat{U}\qmat{U}^T\qmat{A})+\qmat{U}\tilde{\qmat{U}}^T(\qmat{Q}^T\qmat{A}-(\bdPsi\qmat{Q})^\dagger \bdPsi\qmat{A})}.
    \end{align*}
\end{proof}

\section{Application of   SPI to the Streaming Algorithm  TYUC19} \label{sec:TYUC19_SPI_main}
In this section, we demonstrate that SPI can also be applied to other randomized low-rank approximation algorithms. Even without hardware specifications, we can still apply SPI to algorithms under memory-limited scenarios. For example, the streaming algorithm TYUC19 \cite{troppStreamingLowRankMatrix2019}, proposed by Tropp et al. We first recall the algorithm in Section \ref{sec:tyuc19}, then incorporate SPI into TYUC19 in Section \ref{sec:TYUC19_SPI}, and perform numerical comparisons. In Section \ref{sec:deviation_bound_TYUC19_SPI}, we present the proof idea for establishing the deviation bound for TYUC19-SPI.

\subsection{TYUC19 algorithm}\label{sec:tyuc19}

We briefly recall the TYUC19 algorithm here. It uses two sketches: \begin{align*} 
    \qmat{Y}=\qmat{A}\bdOmega\in\bbR^{m\times s}, \quad \qmat{X}=\bdGamma\qmat{A}\in\bbR^{s\times n} 
\end{align*} 
to capture the projection space, and then applies two-sided sketching: 
\begin{align*} 
    \qmat{K}=\bdPhi \qmat{A}\bdPsi^{T}\in\bbR^{d\times d}, 
\end{align*} 
to contains the fresh information, where $\bdOmega\in\bbR^{n\times s}, \bdGamma\in\bbR^{s\times m}, \bdPhi\in\bbR^{d\times m}$, and $\bdPsi\in\bbR^{d\times n}$ are independent random test matrices.

The range and co-range of $\qmat{A}$ are constructed by orthonormalizing the sketches $\qmat{Y}$ and $\qmat{X}$: 
\begin{align*}
     \qmat{Q} = \texttt{QR}(\qmat{Y}) \in\bbR^{m\times s}, \quad \qmat{P} = \texttt{QR}(\qmat{X}^{T}) \in \bbR^{n\times s}. 
\end{align*}
Then, an approximation of the ``core matrix'' is obtained by solving least squares problems: \begin{align*} \qmat{C} = ((\bdPhi\qmat{Q})\backslash \qmat{K}) / (\bdPsi\qmat{P})^{T} \in \bbR^{s\times s}, \end{align*} which returns the approximate triple $(\qmat{Q}, \qmat{C}, \qmat{P})$.

Further steps, such as truncated SVD, can be applied to $\qmat{C}$ to obtain the final result, similar to TYUC17. The algorithm is   shown in Algorithm \ref{SMalg:initial_approximation}.


\color{black}
\begin{algorithm}
    \caption{TYUC19 \cite{troppStreamingLowRankMatrix2019}}
    \label{SMalg:initial_approximation}
    \begin{algorithmic}[1]
        \Require Sketches \(\qmat{Y}\in\bbR^{m\times s}\), \(\qmat{X}\in\bbR^{s\times n}\), \(\qmat{K}\in\bbR^{d\times d}\), \(\bdPhi\in\bbR^{d\times m}\),\(\bdPsi\in\bbR^{n\times d}\).
    \Ensure Rank-\(r\) approximation   in the form \(  \qmat{U}\Sigma\qmat{V}^{T}\) with orthonormal \(\qmat{U} \in \mathbb{R}^{m \times r}\) and \(\qmat{V} \in \mathbb{R}^{n \times r}\) and diagonal \(\Sigma \in \mathbb{R}^{r \times r}\)
        \State \([\qmat{Q}, \sim] = \texttt{QR}(\qmat{Y}, 0)\) 
        \State \([\qmat{P}, \sim] = \texttt{QR}(\qmat{X}^{T}, 0)\)
        \State \(\qmat{C} = ((\bdPhi \qmat{Q})^{T} \backslash \qmat{K}) / ((\bdPsi \qmat{P})^{T})\)
        \State $[\qmat{U},\Sigma,\qmat{V}]=\texttt{SVD}(\qmat{C},r)$
        \State $\qmat{U} = \qmat{Q}\qmat{U}$
        \State $\qmat{V} = \qmat{P}\qmat{V}$
        \State \textbf{return} \((\qmat{U}, \Sigma, \qmat{V})\) 
    \end{algorithmic}
\end{algorithm}


\subsection{TYUC19-SPI}\label{sec:TYUC19_SPI}
In the multi-level caches and distributed systems, SPI can be incorporated into TYUC19, as that for TYUC17 discussed in Section \ref{sec:same_storage_as_PSA}. The details is similar and we omit them. Here we mainly focus on the mixed-precision strategy, and 
we will integrate SPI into TYUC19 in a similar way as the TYUC17-SPI variant in Section \ref{sec:reduce_storage_cost_q_gt_1} (Algorithm \ref{SMalg:TYUC17-SPI}). To be specific, we sketch two   sketches
$$\qmat{Z}=\qmat{A}\bdOmega\in\bbR^{m\times l} ~{\rm and}~ \qmat{W} = \bdGamma\qmat{A}\in\bbR^{l\times n}$$ 
 in single-precision, and sketch   
 \begin{align*} 
    \qmat{K}=\bdPhi \qmat{A}\bdPsi^{T}\in\bbR^{d\times d}, 
\end{align*} 
in double-precision as that in TYUC19. Then we generate two small random test matrices $\tilde{\bdOmega}\in\bbR^{l\times s}$ and $\tilde{\bdGamma}\in\bbR^{s\times l}$, and apply the SPI to get $\hat{\qmat{Y}}\in\bbR^{m\times s}$ and $\hat{\qmat{X}}\in\bbR^{s\times n}$ as follows:
\begin{align*}
    \hat{\qmat{Y}} = \qmat{Z}(\qmat{Z}^{T}\qmat{Z})^q\tilde{\bdOmega}\in\bbR^{m\times s}, \quad \hat{\qmat{X}} = \tilde{\bdGamma} (\qmat{W} \qmat{W}^{T})^q\qmat{W}\in\bbR^{s\times n}.
\end{align*}
As TYUC17-SPI variant, $\hat{\qmat{Y}}$ can be stored in the first $s$ columns of   $\qmat{Z}$, and $\hat{\qmat{X}}$ can be stored in the first $s$ rows of the previous space of $\qmat{W}$. When $l\geq 2s$, we can use the space of $\qmat{Z}$ and $\qmat{W}$ to convert $\hat{\qmat{Y}}$ and $\hat{\qmat{X}}$ from single-precision to double-precision, using Algorithm \ref{alg:single2double}. We then orthogonal $\hat{\qmat{Y}}$ and $\hat{\qmat{X}}^{T}$ to obtain $\qmat{Q}$ and $\qmat{P}$.  The remaining steps are the same as TYUC19. We depict the algorithm in Algorithm \ref{SMalg:TYUC19-SPI}.

\begin{algorithm}
    \caption{TYUC19-SPI}
    \label{SMalg:TYUC19-SPI}
    \begin{algorithmic}[1]
        \Require Two single-precision sketches \( {\qmat{Z}}=\qmat{A} {\bdOmega}\in\bbR^{m\times l}\), \( {\qmat{W}} = ( {\bdGamma}\qmat{A})^{T}\in\bbR^{l\times n}\), double-precision sketch \(\qmat{K}=  {\bdPhi}\qmat{A} {\qmat{\bdPsi}}\in\bbR^{d\times d}\), test matrices \(\bdPhi\in\bbR^{d\times m}\),\(\bdPsi\in\bbR^{n\times d}\), $\tilde{\bdOmega}\in\bbR^{l\times s},\tilde{\bdGamma}\in\bbR^{s\times l}$.
        \Ensure  Rank-\(r\) approximation   in the form \(  \qmat{U}\Sigma\qmat{V}^{T}\) with orthonormal \(\qmat{U} \in \mathbb{R}^{m \times r}\) and \(\qmat{V} \in \mathbb{R}^{n \times r}\) and diagonal \(\Sigma \in \mathbb{R}^{r \times r}\)
        \State \(\qmat{M}_{\text{tmp}} =  \texttt{double}(\qmat{Z}^{T}\qmat{Z})   \in\bbR^{l\times l}\) \Comment{Only take  $l^2$ extra storage for conversion if $l$ is small}.
        \State $\hat{\qmat{Y}} = \qmat{Z}\qmat{M}_{\text{tmp}}^q\tilde{\bdOmega}\in\bbR^{m\times s}$  \Comment{Optional: re-orthonormalization on $\qmat{M}_{\text{tmp}}\bdtildeOmega $ }
        \State \([\qmat{Q}, \sim] = \texttt{QR}(\texttt{double}(\hat{\qmat{Y}}), 0)\)  \Comment{Double $\hat{\qmat{Y}}$ to avoid loss of orthogonality in $\qmat{Q}$}
        \State \(\qmat{N}_{\text{tmp}} =  \texttt{double}(\qmat{W}\qmat{W}^{T})   \in\bbR^{l\times l}\)  
        \State $\hat{\qmat{X}} = \tilde{\bdGamma} \qmat{N}_{\text{tmp}}^q\qmat{W}\in\bbR^{s\times n}$  \Comment{Optional: re-orthonormalization on $\qmat{N}_{\text{tmp}}\tilde{\bdGamma} $ }
        \State \([\qmat{P}, \sim] = \texttt{QR}(\texttt{double}(\hat{\qmat{X}}^{T}), 0)\)  \Comment{Double $\hat{\qmat{X}}^{T}$ to avoid loss of orthogonality in $\qmat{P}$} 
        \State \(\qmat{C} = ((\bdPhi \qmat{Q})^{T} \backslash \qmat{K}) / ((\bdPsi \qmat{P})^{T})\) \Comment{The remaining steps are the same as TYUC19 (Algorithm \ref{SMalg:initial_approximation})}
        \State $[\qmat{U},\Sigma,\qmat{V}]=\texttt{SVD}(\qmat{C},r)$
        \State $\qmat{U} = \qmat{Q}\qmat{U}$
        \State $\qmat{V} = \qmat{P}\qmat{V}$
        \State \textbf{return} \((\qmat{U}, \Sigma, \qmat{V})\)  
    \end{algorithmic}
\end{algorithm}

\begin{figure}[htp]
    \centering
    \captionsetup{font=tiny}
    \begin{subfigure}{0.3\textwidth}
        \centering
        \includegraphics[width=\linewidth]{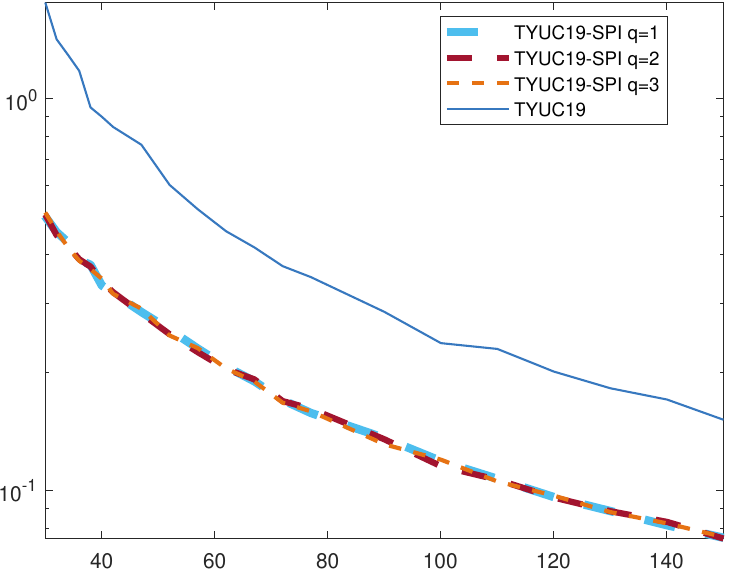}
        \caption{Low Rank with Low Noise}
    \end{subfigure}%
    \begin{subfigure}{0.3\textwidth}
        \centering
        \includegraphics[width=\linewidth]{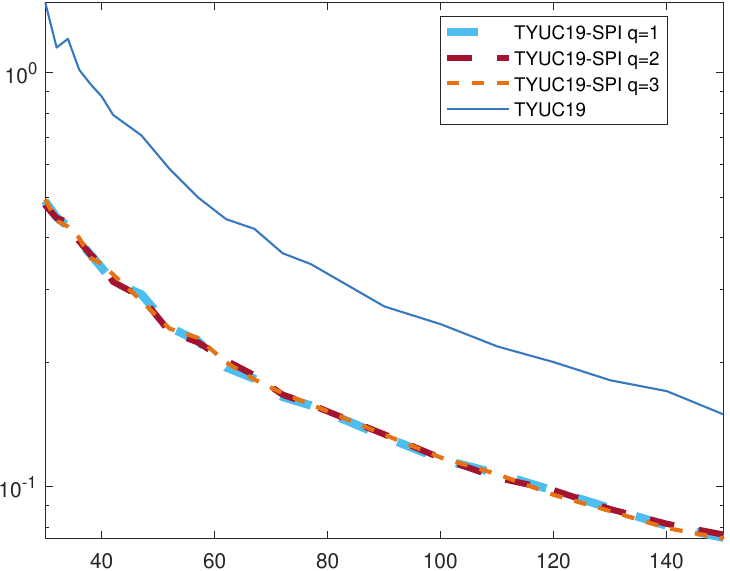}
        \caption{Low Rank with Medium Noise}
    \end{subfigure}%
    \begin{subfigure}{0.3\textwidth}
        \centering
        \includegraphics[width=\linewidth]{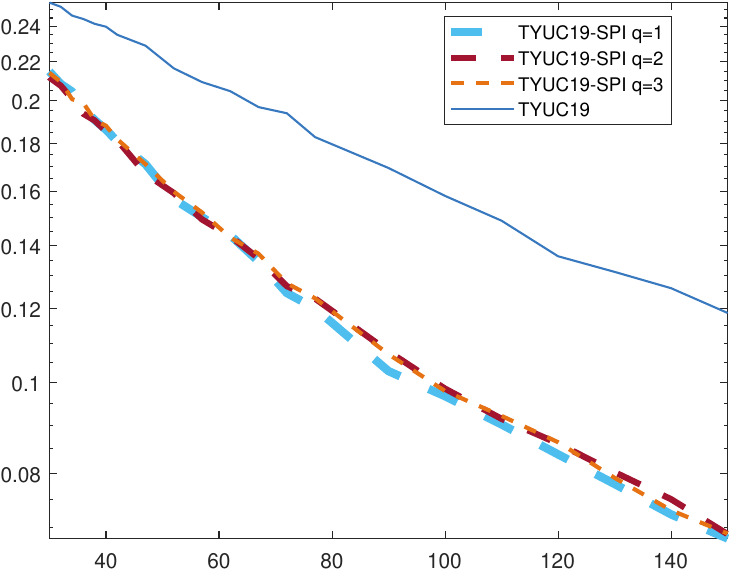}
        \caption{Low Rank with High Noise}
    \end{subfigure}

    \vspace{0.1cm} 

    \begin{subfigure}{0.3\textwidth}
        \centering
        \includegraphics[width=\linewidth]{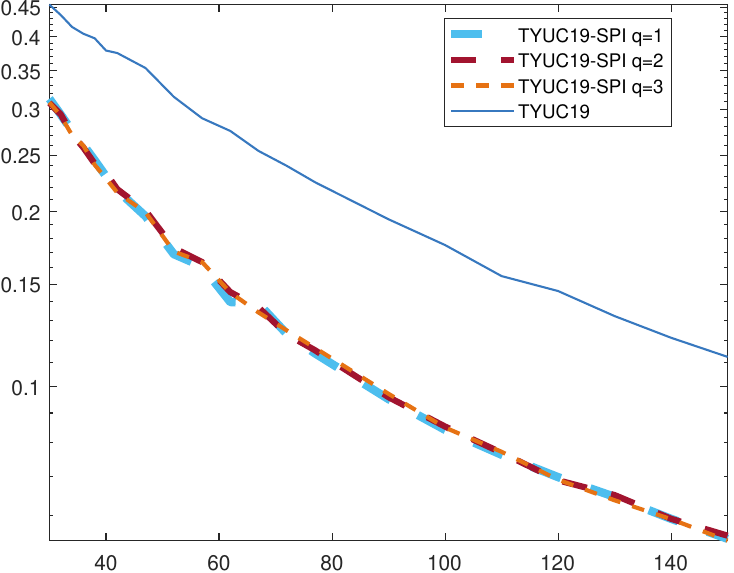}
        \caption{Slow polynomial decay}
    \end{subfigure}%
    \begin{subfigure}{0.3\textwidth}
        \centering
        \includegraphics[width=\linewidth]{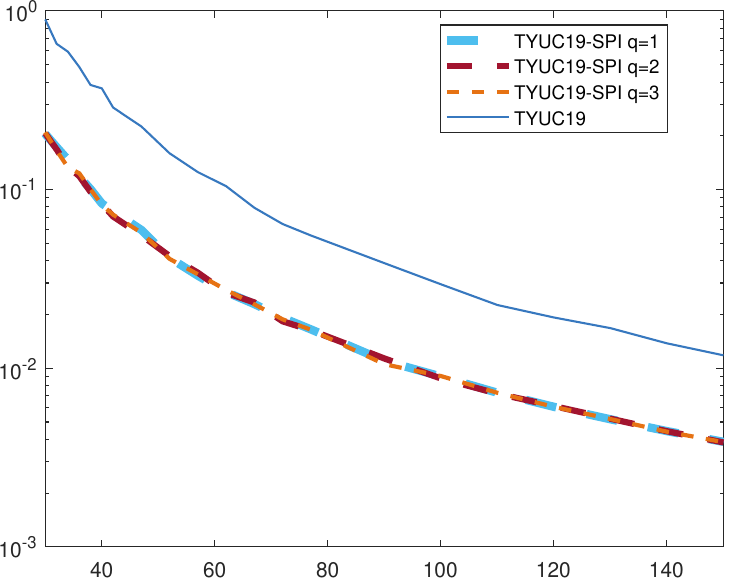}
        \caption{Medium polynomial decay}
    \end{subfigure}%
    \begin{subfigure}{0.3\textwidth}
        \centering
        \includegraphics[width=\linewidth]{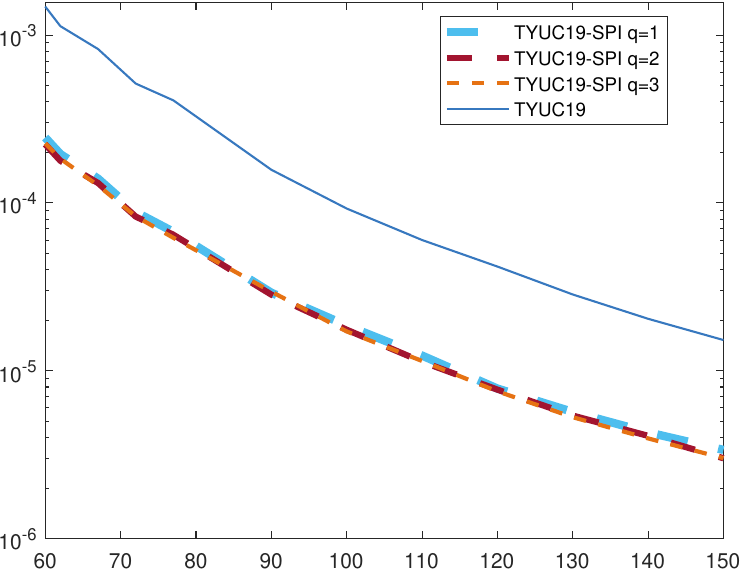}
        \caption{Fast polynomial decay}
    \end{subfigure}

    \vspace{0.1cm} 

    \begin{subfigure}{0.3\textwidth}
        \centering
        \includegraphics[width=\linewidth]{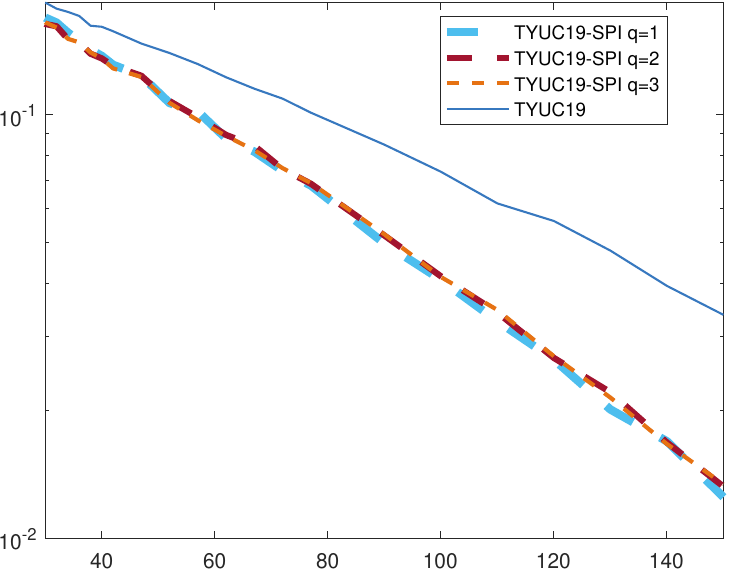}
        \caption{Slow exponentially decay}
    \end{subfigure}%
    \begin{subfigure}{0.3\textwidth}
        \centering
        \includegraphics[width=\linewidth]{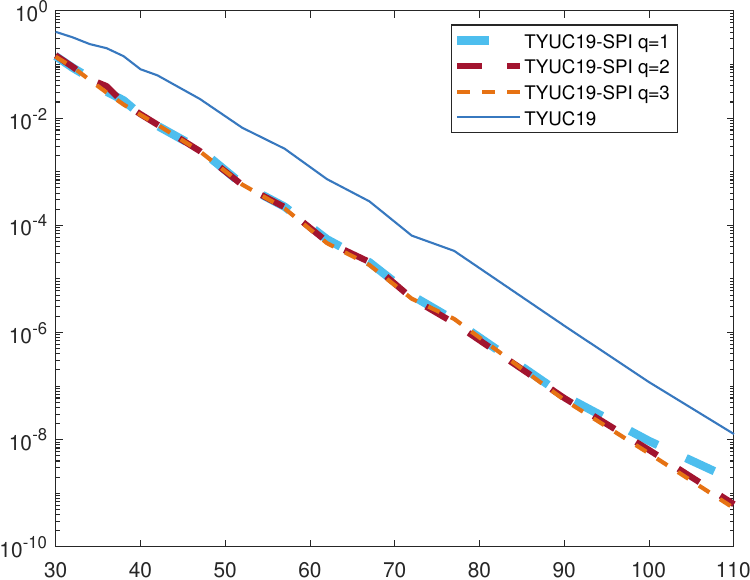}
        \caption{Medium exponentially decay}
    \end{subfigure}%
    \begin{subfigure}{0.3\textwidth}
        \centering
        \includegraphics[width=\linewidth]{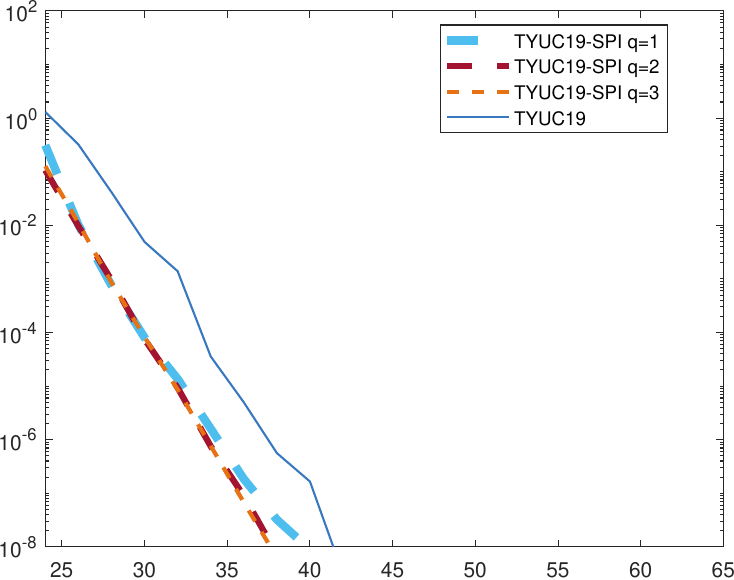}
        \caption{Fast exponentially decay}
    \end{subfigure}

    \caption{Figures of Synectic data. X-axis means the $T$ represented the storage budget. Y-axis means the relative Frobenius error $S_F$. All results are averaged by 20 independent repeated experiments.}\label{SMfig:StreamingSPI_Synectic_data}
\end{figure}

\begin{example}
    In this example, we compare TYUC19  (Algorithm \ref{SMalg:initial_approximation})  and TYUC19-SPI      (Algorithm \ref{SMalg:TYUC19-SPI}) on synthetic data as in Section \ref{sec:synthetic_data_def} under the same storage budget. We present the oracle error only. 
\end{example}

Figure \ref{SMfig:StreamingSPI_Synectic_data} shows the   performance of TYUC19-SPI and TYUC19 under the same storage budget. We can observe that the trend is similar to that of TYUC17-SPI. In summary, SPI can be not only integrated into TYUC17, but also into TYUC19, and possibly other one-pass algorithms, to improve their accuracy, especially when the decay of the data matrix is flat or moderate.    

\subsection{Analysis of deviation bound for   TYUC19-SPI}\label{sec:deviation_bound_TYUC19_SPI}
{We can also obtain a deviation bound for the TYUC19-SPI. Here for notational simplicity, we do not present the final bound; instead we provide the proof idea on how to obtain the final bound.

Let $\hat{\qmat{A}} = \qmat{Q}\qmat{C}\qmat{P}^{T}$, where $\qmat{Q},\qmat{C},\qmat{P}$ are generated in Steps 3, 6, and 7 of Algorithm \ref{SMalg:TYUC19-SPI}.    We first follow \cite[Section SM1.6; Lemma SM1.5]{troppStreamingLowRankMatrix2019} to make a decompose:}
\begin{align*}
    \normFSquare{\qmat{A} -\hat{\qmat{A}}} &= \normFSquare{\qmat{A} - \qmat{Q}\qmat{C}\qmat{P}^{T}} \\
     &= \normFSquare{\qmat{A} - \qmat{Q}\qmat{Q}^{T}\qmat{A}\qmat{P}\qmat{P}^{T} + \qmat{Q}\bigxiaokuohao{\qmat{Q}^{T}\qmat{A}\qmat{P}-\qmat{C}   }\qmat{P}^{T}  }\\
     & = \normFSquare{\qmat{\qmat{A} -\qmat{Q}\qmat{Q}^{T}\qmat{A}\qmat{P}\qmat{P}^{T} }} + \normFSquare{ {\qmat{Q}^{T}\qmat{A}\qmat{P}-\qmat{C}   }   },
\end{align*}
where the equality only uses the orthonormality of $\qmat{Q}$.  Further denote
\begin{align*}
    \qmat{P}_{\bot}\qmat{P}_{\bot}^{T} = \qmat{I}-\qmat{P}\qmat{P}^{T}, \quad \qmat{Q}_{\bot}\qmat{Q}_{\bot}^{T} = \qmat{I}-\qmat{Q}\qmat{Q}^{T}, 
\end{align*}
where $\qmat{P}_{\bot}\in\bbR^{n\times (n-s)}$, $\qmat{Q}_{\bot}\in\bbR^{m\times (m-s)}$.   
Note that
\begin{align*}
    \normFSquare{\qmat{\qmat{A} -\qmat{Q}\qmat{Q}^{T}\qmat{A}\qmat{P}\qmat{P}^{T} }} &= \normFSquare{\qmat{\qmat{A} -\qmat{Q}\qmat{Q}^{T}\qmat{A}\qmat{P}\qmat{P}^{T} } +\qmat{Q}\qmat{Q}^{T}\qmat{A}\qmat{P}_{\bot}\qmat{P}_{\bot}^{T}  - \qmat{Q}\qmat{Q}^{T}\qmat{A}\qmat{P}_{\bot}\qmat{P}_{\bot}^{T}}\\
    &=\normFSquare{ \qmat{\qmat{A} -\qmat{Q}\qmat{Q}^{T}\qmat{A} ^{T} } - \qmat{Q}\qmat{Q}^{T}\qmat{A}\qmat{P}_{\bot}\qmat{P}_{\bot}^{T}}\\
    &= \normFSquare{\bigxiaokuohao{\qmat{I}-\qmat{Q}\qmat{Q}^{T}}\qmat{A} } + \normFSquare{\qmat{A}\bigxiaokuohao{\qmat{I}-\qmat{P}\qmat{P}^{T}}}.
\end{align*}
Thus
\begin{align}\label{eq:SM_decompose_A_Ahat}
    &\normFSquare{\qmat{A} -\hat{\qmat{A}}}  
     = \normFSquare{\bigxiaokuohao{\qmat{I}-\qmat{Q}\qmat{Q}^{T}}\qmat{A} } + \normFSquare{\qmat{A}\bigxiaokuohao{\qmat{I}-\qmat{P}\qmat{P}^{T}}} +   \normFSquare{ {\qmat{Q}^{T}\qmat{A}\qmat{P}-\qmat{C}   }   }.
\end{align}

The deviation bound for the first two terms have been obtained in Theorem \ref{thm:prob_projection_error_q_gt_1} of the main manuscript. The expectation bound of $\normFSquare{ {\qmat{Q}^{T}\qmat{A}\qmat{P}-\qmat{C}   }   }$ has been established in \cite[Lemma SM1.4]{troppStreamingLowRankMatrix2019}.  Here we need a deviation bound. \cite[Lemma SM1.3]{troppStreamingLowRankMatrix2019} shows the decomposition:
\begin{align*}
  \qmat{C} - \qmat{Q}^{T}\qmat{A}\qmat{P}  &=  \bdPhi_1^\dagger\bdPhi_2(\qmat{Q}_\perp^{T}\qmat{A}\qmat{P}) +  (\qmat{Q}^{T}\qmat{A}\qmat{P}_\perp)\bdPsi_2^{T}( \bdPsi_1^\dagger)^{T}  +  \bdPhi_1^\dagger\bdPhi_2(\qmat{Q}_\perp^{T}\qmat{A}\qmat{P}_\perp)\bdPsi_2^{T}( \bdPsi_1^\dagger)^{T} .
\end{align*}
Here
\begin{align*}
    &\bdPhi_1 = \bdPhi\qmat{Q}\in\bbR^{d\times s}, \quad \bdPhi_2 = \bdPhi\qmat{Q}_{\bot} \in\bbR^{d\times (m-s)}, \\
    &\bdPsi_1 = \bdPsi\qmat{P}\in\bbR^{d\times s}, \quad \bdPsi_2 = \bdPsi\qmat{P}_{\bot}\in\bbR^{d\times (n-s)}.
\end{align*}
Then we simply decompose $ \normFSquare{\qmat{C} - \qmat{Q}^{T}\qmat{A}\qmat{P}} $ as
\begin{align*}
    \normFSquare{\qmat{C} - \qmat{Q}^{T}\qmat{A}\qmat{P}}  &\leq  3 \normFSquare{\bdPhi_1^\dagger\bdPhi_2(\qmat{Q}_\perp^{T}\qmat{A}\qmat{P})} +  3\normFSquare{(\qmat{Q}^{T}\qmat{A}\qmat{P}_\perp)\bdPsi_2^{T}( \bdPsi_1^\dagger)^{T}}\\
    &~
      +  3\normFSquare{\bdPhi_1^\dagger\bdPhi_2(\qmat{Q}_\perp^{T}\qmat{A}\qmat{P}_\perp)\bdPsi_2^{T}( \bdPsi_1^\dagger)^{T}} .
\end{align*}
Similar to the arguments in Lemma \ref{lem:GNError_deviation_bound} of the main manuscript we can obtain that
\begin{gather*}
    \probleftright{ \normFSquare{\bdPhi_1^\dagger\bdPhi_2(\qmat{Q}_\perp^{T}\qmat{A}\qmat{P})} > \alpha \normFSquare{\qmat{Q}_\perp^{T}\qmat{A}\qmat{P}}t^2(1+\mu) } \leq   e^{-2\mu^2/2} + t^{-(d-s)};\\
    \probleftright{\normFSquare{(\qmat{Q}^{T}\qmat{A}\qmat{P}_\perp)\bdPsi_2^{T}( \bdPsi_1^\dagger)^{T}} > \alpha \normFSquare{\qmat{Q}^{T}\qmat{A}\qmat{P}_\perp}t^2(1+\mu) } \leq   e^{-2\mu^2/2} + t^{-(d-s)};\\
    \probleftright{\normFSquare{\bdPhi_1^\dagger\bdPhi_2(\qmat{Q}_\perp^{T}\qmat{A}\qmat{P}_\perp)\bdPsi_2^{T}( \bdPsi_1^\dagger)^{T}} \!>\! \alpha^2 \normFSquare{\qmat{Q}_\perp^{T}\qmat{A}\qmat{P}_\perp}t^2(1+\mu) } \!\leq\!   2e^{-2\mu^2/2} \!+\! 2t^{-(d-s)},
\end{gather*}
where $\alpha = \frac{3s}{d-s+1}$. Combining the above yields
\begin{align*}
     \normFSquare{\qmat{C} - \qmat{Q}^{T}\qmat{A}\qmat{P}} &\leq 3 \alpha t^2(1+\mu) \bigxiaokuohao{\normFSquare{\qmat{Q}_\perp^{T}\qmat{A}\qmat{P}}+ \normFSquare{\qmat{Q}^{T}\qmat{A}\qmat{P}_\perp}  }   \\
     &~~+ 3 \alpha^2t^2(1+\mu)   \normFSquare{\qmat{Q}_\perp^{T}\qmat{A}\qmat{P}_\perp} 
\end{align*}
with failure probability $4e^{-2\mu^2/2} + 2t^{-(d-s)}$.

Note that $\normFSquare{\qmat{Q}_\perp^{T}\qmat{A}\qmat{P}} + \normFSquare{\qmat{Q}^{T}\qmat{A}\qmat{P}_\perp} + \normFSquare{\qmat{Q}_\perp^{T}\qmat{A}\qmat{P}_{\bot}} = \normFSquare{\qmat{A} -\qmat{Q}\qmat{Q}^{T}\qmat{A}\qmat{P}\qmat{P}^{T}}   $. Thus the above becomes
\begin{align*}
    \normFSquare{\qmat{C} - \qmat{Q}^{T}\qmat{A}\qmat{P}} &\leq 3 \alpha t^2(1+\mu) \normFSquare{\qmat{A} -\qmat{Q}\qmat{Q}^{T}\qmat{A}\qmat{P}\qmat{P}^{T}}      \\
    &~~+ 3 (\alpha^2-\alpha)t^2(1+\mu)   \normFSquare{\qmat{Q}_\perp^{T}\qmat{A}\qmat{P}_\perp} 
\end{align*}
with failure probability $4e^{-2\mu^2/2} + 2t^{-(d-s)}$. When $d>4s$ the last term becomes non-positive. As a result, when $d>4s$ and conditioned on $\qmat{P},\qmat{Q}$, the above relation together with \eqref{eq:SM_decompose_A_Ahat} yields 
\begin{align*}
    \normFSquare{\qmat{A} -\hat{\qmat{A}}}  
     \leq \bigxiaokuohao{1+ 3 \alpha t^2(1+\mu) }\bigxiaokuohao{   \normFSquare{\bigxiaokuohao{\qmat{I}-\qmat{Q}\qmat{Q}^{T}}\qmat{A} } + \normFSquare{\qmat{A}\bigxiaokuohao{\qmat{I}-\qmat{P}\qmat{P}^{T}}}  }
\end{align*}
with failure probability $4e^{-2\mu^2/2} + 2t^{-(d-s)}$. This in connection with our previous estimations of $\normF{\qmat{A}-\qmat{Q}\qmat{Q}^{T}\qmat{A}}$ in   Theorem \ref{thm:prob_projection_error_q_gt_1}  of the main manuscript yields the final deviation bound for the TYUC19-SPI. We omit the detailed results. 

\bibliographystyle{siamplain}
\bibliography{references.bib}

@article{mitliagkas2013memory,
  title={Memory limited, streaming {PCA}},
  author={Mitliagkas, Ioannis and Caramanis, Constantine and Jain, Prateek},
  journal={Advances in Neural Information Processing Systems},
  volume={26},
  year={2013}
}

@book{vershynin2018high,
  title={High-dimensional probability: An introduction with applications in data science},
  author={Vershynin, Roman},
  volume={47},
  year={2018},
  publisher={Cambridge university press}
}

@article{rokhlin2010randomized,
  title={A randomized algorithm for principal component analysis},
  author={Rokhlin, Vladimir and Szlam, Arthur and Tygert, Mark},
  journal={SIAM J.   Matrix Anal. Appl.},
  volume={31},
  number={3},
  pages={1100--1124},
  year={2010},
  publisher={SIAM}
}

@article{upadhyay2018price,
  title={The price of privacy for low-rank factorization},
  author={Upadhyay, Jalaj},
  journal={Advances in Neural Information Processing Systems},
  volume={31},
  year={2018}
}

@article{higham2022mixed,
  title={Mixed precision algorithms in numerical linear algebra},
  author={Higham, Nicholas J and Mary, Theo},
  journal={Acta Numer.},
  volume={31},
  pages={347--414},
  year={2022},
  publisher={Cambridge University Press}
}

@article{carson2024single,
  title={Single-pass Nystr{\"o}m approximation in mixed precision},
  author={Carson, Erin and Dau{\v{z}}ickait{\.e}, Ieva},
  journal={SIAM J.   Matrix Anal. Appl.},
  volume={45},
  number={3},
  pages={1361--1391},
  year={2024},
  publisher={SIAM}
}

@article{connolly2022randomized,
  title={Randomized low rank matrix approximation: Rounding error analysis and a mixed precision algorithm},
  author={Connolly, Michael P and Higham, Nicholas J and Pranesh, Srikara},
  year={2022},
  url={https://eprints.maths.manchester.ac.uk/2863/1/paper.pdf},
}

@article{bjarkason2019PassEfficientRandomizedAlgorithms,
  title = {Pass-{{Efficient Randomized Algorithms}} for {{Low-Rank Matrix Approximation Using Any Number}} of {{Views}}},
  author = {Bjarkason, Elvar K.},
  year = {2019},
  journal = {SIAM J. Sci. Comput.},
  volume = {41},
  number = {4},
  pages = {A2355-A2383},
}

@techreport{martinsson2006randomizedalg,
  author       = {Per-Gunnar Martinsson and Vladimir Rokhlin and Mark Tygert},
  title        = {A randomized algorithm for the approximation of matrices},
  institution  = {Yale University, Computer Science Department},
  year         = {2006},
  type         = {Technical Report},
  number       = {YALEU/DCS/RR-1361}
}

@article{gu2015subspace,
  title={Subspace iteration randomization and singular value problems},
  author={Gu, Ming},
  journal={SIAM J.   Sci. Comput.},
  volume={37},
  number={3},
  pages={A1139--A1173},
  year={2015},
  publisher={SIAM}
}

@article{li2017algorithm,
  title={Algorithm 971: An implementation of a randomized algorithm for principal component analysis},
  author={Li, Huamin and Linderman, George C and Szlam, Arthur and Stanton, Kelly P and Kluger, Yuval and Tygert, Mark},
  journal={ACM Trans.   Math. Softw.},
  volume={43},
  number={3},
  pages={1--14},
  year={2017},
  publisher={ACM New York, NY, USA}
}

@article{voronin2015rsvdpack,
  title={{RSVDPACK}: An implementation of randomized algorithms for computing the singular value, interpolative, and {CUR} decompositions of matrices on multi-core and GPU architectures},
  author={Voronin, Sergey and Martinsson, Per-Gunnar},
  journal={arXiv:1502.05366},
  year={2015}
}

@article{saibaba2023randomized,
  title={Randomized low-rank approximations beyond {G}aussian random matrices},
  author={Saibaba, Arvind K and Mi{{e}}dlar, Agnieszka},
  journal={arXiv preprint arXiv:2308.05814},
  year={2023}
}

@article{boutsidis2013improved,
  title={Improved matrix algorithms via the subsampled randomized Hadamard transform},
  author={Boutsidis, Christos and Gittens, Alex},
  journal={SIAM J. Matrix Anal.   Appl.},
  volume={34},
  number={3},
  pages={1301--1340},
  year={2013},
  publisher={SIAM}
}

@article{yu2018efficient,
  title={Efficient randomized algorithms for the fixed-precision low-rank matrix approximation},
  author={Yu, W. and Gu, Y. and Li, Y.},
  journal={SIAM J. Matrix Anal. Appl.},
  volume={39},
  number={3},
  pages={1339--1359},
  year={2018},
  publisher={SIAM}
}

@article{BoundsVariationMatrix1994,
title = {Bounds for the variation of matrix eigenvalues and polynomial roots},
journal = {Linear Algebra     Appl.},
volume = {208-209},
pages = {73-82},
year = {1994},
author = {Gerd M. Krause}
}

@inproceedings{cohen2015dimensionalityreduction,
author = {Cohen, M. B. and Elder, S. and Musco, C. and Musco, C. and Persu, M.},
title = {Dimensionality Reduction for $k$-Means Clustering and Low Rank Approximation},
year = {2015},
publisher = {Association for Computing Machinery},
address = {New York, NY, USA},
booktitle = {Proceedings of the Forty-Seventh Annual ACM Symposium on Theory of Computing},
pages = {163-–172},
numpages = {10},
}

@article{martinsson2020RandomizedNumerical,
  title = {Randomized Numerical Linear Algebra: foundations and Algorithms},
  author = {Martinsson, P.-G. and Tropp, J. A.},
  year = {2020},
  journal = {Acta Numer.},
  volume = {29},
  pages = {403--572},
}

@article{murray2023RandomizedNumerical,
  title = {Randomized Numerical Linear Algebra: A Perspective on the Field With an Eye to Software},
  author = {Murray, R. and Demmel, J. and Mahoney, M. W. and Erichson, N. B. and Melnichenko, M. and Malik, O. A. and Grigori, L. and Luszczek, P. and Derezi{\'n}ski, M. and Lopes, M. E. and Liang, T. and Luo, H. and Dongarra, J.},
  year = {2023},
  journal = {arXiv:2302.11474},
}

@article{kireeva2024RandomizedMatrix,
  title = {Randomized Matrix Computations: {{Themes}} and Variations},
  author = {Kireeva, A. and Tropp, J. A.},
  year = {2024},
  journal = {arXiv:2402.17873},
}

@article{kannan2017RandomizedAlgorithms,
  title = {Randomized Algorithms in Numerical Linear Algebra},
  author = {Kannan, R. and Vempala, S.},
  year = {2017},
  journal = {Acta Numer.},
  volume = {26},
  pages = {95--135},
}

@article{tropp2023RandomizedAlgorithms,
  title = {Randomized Algorithms for Low-Rank Matrix Approximation: Design, Analysis, and Applications},
  author = {Tropp, J. A. and Webber, R. J.},
  year = {2023},
  journal = {arXiv:2306.12418},
}

@article{FastMontecarlo,
title={Fast {Monte-Carlo} algorithms for finding low-rank approximations},
author={Frieze, A. and Kannan, R. and Vempala, S.},
journal={J.  ACM},
volume={51},
number={6},
pages={1025--1041},
year={2004},
}

@inproceedings{boutsidis2016OptimalPrincipal,
  title = {Optimal Principal Component Analysis in Distributed and Streaming Models},
  booktitle = {Proceedings of the Forty-Eighth Annual {{ACM}} Symposium on {{Theory}} of {{Computing}}},
  author = {Boutsidis, C. and Woodruff, D. P. and Zhong, P.},
  year = {2016},
  series = {{{STOC}} '16},
  pages = {236--249},
  publisher = {Association for Computing Machinery},
}

@article{woodruff2014SketchingTool,
  title = {Sketching as a Tool for Numerical Linear Algebra},
  author = {Woodruff, D. P.},
  year = {2014},
  journal = {Found. Trends  Theor. Comput. Sci.},
  volume = {10},
  number = {1-2},
  pages = {1--157},
}

@article{mahoney2011RandomizedAlgorithms,
  title = {Randomized Algorithms for Matrices and Data},
  author = {Mahoney, M. W.},
  year = {2011},
  journal = {Found. Trends Mach. Learn.},
  volume = {3},
  number = {2},
  pages = {123--224},
}

@inproceedings{clarkson2009NumericalLinear,
  title = {Numerical Linear Algebra in the Streaming Model},
  booktitle = {Proceedings of the Forty-First Annual {{ACM}} Symposium on {{Theory}} of Computing},
  author = {Clarkson, K. L. and Woodruff, D. P.},
  year = {2009},
  pages = {205--214},
  publisher = {ACM},
}

@article{woolfe2008Fast,
  title = {A Fast Randomized Algorithm for the Approximation of Matrices},
  author = {Woolfe, F. and Liberty, E. and Rokhlin, V. and Tygert, M.},
  year = {2008},
  journal = {Appl.   Comput. Harmon. Anal.},
  volume = {25},
  number = {3},
  pages = {335--366},
}

@article{FindingStructureHalko,
  title = {Finding Structure with Randomness: probabilistic Algorithms for Constructing Approximate Matrix Decompositions},
  author = {Halko, N. and Martinsson, P. G. and Tropp, J. A.},
  year = {2011},
  journal = {SIAM Rev.},
  volume = {53},
  number = {2},
  pages = {217--288},
}

@article{libertyRandomizedAlgorithmsLowrank2007,
  title = {Randomized Algorithms for the Low-Rank Approximation of Matrices},
  author = {Liberty, Edo and Woolfe, Franco and Martinsson, Per-Gunnar and Rokhlin, Vladimir and Tygert, Mark},
  year = {2007},
  journal = {Proc. Natl. Acad. Sci. U. S. A.},
  volume = {104},
  number = {51},
  pages = {20167--20172}
}

@article{martinssonRandomizedAlgorithmDecomposition2011,
  title = {A Randomized Algorithm for the Decomposition of Matrices},
  author = {Martinsson, Per-Gunnar and Rokhlin, Vladimir and Tygert, Mark},
  year = {2011},
  journal = {Appl. Comput. Harmon. Anal.},
  volume = {30},
  number = {1},
  pages = {47--68},
  urldate = {2024-01-08},
  langid = {english}
}

@article{nakatsukasaFastStableRandomized2020,
  title = {Fast and Stable Randomized Low-Rank Matrix Approximation},
  author = {Nakatsukasa, Yuji},
  year = {2020},
  journal = {arXiv:2009.11392},
}

@article{Practical_Sketching_Algorithms_Tropp,
  title = {Practical Sketching Algorithms for Low-Rank Matrix Approximation},
  author = {Tropp, J. A. and Yurtsever, A. and Udell, M. and Cevher, V.},
  year = {2017},
  journal = {SIAM J. Matrix Anal.  Appl.},
  volume = {38},
  number = {4},
  pages = {1454--1485},
}

@article{troppStreamingLowRankMatrix2019,
  title = {Streaming Low-Rank Matrix Approximation with an Application to Scientific Simulation},
  author = {Tropp, J. A. and Yurtsever, A. and Udell, M. and Cevher, V.},
  year = {2019},
  journal = {SIAM J. Sci. Comput.},
  volume = {41},
  number = {4},
  pages = {A2430-A2463},
}

@article{higham2019new,
  title={A new approach to probabilistic rounding error analysis},
  author={Higham, N. J. and Mary, T.},
  journal={SIAM J.   Sci. Comput.},
  volume={41},
  number={5},
  pages={A2815--A2835},
  year={2019},
  publisher={SIAM}
}

@article{nakatsukasa2023randomized,
  title={Randomized low-rank approximation for symmetric indefinite matrices},
  author={Nakatsukasa, Y. and Park, T.},
  journal={SIAM J.  Matrix Anal. Appl.},
  volume={44},
  number={3},
  pages={1370--1392},
  year={2023},
  publisher={SIAM}
}

@inproceedings{narang2017mixed,
  title={Mixed precision training},
  author={Narang, Sharan and Diamos, Gregory and Elsen, Erich and Micikevicius, Paulius and Alben, Jonah and Garcia, David and Ginsburg, Boris and Houston, Michael and Kuchaiev, Oleksii and Venkatesh, Ganesh and others},
  booktitle={Int. Conf.   Learning Representation},
  year={2017}
}

@article{kong2017spectrum,
  title        = {Spectrum Estimation from Samples},
  author       = {Kong, W. and Valiant, G.},
  journal      = {Ann. Statist.},
  volume       = {45},
  number       = {5},
  pages        = {2218--2247},
  month        = oct,
  year         = {2017},
}

@inproceedings{li2014sketching,
  title        = {On sketching matrix norms and the top singular vector},
  author       = {Li, Y. and Nguy{\~{\^e}}n, H. L. and Woodruff, D. P.},
  booktitle    = {Proceedings of the twenty-fifth annual ACM-SIAM symposium on Discrete algorithms},
  pages        = {1562--1581},
  year         = {2014},
  organization = {SIAM},
}

@article{drineas2006FastMonte,
  title = {Fast {M}onte {C}arlo algorithms for  matrices {I}: Approximating Matrix Multiplication},
  author = {Drineas, P. and Kannan, R. and Mahoney, M. W.},
  year = {2006},
  journal = {SIAM J.   Comput.},
  volume = {36},
  number = {1},
  pages = {132--157}
}

@article{che2025efficient,
  author = {Che, M. and Wei, Y.},
  journal = {Adv. Comput.},
  pages = {20},
  title = {Efficient algorithms for {T}ucker decomposition via approximate matrix multiplication},
  volume = {51},
  year = {2025}
}

@article{avron2010BlendenpikSupercharging,
  title = {Blendenpik: Supercharging {LAPACK}'s Least-Squares Solver},
  author = {Avron, H. and Maymounkov, P. and Toledo, S.},
  year = {2010},
  journal = {SIAM J.   Sci. Comput.},
  volume = {32},
  number = {3},
  pages = {1217--1236}
}

@misc{troppComparisonTheoremsMinimum2025,
  title = {Comparison Theorems for the Minimum Eigenvalue of a Random Positive-Semidefinite Matrix},
  author = {Tropp, J. A.},
  year = {2025},
  number = {arXiv:2501.16578},
  eprint = {2501.16578},
}

\end{document}